\newcommand{\W}{\mathcal{W}}
\newcommand{\T}{\mathcal{T}}
\newcommand{\cA}{\mathcal{A}}
\newcommand{\cL}{\mathcal{L}}
\newcommand{\cU}{\mathcal{U}}\newcommand{\cV}{\mathcal{V}}
\newcommand{\cW}{\mathcal{W}}
\newcommand{\Sym}{\mathbb{\mr{Sym}}}
\newcommand{\N}{\mathbb{N}}
\newcommand{\R}{\mathbb{R}}
\newcommand{\Q}{\mathbb{Q}}
\newcommand{\Z}{\mathbb{Z}}
\newcommand{\sS}{\mathfrak{S}}
\newcommand{\m}{\to}
\newcommand{\cd}{\mathrm{\bf D}}
\newcommand{\Y}[1]{{\tiny\yng(#1)}}
\newtheorem{theorem}{Theorem}[section]
\newtheorem{lemma}[theorem]{Lemma}
\newtheorem{proposition}[theorem]{Proposition}
\newtheorem{corollary}[theorem]{Corollary}
\newtheorem{conjecture}[theorem]{Conjecture}
\theoremstyle{definition}
\newtheorem{definition}[theorem]{Definition}
\newtheorem{example}[theorem]{Example}
\newtheorem{remark}[theorem]{Remark}
\newtheorem{convention}[theorem]{Convention}
\newtheorem{question}[theorem]{Question}
\newcommand{\mr}[1]{{\rm #1}}
\newcommand{\fS}{\mathfrak{S}}
\newcommand{\fB}{\mathfrak{B}}
\newcommand{\Hom}{\mathrm{Hom}}
\newcommand{\FI}{\mathrm{FI}}
\newcommand{\FB}{\mathrm{FB}}
\newcommand{\FIM}{\mathrm{FIM}}
\newcommand{\Inj}{\mathrm{Inj}}
\newcommand{\im}{\mathrm{im}}
\newcommand{\Tor}{\mathrm{Tor}}
\newcommand{\Top}{\mathcal{T}}
\newcommand{\Lie}{\mathcal L}
\newcommand{\Ind}{\mathrm{Ind}}
\newcommand{\Emb}{\mathrm{Emb}}
\newcommand{\Mod}{\mathrm{Mod}}
\newcommand{\Arc}{\mathrm{Arc}}
\newcommand{\up}{\mathrm{Ind}}
\title{Higher order representation stability and ordered configuration spaces of manifolds}
\author{Jeremy Miller\thanks{Jeremy Miller was supported in by NSF grant DMS-1709726.} \, and Jennifer C. H. Wilson}
\date{\today}
\begin{document}

\maketitle 

Using the language of twisted skew-commutative algebras, we define \emph{secondary representation stability},  a stability pattern in the {\it unstable} homology of spaces that are representation stable in the sense of Church--Ellenberg--Farb \cite{CEF}. We show that the rational homology of configuration spaces of ordered points in noncompact manifolds satisfies secondary representation stability. While representation stability for the homology of configuration spaces involves stabilizing by introducing a point ``near infinity,'' secondary representation stability involves stabilizing by introducing a pair of orbiting points -- an operation that relates homology groups in different homological degrees. This result can be thought of as a representation-theoretic analogue of \emph{secondary homological stability} in the sense of Galatius--Kupers--Randal-Williams \cite{GKRW1,GKRW2}. In the course of the proof we establish some additional results: we give a new characterization of the homology of the complex of injective words, and we give a new proof of integral representation stability for configuration spaces of noncompact manifolds, extending previous results to nonorientable manifolds.

 \setcounter{tocdepth}{4}
\setcounter{secnumdepth}{4}
 
\tableofcontents
\newpage
 
\section{Introduction} \label{intro}
 
The objective of this paper is to introduce the concept of \emph{secondary representation stability} and prove that this phenomenon is present in the homology of the ordered configuration spaces of a connected noncompact manifold. Church--Ellenberg--Farb \cite{CEF} proved that, in each fixed homological degree $i$, these homology groups are \emph{representation stable}: up to the action of the symmetric groups, the homology classes stabilize under the operation of adding a point ``near infinity." In this paper, we exhibit patterns between {\it unstable} homology groups in different homological degrees. We show that certain sequences of unstable rational homology groups stabilize under the new operation of adding {\it pairs} of points orbiting each other ``near infinity." We formalize this secondary representation stability phenomenon using the theory of twisted skew-commutative algebras.

\subsection{Stability for configuration spaces} \label{SectionIntroStabilization}

For a manifold $M$, let $F_k(M):=\{(m_1,\ldots,m_k) \, | \,  m_i \in M, \; m_i \neq m_j \text{ if } i \neq j\} \subseteq M^k$ be  the configuration space of $k$ distinct ordered points in $M$. The symmetric group $\fS_k$ acts on $F_k(M)$ by permuting the terms, and so induces a $\Z[\fS_k]$-module structure on the homology groups $H_i(F_k(M))$. Although these homology groups do not exhibit classical homological stability as $k$ increases, Church--Ellenberg--Farb \cite{Ch,CEF} showed that they do stabilize in a certain sense as $\fS_k$-representations. To  make this statement of \emph{representation stability} precise, we recall the definition of the stabilization map. 

Assume throughout that $M$ is a connected noncompact $n$-manifold with $n \geq 2$. Since $M$ is not compact, there is an embedding $e: M \sqcup \R^n \hookrightarrow M$ such that $e|_M$ is isotopic to the identity, as in Figure \ref{MEmbedding}. Such an embedding exists, for example, by Kupers--Miller \cite[Lemma 2.4]{kupersmillerimprov}. 
\begin{figure}[!ht]    \centering
\labellist
\Large \hair 0pt
\pinlabel {\fontsize{18}{40} $\R^n$} [c] at 106 40
\pinlabel {\fontsize{18}{40} $M$} [c] at 21 33
\pinlabel {\fontsize{18}{40} $M$} [c] at 302 40
\endlabellist
\begin{center}\scalebox{.5}{\includegraphics{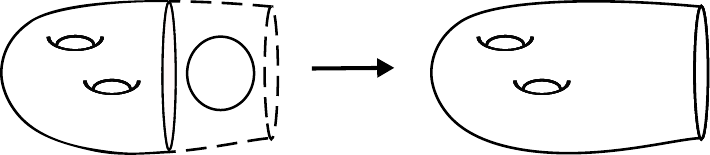}}\end{center}
\caption{The embedding $e:M \sqcup \R^n \hookrightarrow M$.}
\label{MEmbedding}
\end{figure}  
Using this embedding, we construct a map $$t:F_{k-1}(M) \m F_{k}(M)$$ which maps a configuration in $M$ to its image in $e(M)$, and then adds a point labeled by $k$ in $e(\R^n)$. This map is illustrated in Figure  \ref{StabilizationExample}.
\begin{figure}[!ht]    \centering
\begin{center}\scalebox{.5}{\includegraphics{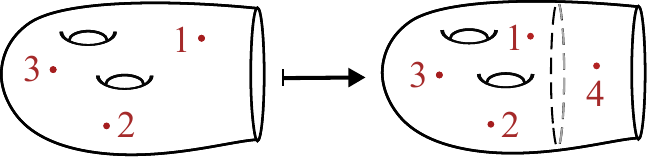}}\end{center}
\caption{The stabilization map $t:F_{3}(M) \m F_{4}(M)$.}
\label{StabilizationExample}
\end{figure}  

The following stability result is a consequence of work of  Church--Ellenberg--Farb \cite[Theorem 6.4.3]{CEF}.

\begin{theorem}[Church--Ellenberg--Farb {\cite[Theorem 6.4.3]{CEF}}] \label{CEFsurj}
Let $M$ be a connected, orientable, noncompact $n$-manifold with $n \geq 2$. For $i \leq \frac{k-1}{2}$,  
$$ \Z[\fS_{k}] \cdot t_*(H_i(F_{k-1}(M);\Z)) = H_i(F_{k}(M);\Z).$$ 
\end{theorem}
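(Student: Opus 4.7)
The plan is to induct on the homological degree $i$ using the Fadell--Neuwirth fibration $\pi: F_k(M) \to F_{k-1}(M)$ that forgets the $k$-th particle; its fiber over $(p_1,\ldots,p_{k-1})$ is the punctured manifold $M_{k-1} := M \setminus \{p_1,\ldots,p_{k-1}\}$. The starting geometric observation is that $t$ is, up to isotopy of $F_{k-1}(M)$, a section of $\pi$: stabilizing and then forgetting the new particle recovers the original configuration after applying the isotopy $e|_M \simeq \mr{id}_M$. Consequently $t_*$ splits $\pi_*$ on homology, and I would extract information from the Serre spectral sequence
\[
E^2_{p,q} = H_p\bigl(F_{k-1}(M);\, \cH_q(M_{k-1})\bigr) \Longrightarrow H_{p+q}(F_k(M)).
\]

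For the base case $i = 0$ the statement is immediate since $F_k(M)$ is connected. For the inductive step, the splitting of the edge homomorphism provides an internal direct sum
\[
H_i(F_k(M)) \;=\; t_*\bigl(H_i(F_{k-1}(M))\bigr) \;\oplus\; \ker(\pi_*),
\]
where $\ker(\pi_*)$ is assembled from the groups $E^\infty_{p,q}$ with $p+q=i$ and $q>0$. The first summand is tautologically in the image of $t_*$; the real task is to show that every class in $\ker(\pi_*)$ lies in $\Z[\fS_k] \cdot t_*(H_i(F_{k-1}(M)))$ when $i \leq (k-1)/2$. As an $\fS_{k-1}$-representation, $\cH_q(M_{k-1})$ splits into a ``global'' piece coming from $H_q(M)$ plus a ``meridian'' summand concentrated in degree $n-1$ that is isomorphic to the permutation representation $\Z[\{1,\ldots,k-1\}]$ (orientability of $M$ provides the trivial monodromy needed for this identification).

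The two contributions are handled by different mechanisms. Classes arising from the $H_q(M)$ summand are realized as products of an iteratively stabilized cycle in a smaller configuration space with a cycle in the additional $M$-factor, and the inductive hypothesis, applied to $F_{k-1}(M)$ in degree $p = i-q \leq i-1$, places them inside the $\Z[\fS_k]$-orbit of $t_*$. Classes arising from the meridian summand correspond geometrically to one of the first $k-1$ particles linking around a second one; the decisive mechanism is transparent in the classical braid example on $\R^2$, where the pair-linking class $A_{jk} \in H_1(F_k(\R^2))$ equals $\sigma \cdot t_*(A_{12})$ for a suitable transposition $\sigma \in \fS_k$, since $\sigma$ simply relabels which particle sits ``at infinity'' in $e(\R^n)$ without changing the underlying geometric cycle. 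The main obstacle I anticipate is the bookkeeping required to (a) track how the $\fS_k$-action interacts with the spectral sequence (the full symmetric group acts by permuting the family of fibrations indexed by the choice of forgotten particle, rather than preserving $\pi$ alone) and (b) reduce an arbitrary class in $\ker(\pi_*)$ to the $\fS_k$-orbit of an iteratively stabilized class via a secondary induction on the number of meridian factors present. The stable range $i \leq (k-1)/2$ should emerge from the constraint that each meridian factor consumes one particle of the available ``budget'' for applying the inductive hypothesis to $F_{k-1}(M)$, forcing a bound linear in $k$.
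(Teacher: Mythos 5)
Your proposal takes a genuinely different route from the paper. The paper does not rely on the Fadell--Neuwirth fibration at all; it proves this statement (and its nonorientable extension, Theorem~\ref{ConfigSpaceRepStable}) by replacing $\pi$ with the augmented semi-simplicial space $\Arc_\bullet(F_k(M))$, whose geometric realization maps to $F_k(M)$ with connectivity growing in $k$ (Proposition~\ref{ArcF}). The resulting spectral sequence is a spectral sequence of FI-modules by construction --- its $q$th row is the twisted injective word complex $\Inj_*(H_q(F(M)))$, and its $p=-1$ column on the $E^2$-page is exactly $H_0^{\FI}(H_q(F(M)))_S$ by Proposition~\ref{minusonehomology} --- so the theorem drops out of the connectivity bound together with Farmer's theorem (via Corollary~\ref{twistedInj}) with essentially no equivariance bookkeeping. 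Your route is closer in spirit to the Totaro/Church-style arguments that the authors explicitly set out to replace (and which require orientability), whereas the arc resolution argument dispenses with that hypothesis for free.

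Within your sketch there are two genuine gaps, both of which you flag but neither of which is a footnote. First, the Fadell--Neuwirth fibration forgets one fixed particle, so the entire Serre spectral sequence, and in particular the splitting $H_i(F_k(M)) = t_*H_i(F_{k-1}(M)) \oplus \ker\pi_*$, is only $\fS_{k-1}$-equivariant. The content of the theorem is that $\ker\pi_*$ is swept out by $\fS_k$-translates of $t_*$-images, and ``permuting the family of fibrations indexed by the choice of forgotten particle'' is precisely the hard step: you must compare the spectral sequences of the $k$ different Fadell--Neuwirth maps inside a single $\fS_k$-equivariant object, which is the problem the arc resolution is engineered to solve at the outset (its face maps together realize all of the maps induced by $[k]\setminus\{a\}\hookrightarrow[k]$). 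Second, your inductive bookkeeping mixes two inductions that need to be disentangled: the inductive hypothesis for $F_{k-1}(M)$ in degree $p<i$ yields elements of $\Z[\fS_{k-1}]\cdot t_*(H_p(F_{k-2}(M)))$, which must then be promoted through the $E^\infty$-filtration --- using the meridian/global decomposition of $\cH_q(M_{k-1})$, whose splitting at $q=n-1$ and the triviality of the monodromy on the $H_q(M)$ summand both still require justification --- to elements of $\Z[\fS_k]\cdot t_*(H_i(F_{k-1}(M)))$. The ``budget'' heuristic gives a plausible reason for the linear range, but the range is not actually established until this promotion is made precise.
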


In this paper, we consider a higher order stabilization map, $t'$. Using the embedding $e$ we can also construct a map $F_{k-2}(M) \times F_2(\R^n) \m F_{k}(M)$ which places two points in $e(\R^n)$, labeled by $(k-1)$ and $k$. This induces a map $H_a(F_{k-2}(M)) \otimes H_b(F_2(\R^n)) \m H_{a+b}(F_{k}(M)).$ We then define the stabilization map 
$$t':H_{i-1}(F_{k-2}(M)) \m H_i(F_{k}(M))$$
 by pairing a class in $H_{i-1}(F_{k-2}(M))$ with the class in $H_1(F_2(\R^n))$ of the point labeled by $k$ orbiting the point labeled by $(k-1)$ counterclockwise, as in Figure \ref{SecondaryStabilizationSample}. This class is zero for $n \geq 3$, but is nonzero for $n=2$. Note that this operation is symmetric in $k$ and $(k-1)$. 
\begin{figure}[!ht]    \centering
\begin{center}\scalebox{.5}{\includegraphics{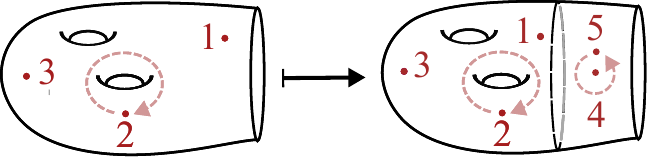}}\end{center}
\caption{The secondary stabilization map $t':H_{1}(F_{3}(M)) \m H_2(F_{5}(M))$.}
\label{SecondaryStabilizationSample}
\end{figure}
While the classical stabilization map $t_*$ raises the number of points by one and keeps homological degree constant, the map $t'$ increases the number of points by two and homological degree by one. 

With the definition of $t'$, we can state the following version of our main theorem, \emph{secondary representation stability} for the rational homology of configuration spaces. For this theorem we do not need to assume $M$ is orientable, but we assume that our manifold $M$ is finite type (that is, the homotopy type of a finite CW complex) to ensure that the rational homology groups of the configuration spaces are finite-dimensional. Let $\N_0$ denote the set of nonnegative integers.

\begin{theorem}\label{maindecat}
Let $M$ be a connected noncompact finite type $n$-manifold with $n \geq 2$. There is a function $r:\N_0 \m \N_0$ tending to infinity such that for $i \leq \frac{k-1}{2}+r(k)$,  $$\Q[\fS_{k}] \cdot \Big( t_*(H_i(F_{k-1}(M);\Q))+t'(H_{i-1}(F_{k-2}(M);\Q)) \Big) = H_i(F_{k}(M);\Q).$$
\end{theorem}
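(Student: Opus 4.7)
The plan is to resolve $F_k(M)$ by a highly-connected augmented semi-simplicial space whose simplices factor, up to homotopy, through products of configuration spaces in $M$ and in the end $e(\R^n)$, and to read the theorem off the associated spectral sequence. A natural candidate is a semi-simplicial space $A_\bullet(M,k)$ whose $p$-simplices consist of a configuration in $F_k(M)$ together with an ordered choice of $p+1$ of the particles constrained to lie in the disk $e(\R^n)$, with face maps forgetting a chosen particle and augmentation recording the underlying configuration. Each component of $A_p(M,k)$ is homotopy equivalent to $F_{k-p-1}(M)\times F_{p+1}(\R^n)$, and the combinatorial skeleton of $A_\bullet(M,k)$ is controlled by the complex of injective words on $\{1,\dots,k\}$. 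The first task is to show that the augmentation $\|A_\bullet(M,k)\|\to F_k(M)$ is $\rho(k)$-connected for some function $\rho(k)\to\infty$, leveraging the new characterization of $H_*(\Inj)$ announced in the introduction.

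Granting this connectivity, the augmented spectral sequence gives a $\Q[\fS_k]$-equivariant convergence
\begin{equation*}
E^1_{p,q} \;=\; H_q\bigl(A_p(M,k);\Q\bigr) \;\Longrightarrow\; 0 \qquad \text{for } p+q\leq \rho(k),
\end{equation*}
with $E^1_{-1,i}=H_i(F_k(M);\Q)$. A K\"unneth decomposition identifies $E^1_{p,q}$, up to $\fS_k$-induction, with $\bigoplus_{a+b=q}H_a(F_{k-p-1}(M);\Q)\otimes H_b(F_{p+1}(\R^n);\Q)$. By construction of the face maps and the definition of $t$, the $d^1$-differential out of the $p=0$ column is the primary stabilization $t_*$; and the $H_1(F_2(\R^n);\Q)$-summand of the $p=1$ column matches the secondary stabilization $t'$ up to an element of $\fS_k$. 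The desired surjection of $\mr{im}(t_*)+\mr{im}(t')$ onto $H_i(F_k(M);\Q)$ in the range $i\leq (k-1)/2+r(k)$ then follows from the vanishing of the abutment.

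The technical heart of the argument is twofold. First, one must establish the connectivity bound $\rho(k)\geq (k-1)/2+r(k)$ with $r(k)\to\infty$, and this is precisely where the new computation of the homology of the complex of injective words enters, providing a quantitative improvement beyond the classical CEF range of Theorem~\ref{CEFsurj}. Second, one must control the deeper columns $p\geq 2$: in the stated range their contributions should reduce, by induction on $k$, to classes already in the $\fS_k$-span of the primary and secondary stabilization maps. Since $H_b(F_{p+1}(\R^n);\Q)$ is supported in a narrow band of degrees (controlled by the operadic structure of $F_*(\R^n)$) and since the theorem is vacuous for small $k$, the induction should close. The main obstacle is making the connectivity estimate quantitative enough to produce $r(k)\to\infty$, as this is what forces the secondary stability range to grow beyond the primary range of Church--Ellenberg--Farb.
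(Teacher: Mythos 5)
Your overall outline—resolve $F_k(M)$ by a highly connected augmented semi-simplicial space, read $H_i(F_k(M))$ off the $(-1)$-column of the augmented spectral sequence, and match the $d^1$ differentials with the stabilization maps—is indeed the template the paper follows (with the arc resolution of Section~3.2 in place of your $A_\bullet(M,k)$). But the point at which you locate the ``technical heart'' is exactly where the proposal has a genuine gap, and a serious one.

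You say the growth $r(k)\to\infty$ should come from establishing a connectivity bound $\rho(k) \geq (k-1)/2 + r(k)$ via the new calculation of $H_*(\Inj)$. This does not work and is not what the paper does. Two separate confusions: first, Farmer's theorem already gives the \emph{optimal} connectivity of the complex of injective words ($(|S|-2)$-connected, which is $(k-2)$, far better than $(k-1)/2$); the paper's new computation (Theorem~\ref{TopHomology}) describes the unique nonvanishing reduced homology group as a space of products of graded Lie brackets—it says nothing further about connectivity. Second, the paper's arc resolution is only $(k-1)$-connected (Proposition~\ref{ArcF}), which is the \emph{same} connectivity input used to prove the classical range $i\leq(k-1)/2$ of Church--Ellenberg--Farb. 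The improvement $r(k)\to\infty$ does not come from a better connectivity estimate. It comes from an algebraic input entirely absent from your proposal: the Noetherian property for $\bigwedge(\Sym^2 R)$-modules over a characteristic-zero field (Nagpal--Sam--Snowden, Theorem~\ref{NSSnoth}). Noetherianity is used to build free resolutions by $\FIM^+$-modules (Corollary~\ref{freeResolution}), from which one deduces that the ``secondary injective word complexes'' $\Inj^2_*(\cV)$ of any finitely generated $\bigwedge(\Sym^2 R)$-module are acyclic once $|S|$ is large (Proposition~\ref{connected2}). Via the filtration of the $E^2$-page (Proposition~\ref{E2filtration}), this acyclicity kills the higher differentials $d^r$, $r\geq 3$, into $E^r_{-1,i}$ for $k$ large in a nested induction; that is what produces $r(k)\to\infty$. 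Since the Noetherian hypothesis is what forces the characteristic-zero restriction in Theorem~\ref{maindecat}, any approach that makes no use of it is missing the load-bearing step.

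Two smaller problems are worth flagging. Your identification of the $p$-simplices as having components homotopy equivalent to $F_{k-p-1}(M)\times F_{p+1}(\R^n)$ is not quite right: if the remaining $k-p-1$ particles are unconstrained they can collide with the disk region and you instead get a punctured-manifold configuration space; if you constrain them to stay out of the disk, the face maps do not land where you want. The paper's arc resolution sidesteps this: the arcs (not the particles) do the bookkeeping, and a component of $\Arc_p(F_k(M))$ is homotopy equivalent to $F_{k-p-1}(M)$ alone—no $F_{p+1}(\R^n)$ factor. Consequently the secondary stabilization $t'$ does \emph{not} appear in $d^1$ as you suggest; it surfaces only in the higher differentials, and identifying it there is a substantial calculation (Lemma~\ref{LemmaDifferentialsOnLie} and the Leibniz rule of Lemma~\ref{LemmaLeibniz}), which is where the Lie-theoretic description of $H_*(\Inj)$ actually earns its keep.
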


Up to the action of $\fS_k$, the homology group $H_i(F_{k}(M);\Q)$ is generated by the images of $t_*$ and $t'$ in a range. In other words, Theorem \ref{CEFsurj} says that when the homological degree $i$ is small enough relative to the number $k$ of points, the group $H_i(F_{k}(M);\Q)$ is spanned by classes where at least one point is stationary ``near infinity.''  Theorem \ref{maindecat} says that there is a larger range in which the homology group is spanned by classes where at least one point is stationary, or two points are orbiting each other ``near infinity.''

When dim$(M) \geq 3$, we will see that Theorem 1.2 implies an improved representation stability range for the groups $H_i(F_k(M); \Q)$. For 2-manifolds, however, this result is a novel form of stability among these homology groups.

\begin{remark}The idea to study homological degree-shifting stabilization maps originated with the work of Galatius--Kupers--Randal-Williams \cite{GKRW1,GKRW2}. Their work generalizes classical homological stability, whereas we generalize representation stability. 
See also Hepworth \cite[Theorem B and C]{HepworthEdge} for a related result. 
\end{remark}

\subsection{Categorical reformulation}\label{seccat}

In order to prove Theorem \ref{maindecat}, and interpret it within the broader field of representation stability, we will reformulate the result in terms of finite generation of a module over a certain enriched category (or equivalently as a module over a certain \emph{twisted skew-commutative algebra}). From this perspective, Theorem \ref{maindecat} becomes a structural algebraic result on the homology of configuration spaces. We now review elements of the theory of FI-modules.

\subsubsection*{FI-modules}

Let FI denote the category of finite sets and injective maps. An \emph{FI-module} (over a commutative unital ring $R$) is a covariant functor $\cV$ from FI to the category of $R$-modules. 

Given an FI-module $\cV$, we write $\cV_S$ to denote the image of $\cV$ on a set $S$, or for $k \in \N_0$ we let $\cV_k$ denote the value of $\cV$ on the standard set $[k]:=\{1,\ldots,k\}$ or $[0]:=\varnothing$. The endomorphisms End$_{\FI}([k]) \cong \fS_k$ induce an action of  $\fS_k$ on $\cV_k$. The FI-module structure on $\cV$ is completely determined by these $\fS_k$-actions and the maps $\cV_k \to \cV_{k+1}$ induced by the standard inclusions $[k] \subset [k+1]$. 

Given an FI-module $\cV$,  the \emph{minimal generators} $H_0^{\FI}(\cV)$ of $\cV$ are a sequence of $\fS_k$-representations that we think of as encoding the ``unstable" elements of $\cV$. In degree $k$, the $\fS_k$-representation $H_0^{\FI}(\cV)_k$ is defined to be the cokernel
\[ H_0^{\FI}(\cV)_k := \mathrm{cokernel}\left( \bigoplus_{a \in [k]} \cV_{ [k] \setminus \{a\} } \m \cV_k \right) \]
where  the maps are induced by the natural inclusions $[k] \setminus \{a\}  \hookrightarrow [k]$. Minimal generators should not necessarily be viewed as FI-module generators; in general they are a quotient and not a subobject. They do, however, give a lower bound on the size of a generating set. They are analogous to the \emph{indecomposable} elements of an algebra with respect to an augmentation.  onendomorphisms morphisms act by zero. 

We say that an FI-module $\cV$ is \emph{generated in degree $\leq d$} (or has \emph{generation degree} $\leq d$) if $$H_0^{\FI}(\cV)_k \cong 0 \qquad \text{ for $k>d.$}$$ 
We say that $\cV$ is \emph{finitely generated} if $\bigoplus_{k \geq 0} H_0^{\FI}(\cV)_k$ is finitely generated as an $R$-module. Finite generation is equivalent to the condition that there is a finite subset of $\bigoplus_{k \geq 0} \cV_k$ whose images under the FI morphisms generate $\bigoplus_{k \geq 0} \cV_k$ as an $R$-module.

The FI-modules central to this paper have additional structure: they are \emph{free} FI-modules in the sense of Definition \ref{DefnFreeModule}. A free FI-module $\cV$ admits natural splittings $ H_0^{\FI}(\cV)_k \hookrightarrow \cV_k$, and in this case the images of minimal generators under these splittings do give a canonical generating set for $\cV$. Free FI-modules are highly constrained;  all FI morphisms act by injective maps, and they are completely determined by their minimal generators (see Theorem \ref{4.1.5}, quoting \cite[Theorem 4.1.5]{CEF}). 

\subsubsection*{Stability in the homology of configuration spaces}

Given a noncompact manifold $M$ of dimension at least $2$ and $i \in \N_0$, the $i$th homology groups $\{H_i(F_k(M))\}_{k=0}^\infty$ of the configuration spaces have the structure of an FI-module, denoted $H_i(F(M))$, which we now describe. We take homology with coefficients in a fixed commutative, unital ring $R$ unless otherwise stated. Given a finite set $S$, let $F_S(M)$ denote the space of embeddings of $S$ into $M$. 

If $|S|=k$, a choice of bijection $S\cong [k]$  gives a homeomorphism $F_S(M) \cong F_k(M)$. Every injective map of sets $f: S \hookrightarrow T$ defines a map $\bar{f}: F_S (M) \to F_T (M)$, as in  Figure \ref{ModuleStructure}. 
\begin{figure}[!ht]    \centering
\labellist
\Large \hair 0pt
\pinlabel {\fontsize{20}{40} $f: S \hookrightarrow T$} [c] at 25 95
\pinlabel {\fontsize{20}{40} $\bar{f}$} [c] at 268 75
\endlabellist
\begin{center}\scalebox{.46}{\includegraphics{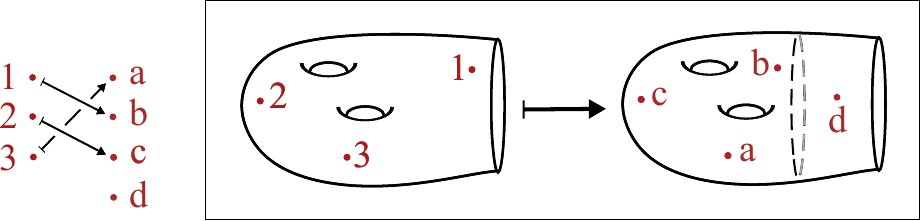}}\end{center}
\caption{The FI-module structure on $H_i(F(M);R)$.}
\label{ModuleStructure}
\end{figure}  
We use the injection $S \hookrightarrow T$ to relabel the configuration, and insert points labeled by the elements of $T\setminus f(S)$ in the image $e(\R^n)$ of the embedding $e$.

 Although the map $\bar{f}$ depends on many choices, up to homotopy it only depends on the isotopy class of the embedding $e$ and the injection $S \hookrightarrow T$, and so for a fixed choice of embedding we obtain a well-defined FI-module structure on the homology groups $H_i(F(M))$. In the language of FI-modules, Theorem \ref{CEFsurj} is the statement that $H_0^{\FI}(H_i(F(M))_S$ vanishes when $|S|>2i$. If $M$ has finite type then the FI-module $H_i(F(M))$ is finitely generated. For $k \geq 2i$, every homology class in $H_i(F_k(M))$ is an $R$-linear combination of homology classes of the form of Figure \ref{StableHomology}:  there are at most $2i$ points moving around $M$ in an $i$-parameter family,  and the remaining points remain fixed ``near infinity.''  
\begin{figure}[!ht]
\begin{center}\scalebox{.5}{\includegraphics{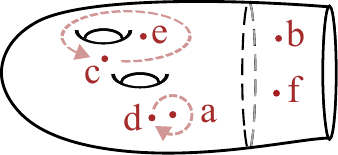}}\end{center}
\caption{A stable homology class in $H_{2}(F(M))_{\{a,b,c,d,e,f\}}$.}
\label{StableHomology}
\end{figure}

Church--Ellenberg--Farb showed that the homology groups of configuration spaces $H_i(F(M))$ are free FI-modules when $M$ is noncompact \cite[Definition 4.1.1 and Section 6.4]{CEF}.  The $\fS_k$-representations $H_0^{\FI} (H_i(F(M)))_k$ therefore determine all homology groups of $F_k(M)$; the objective of this paper is to achieve a better understanding of these groups.

\subsubsection*{Secondary representation stability}
 
In general there are no natural nonzero maps from $H_0^{\FI} (H_i(F(M)))_k$ to $H_0^{\FI}(H_i(F(M)))_{k+1}$. However, $t'$ induces a map $$H_0^{\FI}\Big(H_i(F(M))\Big)_k \longrightarrow H_0^{\FI}\Big(H_{i+1}(F(M))\Big)_{k+2}, $$ and our main result is a stability result with respect to this operation. 
 
Given $i \geq 0$ and a finite set $S$, let $\W_i^M(S)$ be the sequence of minimal generators $$\W_i^M(S):=H_0^{\FI}\left(H_{\frac{|S|+i}{2}} \left( F(M); R\right) \right)_S.$$ By convention, fractional homology groups are zero. Any injection $S \hookrightarrow T$ with $|T|-|S|=2$ induces a map $\W_i^M(S) \m \W_i^M(T)$ as shown in Figure \ref{WedgeModuleStructure}. 
\begin{figure}[!ht]    \centering
\labellist
\Large \hair 0pt
\pinlabel {\fontsize{20}{40} $g: S \hookrightarrow T$} [c] at  25 110
\pinlabel {\fontsize{20}{40} $g_*$} [c] at  268 75
\endlabellist
\begin{center}\scalebox{.46}{\includegraphics{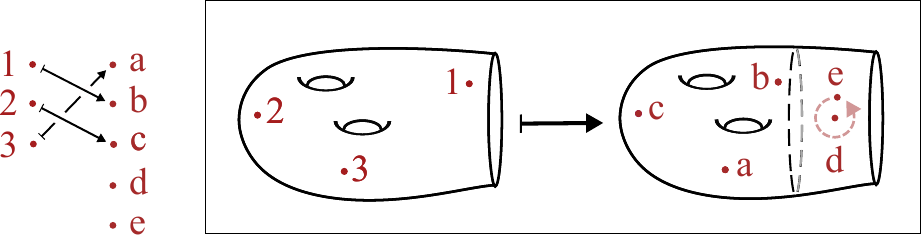}}\end{center}
\caption{Stabilization by orbiting points.}
\label{WedgeModuleStructure}
\end{figure}  

If $|T|-|S| = 2d$ for $d>1$, the data of the injection is not enough to define a map $\W_i^M(S) \m \W_i^M(T)$. In addition to the injection $f: S \hookrightarrow T$, we choose a \emph{perfect matching} on the complement $T\setminus f(S)$, that is, a partition of $T\setminus f(S)$ into $d$ sets of size $2$. This matching determines how the points will be paired. To specify the sign of the resultant homology class, we then choose an \emph{orientation} on the perfect matching (see Definition \ref{DefFIM+}). We define a stabilization map on the homology of $F_S(M)$ by introducing these $d$ pairs of orbiting points ``near infinity,'' as in Figure \ref{Match}.
\begin{figure}[!ht]    \centering
\labellist
\Large \hair 0pt
\pinlabel {\fontsize{20}{40} $G$} [c] at  33 146
\pinlabel {\fontsize{20}{40} $G_*$} [c] at  295 123
\endlabellist
\begin{center}\scalebox{.46}{\includegraphics{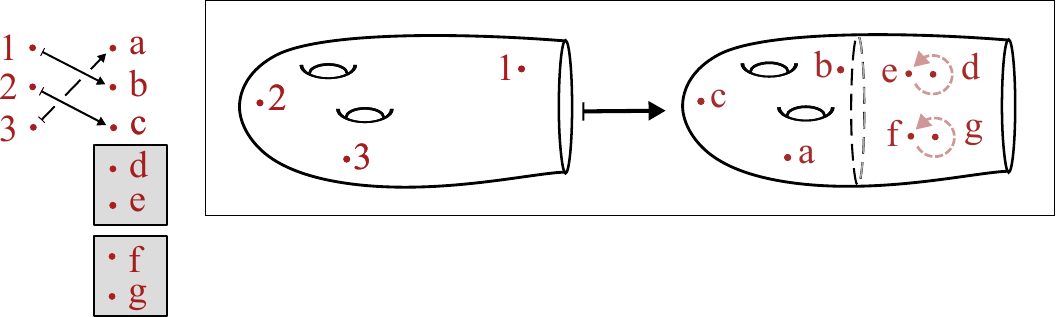}}\end{center}
\caption{The $\bigwedge \,(\Sym^2 R)$-module structure on $\W_i^M$.}
\label{Match}
\end{figure}  

These operations and the $\fS_k$-actions give the sequences $\W_i^M$ the structure of modules over the \emph{twisted skew-commutative algebra $\bigwedge \, (\Sym^2 R)$}, or, equivalently, a module over the enriched category FIM$^+$ of Definition \ref{DefFIM+}. See work of Sam--Snowden and Nagpal--Sam--Snowden \cite{SStcas, SS1, SSN1, NSSskew} and Section \ref{FIreview} for more information on twisted (skew-)commutative algebras. In this language, Theorem \ref{maindecat} can be formulated as follows.

\begin{theorem} \label{maintheorem}
If $R$ is a field of characteristic zero and $M$ is a connected noncompact manifold of finite type and dimension at least two, then for each $i\geq 0$ the sequence of minimal generators $$\W_i^M(k)=H_0^{\FI}\left(H_{\frac{k+i}{2}} \left( F(M); R\right) \right)_k$$  is finitely generated as a $\bigwedge \,(\Sym^2 R)$-module.
\end{theorem}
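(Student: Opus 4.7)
My plan is to prove Theorem \ref{maintheorem} in three stages: establish the case $M = \R^n$ directly, reduce a general $M$ to this case via a handle decomposition and a compatible spectral sequence, and propagate finite generation through the spectral sequence using a Noetherian property of $\bigwedge(\Sym^2 R)$-modules in characteristic zero.

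The base case $M = \R^n$ is amenable to direct computation because Cohen's work gives an explicit description of $H_*(F_k(\R^n);\Q)$ in terms of iterated Browder brackets on ordered particles. From this description I would identify $H_0^{\FI}(H_j(F(\R^n);\Q))_k$ with the span of ``pure bracket'' classes that vanish under every forgetful map to a smaller configuration space, and observe that the action of the orbit class in $H_1(F_2(\R^n))$ corresponds to inserting an outermost bracket. A combinatorial check then shows that $\W_i^{\R^n}$ is generated as a $\bigwedge(\Sym^2 R)$-module by a bounded finite set of classes supported in $|S| \leq C(n,i)$.

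To extend to general $M$, I would use that a connected noncompact finite-type $n$-manifold admits a handle decomposition $M = M_0 \cup H_1 \cup \cdots \cup H_r$ with $M_0 \cong \R^n$. A handle attachment $M = M' \cup H$ induces a Mayer--Vietoris-type spectral sequence computing $H_*(F_k(M))$ from $H_*(F_a(M')) \otimes H_*(F_b(\R^n))$. Because the secondary stabilization map $t'$ of Figure \ref{SecondaryStabilizationSample} may be realized by introducing the orbiting pair in a collar region of $M$ disjoint from the attaching region of the new handle, this upgrades, after passing to FI-generators, to a spectral sequence of $\bigwedge(\Sym^2 R)$-modules with $E^2$-page built from $\W_*^{M'}$ and $\W_*^{\R^n}$. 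Induction on the number of handles, together with the base case and a Noetherian property for $\bigwedge(\Sym^2 R)$-modules drawn from the Sam--Snowden and Nagpal--Sam--Snowden machinery on twisted skew-commutative algebras cited above, then gives finite generation of the $E^\infty$-page and hence of $\W_i^M$.

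The main obstacle, I expect, will be arranging the interaction between $t'$ and the Mayer--Vietoris decomposition so that one obtains an honest spectral sequence of $\bigwedge(\Sym^2 R)$-modules rather than simply a collection of unrelated FI-modules at each bidegree. The FIM$^+$-categorical framework of Definition \ref{DefFIM+} should make this manageable by packaging all of the relevant maps into morphisms in a single enriched category, but verifying compatibility at the chain level requires careful geometric bookkeeping; it may in fact be cleaner to avoid the handle decomposition altogether and instead work with a resolution by the complex of injective words, exploiting the new integral characterization of its homology advertised in the abstract to write $\W_i^M$ as the cokernel of an explicit map of finitely generated $\bigwedge(\Sym^2 R)$-modules.
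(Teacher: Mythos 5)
Your proposal starts from sound ingredients (Cohen's calculation for $\R^n$, the Nagpal--Sam--Snowden Noetherianity of $\bigwedge(\Sym^2 R)$-modules, and the complex of injective words), but the main route you propose --- induction on a handle decomposition via a ``Mayer--Vietoris-type spectral sequence'' --- has a genuine gap. Configuration spaces are not excisive in the manifold: if $M = A \cup B$, there is no simple Mayer--Vietoris relation expressing $H_*(F_k(M))$ in terms of $H_*(F_a(A)) \otimes H_*(F_b(B))$, because particles can migrate arbitrarily among $A$, $B$, and the overlap, and the resulting space is neither a homotopy pushout nor a product. Making this precise requires something like factorization homology or the $E_n$-cell technology of Galatius--Kupers--Randal-Williams, which is substantially heavier machinery than the paper invokes and which, even then, would not obviously produce a spectral sequence of $\bigwedge(\Sym^2 R)$-modules in the way you describe. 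You flag this as the ``main obstacle,'' but it is more than bookkeeping --- the $E^2$-page you write down does not exist in this naive form, so the induction on handles has nothing to run on.

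The paper avoids this entirely. Rather than decomposing $M$, it resolves the \emph{space} $F_k(M)$ by a semi-simplicial space (the arc resolution, Section \ref{SectionArcRes}), whose $p$-simplices consist of configurations with $p+1$ arcs running to a fixed interval in the boundary of a compactification of $M$. The associated spectral sequence has $E^1$-page given by the twisted injective word complex $\Inj_*(H_q(F(M)))$, which is exactly the ``resolution by the complex of injective words'' you gesture toward in your final sentence; the $E^2$-page is then identified, using Theorem \ref{connectedM(W)} and the FI$\sharp$-module structure, as inductions of $\Top_{p+1} \boxtimes H_0^{\FI}(H_q(F(M)))$, and the crucial geometric input is that the higher differentials $d^r$ are given by Browder operations (Lemma \ref{LemmaDifferentialsOnLie}). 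From there the proof is an intricate double induction (Section \ref{SectionMainProof}): one filters diagonals of the $E^2$-page (Proposition \ref{E2filtration}) by copies of the secondary injective word complex $\Inj^2_*$, uses the Noetherian property via Proposition \ref{connected2} to obtain high acyclicity of these complexes, and then leverages the vanishing of $E^\infty_{-1,q}$ forced by the $(k-1)$-connectivity of the arc resolution. Your suggestion to ``write $\W_i^M$ as the cokernel of an explicit map of finitely generated $\bigwedge(\Sym^2 R)$-modules'' is overly optimistic: the paper never obtains such a presentation directly, but instead runs the spectral sequence inductively on homological degree, with all of $A_*^i$, $B_*^i$ and both parities of $k$ entering the bookkeeping. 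So the base case and the Noetherian ingredient are right, the injective-words idea points at the correct chain complex, but the global structure of the argument --- a spectral sequence from a semi-simplicial resolution of $F_k(M)$ itself, not a decomposition of $M$ --- is what is missing.
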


We call this finite generation result \emph{secondary representation stability}.  This implies that there is some number $N_i$ such that for any $k$ the minimal generators $H_0^{\FI} (H_{\frac{i+k}{2}}(F(M)))_k$ are spanned by classes of the form given in Figure \ref{SecondaryStableHomology}, where all but at most $N_i$ many points move in orbiting pairs ``near infinity.''
\begin{figure}[!ht]    \centering
\begin{center}\scalebox{.5}{\includegraphics{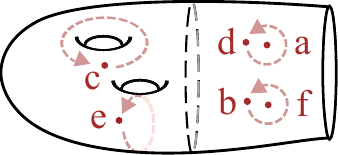}}\end{center}
\caption{A secondary stable class in $H_0^{\FI}\left( H_4(F(M)) \right)_{\{a,b,c,d,e,f\}}$.}
\label{SecondaryStableHomology}
\end{figure}  
For connected, noncompact surfaces, representation stability  is shown graphically in Figure \ref{RepStabilityGraph}, and secondary representation stability in Figure \ref{SecondaryStabilityGraph}.
\begin{figure}[!ht]    \centering
\labellist
\Large \hair 0pt
\pinlabel {\scriptsize homological } [c] at -20 60
\pinlabel {\scriptsize degree $i$} [c] at -20 50
\pinlabel {\scriptsize FI degree $k$} [c] at 90 0
\pinlabel {\fontsize{10}{10}{ \color{red} $i=k$}} [c] at 95 100
\pinlabel {\fontsize{10}{10} { \color{gray} $i=\frac12 k$ }} [c] at 160 90
\pinlabel {\scriptsize homology} [c] at 37 75
\pinlabel {\scriptsize vanishes} [c] at 37 65
\pinlabel {\scriptsize FI-module $H_i(F(M); R)$} [l] at 190 40
\pinlabel {\scriptsize stable range } [l] at 190 30
\pinlabel {\scriptsize } [l] at 190 20
\endlabellist
\begin{center}\scalebox{1}{\includegraphics{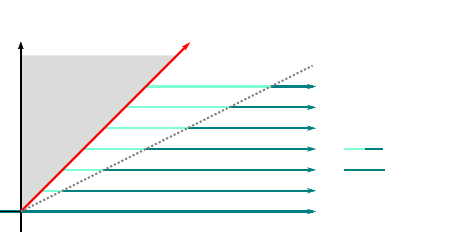}}\end{center}
\caption{The FI-modules $H_i(F(M); R)$ for a noncompact connected surface $M$.}
\label{RepStabilityGraph}
\end{figure}  
\begin{figure}[!ht]    \centering
\labellist
\Large \hair 0pt
\pinlabel {\scriptsize  homological } [c] at  -20 60
\pinlabel {\scriptsize   degree $i$} [c] at  -20 50
\pinlabel {\scriptsize  FI degree $k$} [c] at  90 0
\pinlabel {\fontsize{10}{10}{ \color{red} $i=k$}} [c] at  110 120
\pinlabel {\fontsize{10}{10} { \color{blue}  $i=\frac12 k$ }} [l] at  205 105
\pinlabel {\scriptsize homology vanishes} [c] at  47 97
\pinlabel {\scriptsize (above homological} [c] at  47 87
\pinlabel {\scriptsize dimension)} [c] at  47 77
\pinlabel {\scriptsize minimal generators vanish} [c] at  115 35
\pinlabel {\scriptsize (rep stability range)} [c] at  120 25
\pinlabel {\scriptsize module $H_0^{\FI}\Big(H_{\frac{k+j}{2}} \left( F(M) ;R\right) \Big)_k$ } [l] at  190 65
\pinlabel {\scriptsize secondary rep stability range } [l] at  190 45
\pinlabel {\scriptsize (precise bounds not known)} [l] at  190 35
\endlabellist
\begin{center}\scalebox{1}{\includegraphics{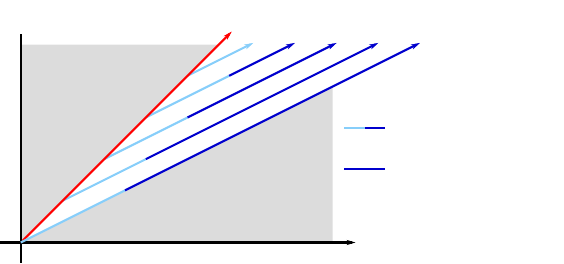}}\end{center}
\caption{The minimal generators $H_0^{\FI}\left(H_i(F(M))\right)_k$ for a noncompact connected surface $M$.}
\label{SecondaryStabilityGraph}
\end{figure}  

Viewing these homology groups as a $\bigwedge \,(\Sym^2 R)$-module and drawing on the theory of twisted skew-commutative algebras, we can prove a version of the main theorem that establishes isomorphisms instead of just surjections. 

\begin{corollary} \label{corSecondaryCentral} Let $R$ be a field of characteristic zero. For $k$ sufficiently large compared to $i$, $\W_i^M(k)$ is isomorphic to the quotient of $\Ind_{\fS_{k-2} \times \fS_2 }^{\fS_k} \W_i^M(k-2) \boxtimes R$ by the image of the sum of the two natural maps: \[ \Ind_{\fS_{k-4} \times \fS_2 \times \fS_2 }^{\fS_k}  \Big ( \W_i^M(k-4)  \Big ) \boxtimes R \boxtimes R \rightrightarrows \Ind_{\fS_{k-2} \times \fS_2 }^{\fS_k} \Big ( \W_i^M(k-2)  \Big ) \boxtimes R. \] Here $R$ represents the trivial $\fS_2$-representation. \end{corollary}

Concretely, this says that, in the stable range, $ H_0^{\FI}\left(H_{\frac{k+i}{2}} \left( F(M); R \right) \right)_k$ is the coequalizer of the (appropriately signed) maps

 \[ \Ind_{\fS_{k-4} \times \fS_2 \times \fS_2 }^{\fS_k}  H_0^{\FI}\left(H_{\frac{k+i}{2}-2} \left( F(M); R\right) \right)_{k-4} \boxtimes R \boxtimes R\rightrightarrows \Ind_{\fS_{k-2} \times \fS_2 }^{\fS_k}  H_0^{\FI}\left(H_{\frac{k+i}{2}-1} \left( F(M); R\right) \right)_{k-2}\boxtimes R.\] 
 
\noindent In particular, the representations $\W_i^M(k-4)$ and $\W_i^M(k-2)$ together with the maps $\W_i^M(k-2) \m \W_i^M(k-4)$ completely determine the representations $\W_i^M(k)$ in the stable range.   This corollary can be viewed as a secondary version of \emph{central stability} in the sense of Putman \cite{Pu}. The stability range where the isomorphisms of Corollary \ref{corSecondaryCentral} hold is typically smaller than the surjectivity range of Theorem \ref{maintheorem}.

If $M$ is at least three-dimensional, then the maps $\Ind^{\fS_k}_{\fS_{k-2} \times \fS_2 } \W^M_i(k-2) \boxtimes R \m \W^M_i(k)$ are both zero and surjective in a range. Hence, $\W^M_i(k)$ vanishes for $k$ sufficiently large, and secondary representation stability is the statement that $H_i(F_k(M))$ is representation stable in an improved range. In Theorem \ref{ImprovedRangeTheorem} we prove explicit stability bounds for these homology groups with integral coefficients. 

 For surfaces, however, the groups $\W^M_i(k)$ are generally nonzero as $k$ tends to infinity. For example, $\W^{\R^2}_i(2k+i)$ is a sequence of free abelian groups whose ranks grows super-exponentially in $k$; see Proposition \ref{PropWR20}. In Section \ref{secConj}, we formulate some conjectures for tertiary and higher order stability.
 
Since it was first observed that FI-modules could be interpreted in the language of tca's (Definition \ref{defTCA}), it has been an open question (see Part 4 of Motivation 1.2 of \cite{SSN1}) if algebraic properties of more general (skew-)tca's would have applications to topology in a similar fashion to the theory of FI-modules. Our paper represents one of the  first examples of such an application.

\subsubsection*{The proof of secondary representation stability}
The proof of Theorem \ref{maintheorem} involves the analysis of a semi-simplicial space, the \emph{arc resolution} of $F_k(M)$, described in Section \ref{SectionArcRes}. In Section \ref{SectionDifferentials}, we compute certain differentials in spectral sequences associated to the arc resolutions, which we use to prove the desired finiteness properties of the sequences $\W^M_i$ in Section \ref{SectionMainProof}.
The algebraic underpinnings of our proof of secondary representation stability is developed in Section \ref{SectionAlgebraicFoundations}, and draws on the theory of FI--modules introduced by Church--Ellenberg--Farb \cite{CEF}, the central stability complex introduced by Putman in \cite{Pu}, and the theory of twisted skew-commutative algebras. In particular, our proof relies on the Noetherian property for $\bigwedge \,(\Sym^2 R)$-modules established by Nagpal--Sam--Snowden \cite[Theorem 1.1]{NSSskew}.

This Noetherian property for $\bigwedge \,(\Sym^2 R)$-modules is currently only known when $R$ is a field of characteristic zero.  If it were possible to prove this result over more general commutative unital rings $R$, then (with a modification of our Proposition \ref{E2filtration}) our proof would establish our main results, Theorem \ref{maindecat}, Theorem \ref{maintheorem}, and Corollary \ref{corSecondaryCentral}, over these rings. Some conjectural generalizations and strengthenings of Theorem \ref{maintheorem} are discussed in Section \ref{secConj}.

\subsection{Other results}
 
In the process of establishing secondary representation stability for configuration spaces, we prove some other results which may be of independent interest. In particular, we prove new representation stability results for the homology of configuration spaces, and we give a new Lie-theoretic description of the top homology group of the complex of injective words.

\subsubsection*{The homology of the complex of injective words}

The \emph{complex of injective words} $\Inj_\bullet(k)$ on the set $[k]$ is a semi-simplicial set which was used by Kerz \cite{Ker} to give a new proof of homological stability for the symmetric groups (see Definition \ref{definj}). It has found application in algebraic topology, representation theory, and algebraic combinatorics.  The complex of injective words has only one nonvanishing reduced homology group, a subgroup of the free abelian group on the set of $k$-letter words on the set $[k]$.  In Section \ref{SectionInjHomology}, we describe an explicit basis for this group that resembles the Poincar\'e--Birkhoff--Witt basis for the free Lie superalgebra on $[k]$. The following result may be viewed as an analogue of the Solomon--Tits Theorem \cite{SolomonSteinberg} for the complex of injective words. \\[-3pt]

{\noindent \bf  Theorem \ref{TopHomology}.} {\it The reduced integral homology group $\tilde H_{k-1}(||\Inj_\bullet(k)||)$ is the submodule of the free associative algebra on the set $[k]$ generated by products of iterated graded commutators where every element of $[k]$ appears exactly once. An explicit $\Z$--module basis for this group is given in Lemma \ref{LProductBasis}.} \\[-3pt]

\subsubsection*{Primary representation stability for configuration spaces}
 
The work of Church, Ellenberg, Farb, and Nagpal \cite{Ch, CEF, CEFN} on representation stability for configuration spaces uses Totaro's spectral sequence \cite{Totaro}, which assumes that the manifold is orientable. We remove this assumption by giving an entirely different proof of representation stability for configuration spaces (see also Palmer \cite[Remark 1.8]{palmertwisted} and Casto \cite[Corollary 3.3]{Casto}). Following methods of Putman \cite{Pu} on congruence subgroups, we adapt Quillen's approach to homological stability to prove representation stability.

\vspace{.12in}
{\noindent \bf  Theorems \ref{ConfigSpaceRepStable} and \ref{ImprovedRangeTheorem}.} {\it Let $M$ be a connected noncompact manifold of dimension $n \geq 2$.
\begin{itemize}
\item[(a)] Then $ H_0^{\FI}(H_i(F(M);\Z))_k \cong 0$ for $k>2i.$
\item[(b)] Suppose $M$ has dimension at least $3$. Then  $ H_0^{\FI}(H_i(F(M);\Z))_k \cong 0$ for $k > i$.
\end{itemize} }

\subsection{Acknowledgments}
 
Our project was inspired by results on \emph{secondary homological stability} by S\o ren Galatius, Alexander Kupers, and Oscar Randal-Williams. Using the theory of $E_n$-cells, these authors have established secondary homological stability in many examples including classifying spaces of general linear groups and mapping class groups of surfaces \cite{GKRW1,GKRW2}. Our project benefited greatly from our interaction with them.
 
We would like to thank Martin Bendersky, Thomas Church, Jordan Ellenberg, Benson Farb,  S\o ren Galatius, Patricia Hersh, Ben Knudsen, Alexander Kupers, Rita Jim\'enez Rolland, Rohit Nagpal, Andrew Putman, Oscar Randal-Williams, Vic Reiner, Steven Sam, Andrew Snowden, Bena Tshishiku, John Wiltshire-Gordon, Jesse Wolfson, and Arnold Yim for helpful conversations. We thank our anonymous referee for an exceptionally close reading and detailed comments on the paper.

\section{Algebraic foundations} \label{SectionAlgebraicFoundations}

The goal of this section is to lay the algebraic groundwork necessary to state and prove the main theorem.  We begin, in Section \ref{FIreview}, with a review of FI-modules and their generalizations, modules over a twisted (skew-)commutative algebra. This provides a very general context for formulating representation stability for sequences of $\fS_k$-representations. We then discuss the relationship between Putman's central stability chain complex \cite{Pu}  and Farmer's complex of injective words \cite{Fa} in Section \ref{SectionPutmanTwisted}. In Section \ref{SectionInjHomology}, we give a new description of the homology of the complex of injective words. In Section \ref{subsecsecworcomplex},  we conclude with an analysis of a generalization of the central stability chain complex for FIM$^+$-modules. These chain complexes will appear in Section \ref{SectionConfigurationSpaces} on the pages of the arc resolution spectral sequence, a spectral sequence we use to prove secondary representation stability for configurations spaces.
\subsection{Review of twisted (skew-)commutative algebras} \label{FIreview}

Throughout this paper, we fix a commutative unital ring $R$. All homology groups will be assumed to have coefficients in $R$, all tensor products will be taken over $R$, and so forth, unless otherwise specified. 

\begin{definition} \label{DefnFI,FB}
Let FI be the category whose objects are finite (possibly empty) sets and whose morphisms are injective maps. Let FB be the category of finite sets and bijective maps.  
\end{definition} 

\begin{definition} \label{DefnC-module} \label{DefnC-module}
Let $\mathcal C$ be a category. A \emph{$\mathcal C$-module (over the ring $R$)} is a covariant functor from $\mathcal C$ to $R$-$\Mod$, the category of $R$-modules. A \emph{$\mathcal C$-space} is a covariant functor from $\mathcal C$ to the category of topological spaces and a \emph{homotopy} $\mathcal C$-space is a covariant functor from $\mathcal C$ to the homotopy category of topological spaces. Co-$\mathcal C$-modules and (homotopy) co-$\mathcal C$-spaces are the corresponding contravariant functors.
 \end{definition}

Recall that we denote the value of an FB or FI-module $\cV$ on a set $S$ by $\cV_S$ (or possibly $\cV(S)$ in instances where $\cV$ has other subscripts). When $S$ is the set $[k]= \{1, 2, \ldots, k\}$, we write $\cV_k$ or $\cV(k)$.

The category of FI-modules studied by Church--Ellenberg--Farb \cite{CEF} was later understood to be an example of a category of modules over a twisted commutative algebra (tca). We will use the theory of (skew-)tca's to define secondary representation stability, and we summarize the relevant aspects of this theory here. 

\begin{definition}
Let $\cV$ and $\cW$ be $\FB$-modules. The \emph{Day convolution} of two $\FB$-modules $\cV$ and $\cW$ is the $\FB$-module defined by the formula $$(\cV \otimes_\FB \cW)_S := \bigoplus_{A \sqcup B=S} \cV_A \otimes \cV_B. $$ 
\end{definition}

This product is symmetric monoidal with symmetry $\tau: \cV \otimes_{\FB} \cW \m \cW \otimes_{\FB} \cV$ induced by the canonical bijection $A \sqcup B \m B \sqcup A$.

\begin{definition} \label{defTCA}
A \emph{(skew-)twisted commutative algebra} is a (skew-)commutative unital monoid object in category of $\FB$-modules with Day convolution. A \emph{module over a twisted (skew-)commutative algebra} is a module object over the associated monoid object.

 \end{definition}

See Sam--Snowden \cite[Section 8]{SStcas} for more details.

\begin{definition}
Let $$T: \text{FB-}\Mod  \longrightarrow \text{FB-}\Mod$$ be given by the formula $$(T \cV ): = \bigoplus_k \left(  \cV^{\otimes_\FB \, k}   \right).$$  
\end{definition}

For the same reasons that tensor algebras are unital rings, this is a unital monoid object with respect to the Day convolution. In particular, there is a natural multiplication map $ \mu : T\cV \otimes_\FB T \cV  \m T \cV. $ 


\begin{definition} 

Let TCA denote the category of twisted commutative algebras over $R$, and let STCA denote the category twisted skew-commutative algebras over $R$.   Let $\Sym: \text{FB-}\Mod  \to \text{TCA} $ be given by the formula $$\Sym \cV:=\mathrm{cokernel} \Big( \mu -\tau \circ \mu: T \cV \otimes_\FB T \cV \m T \cV \Big)$$ and let $\bigwedge: \text{FB-}\Mod \to \text{STCA} $ be given by the formula $$\bigwedge \cV:=\mathrm{cokernel} \Big( \mu +\tau \circ \mu: T \cV \otimes_\FB T \cV \m T \cV \Big)$$ where the multiplicaiton is induced by monoid structure on $T \cV$. Let $\Sym^k \, \cV$ or $\bigwedge^k \cV$ denote the image of $\cV^{ \otimes_{\FB} k}$ in $\Sym \, \cV$ or $\bigwedge \cV$ respectively.

\end{definition}

The tca $\Sym \, (\Sym^1 R)$ is the FB-module with a rank-1 trivial $\fS_k$-representation $R$ in every degree, and all multiplication maps given by the canonical isomorphisms $R \otimes R \cong R$. The data of a module over $\Sym \, (\Sym^1 R)$ is equivalent to an FI-module $\cV$ over $R$. See Sam--Snowden \cite[Section 10.2]{SStcas}. 

The tca  $\Sym  \, (\Sym^2 R)$ is generated by $$\Sym \, (\Sym^2 R)_{\{a,b\}} \cong R\langle x_{a,b} \; | x_{a,b} = x_{b,a} \rangle.$$ The multiplication map is given by multiplication of (commutative) monomials in the variables $x_{a,b}$, with the caveat that by definition we must take the disjoint union of the indices of each factor. For this reason $\Sym \, (\Sym^2 R)_S$ is not simply a polynomial algebra on variables of the form $x_{a,b}$; the indices of any monomial are all distinct by construction. Modules over $\Sym \, (\Sym^2 R)$ are equivalent to modules over the combinatorial category FIM we now define (see also Sam--Snowden \cite[Section 4.3]{SS1}).  

\begin{definition} \label{DefnFIM}
A \emph{matching} of a set $B$ is a set of disjoint $2$-element subsets of $B$, and a matching is a \emph{perfect matching} if the union of these subsets is $B$. Let FIM be the category whose objects are finite sets and whose morphisms are injective maps $f: S \hookrightarrow T$ together with the data of a perfect matching of the complement $T\setminus f(S)$ of the image. Composition of morphisms is defined by composing injective maps and taking the union of one matching with the image of the other.
 
\end{definition} 

The skew-tca  $\bigwedge \, (\Sym^2 R)$ is generated by $\bigwedge \, (\Sym^2 R)_{\{a,b\}}.$ In general for sets $S$ of even parity the group $\bigwedge \, (\Sym^2 R)_S$ is spanned by anticommutative monomials with distinct indices $$x_{a_1, b_1} \cdots x_{a_d, b_d} \qquad \text{such that} \qquad S=\{a_1, b_1, \ldots, a_d, b_d\}.$$ The category of modules over $\bigwedge \, (\Sym^2 R)$ cannot be encoded as a functor category to $R$-$\Mod$, however, $\bigwedge \, (\Sym^2 R)$-modules are equivalent to modules over an enriched category which we denote by $\FIM^+$. 

\begin{definition} \label{DefFIM+} Let FIM$^+$ be the following category enriched over $R$-$\Mod$. The objects are finite sets. The module of morphisms between sets of different parity is the $R$-module $0$. Between sets $[a-2b]$ and $[a]$, the module of morphisms is the following quotient: 
$$\frac{ R \left\langle( f: [a-2b] \to [a], \; A_1, A_2,  \ldots, A_b) \; \middle|  \; \begin{array}{l} f \text{ is injective}, \quad  |A_i|=2, \quad [a] = \im(f) \sqcup A_1 \sqcup \cdots \sqcup A_b \\  \text{so } \{ A_i \} \text{ is an ordered perfect matching on }[a]\setminus \im(f) \end{array} \right \rangle  }{\left\langle \; \left(f, \; A_1, A_2, \ldots A_b \right)  = \mr{sign}(\sigma) \left( f, \;  A_{\sigma(1)}, A_{\sigma(2)}, \ldots, A_{\sigma(b)}   \right)  \quad \text{for all }\sigma \in \fS_{b}  \; \right\rangle } $$ 
In other words, when $k \equiv m \pmod 2$, the morphisms from $[k]$ to $[m]$ are the free $R$-module on the set of all  injective maps $[k]  \hookrightarrow [m]$ along with a perfect matching on the complement of the image. These perfect matchings are oriented and reversing the orientation gives a sign. We denote a free generator of the morphisms by  $$F=(f , A_1  \wedge A_2 \wedge \cdots  \wedge A_b ).$$  
\noindent The composition of the maps 
$$F=(f , \; A_1  \wedge A_2 \wedge \cdots  \wedge A_b ) \quad \text{and} \quad G=(g , \; C_1  \wedge C_2 \wedge \cdots  \wedge C_d )$$ 
is given by the map 
$$ G \circ F := (g\circ f , \; C_1  \wedge C_2 \wedge \cdots  \wedge C_d \wedge g(A_1)  \wedge g(A_2) \wedge \cdots  \wedge  g(A_b)  ). $$
\end{definition}

\begin{definition} Let $\mathcal C$ be a category enriched over $R$-$\Mod$. We define a \emph{$\mathcal C$-module} to be an enriched functor from $\mathcal C$ to $R$-$\Mod$.
\end{definition}

We now extend the definition of $H_0^\FI$ to modules over a general (skew-)tca.

\begin{definition}
Let $\mathcal V$ be a module over a (skew-)tca $\mathcal{A}$.  Let $H_0^\mathcal{A}(\mathcal V)_S$ be the quotient 
$$H_0^\mathcal{A}(\mathcal V)_S := \mathrm{cokernel}\left( \bigoplus_{S=P\sqcup Q, \; P \neq \varnothing} \cA_P  \otimes \cV_Q \longrightarrow \cV_S \right).$$
 The $R$-modules $H_0^\mathcal{A}(\mathcal V)_S$ assemble to form an $\FB$-module. We say that $\mathcal V$ is \emph{finitely generated} if $\bigoplus_{k=0}^\infty H^\mathcal{A}_0(\mathcal V)_k$ is finitely generated as an $R$-module.
\end{definition}
We often replace the superscript $\mathcal{A}$ in the notation $H_0^\mathcal{A}(\mathcal V)$ with the corresponding category. Following Church--Ellenberg \cite{CE}, we use the following terminology. 
\begin{definition} \label{defdeg}
Let $\mathcal V$ be an $\mathcal{A}$-module with $\mathcal{A}$ a (skew-)tca. We say that $\deg \mathcal V \leq d$ if $\mathcal V_k=0$ for all $k>d$. We say $\mathcal V$ is \emph{generated in degrees $\leq d$} if $\deg H_0^\mathcal{A}(\mathcal V) \leq d$.
\end{definition}

The following Noetherianity result of Nagpal--Sam--Snowden \cite[Theorem 1.1]{NSSskew} is key to our understanding of $\FIM^+$-modules, equivalently, of $\bigwedge (\Sym^2 R)$-modules. 

\begin{theorem}[Nagpal--Sam--Snowden {\cite[Theorem 1.1]{NSSskew}}] \label{NSSnoth}
Let $R$ be a field of characteristic zero. Any submodule of a finitely generated module over $\bigwedge \,( \Sym^2 R)$ is finitely generated.
\end{theorem}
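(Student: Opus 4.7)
The plan is to transfer Noetherianity from the twisted commutative algebra $\Sym \, (\Sym^2 R)$, whose Noetherianity was established in earlier work of Nagpal--Sam--Snowden, to $\bigwedge \, (\Sym^2 R)$ via a sign-twisting equivalence of module categories. The key observation is that $\Sym \, (\Sym^2 R)$ and $\bigwedge \, (\Sym^2 R)$ are built from the same generating FB-module (the trivial $\fS_2$-representation concentrated in degree $2$); they differ only in the Koszul sign governing the commutation of disjoint generators. Over a field of characteristic zero, one can interpolate between these conventions by tensoring degreewise with the sign representation.

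Concretely, I would define a functor $T \colon \bigwedge \, (\Sym^2 R)\text{-}\Mod \to \Sym \, (\Sym^2 R)\text{-}\Mod$ by $(T\cV)_S := \cV_S \otimes \mathrm{sgn}_{\fS_{|S|}}$, with the new action built from the original $\bigwedge \, (\Sym^2 R)$-action using the canonical $\fS_P \times \fS_Q$-equivariant maps $\mathrm{sgn}_P \otimes \mathrm{sgn}_Q \to \mathrm{sgn}_{P \sqcup Q}$. The key sign computation is that the shuffle $\tau \colon S \sqcup T \to T \sqcup S$ acts on $\mathrm{sgn}_{S \sqcup T}$ by the factor $(-1)^{|S||T|}$ (its sign as a permutation), which exactly cancels the $(-1)^{|S||T|}$ appearing in axiom $(**)$, converting skew-commutativity into commutativity. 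Since $\mathrm{sgn}^{\otimes 2}$ is trivial, $T$ is its own quasi-inverse; it is exact and preserves both finite generation and the submodule lattice. Noetherianity for $\bigwedge \, (\Sym^2 R)$ then follows immediately from Noetherianity for $\Sym \, (\Sym^2 R)$.

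The main obstacle is the bookkeeping in verifying that $T$ is well-defined as a functor of module categories: one must check associativity, unitality, and the converted commutativity axiom at the level of the full module structure (not just on free generators), carefully tracking how the sign character on $P \sqcup Q$ restricts to the tensor of sign characters on $P$ and $Q$ and how these interact with the symmetric group actions induced by the FB structure. If a self-contained proof were preferred, an alternative route would be to adapt the Gröbner-theoretic machinery of Sam--Snowden directly to $\FIM^+$: choose a monomial order on oriented-perfect-matching morphisms (breaking ties via the ordered basis of underlying injections), then show by induction on generating degree that leading ideals of submodules stabilize within the Noetherian poset of FB-degree sequences. Either approach crucially uses $\mathrm{char}(R) = 0$ to split representations into isotypic components, which is why the theorem is currently restricted to this setting.
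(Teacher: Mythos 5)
The paper itself does not prove Theorem~\ref{NSSnoth}; it is invoked as a black box from the cited Nagpal--Sam--Snowden paper, so there is no ``paper's own proof'' to compare against. Evaluating your argument on its own terms: the sign-transpose strategy is indeed the idea behind the cited result, and your sign computation is right --- the shuffle $\tau\colon S\sqcup T\to T\sqcup S$ acts on $\mathrm{sgn}_{S\sqcup T}$ by $(-1)^{|S||T|}$, which cancels the Koszul sign in axiom $(**)$ and converts a skew-tca structure to a tca structure.

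However, you have the target of the transpose wrong, and this is a substantive error. Tensoring degreewise with $\mathrm{sgn}$ changes not just the commutation sign but also the generating FB-module: $(\Sym^2 R)\otimes \mathrm{sgn}_{\fS_2}$ is the \emph{sign} representation of $\fS_2$ (denote it $\bigwedge^2 R$), not the trivial one. So your functor $T$ gives an equivalence between $\bigwedge\,(\Sym^2 R)$-modules and $\Sym\,(\bigwedge^2 R)$-modules, not $\Sym\,(\Sym^2 R)$-modules. The input you need is therefore Noetherianity of $\Sym\,(\bigwedge^2 R)$, which is also a theorem of Nagpal--Sam--Snowden (the companion paper this article cites as \cite{SSN1}), but it is a different theorem from the one you invoke. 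Once the target is corrected, the argument goes through, but as written the reduction lands in the wrong module category.

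Your proposed ``alternative route'' via Gr\"obner methods for $\FIM^+$ does not work, and this is worth understanding: the Sam--Snowden Gr\"obner formalism requires a combinatorial wellfoundedness property that holds for FI and its relatives but \emph{fails} for the degree-two twisted (skew-)commutative algebras. This failure is exactly why Noetherianity of $\Sym\,(\Sym^2 R)$ and $\Sym\,(\bigwedge^2 R)$ required a lengthy and entirely different proof, and why the result remains open outside characteristic zero. A monomial-order argument for $\FIM^+$ would run into the same obstruction.
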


A similar Noetherian property also holds for FI-modules; see Snowden \cite[Theorem 2.3]{SnowdenSyzygies}, Church--Ellenberg--Farb \cite[Theorem 1.3]{CEF}, and Church--Ellenberg--Farb--Nagpal \cite[Theorem A]{CEFN}. 

\begin{definition} \label{DefnFreeModule} Let $\cA$ be a (skew-)tca. We define $$M^\mathcal{A} : \FB\text{-}\Mod \longrightarrow \mathcal{A}\text{-}\Mod$$ via the formula $$ M^\mathcal{A} (\cV):=\cA \otimes_\FB \cV. $$ The $\cA$-module structure on $M^\mathcal{A} (\cV)$ is induced by the map $\cA \otimes_\FB \cA \m \cA$. We call modules of the form $M^\mathcal{A}(\cV)$  \emph{free} $\mathcal A$-modules. Given an $R[\fS_d]$-module $W$, we define $M^{\mathcal A}(W)$ by viewing $W$ as the FB-module with module $W$ in degree $d$ and $0$ in all other degrees. We let $M^{\mathcal A}(d):=M^{\mathcal A}(R[\fS_d])$.
  We will often replace the superscript $\mathcal A$ with its corresponding category, and (following Church--Ellenberg--Farb \cite[Definition 2.2.2]{CEF}) simply write $M$ for $M^{\FI}$. \end{definition}

We now give another description of $M^\mathcal{A} (d)$ for the (skew)-tca's of interest. 

\begin{proposition}\label{altDescriptionOfFree}
There is a natural isomorphism of functors $M^{\FIM^+}(d)$ and $R\left[\Hom_{\FIM^+}([d], -  )\right]$. Similarly, there is a natural isomorphism of functors $M^{\FI}(d)$ and $R\left[\Hom_{\FIM^+}([d], -  )\right]$.
\end{proposition}
 \begin{proof}
Both pairs of functors are left adjoint to the forgetful functor $\mathcal{A}$-Mod $\to$ FB-$\Mod$ for $\cA=\bigwedge \Sym^2 R$ or $\cA=\Sym \, \Sym^1 R$. 
\end{proof}

See Proposition \ref{PropDecomposingM(d)} for an explicit description of $M^{\FIM^+}(W)$ as induced representations. Using the fact that $M(W)$ is a left Kan extension, Church, Ellenberg, and Farb observed that, given an $\fS_d$-representation $W$, the free FI-module $M(W)$ satisfies
 $$M(W)_k \; \cong \; \bigoplus_{\substack{A \subseteq [k] \\ |A|=d}} W \; \cong \; \Ind^{\fS_k}_{\fS_d \times \fS_{k-d}} W \boxtimes R$$ where $R$ denotes the trivial $\fS_{k-d}$--representation. These authors prove that the free FI-modules $M(W)$ can be promoted to modules over the larger category $\FI\sharp$, which we define as follows.

\begin{definition} \label{FISharp}
Define a \emph{based injection} $f: S_0 \to T_0$ between two based sets $S_0, T_0$ to be a based map such that $|f^{-1}(\{a\})| \leq 1$ for all elements $a \in T_0$ except possibly the basepoint. Let FI$\sharp$ be the category whose objects are finite based sets and whose morphisms are based injections. 
\end{definition}

The category defined in Definition \ref{FISharp} is isomorphic to the category called FI$\sharp$ by Church--Ellenberg--Farb \cite[Definition 4.1.1]{CEF}. The operation of adding a basepoint gives an embedding of categories $\FI \subseteq \FI\sharp$. Hence an $\FI\sharp$-module is an FI-module with additional structure and constraints, notably, the FI morphisms have one-sided inverses and so must act by injective maps. These backwards maps give $\FI\sharp$-modules the structure of co-FI-modules, and  we may view $\FI\sharp$-modules as co-FI-modules with a compatible FI-module structure. The following result of Church, Ellenberg, and Farb gives a classification of FI$\sharp$-modules: they are precisely the free $\FI$-modules. They show moreover that the functors $M:$ FB-$\Mod \to \FI\sharp$-$\Mod$ and $H^{\FI}_0: \FI\sharp$-$\Mod \to$ FB-Mod are inverses, and define an equivalence of categories. 

\begin{theorem}[Church--Ellenberg--Farb {\cite[Theorem 4.1.5]{CEF}}] \label{4.1.5}
An FI-module $\cV$ is the restriction of an FI$\sharp$-module if and only if it is free, in which case it is the restriction of a unique FI$\sharp$-module. In particular, for an FI$\sharp$-module $\mathcal V$, there is a natural isomorphism
$$\mathcal V \cong \bigoplus_{k=0}^\infty M\Big(H_0^{\FI}(\mathcal V)_k\Big).$$
\end{theorem}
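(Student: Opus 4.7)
The plan is to prove that the functors $H_0^{\FI}: \FI\sharp\text{-}\Mod \to \text{FB-}\Mod$ and $M: \text{FB-}\Mod \to \FI\sharp\text{-}\Mod$ are inverse equivalences of categories; this single statement simultaneously yields the characterization of free FI-modules as exactly the restrictions of FI$\sharp$-modules, the uniqueness of such an extension, and the decomposition $\cV \cong \bigoplus_k M(H_0^{\FI}(\cV)_k)$. For the easier direction I would extend the free FI-module $M(W)_S = \bigoplus_{A \subseteq S} W_{A}$ to an FI$\sharp$-module by a direct formula: given a based injection $f: S_0 \to T_0$ in the sense of Definition \ref{FISharp} with actual domain $D \subseteq S$, the action sends the summand $W_A \subseteq M(W)_S$ to the summand $W_{f(A)} \subseteq M(W)_T$ via the bijection $f|_A$ when $A \subseteq D$, and to zero otherwise. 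Functoriality is an immediate check from composition of based injections.

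For the harder direction, I would start from an arbitrary FI$\sharp$-module $\cV$ and construct a canonical decomposition of each $\cV_n$. For every subset $A \subseteq [n]$ let $\pi_A: [n]_+ \to [n]_+$ denote the based map that is the identity on $A$ and sends $[n] \setminus A$ to the basepoint; this is an FI$\sharp$-endomorphism of $[n]_+$. The induced operators $e_A := \cV(\pi_A)$ are idempotent, and satisfy $e_A e_B = e_{A \cap B}$ thanks to the composition identity $\pi_A \circ \pi_B = \pi_{A \cap B}$. M\"obius inversion over the Boolean lattice then produces a complete family of pairwise orthogonal primitive idempotents $f_A := \sum_{B \subseteq A}(-1)^{|A \setminus B|} e_B$ with $\sum_A f_A = e_{[n]} = \mathrm{id}$, yielding an $\fS_n$-equivariant decomposition $\cV_n = \bigoplus_{A \subseteq [n]} f_A \cV_n$.

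The central step is then to identify $f_A \cV_n$ with $H_0^{\FI}(\cV)_A$. Since $\pi_A$ factors in FI$\sharp$ as $[n]_+ \xrightarrow{\sigma_A} A_+ \xrightarrow{\iota_A} [n]_+$ with $\sigma_A \circ \iota_A = \mathrm{id}_{A_+}$, the FI-morphism $\iota_A$ induces a split injection $\cV_A \hookrightarrow \cV_n$ whose image is exactly $e_A \cV_n$. Transporting the idempotents $e_B$ for $B \subseteq A$ through this isomorphism, they correspond to the analogous idempotents intrinsic to $\cV_A$ built from partial identities within $A$, so under $\cV_A \cong e_A \cV_n$ the subspace $f_A \cV_n$ becomes a canonical splitting of the natural surjection $\cV_A \twoheadrightarrow \cV_A \big/ \sum_{a \in A} \im(\cV_{A \setminus \{a\}} \to \cV_A) = H_0^{\FI}(\cV)_A$. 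Assembling these identifications over all $A$ and checking that an FI-morphism $g: [n] \hookrightarrow [m]$ carries $f_A \cV_n$ to $f_{g(A)} \cV_m$ via $g|_A$ produces the desired isomorphism $\cV \cong \bigoplus_k M(H_0^{\FI}(\cV)_k)$.

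The main obstacle will be verifying the naturality of this decomposition with respect to \emph{all} FI$\sharp$-morphisms, not merely the FI-structure, so that the resulting map is an isomorphism of FI$\sharp$-modules rather than only of underlying FB-modules. This requires systematically factoring each FI$\sharp$-morphism as a composite of a partial identity and an honest injection and tracking how the idempotents $e_A$ and $f_A$ transform under each factor. Once naturality is established, uniqueness of the FI$\sharp$-extension of a free FI-module follows from the same factorization principle: any partial injection $f$ with $a \in [n] \setminus \mathrm{dom}(f)$ satisfies $f = f \circ \pi_{[n] \setminus \{a\}}$, which forces $\cV(f)$ to vanish on every summand containing $a$ and hence determines the action on each $W_A$ entirely from the underlying FI-structure.
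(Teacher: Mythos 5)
The paper does not prove this statement; it is quoted verbatim from Church--Ellenberg--Farb \cite[Theorem 4.1.5]{CEF}, so there is no internal proof to compare against. Your argument is correct and reproduces the standard CEF proof in its essentials: the commuting idempotents $e_A = \cV(\pi_A)$ with $e_A e_B = e_{A\cap B}$, the M\"obius-inverted orthogonal family $f_A$, the identification of $f_A\cV_n$ with $H_0^{\FI}(\cV)_A$ via the split injection $\cV_A \hookrightarrow \cV_n$ with image $e_A\cV_n$, and the uniqueness of the FI$\sharp$-extension coming from the factorization $f = f\circ \pi_{[n]\setminus\{a\}}$ for $a\notin\operatorname{dom}(f)$ are all exactly the ingredients of their argument. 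The one place you rightly flag as requiring care -- naturality of the decomposition under all FI$\sharp$-morphisms rather than just FI-morphisms -- is genuinely the bulk of the bookkeeping, and the factorization principle you propose (every FI$\sharp$-morphism as a partial identity followed by an honest injection) is the right way to organize it.
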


 Theorem \ref{4.1.5} implies that  an FI$\sharp$-module $\cV$ is completely determined by its minimal generators.

\subsection{Twisted injective word complexes} \label{SectionPutmanTwisted}

 Putman \cite{Pu} defined a chain complex associated to a sequence of $\fS_k$-representations called the \emph{central stability chain complex}. This chain complex arises as the $E^1$-page of a certain spectral sequence, and its homology is the $E^2$-page. Natural analogues of the chain complex exist when the symmetric groups are replaced by other families of groups such as general linear groups. See for example Putman--Sam \cite[Section 5.3]{PutmanSamLinearGroups}. In the context of FI-modules, we show that this chain complex is closely related to the complex of injective words and accordingly we will denote the complex using the notation $\Inj$.  We first recall the definition of the \emph{complex of injective words}. 
 
 \begin{definition} \label{definj}
For a set $S$ and an integer $i \geq -1$, let $\Inj_i(S):=\Hom_{\FI}(\{0,\ldots,i\},S)$.
\end{definition}

For a fixed set $S$, $\Inj_\bullet(S)$ has the structure of an augmented semi-simplicial set. The face map $d_j$ acts by precomposition with the order-preserving injective map $\{0,\ldots,i-1\} \to \{0,\ldots,i\}$ that misses the element $j$. Farmer \cite{Fa} proved the following result on the connectivity of $|| \Inj_\bullet(S)||$.

\begin{theorem}[Farmer \cite{Fa}]  \label{Farmer}
The geometric realization $|| \Inj_\bullet(S)||$ is $|S|-2$ connected.
\end{theorem}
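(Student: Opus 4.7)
The plan is to induct on $n = |S|$. The base cases $n \leq 1$ are immediate: $||\Inj_\bullet(\varnothing)||$ is empty (vacuously $(-2)$-connected) and $||\Inj_\bullet(S)||$ for $|S|=1$ is a point (hence $(-1)$-connected). For the inductive step, fix $\sigma \in S$, set $n = |S|$, $B := \Inj_\bullet(S \setminus \{\sigma\})$ and $X := \Inj_\bullet(S)$, noting that $||B||$ is $(n-3)$-connected by induction.

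The structural heart of the argument is the identification of a contractible subcomplex $L \subseteq X$ containing $B$. Let $L$ be the subcomplex generated by all words ending in $\sigma$; as a set of simplices, $L$ is the disjoint union of $B$ with $\{(s_0, \ldots, s_{i-1}, \sigma) : (s_0, \ldots, s_{i-1}) \in B\}$. The assignment $(s_0,\ldots,s_{i-1}) \mapsto (s_0, \ldots, s_{i-1}, \sigma)$ raises simplicial dimension by one, commutes with face maps $d_j$ for $j < i$, and is inverted by the top face $d_i$. Geometrically, each such simplex is the simplicial join of $(s_0, \ldots, s_{i-1}) \in B$ with the vertex $(\sigma)$, so $||L||$ is the join $||B|| * \{(\sigma)\} = C||B||$, which is contractible. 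Since $L \hookrightarrow X$ is a cofibration, $||X|| \simeq ||X||/||L||$, and it suffices to show that this cofiber is $(n-2)$-connected. The cells of $||X||/||L||$ correspond bijectively to words in $S$ containing $\sigma$ in a non-terminal position.

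The main obstacle is establishing $(n-2)$-connectivity of the cofiber. I would approach this via discrete Morse theory: construct an acyclic matching on the simplices of $X$ whose critical cells are all of top dimension $n-1$, implying that $||X||$ is homotopy equivalent to a wedge of $(n-1)$-spheres and in particular is $(n-2)$-connected. A natural candidate matching, based on the position of the smallest missing letter $m := \min(S \setminus w)$, pairs each non-full word $w$ with either its coface $(m, w)$ or its face $d_0 w$, according to the relative order of $m$, $w_0$, and (when the length is at least $2$) $w_1$; the delicate step is the combinatorial verification of acyclicity. An alternative route, which yields the strictly stronger conclusion that $||X||$ is homotopy equivalent to a wedge of $(n-1)$-spheres indexed by the derangements of $S$, is to invoke the Bj\"orner--Wachs shellability of the complex of injective words. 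In either case, the connectivity claim follows once the critical cells are shown to be concentrated in top dimension, and the structural cone construction above isolates the combinatorial heart of the proof to this single remaining step.
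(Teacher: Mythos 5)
The paper does not prove Farmer's theorem --- it quotes it (citing \cite{Fa}) and uses it as a black box --- so there is no proof in the paper to compare against; I will therefore assess your proposal on its own terms.

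There is a genuine gap, and the structure you build is largely vestigial. The cone construction is correct: $L$ is a sub-semi-simplicial set whose realization is the cone on $||\Inj_\bullet(S\setminus\{\sigma\})||$ with apex $(\sigma)$, hence contractible, and $||X|| \to ||X||/||L||$ is a homotopy equivalence. But you never use this reduction, nor the inductive hypothesis that $||B||$ is $(n-3)$-connected. Both of your proposed completions --- a discrete Morse matching on $X$ with critical cells concentrated in the top dimension (note that there must also be exactly one critical $0$-cell, or the space would be empty), or Bj\"orner--Wachs shellability --- operate on $X$ directly and, if carried out, show that $||X||$ is a wedge of $(n-1)$-spheres with no reference to $L$, to the cofiber, or to $B$. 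The induction and the cone are set up and then silently abandoned.

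More substantively, the acyclicity of the matching (equivalently, the shellability) is precisely the content of Farmer's theorem, and you neither prove it nor pin the matching down enough to check. You do not say which of the possible orderings of $m = \min(S\setminus w)$, $w_0$, $w_1$ triggers the pairing with the coface $(m,w)$ versus the face $d_0 w$, and you do not verify that the resulting relation is a bona fide partial matching (each cell paired with at most one other, and symmetrically). Citing Bj\"orner--Wachs is a legitimate proof, but then your argument collapses to that single citation and everything before it is decoration. If you want the cone reduction and the induction to do genuine work, you should instead argue directly that $||X||/||L||$ is $(n-2)$-connected from the inductive hypothesis --- for instance by filtering $X$ by the position of $\sigma$ in the word and controlling the connectivity of each filtration quotient inductively. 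Be warned that this is more delicate than the sketch suggests, because cells appear across a range of dimensions rather than only the top one, so a naive ``cells are attached in high dimension'' argument does not go through.
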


Since $|| \Inj_\bullet(S)||$ has dimension $|S|-1$, the reduced homology of $|| \Inj_\bullet(S)||$ is concentrated in dimension $|S|-1$. We now recall Putman's central stability chain complex, which we view as a twisted version of the complex of injective words.

\begin{definition} \label{DefnInj.}
For a set $S$, an FI-module $\mathcal V$, and integer $i \geq -1$, let 
$$\Inj_i(\mathcal V)_S:=\bigoplus_{f :\{0,\ldots,i\} \hookrightarrow S} \mathcal V_{S\setminus \im(f)}.$$ 
These groups assemble into an augmented semi-simplicial FI-module $\Inj_\bullet(\mathcal V)$. Let $\Inj_*(\mathcal V)$ denote the associated FI-chain complex. When $\cV$ is the FI-module $M(0)$, for a set $S$ the complex $\Inj_*(\mathcal V)_S$ is precisely the chain complex associated to the augmented semi-simplicial set $\Inj_\bullet(S)$. 
\end{definition}

\begin{remark} Given an FI-module $\mathcal V$, the chain complex $\Inj_*(\mathcal V)$ has appeared in the literature under a variety of different notations, and frequently with a shift in indexing. It is closely related to Putman's chain complex $\mathrm{IA}_{*+1}(\mathcal{V}_{n-*-1})$ \cite[Section 4]{Pu}, and the complexes computing \emph{$\FI$--homology} in work of Church, Ellenberg, Farb, Nagpal \cite{CEF, CEFN, CE} and Gan and Li \cite{GanLES, GanLi.OnCentralStability}. The complex $\Inj_*(\mathcal V)$ itself is denoted by $B_{*+1}(\mathcal V)$ in Church--Ellenberg--Farb--Nagpal \cite[Definition 2.16]{CEFN},  by $\Sigma_{*+1} (\mathcal V) $ in Putman--Sam \cite[Section 3]{PutmanSamLinearGroups}, by $\tilde{C}^{F_{\mathcal V}}_{\bullet+1}$ in Church--Ellenberg \cite[Section 5.1]{CE}, and by $\widetilde{C}^1_*\mathcal{V}$ in Patzt \cite[Definition 2.5]{Patzt.CentralStability}. We apologize for adding yet another name for this chain complex. 
\end{remark}

The goal of this subsection is to compute the homology of this chain complex on FI$\sharp$-modules.  

\begin{remark}\label{RemInj.Vanish} Suppose that $\mathcal V$ is an FI-module such that $\mathcal V_k = 0$ for all $k< d$. Observe that by Definition \ref{DefnInj.}, $\Inj_i(\mathcal V)_S=0$ whenever $|S|-i-1 < d$. 
\end{remark}

\begin{remark} \label{Inj.Formulas}
It follows from the definition of  $\Inj_i(\mathcal V)_k$ that there is an isomorphism of $\mathfrak S_k$-representations 
$$\Inj_i(\mathcal V)_k \cong \mathrm{Ind}_{\mathfrak  S_{k-i-1}}^{\mathfrak S_k} \mathcal V_{k-i-1} . $$ In particular, for the FI-module $M(d)$ there is an isomorphism of $\fS_k$-representations 
$$\Inj_i(M(d))_k \cong M(d+i+1)_k . $$
Given an $\mathfrak S_d$-representation $W$,  there is an isomorphism of $\fS_k$-representations 
$$\Inj_i(M(W))_k \cong M\Big(\mathrm{Ind}_{\mathfrak S_{d}}^{\mathfrak S_{d+i+1}} W\Big)_k . $$
\end{remark}

\begin{lemma} \label{H.Inj.d}

There is an isomorphism of $\fS_n$-representations: $$H_*(\Inj_*(M(d)))_n \cong \tilde H_{*} \left( \bigvee_{g \in \Hom_{\FI}([d],[n])} ||\Inj_\bullet([n]-\im(g))|| \right). $$
\end{lemma}

\begin{proof} This follows from the existence of a natural isomorphism of chain complexes between $\Inj_*(M(d))_n$ and the direct sum over $g \in \Hom_{\FI}([d],[n])$ of the reduced cellular chains of $||\Inj_\bullet([n]-\im(g))||$. 
\end{proof}

\begin{theorem} \label{connectedM(W)}
Let $W$ be an integral representation of $\mathfrak S_d$. There is an isomorphism: $$H_i(\Inj_*(M(W)))_S \cong \Big(H_i \big(\Inj_*(M(d))\big) \otimes_{\Z[\sS_d]} W \Big)_S.$$ In general, given an $\FI\sharp$-module $\mathcal V$, $$H_p (\mathrm{Inj}_*(\mathcal V
))_k= \mathrm{Ind}_{\sS_{p+1}\times \sS_{k-p-1} }^{\sS_{k}} H_p (\mathrm{Inj}_*(p+1)) \boxtimes (H_0^{\FI}(\mathcal V))_{k-p-1}.$$
\end{theorem}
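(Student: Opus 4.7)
The plan is to reduce the first isomorphism to an elementary tensor-product identity, and then deduce the $\FI\sharp$-module case from the first part via Theorem \ref{4.1.5}. The geometric input in both steps is Farmer's Theorem \ref{Farmer} combined with the disjoint union decomposition in Lemma \ref{Inj(d)}.

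For the first isomorphism, I would begin by noting that $M(W) \cong M(d) \otimes_{\Z[\sS_d]} W$ as $\FI$-modules, since $M(d)_T = \Z[\Hom_{\FI}([d],T)]$ is a free right $\Z[\sS_d]$-module under precomposition and $M$ is a left adjoint. Because $\Inj_i(-)_S$ is defined as a pointwise direct sum, it commutes with this tensor product, giving an isomorphism of chain complexes
\[
\Inj_*(M(W))_S \;\cong\; \Inj_*(M(d))_S \otimes_{\Z[\sS_d]} W.
\]
To pass the tensor product through homology, I would show $H_*(\Inj_*(M(d)))_S$ is free over $\Z[\sS_d]$. By Lemma \ref{H.Inj.d}, up to a shift this homology is the reduced homology of $||\Inj_\bullet(d)(S)||$, and by Lemma \ref{Inj(d)} the latter splits as a disjoint union, indexed by injections $g\colon [d] \hookrightarrow S$, of copies of $||\Inj_\bullet(S-\im g)||$. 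Farmer's theorem concentrates each summand's reduced homology in top degree $|S|-d-1$, and since $\sS_d$ acts freely on the indexing set of injections, grouping those with common image into $\sS_d$-orbits realizes the total homology as a free $\Z[\sS_d]$-module. Consequently $(-) \otimes_{\Z[\sS_d]} W$ is exact and commutes with homology, which gives the first isomorphism.

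For the $\FI\sharp$-module statement, I would apply Theorem \ref{4.1.5} to decompose $\mathcal{V} \cong \bigoplus_{d \geq 0} M\bigl(H_0^{\FI}(\mathcal{V})_d\bigr)$. Because $\Inj_*$ is additive, the first part gives
\[
H_p(\Inj_*(\mathcal{V}))_k \;\cong\; \bigoplus_{d \geq 0} H_p(\Inj_*(M(d)))_k \otimes_{\Z[\sS_d]} H_0^{\FI}(\mathcal{V})_d,
\]
and the top-degree concentration shown above forces all summands with $d \neq k-p-1$ to vanish.

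The step I expect to be the main technical point is the explicit identification of the surviving summand as an induced representation. Choosing a basepoint $g_0\colon [d] \hookrightarrow [k]$ with $\im(g_0) = [d]$, the $\sS_k$-action on $\Hom_{\FI}([d],[k])$ is transitive with stabilizer $\sS_d \times \sS_{p+1}$ (permutations of $[d]$ and of its complement). Collecting injections with a common image into $\sS_d$-torsors, the combined orbit-stabilizer analysis yields
\[
H_p(\Inj_*(M(d)))_k \;\cong\; \Ind_{\sS_d \times \sS_{p+1}}^{\sS_k}\!\bigl(\Z[\sS_d] \boxtimes H_p(\Inj_*(p+1))\bigr),
\]
where $\Z[\sS_d]$ records the $d!$ bijections $[d] \to [d]$ with $\sS_d$ acting by left multiplication (matching the FI-module $\sS_d$-action on $M(d)$ by precomposition) and $\sS_{p+1}$ acts on $H_p(\Inj_*(p+1)) = \tilde{H}_p(||\Inj_\bullet([p+1])||)$ in the natural way. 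Tensoring $\Z[\sS_d] \otimes_{\Z[\sS_d]} H_0^{\FI}(\mathcal{V})_{k-p-1}$ collapses the $\Z[\sS_d]$ factor, and after swapping the order of the two remaining tensor factors one obtains precisely the formula in the theorem.
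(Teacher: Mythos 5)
Your argument is correct, but it takes a genuinely different route from the paper's on the first isomorphism. Both begin with $\Inj_*(M(W))_S \cong \Inj_*(M(d))_S \otimes_{\Z[\sS_d]} W$. The paper then runs the homological K\"unneth spectral sequence $E^2_{p,q} = \Tor_p^{\Z[\sS_d]}(H_q(\Inj_*(M(d)))_S, W)$: Farmer's theorem concentrates the $E^2$ page in a single $q$-row, forcing degeneration, but a priori the higher $\Tor_p$ could still populate $H_i(\Inj_*(M(W)))_S$ for $i > |S|-d-1$, and the paper uses the vanishing of Remark \ref{RemInj.Vanish} to rule these out. You instead prove directly that $H_*(\Inj_*(M(d)))_S$ is a free $\Z[\sS_d]$-module: $\sS_d$ acts freely by precomposition on the indexing set $\Hom_{\FI}([d],S)$ of the disjoint-union decomposition in Lemma \ref{Inj(d)}, and each summand's surviving (top) reduced homology is a free abelian group, being a subgroup of the free top chain group. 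Freeness kills all higher $\Tor$ at once, so $\otimes_{\Z[\sS_d]} W$ commutes with homology. This is arguably cleaner than the paper's argument and also makes explicit the freeness of the top homology over $\Z[\sS_d]$, which the paper obtains only indirectly. Your treatment of the general $\FI\sharp$-module case matches the paper's in substance. One small imprecision: the stabilizer in $\sS_k$ of the injection $g_0 \colon [d] \hookrightarrow [k]$ under postcomposition is $\sS_{k-d}$, not $\sS_d \times \sS_{p+1}$; the latter is the stabilizer of the image $\im(g_0)$ as a subset of $[k]$. Your following sentence about collecting injections with a common image into $\sS_d$-torsors shows you have the correct orbit structure in mind, and the final induced-representation formula is right.
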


\begin{proof}
Recall that $M(W) \cong M(d)\otimes_{\Z[\mathfrak S_d]} W$.  Then $$\Inj_*(M(W))_S \cong \Inj_*(M(d))_S \otimes_{\Z[\mathfrak S_d]} W.$$ 
The homological K\"unneth spectral sequence (see for example Theorem 10.90 of Rotman \cite{RotmanHomological}),  is a first quadrant spectral sequence:  $$ E^2_{p,q}  =\Tor_p^{\Z[\mathfrak S_d]}\Big(H_q \Big( \Inj_*(M(d))\Big)_S, W\Big).$$ Since the $\Z[\fS_d]$-modules $\Inj_q(M(d))_S$ are flat, the spectral sequence converges to $H_{p+q}(\Inj_*(M(W)))_S$. Theorem \ref{Farmer} and Lemma \ref{H.Inj.d} imply that $E^2_{p,q}=0$ except for $q = |S|-1-d$.  Since the $E^2_{p,q} $ page has only a single nonzero column, the spectral sequence collapses on this page. The limit is nonzero  only when  $i \geq (|S|-1-d)$, and in this case we see that: $$ H_{i}(\Inj_*(M(W)))_S \cong \Tor_{i-(|S|-1-d)}^{\Z[\mathfrak S_d]}\Big(H_{|S|-1-d} \Big( \Inj_*(M(d))\Big)_S, W\Big).$$ On the other hand, $M(W)_k = 0$ for $k<d$, and so by Remark \ref{RemInj.Vanish},  $H_i(\Inj_*(M(W)))_S=0$ whenever $i>|S|-d-1$. 

Thus this spectral sequence has a single nonzero entry. The homology groups $H_{i}(\Inj_*(M(W)))_S $ are nonzero only in degree $i=|S|-1-d$, in which case we have 
\begin{align*} H_{|S|-1-d}(\Inj_*(M(W)))_S &\cong \Tor_{0}^{\Z[\mathfrak S_d]}\Big(H_{|S|-1-d} \Big( \Inj_*(M(d))\Big)_S, W\Big) \\ 
&\cong \Big(H_{|S|-1-d} \Big( \Inj_*(M(d))\Big) \otimes_{\Z[\sS_d]} W \Big)_S. \end{align*} 
\noindent Theorem 4.1.5 of \cite{CEF} (here Theorem \ref{4.1.5}) implies that every $\FI\sharp$-module is a direct sum of modules of the form $M(W)$. Additionally, for an $\fS_d$-representation $W$, $H_0^{\FI}(M(W))_d \cong W$ and $H_0^{\FI}(M(W))_i \cong 0$ for $i \neq d$. These two facts imply the general result.
 \end{proof}

We obtain the following corollary.  

\begin{corollary} \label{twistedInj}
Let $\mathcal V$ be an FI$\sharp$-module with generation degree $\leq d$. Then $H_i(\Inj_*(\mathcal V))_S =0$ for $i \leq |S|-2-d$.
\end{corollary}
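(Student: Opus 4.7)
The plan is to deduce this corollary directly from Theorem \ref{connectedM(W)} combined with the classification of FI$\sharp$-modules in Theorem \ref{4.1.5}. There is no major obstacle here; the hard work has already been done in Theorem \ref{connectedM(W)}, and what remains is to unpack what that calculation says for a general FI$\sharp$-module generated in degrees $\leq d$.

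First, by Theorem \ref{4.1.5}, any FI$\sharp$-module $\cV$ decomposes as
\[ \cV \;\cong\; \bigoplus_{j=0}^{\infty} M\!\left( H_0^{\FI}(\cV)_j \right). \]
Since $\cV$ is generated in degrees $\leq d$, the summands vanish for $j > d$, so this sum is finite and ranges over $0 \leq j \leq d$. The functor $\Inj_*$ preserves direct sums (it is defined degree-by-degree as a direct sum indexed by injections), and homology likewise commutes with direct sums, so
\[ H_i(\Inj_*(\cV))_S \;\cong\; \bigoplus_{j=0}^{d} H_i\!\left( \Inj_*\!\big( M(H_0^{\FI}(\cV)_j) \big) \right)_S. \]

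Next, I apply Theorem \ref{connectedM(W)} to each summand: taking $W = H_0^{\FI}(\cV)_j$, an $\fS_j$-representation, the theorem shows that $H_i(\Inj_*(M(W)))_S$ is concentrated in the single homological degree $i = |S|-1-j$. Consequently, the $j$th summand above contributes only when $i = |S|-1-j$, which forces $j = |S|-1-i$. For this value of $j$ to lie in the range $0 \leq j \leq d$, we need $|S|-1-i \leq d$, equivalently $i \geq |S|-1-d$.

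Therefore, whenever $i \leq |S|-2-d$, every summand vanishes and $H_i(\Inj_*(\cV))_S = 0$, as claimed. The argument is essentially bookkeeping on top of the structural results already established, so the only subtlety to verify is the compatibility of $\Inj_*$ with the direct sum decomposition of $\cV$, which is immediate from Definition \ref{DefnInj.}.
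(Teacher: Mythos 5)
Your proposal is correct and follows essentially the same route as the paper: the paper deduces this corollary directly from Theorem \ref{connectedM(W)} (which already packages the FI$\sharp$ decomposition of Theorem \ref{4.1.5} and the concentration of $H_*(\Inj_*(M(W)))$ in a single degree), and your argument simply unfolds that same chain of reasoning, with correct bookkeeping of the degree inequality $i \geq |S|-1-d$.
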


Unwinding definitions gives the following.

\begin{proposition} For any FI-module $\mathcal V$, $H_{-1}(\Inj_*(\mathcal V))_S \cong H^{\FI}_0(\mathcal V)_S$. \label{minusonehomology}
\end{proposition}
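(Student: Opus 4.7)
The plan is to unwind both sides directly from their definitions and observe that they coincide. This is the kind of result where the main ``work'' is bookkeeping, so I would not expect any serious obstacle; the subtlety is just making sure the boundary map in the semi-simplicial chain complex lines up with the cokernel appearing in the definition of $H_0^{\FI}$.

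First I would compute $\Inj_{-1}(\cV)_S$ and $\Inj_0(\cV)_S$ explicitly. By Definition \ref{DefnInj.}, $\Inj_{-1}(\cV)_S = \bigoplus_{f \colon \varnothing \hookrightarrow S} \cV_{S \setminus \im(f)}$, and there is exactly one such (empty) map $f$, with $\im(f) = \varnothing$, so $\Inj_{-1}(\cV)_S \cong \cV_S$. Similarly, an injection $f \colon \{0\} \hookrightarrow S$ is the same as choosing an element $a = f(0) \in S$, so $\Inj_0(\cV)_S \cong \bigoplus_{a \in S} \cV_{S \setminus \{a\}}$.

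Next I would identify the differential $\partial \colon \Inj_0(\cV)_S \to \Inj_{-1}(\cV)_S$. In the semi-simplicial chain complex the only face map available from a $0$-simplex is $d_0$, which corresponds to precomposition with the unique inclusion $\varnothing \hookrightarrow \{0\}$. Functorially, this sends a summand $\cV_{S \setminus \{a\}}$ to $\cV_S$ via the FI morphism induced by the inclusion $S \setminus \{a\} \hookrightarrow S$. So the differential is precisely the natural map
\[ \bigoplus_{a \in S} \cV_{S \setminus \{a\}} \longrightarrow \cV_S \]
appearing in the definition of $H_0^{\FI}(\cV)_S$. Taking the cokernel gives the claimed isomorphism
\[ H_{-1}(\Inj_*(\cV))_S = \coker(\partial) = H_0^{\FI}(\cV)_S. \]

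The only thing to be careful about is a potential sign convention for $d_0$ on a single vertex, but in the standard convention the differential $\partial \colon C_0 \to C_{-1}$ for an augmented semi-simplicial object is just $d_0$ (no alternating sum is needed since there is only one face), so no sign issue arises. Everything is clearly natural in $S$, which upgrades the isomorphism to an isomorphism of FI-modules, as required.
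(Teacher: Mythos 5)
Your proof is correct and matches the paper's approach: the paper does not write out a proof at all, noting only that the statement is ``immediate from the definitions,'' and your unwinding of $\Inj_{-1}(\cV)_S \cong \cV_S$, $\Inj_0(\cV)_S \cong \bigoplus_{a \in S}\cV_{S\setminus\{a\}}$, and the identification of the sole face map $d_0$ with the natural stabilization maps is exactly the intended verification.
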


\subsection{Homology of the complex of injective words} \label{SectionInjHomology}

In the previous subsection, we computed the homology of the injective words chain complex of an FI$\sharp$-module in terms of the top homology group of the complex of injective words. We now will show this top homology group is a certain space of products of graded Lie polynomials, and compute a basis. 

Throughout this section we let $C^{(k)}_*$ denote the reduced cellular chains on the semi-simplicial space $\Inj_\bullet(k)$. In the language of the previous subsection, $C^{(k)}_*:=\Inj_*(M(0))_k$. For $q\geq -1$, the group $C^{(k)}_q$ is the free abelian group on words of $q+1$ distinct letters in $[k]$. By Theorem \ref{Farmer}, this chain complex has only one nonvanishing homology group, in homological degree $k-1$. 
\begin{definition}
Let $\Top_k := H_{k-1}(C^{(k)}_*)  \cong \tilde H_{k-1}(||\Inj_\bullet(k)||)$. 
\end{definition}
\noindent The symbol $\Top$ stands for ``top homology group."  Since $C^{(k)}_{k}=0$, the homology group $\Top_k$ is a submodule of $C^{(k)}_{k-1}$, the kernel of the differential: $$\cd := \sum_{j=0}^{k-1} (-1)^jd_j : C^{(k)}_{k-1} \to C^{(k)}_{k-2}$$ where $d_j$ is the face map that forgets the $j$th letter of each word. The top chain group  $C^{(k)}_{k-1}$ is naturally isomorphic to the regular representation $\Z[S_k]$, with a $\Z$--basis given by all injective words on $k$ letters in $[k]$.  The main objective of this section is to compute an alternate  $\Z$--basis for $C^{(k)}_{k-1}$ in the style of the Poincar\'e--Birkhoff--Witt theorem (Theorem \ref{InjectiveWordBasis}), and identify a sub-basis that spans the kernel of $\cd$ (Lemma \ref{LProductBasis} and Theorem \ref{TopHomology}). The result of this calculation is shown explicitly for $k=2,3,4$ in the Example \ref{ExampleTopHomology234}.

We adopt the following notational conventions. If $a$ is a word in the alphabet $[k]$, then in this section we write $|a|$ to mean the word-length of $a$. If $p$ is an integer linear combination of words, we call $p$ a \emph{(noncommutative) polynomial} in $[k]$, and define its degree $|p|$ to be  the length of the longest word occuring in $p$. Polynomials are assumed to be homogeneous unless otherwise stated. For words $a$ and $b$, we write $ab$ to denote their concatenation; this operation extends linearly to a multiplication on the additive group of polynomials in $[k]$.   A word is \emph{injective} if each letter appears at most once. We introduce a graded Lie bracket on polynomials in $[k]$.

\begin{definition} \label{DefnLieBracket} Define a graded Lie bracket on words in $[k]$  by $$[a,b]:= ab -(-1)^{|a||b|}ba$$ and extend bilinearly to a bracket on the free abelian group on words in $[k]$.
\end{definition}
On homogeneous polynomials $a,b,c$, the Lie bracket satisfies the graded antisymmetry rule $$[a,b] =- (-1)^{|a||b|}[b,a] $$ and the graded Jacobi identity $$(-1)^{|a||c|}[a,[b,c]]+(-1)^{|a||b|}[b,[c,a]]+(-1)^{|b||c|}[c,[a,b]]=0.$$ 

\begin{definition}  A \emph{Lie polynomial} is any element of the smallest submodule of the free abelian group on words in $[k]$ that contains the elements of $[k]$ and is closed under the Lie bracket. 
\end{definition}

The space of Lie polynomials is isomorphic to the \emph{free Lie superalgebra on $[k]$}. This space naturally embeds into the free abelian group of words on $[k]$, which, by a graded-commutative version of the Poincar\'e--Birkhoff--Witt Theorem,  we can identify with its universal enveloping algebra. The following result appears in Ross \cite[Theorem 2.1]{RossLieSuperalgebras}; see also Musson \cite[Theorem 6.1.1]{MussonLieSuperalgebras}.

\begin{theorem}[{See, eg, Ross \cite[Theorem 2.1]{RossLieSuperalgebras}}]  \label{GradedPBW}
Let $R$ be a commutative ring with unit such that $2$ is invertible. Let $L$ be a homogeneously free Lie superalgebra over $R$ with homogeneous bases $X_0$ for its even-graded part and  $X_1$ for its odd-graded part. If $\leq$ is a total order on $X=X_0\cup X_1$, then the set of monomials of the form $$ b_1 b_2 \cdots b_m \qquad \text{with $b_i \in X$, $b_i \leq b_{i+1}$, and $b_i \neq b_{i+1}$ if $b_i \in X_1$} $$ and $1$ form a free $R$--basis for the universal enveloping algebra $U(L)$. 
\end{theorem}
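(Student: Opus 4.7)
The plan is to adapt the classical Poincar\'e--Birkhoff--Witt argument to the super setting. Writing $U(L)=T(L)/I$ where $I$ is the two-sided ideal generated by $xy-(-1)^{|x||y|}yx-[x,y]$ for homogeneous $x,y\in L$, the proof breaks into the usual two halves: (i) the claimed monomials span $U(L)$ over $R$, and (ii) they are $R$-linearly independent.

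For spanning, I would proceed by induction on the lexicographically ordered pair (total length, number of out-of-order adjacent pairs) applied to an arbitrary monomial $b_{i_1}\cdots b_{i_m}$ in the basis $X$. Whenever an adjacent pair $b_{i_j}>b_{i_{j+1}}$ occurs, rewrite it via the defining relation as $(-1)^{|b_{i_j}||b_{i_{j+1}}|}b_{i_{j+1}}b_{i_j}+[b_{i_j},b_{i_{j+1}}]$: the first summand strictly decreases the inversion count, and the second, after expanding $[b_{i_j},b_{i_{j+1}}]$ back in the basis $X$, strictly decreases the total length. The remaining obstruction is the case $b_{i_j}=b_{i_{j+1}}=x$ with $x\in X_1$. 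For such odd $x$ we have $2x^2=xx+(-1)^{|x||x|}xx=[x,x]$ in $U(L)$, and since $2$ is invertible we can replace $x^2$ by $\tfrac12[x,x]$, again strictly lowering the total length. Induction reduces every monomial to a $R$-linear combination of the claimed PBW monomials.

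For linear independence I would construct a representation of $U(L)$ on the free supercommutative $R$-algebra $V:=\mathrm{Sym}(L_0)\otimes \bigwedge(L_1)$, following Birkhoff's classical device. Specifically, I would define an $L$-action on $V$ by induction on the length of an ordered monomial $v=b_{j_1}\cdots b_{j_p}$: for $x\in L$ homogeneous, set $x\cdot v:=xv$ when $x\le b_{j_1}$, and otherwise
\[
x\cdot v \;:=\; (-1)^{|x||b_{j_1}|}\,b_{j_1}\bigl(x\cdot b_{j_2}\cdots b_{j_p}\bigr)\;+\;[x,b_{j_1}]\cdot b_{j_2}\cdots b_{j_p},
\]
where the first summand is interpreted by recursion on $p$ and the second has strictly smaller total length inside the bracket. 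Once this is shown to descend to a well-defined $U(L)$-action, evaluating any $R$-linear combination of the claimed ordered monomials of $U(L)$ on the vacuum $1\in V$ returns the corresponding $R$-linear combination of distinct monomials in the free $R$-module $V$ (here the relation $x\wedge x=0$ in $\bigwedge(L_1)$ is exactly what forces the no-repetition rule for odd basis elements in the PBW list). Since these images are $R$-linearly independent in $V$, so are the original PBW monomials in $U(L)$.

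The main obstacle is verifying that the above action descends from $T(L)$ to $U(L)$, i.e.\ that it satisfies $x(yv)-(-1)^{|x||y|}y(xv)=[x,y]v$ for all $x,y\in L$ and $v\in V$. By the inductive recipe, this reduces to the case $v=b_j$ a single ordered basis element, and the verification then unfolds into a tripartite bookkeeping that uses the graded antisymmetry $[x,y]=-(-1)^{|x||y|}[y,x]$ and the graded Jacobi identity; here the hypothesis that $2$ is invertible is again important to handle the case $x=y$ odd. Once compatibility is checked, combining spanning with the independence supplied by the representation yields the claimed PBW basis.
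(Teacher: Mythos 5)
The paper does not prove this statement itself; it is quoted from Ross (with a reference also to Musson), so there is no in-paper argument to compare against. Your proposal follows the classical Birkhoff--Witt strategy adapted to the super setting: spanning via a rewriting argument and independence via a representation on the model module $V = \mathrm{Sym}(L_0)\otimes\bigwedge(L_1)$. The spanning half is correct as written.

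The independence half, however, has a concrete error in the recursive definition of the action. You declare $x\cdot v := xv$ whenever $x\le b_{j_1}$, where $b_{j_1}$ is the first letter of the ordered monomial $v$. When $x = b_{j_1}$ and $x$ is odd, the product $xv = x\wedge x\wedge b_{j_2}\wedge\cdots$ vanishes in $\bigwedge(L_1)$, so your rule gives $x\cdot(xw)=0$. But the $U(L)$-module relation forces $x\cdot(xw) = x^2\cdot w = \tfrac12[x,x]\cdot w$, and for a general $L$ the even element $[x,x]\in L_0$ is nonzero for odd $x$ --- the paper itself notes exactly this phenomenon right after the theorem statement, observing $[1,1]=11+11\neq 0$ in the free Lie superalgebra on $[k]$. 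So as written your putative action does not factor through $U(L)$, and the representation-theoretic independence argument collapses. The fix is to split off the case $x=b_{j_1}$ odd as a separate branch of the recursion, defining $x\cdot(xw) := \tfrac12[x,x]\cdot w$ (which has strictly shorter input on the right, so is already defined). This is precisely where the invertibility of $2$ is used, and it is needed already in the \emph{definition} of the action, not merely later in verifying bracket compatibility as your sketch suggests.
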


We remark that this set of monomials is not a basis when $R$ is $\Z$. In the case of the free Lie superalgebra on $[k]$, this failure is in some sense due to factors of two that appear with (nested) brackets involving repeated letters, for example, $[1,1]=11+11$. Fortunately for our purposes, we will show in Theorem \ref{InjectiveWordBasis} that those basis elements for which every letter is distinct \emph{do} form an integer basis for $C_{k-1}^{(k)}$. The following example illustrates the main result of this subsection, the bases for $C_{k-1}^{(k)}$ and the top homology group, for small $k$. 

\begin{example} \label{ExampleTopHomology234} When $k=2, 3,$ or $4$, Theorems \ref{InjectiveWordBasis} and \ref{TopHomology} give the following $\Z$-bases for the chain group $C^{(k)}_{k-1}$, and the top homology group $H_{k-1}(C^{(k)}_*)$. (Here we have taken the graded lexicographical ordering on the set $B$ of Theorem \ref{InjectiveWordBasis}). 

The $\Z$-basis for the rank-$2$ group $C^{(2)}_{1}$  is $ \{ [1,2], 12 \} $ and $H_{1}(C^{(2)}_*)$ is the rank-one subgroup spanned by $[1,2]=12+21$. This is the trivial $S_2$-representation. 

The basis for $C^{(3)}_{2}$  is $$ [[1,2],3], \;  [[1,3],2], \quad 1[2,3], \;  2[1,3], \; 3[1,2], \quad 123,$$ 
and $H_{2}(C^{(3)}_*)$ is the rank--two subgroup spanned by $$[[1,2],3] =  123+213- 312-321, \qquad [[1,3],2] = 132+312-213-231$$ isomorphic to the standard $S_3$-representation. 

The basis for $C^{(4)}_{3}$  is 
\begin{align*}
& [[[1,2],3],4], \; [[[1,2],4],3], \; [[[1,3],2],4],\; [[[1,3],4],2], \; [[[1,4],2],3],\; [[[1,4],3],2], \\
 & [1,2][3,4], \;  [1,3][2,4], \;   [1,4][2,3], \\
 &   1[[2,3],4], \; 1[[2,4], 3]], \; 2[[1,3], 4], \; 2[[1,4], 3], \; 3[[1,2], 4],\; 3[[1,4], 2],\;  4[[1,2], 3],\; 4[[1,3], 2],\; \\ 
& 12[3,4], \; 13[2,4], \; 14[2,3], \; 23[1,4], \; 24[1,3], \; 34[1,2], \; \\ & 1234. 
\end{align*} 
The top homology group $H_{3}(C^{(4)}_*)$ is the rank-nine free abelian group on the elements 
\begin{align*}
& [[[1,2],3],4], \; [[[1,2],4],3], \; [[[1,3],2],4],\; [[[1,3],4],2], \; [[[1,4],2],3],\; [[[1,4],3],2], \\ 
 & [1,2][3,4], \;  [1,3][2,4], \;   [1,4][2,3]. \;  
\end{align*} 

In general, the homology group will consist of all the basis elements that consist of a product of brackets, that is, the basis elements that contain no singleton factors. 
\end{example}

We now introduce notation for the free Lie superalgebra which we will  view as a submodule of $C^{(k)}_{k-1}$.

\begin{definition} For a finite set $S$ with $|S|\geq 2$, let $\cL_S$ denote the subset of homogeneous degree-$|S|$ Lie polynomials whose terms are all injective words in $S$. We write $\cL_k$ when $S=[k]$. It is spanned by $(k-1)$-fold iterated brackets such that each letter in $[k]$ appears exactly once.  We define $\cL_S=0$ if $S$ has one or zero elements. 
\end{definition}

For example, $\cL_2 \cong \Top_2$ is the rank-$1$ abelian group with basis $[1,2] = 12+21$, $\cL_3 \cong \Top_3$ is the rank-$2$ abelian group spanned by the elements  $[1,[2,3]],  [2,[1,3]]$ and $[3,[1,2]]$, which (by the Jacobi identity) sum to zero. The group $\cL_4 \subsetneq \Top_4$ is the rank-$6$ abelian group spanned by the Lie polynomials 
$$[[[1,2],3],4], \; [[[1,2],4],3], \; [[[1,3],2],4],\; [[[1,3],4],2], \; [[[1,4],2],3],\; [[[1,4],3],2].$$ We give a basis for $\cL_k$ using a graded-commutative variation on an argument appearing in Reutenauer \cite[Section 5.6.2]{ReutenauerFreeLieAlgebras}.

\begin{theorem}[{Compare to Reutenauer \cite[Section 5.6.2]{ReutenauerFreeLieAlgebras}}]\label{ReutenauersBasis} The abelian group $\cL_k$ is free of rank $(k-1)!$ with a $\Z$-basis all elements of the form $$ [[[ \cdots[1, a_2], a_3], \ldots], a_{k-1}], a_k ] \qquad \text{ for any ordering $(a_2, a_3, \ldots, a_k)$ of the set $\{2, 3, \ldots k\}$. }$$ 
\end{theorem}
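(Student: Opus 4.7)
The plan is to adapt Reutenauer's argument for the ungraded free Lie algebra, tracking the signs appropriate to the graded setting in which every letter of $[k]$ has odd degree $1$.

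First I would establish \emph{linear independence} via a triangularity argument. Let $L(a_1, \ldots, a_k) := [[[\cdots[a_1, a_2], a_3], \ldots], a_k]$ denote a left-normed bracket, viewed as an element of $C^{(k)}_{k-1}$. The key claim is: in the expansion of $L(1, a_2, \ldots, a_k)$ as a $\Z$-linear combination of length-$k$ injective words, the only word beginning with the letter $1$ is the word $1\, a_2\, a_3 \cdots a_k$, and it occurs with coefficient $+1$. Granted this, distinct orderings $(a_2, \ldots, a_k)$ yield distinct ``leading words,'' so the $(k-1)!$ proposed basis elements are $\Z$-linearly independent inside $C^{(k)}_{k-1}$. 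The claim follows by induction on $k$: the base case is $[1, a_2] = 1\,a_2 + a_2\,1$, and for the inductive step, since $L(1, a_2, \ldots, a_{k-1})$ has odd total degree $k-1$, Definition \ref{DefnLieBracket} gives
\[ L(1, a_2, \ldots, a_k) = L(1, a_2, \ldots, a_{k-1}) \cdot a_k - (-1)^{k-1} a_k \cdot L(1, a_2, \ldots, a_{k-1}). \]
The second summand contributes only words beginning with $a_k \neq 1$, and the first produces the word $1\,a_2\cdots a_{k-1}\,a_k$ with coefficient $+1$ by induction.

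For \emph{spanning}, I would proceed in two stages. Using the graded Leibniz rule $[A, [B, C]] = [[A, B], C] + (-1)^{|A||B|} [B, [A, C]]$, every iterated bracket of the letters of $[k]$ reduces to a $\Z$-linear combination of left-normed brackets $L(b_1, \ldots, b_k)$ as $(b_1, \ldots, b_k)$ ranges over permutations of $[k]$. Then one must show that any such $L(b_1, \ldots, b_k)$ is a $\Z$-combination of brackets of the form $L(1, c_2, \ldots, c_k)$. The inner-bracket symmetry $[b_1, b_2] = [b_2, b_1]$, which holds because both letters are odd of degree one, handles the case where $1 \in \{b_1, b_2\}$. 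If $1 = b_j$ for some $j \geq 3$, one applies the graded Jacobi identity to the triple $(b_{j-2}, b_{j-1}, 1)$ at the appropriate level of the left-normed tree and then re-expands the resulting non-left-normed terms using Leibniz; this rewrites $L(b_1, \ldots, b_k)$ as an integer combination of left-normed brackets in which $1$ has moved strictly to the left of position $j$. Induction on $j$ then brings $1$ into position $1$.

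The main obstacle will be this second stage of spanning, namely the explicit inductive manipulation that moves the letter $1$ to the front of an arbitrary left-normed bracket. Reutenauer handles this with a short induction in the ungraded case; in our graded setting the same strategy works, but one must carefully track the signs $(-1)^{|A||B|}$ arising from the Leibniz and Jacobi identities at each step. Since every letter has degree one, these signs depend only on the lengths of the intermediate subbrackets, so all manipulations remain integral, yielding a $\Z$-basis rather than merely a $\Q$-basis.
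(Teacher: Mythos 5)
Your proposal is correct and takes essentially the same approach as the paper: a triangularity argument for linear independence (the word $1a_2\cdots a_k$ appears only in $L(1,a_2,\ldots,a_k)$ among the proposed basis elements), and iterated application of graded antisymmetry and Jacobi for spanning, both directly adapting Reutenauer. Two small comments. First, your explicit observation that the leading word $1a_2\cdots a_k$ appears with coefficient $+1$ (not just nonzero) is slightly more precise than the paper's statement and is exactly what justifies the paper's follow-up remark that the span of these elements is a \emph{direct summand} of $C^{(k)}_{k-1}$, which matters for Theorem~\ref{TopHomology}. Second, the two-stage decomposition of your spanning argument — first reduce to left-normed brackets $L(b_1,\ldots,b_k)$ with arbitrary initial letter, then move $1$ to the front — is not as cleanly separable as the write-up suggests: moving $1$ leftward by Jacobi necessarily produces non-left-normed terms, so stage two forces you to re-run stage one, and you rightly flag this as the main obstacle. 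The paper's version avoids this interleaving by keying the relation $[1,[P,Q]] = (-1)^{|P||Q|+1}[[1,Q],P] + [[1,P],Q]$ directly to the distinguished letter $1$ and running a single (double) induction on $k$ and on the size of the subbracket paired with the $1$-containing part, but it is equally terse about the details (``as in Reutenauer's proof''), so this is a stylistic rather than a substantive difference.
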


More generally, for $S \subseteq [k]$, we define the Reutenauer basis for $\cL_S$ to be the $(|S|-1)!$ elements as above with the letter $1$ replaced by the smallest element of $S$ under the natural ordering on $[k]$. 

\begin{proof} As in Reutenauer's proof, we may inductively apply the antisymmetry and Jacobi relations 
$$[1,[P,Q]]=(-1)^{|Q||P|+1}[[1,Q], P]+[[1,P],Q] $$ 
 to write any element in $\cL_k$ as a linear combination of these generators. The generators must be linearly independent over $\Z$, since  $[[[ \cdots[1, a_2], a_3], \ldots], a_{k-1}], a_k ]$ is the only Lie polynomial in the list whose expansion includes the word $1a_2a_3 \ldots a_k$. We note that this last observation also implies that these elements span a direct summand of $C^{(k)}_{k-1}$, and not a higher-index subgroup of a direct summand. 
\end{proof}

\begin{corollary} \label{LieNGeneratingFunction} The exponential generating function for the sequence $\ell_k :=$ rank$(\cL_k)$ is 
\begin{align*} L(x) &= -\log(1-x)-x \\
&= (1!)\frac{x^2}{2!} + (2!)\frac{x^3}{3!} + (3!)\frac{x^4}{4!}+ (4!)\frac{x^5}{5!} + \cdots. \end{align*}
\end{corollary}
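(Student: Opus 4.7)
The plan is to read off $\ell_k$ directly from Theorem \ref{ReutenauersBasis} and then recognize the resulting exponential generating function as a well-known Taylor series.

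First I would extract the values of $\ell_k$ from the preceding theorem. Theorem \ref{ReutenauersBasis} exhibits a $\Z$-basis of $\cL_k$ for $k \geq 2$ indexed by orderings $(a_2,\ldots,a_k)$ of $\{2,\ldots,k\}$, hence $\ell_k = (k-1)!$ for $k \geq 2$. By the convention adopted just before the statement of Theorem \ref{ReutenauersBasis}, we also have $\ell_0 = \ell_1 = 0$.

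Next I would assemble the exponential generating function and simplify:
\[
L(x) \;=\; \sum_{k \geq 0} \ell_k \frac{x^k}{k!} \;=\; \sum_{k \geq 2} (k-1)!\,\frac{x^k}{k!} \;=\; \sum_{k \geq 2} \frac{x^k}{k}.
\]
Comparing with the standard Taylor expansion $-\log(1-x) = \sum_{k \geq 1} \frac{x^k}{k}$, valid for $|x| < 1$, we obtain $L(x) = -\log(1-x) - x$, and the first few terms displayed in the statement are just the expansion $\tfrac{x^2}{2} + \tfrac{x^3}{3} + \tfrac{x^4}{4} + \tfrac{x^5}{5} + \cdots$ rewritten with factorial denominators.

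There is essentially no obstacle here: the corollary is a direct consequence of the rank computation in Theorem \ref{ReutenauersBasis} combined with the elementary identity for $-\log(1-x)$. The only (minor) thing to double-check is the convention at $k=0,1$, which is explicitly handled by the definition of $\cL_S$ for $|S| \leq 1$ as the zero module, ensuring that no spurious constant or linear term appears in $L(x)$.
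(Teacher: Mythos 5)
Your proof is correct and follows essentially the same (implicit) argument the paper has in mind: the corollary is an immediate consequence of the rank count $\ell_k = (k-1)!$ from Theorem \ref{ReutenauersBasis} together with the Taylor series for $-\log(1-x)$, with the conventions $\ell_0 = \ell_1 = 0$ handled exactly as you note.
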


In the spirit of the PBW theorem, we will now construct a new basis for the free $\Z$-module $C^{(k)}_{k-1}$ using the bases defined in Theorem \ref{ReutenauersBasis}. Our eventual goal is to prove that a certain subset of this basis spans the top homology group of the complex of injective words. 

\begin{theorem} \label{InjectiveWordBasis} 
Fix $k \geq 2$. For each subset $S\subseteq [k]$ with $|S| \geq 2$, let $B_S$ be the basis of $\cL_S$ of Theorem \ref{ReutenauersBasis}. For each singleton subset $S=\{a \} \subset [k]$, let $B_S = \{a\}$.  Put a total order $\leq$ on $\displaystyle B = \sqcup_{{S\subseteq [k]}} B_S.$ Then the set $\Pi$  of polynomials of the form 
$$P_1 P_2 \cdots P_m \qquad \text{ such that $[k]=S_1 \sqcup S_2 \sqcup \cdots \sqcup S_m$, $P_i \in B_{S_i}$, and  $P_1 < P_2 < \ldots < P_m \in B$}$$
is a $\Z$--basis for  $C^{(k)}_{k-1}$. 
\end{theorem}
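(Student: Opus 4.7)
The plan is to combine a cardinality count with a constructive spanning argument. Since any $k!$-element subset that $\Z$-spans a free $\Z$-module of rank $k!$ is automatically a basis---a surjection $\Z^{k!} \twoheadrightarrow \Z^{k!}$ between free $\Z$-modules of equal finite rank is always an isomorphism---it will suffice to verify both that $|\Pi|=k!$ and that $\Pi$ spans $C^{(k)}_{k-1}$.

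For the cardinality, I would first observe that $|B_S|=(|S|-1)!$ for every nonempty $S\subseteq[k]$ (by Theorem~\ref{ReutenauersBasis} when $|S|\geq 2$, and trivially for singletons since $0!=1$). Because the supports of the factors of any element of $\Pi$ partition $[k]$, the corresponding $P_i\in B_{S_i}$ are automatically pairwise distinct in $B$, and the total order on $B$ forces the unique strict ordering $P_1<\cdots<P_m$. Hence an element of $\Pi$ is equivalent data to a set partition $\pi=\{S_1,\ldots,S_m\}$ of $[k]$ together with an unordered choice of one element from each $B_{S_i}$, giving
\[ |\Pi|\;=\;\sum_{\pi}\prod_{S\in\pi}(|S|-1)!\;=\;k!, \]
where the last equality is the classical cycle-decomposition identity: each permutation of $[k]$ uniquely determines a set partition by its cycle supports, with $(|S|-1)!$ distinct cycles on each support $S$.

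For the spanning, I would prove the stronger claim that any product $Q_1 Q_2 \cdots Q_m$, where $\{S_1,\ldots,S_m\}$ is a set partition of $[k]$ and $Q_i\in B_{S_i}$ (\emph{in any order}), lies in the $\Z$-span of $\Pi$. Applied to $m=k$ with each $Q_i=w_i\in B_{\{w_i\}}$ a single letter, this shows every injective word $w_1 w_2 \cdots w_k$ lies in the $\Z$-span of $\Pi$, so $\Pi$ spans $C^{(k)}_{k-1}$. The proof is by outer induction on $m$ (base case $m=1$, where the product is already in $\Pi$) and, for $m\geq 2$, inner induction on the number of inversions of $(Q_1,\ldots,Q_m)$ with respect to $\leq$. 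If there are no inversions the product is in $\Pi$; otherwise, choosing an adjacent pair $Q_i>Q_{i+1}$, I would apply the graded commutation relation
\[ Q_i Q_{i+1} \;=\; (-1)^{|Q_i||Q_{i+1}|}\,Q_{i+1}Q_i \;+\; [Q_i,Q_{i+1}], \]
which rewrites $Q_1\cdots Q_m$ as the sum of $\pm Q_1\cdots Q_{i+1}Q_i\cdots Q_m$ (same $m$, one fewer inversion, handled by the inner hypothesis) plus $Q_1\cdots[Q_i,Q_{i+1}]\cdots Q_m$. Since the graded bracket preserves Lie polynomials, $[Q_i,Q_{i+1}]\in\cL_{S_i\sqcup S_{i+1}}$, and so Theorem~\ref{ReutenauersBasis} expresses it as a $\Z$-linear combination of elements of $B_{S_i\sqcup S_{i+1}}$; substituting turns this second term into a $\Z$-combination of products of only $m-1$ factors, handled by the outer hypothesis.

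The main care I expect to be required is in the inductive rewriting step: one needs to verify that the graded bracket of two multilinear Lie polynomials on disjoint alphabets is itself a multilinear Lie polynomial on their union (so that Theorem~\ref{ReutenauersBasis} applies), and confirm that the induction measure $(m,\mathrm{inv})$ strictly decreases under both kinds of substitution so that the recursion terminates.
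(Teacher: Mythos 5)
Your proposal is correct, and its spanning argument is in substance the same as the paper's: the rewriting identity $Q_iQ_{i+1}=(-1)^{|Q_i||Q_{i+1}|}Q_{i+1}Q_i+[Q_i,Q_{i+1}]$ is exactly the paper's Equation~(\ref{EqnTransposition}) rearranged, and your explicit inner induction on the number of inversions is just a cleaner bookkeeping of the paper's ``transpositions $(i\ i{+}1)$ generate $\fS_m$'' step, with the same outer induction on the number of factors $m$ absorbing the bracket term via Theorem~\ref{ReutenauersBasis}.

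Where you genuinely diverge is the independence half. The paper observes that $\Pi$ is a subset of Ross's super-PBW basis for $U(\cL)$ over a ring in which $2$ is invertible (Theorem~\ref{GradedPBW}), so $\Pi$ is linearly independent over $\Z[\tfrac12]$ and therefore over $\Z$. You instead avoid the PBW theorem entirely: you count $|\Pi|$ using the exponential formula $\sum_{\pi}\prod_{S\in\pi}(|S|-1)!=k!$ and then invoke the fact that a surjective endomorphism of a finitely generated $\Z$-module is an isomorphism. This buys a more elementary and self-contained proof, at the cost of a slightly delicate point you should make explicit: your cardinality computation really only shows that the map from pairs (partition, choice of Reutenauer basis element per block) onto $\Pi$ is a \emph{surjection} onto a set of size $\le k!$; the injectivity (distinctness of the resulting polynomials) is not immediate from the definitions. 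This is not a gap, because distinctness falls out automatically once you have spanning: a set of fewer than $k!$ elements cannot generate $\Z^{k!}$, so spanning forces $|\Pi|=k!$ and then a generating set whose size equals the rank is a basis. But as written, your first paragraph asserts $|\Pi|=k!$ as an input rather than deriving it as an output, so you should either prove the injectivity directly (e.g.\ by tracking a distinguished word, as the paper does in Proposition~\ref{EmbedTdb}) or reorder the logic so that the count gives only the upper bound $|\Pi|\le k!$ and the exact value is deduced at the end. Either way, the approach is sound, and the two ways of getting independence (PBW over $\Z[\tfrac12]$ versus cardinality plus the Noetherian-ring fact) are a nice illustration of the same result proved by different finiteness principles.
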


\begin{proof}  The set $\Pi$ is a subset of the basis given in Ross \cite[Theorem 2.311]{RossLieSuperalgebras}; the elements of $\Pi$  are linearly independent over $\Z[\frac12]$ and therefore over $\Z$. We must show that they span $C^{(k)}_{k-1}$. Assume without loss of generality that $1 < 2 < \ldots < k$ in our total order on $B$. Observe that the one element of $\Pi$ associated with the decomposition $[k] = \{1\} \sqcup \{2\} \sqcup \cdots \sqcup \{k\}$ is the single word $P=P_1 P_2 \cdots P_k = 1 2 3 \cdots k$. We wish to show all permutations of this word are also contained in the span of  $\Pi$. We proceed by induction. 

 Let $\Pi_m \subseteq \Pi$ be the subset of polynomials in $\Pi$ associated to a decomposition $[k]=S_1 \sqcup S_2 \sqcup \cdots \sqcup S_q$ with $q \leq m$. We prove by induction on $m$ that elements in the subset $\Pi_m$ span the space of all products of elements of $B$  (in any order) with $m$ or fewer factors. This is trivial when $m=1$; suppose $m>1$. Observe that, given a polynomial $P=P_1 P_2 \cdots P_m \in \Pi$ and a transposition $(i\; i+1) \in S_m$, we have: 
\begin{equation} \big(P_1 P_2 \cdots P_{i+1}P_i \cdots P_m \big)  = (-1)^{|P_i||P_{i+1}|} \left( \big(P_1 P_2 \cdots P_iP_{i+1} \cdots P_m \big) -\big( P_1 P_2 \cdots [P_i, P_{i+1}] \cdots P_m\big)\right). \label{EqnTransposition} \end{equation}
We may re-express $[P_i, P_{i+1}]$ as a linear combination of Reutenauer basis elements for  $\cL_{S_i \cup S_{i+1}}$, and by induction $\big(P_1 P_2 \cdots [P_i, P_{i+1}] \cdots P_m\big)$ is in the span of polynomials in $\Pi_{m-1}$. Since transpositions of the form $(i\; i+1)$ generate $S_m$, this implies that all $S_m$--permutations of the factors of $P=P_1 P_2 \cdots P_m$ are in the span of $\Pi_m$, which concludes our induction. In particular,  when $m=k$ all permutations of our word $P= 123\cdots k$ of length $k$ are contained in the span of $\Pi_k = \Pi$, so $C^{(k)}_{k-1}$ is contained in the span of $\Pi$ as claimed. 
\end{proof}

 Our next goal is to identify $H_{k-1}(C^{(k)}_*) \subseteq C^{(k)}_{k-1}$. We will show that the top homology group is spanned by certain polynomials we call $\cL$--products. 

        \begin{definition}  We call an element  $P$ of $C^{(k)}_{k-1}$ an \emph{$\cL$--product} if it has the following form. For some partition of $[k]=S_1 \sqcup S_2 \sqcup \cdots \sqcup S_m$, we can decompose $P$ as a product: $$P = P_1 P_2 \cdots P_m \qquad \text{with $P_i \in \cL_{S_i}$.}$$
\end{definition}

Note that, in contrast to  the elements of the basis $\Pi$ in Lemma \ref{InjectiveWordBasis}, $\cL$--products exclude factors $P_i$ that are a single letter. For example, the polynomial $$[1,2][3,4] = (12+21)(34+43) = (1234+1243+2134+2143)$$ is an $\cL$--product in $C^{(4)}_3$, but 
$$[1,[2,3]]4 = (1(23+32)-(23+32)1)4 = (1234+1324-2314-3214)$$ is  \emph{not} an $\cL$--product. The following proposition shows that all $\cL$--products are in the kernel of the differential $\cd$. 

\begin{proposition} Any  $\cL$--product in $C^{(k)}_{k-1}$ is a cycle. \label{PropLProductCycles}
\end{proposition}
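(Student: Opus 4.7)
The plan is to realize $\cd$ as a super-derivation on the full tensor algebra and exploit the fact that such a derivation annihilates iterated brackets of distinct letters. I would first extend $\cd$ to the free associative $\Z$-algebra on $[k]$ using the same alternating-sum-of-deletions formula on words of arbitrary length, with $\cd(a) = 1$ for each single letter $a$. A term-by-term analysis of $\cd(uv)$, sorting each deletion according to whether the deleted letter lies in $u$ or in $v$, gives the graded Leibniz rule
$$\cd(uv) = (\cd u)\, v + (-1)^{|u|}\, u\, (\cd v),$$
where $|u|$ denotes word length. A routine algebraic manipulation then promotes this to the super-derivation identity for the graded commutator of Definition \ref{DefnLieBracket}:
$$\cd[u, v] = [\cd u, v] + (-1)^{|u|}\,[u, \cd v].$$

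Next, I would prove by induction on word length that $\cd P = 0$ for every Lie polynomial $P$ in distinct letters with $|P| \geq 2$. The base case is $P = [a,b] = ab + ba$ for distinct letters (note $|a||b| = 1$, hence the plus sign), and direct computation gives $\cd P = (b - a) + (a - b) = 0$. For the inductive step, every such $P$ is a $\Z$-linear combination of iterated brackets $[Q, R]$ with $Q, R$ Lie polynomials in disjoint letter subsets, and the super-derivation identity expresses $\cd[Q, R]$ as a combination of $[\cd Q, R]$ and $[Q, \cd R]$. If the inner factor has word length at least two, the induction hypothesis kills it; if it is a single letter $c$, then $\cd c = 1$ has even superdegree, so $[1, R] = R - R = 0$. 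Either way both terms vanish.

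Finally, for an $\cL$-product $P = P_1 P_2 \cdots P_m$ with $P_i \in \cL_{S_i}$ and $|S_i| \geq 2$, iterating the Leibniz rule yields
$$\cd(P_1 P_2 \cdots P_m) = \sum_{i=1}^m (-1)^{|S_1| + \cdots + |S_{i-1}|}\, P_1 \cdots (\cd P_i) \cdots P_m,$$
and every summand vanishes by the preceding step, since each $P_i$ is a Lie polynomial in distinct letters of length at least two. The main place where care is required is bookkeeping the parity signs in the super-derivation framework, particularly in the base case, where individual letters carry odd superdegree; once those conventions are fixed, the induction runs mechanically and neither the Jacobi identity nor Reutenauer's basis is needed beyond the remark that every element of $\cL_S$ is a $\Z$-linear combination of iterated brackets in the letters of $S$.
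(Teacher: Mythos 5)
Your proof is correct and rests on the same two pillars as the paper's argument --- the graded Leibniz rule for $\cd$ and the base case $\cd([a,b]) = 0$ --- but it organizes the inductive step around a different intermediate lemma. The paper reduces, via Theorem \ref{ReutenauersBasis}, to the left-nested basis elements $[P, a_k]$ of $\cL_k$ and expands $\cd([P,a_k])$ by applying Leibniz term-by-term to $Pa_k$ and $a_kP$. You instead isolate the super-derivation identity $\cd[u,v] = [\cd u, v] + (-1)^{|u|}[u, \cd v]$ and run the induction over arbitrary brackets $[Q,R]$ of Lie polynomials on disjoint letter sets, closing the loop with the observation that $[\cd c, R] = [1,R] = 0$ when $c$ is a single letter since the unit has even degree. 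What this buys you is that Reutenauer's basis (and hence the Jacobi identity underlying its proof) is not needed at all; you use only the structural fact, coming from the multigrading of the free Lie superalgebra, that any element of $\cL_S$ is a $\Z$-linear combination of brackets of shorter Lie polynomials on disjoint subsets of $S$. The paper's version is more concrete and stays inside the chain complex $C^{(k)}_*$; yours is a touch cleaner conceptually and generalizes mechanically, at the modest cost of extending $\cd$ to the full tensor algebra and verifying the commutator identity.
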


Since the homology group $H_{k-1}(C^{(k)}_*)$ is the subgroup of cycles in $C^{(k)}_{k-1}$, we may view elements in the span of the $\cL$--products as homology classes. 

\begin{proof}[Proof of Proposition \ref{PropLProductCycles}] We will verify that elements of $\cL_k$ are contained in $\ker(\cd)$. 
Since the differential $\cd$ satisfies the Leibniz rule  on elements of $C^{(k)}_{k-1}$ $$\cd(ac) = \cd(a)c+(-1)^{|a|}a\cd(c),$$
it follows that products of these Lie polynomials are in the kernel of $\cd$. We will proceed by induction on $k$. When $k=2$  we have $\cL_2 = \Z[1,2]$ and
$$ \cd([1,2]) = \cd(12+21) = 2-1+1-2 = 0.$$ Now fix $k$ and suppose that any Lie polynomial of degree less than $k$ is mapped to zero by $\cd$. To show that $\cL_k \subseteq \ker(\cd)$, it suffices to check Lie polynomials of the form $[P, a_k]$ of Reutenauer's basis (Theorem \ref{ReutenauersBasis}). We have: 
\begin{align*}
&\cd([ P, a_k]) = \cd\Big( Pa_k - (-1)^{|P|} a_kP \Big) \\
& = \Big(\cd(P)a_k +(-1)^{|P|}P\cd(a_k)\Big)-(-1)^{|P|}\Big(\cd(a_k)P - a_k\cd(P) \Big) \\ 
&=  0+(-1)^{|P|}P\cd(a_k) -(-1)^{|P|}\cd(a_k)P + 0\qquad \qquad \qquad  \text{since $\cd P=0$ by the inductive hypothesis,} \\ 
&= (-1)^{|P|}( P  - P) \\
&=0.
\end{align*}
Thus the Lie polynomials in $\cL_k$ and their products are cycles, as claimed. 
\end{proof}

The next result gives a basis for the subgroup of $C^{(k)}_{k-1}$ spanned by $\cL$--products. Theorem \ref{TopHomology} will then show us that this subgroup is, in fact, precisely the top homology group $\T_k$. 

\begin{lemma} \label{LProductBasis} 
Fix a finite set $[k]$ with $k \geq 2$. As in Theorem \ref{InjectiveWordBasis}, for each subset $S\subseteq [k]$, let $B_S$ be the basis of $\cL_S$ of Theorem \ref{ReutenauersBasis}. Put a total order $\leq$ on $B := \cup_{S\subseteq [k], |S| \geq 2} B_S$. 
The set $\Pi^*$  of polynomials of the form 
$$P_1 P_2 \cdots P_m \qquad \text{ such that $[k]=S_1 \sqcup S_2 \sqcup \cdots \sqcup S_m$, $P_i \in B_{S_i}$, and  $P_1 < P_2 < \ldots < P_m \in B$}$$
form a basis for the subgroup of $C^{(k)}_{k-1}$ spanned by $\cL$--products. Moreover, this subgroup is a direct summand of $C^{(k)}_{k-1}$. 
\end{lemma}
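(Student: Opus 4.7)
The plan is to leverage Theorem \ref{InjectiveWordBasis} directly. Observe that $\Pi^*$ is precisely the subset of $\Pi$ obtained by restricting to ordered partitions $[k]=S_1 \sqcup \cdots \sqcup S_m$ in which every block $S_i$ satisfies $|S_i|\ge 2$; equivalently, we remove from $\Pi$ exactly those basis elements that have one or more singleton factors. Since $\Pi$ is a $\Z$-basis of $C^{(k)}_{k-1}$, this immediately gives both $\Z$-linear independence of $\Pi^*$ and the direct summand claim: a complement is furnished by the $\Z$-span of $\Pi \setminus \Pi^*$.

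The remaining task is to identify $\mathrm{span}_{\Z}(\Pi^*)$ with the subgroup generated by all $\cL$-products. One inclusion is tautological: each factor $P_i$ appearing in an element of $\Pi^*$ lies in $B_{S_i} \subseteq \cL_{S_i}$ with $|S_i|\ge 2$, so $\Pi^* \subseteq \{\cL\text{-products}\}$. For the reverse inclusion, let $Q = Q_1 \cdots Q_m$ be an arbitrary $\cL$-product with $Q_i \in \cL_{T_i}$. Since $\cL_{T_i}$ vanishes when $|T_i|\le 1$, we may assume each $|T_i|\ge 2$. I would first expand each $Q_i$ in the Reutenauer basis $B_{T_i}$ from Theorem \ref{ReutenauersBasis}, reducing to the case where $Q = R_1 \cdots R_m$ with $R_i \in B_{T_i}$; then apply the sorting argument from the proof of Theorem \ref{InjectiveWordBasis}: identity \eqref{EqnTransposition} allows transposing two adjacent factors at the price of a correction term $R_1 \cdots [R_i,R_{i+1}] \cdots R_m$ with one fewer factor.

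The only point requiring attention — and the mild obstacle in the argument — is verifying that this sorting procedure never escapes the class of $\cL$-products. This is exactly where the admissibility condition $|S_i|\ge 2$ pulls its weight: the bracket $[R_i,R_{i+1}]$ lies in $\cL_{T_i \cup T_{i+1}}$, and since $|T_i \cup T_{i+1}|\ge 4$, merging two admissible blocks produces another admissible block. An induction on $m$ then closes: the base case $m=1$ is immediate from $\cL_S = \mathrm{span}_{\Z}(B_S)$, and the inductive step rewrites any unordered product of admissible Reutenauer factors as a properly ordered element of $\Pi^*$ plus $\cL$-products with strictly fewer factors. Overall, the proof is a controlled restriction of the proof of Theorem \ref{InjectiveWordBasis}, with the essential bookkeeping being that the size-$\ge 2$ condition is preserved under every merge that arises during sorting.
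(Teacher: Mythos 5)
Your proof is correct and takes essentially the same route as the paper: linear independence and the direct summand claim follow because $\Pi^*\subseteq\Pi$, and spanning is proved by the same factor-sorting induction via Equation~\eqref{EqnTransposition}. The one addition you make — explicitly noting that $[R_i,R_{i+1}]\in\cL_{T_i\cup T_{i+1}}$ with $|T_i\cup T_{i+1}|\ge 4$, so the admissibility condition $|S_i|\ge 2$ is stable under merges — is a useful clarification that the paper leaves implicit.
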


Note that, in contrast to Theorem \ref{InjectiveWordBasis}, our generating set $B$ excludes all words of length $1$. 

\begin{proof} Because $\Pi^*$ is a subset of the basis $\Pi$ for $C_{k-1}^{(k)}$ of Theorem \ref{InjectiveWordBasis}, the polynomials in $\Pi^*$  must be linearly independent, and their span must be a direct summand of $C_{k-1}^{(k)}$.  Each polynomial in $\Pi^*$ is an $\cL$--product, so it remains to show that they span. As in the proof of Theorem \ref{InjectiveWordBasis}, we need to show that any permutation of the factors of an element $P_1 P_2 \cdots P_m$ of $\Pi^*$ is in the span of $\Pi^*$, and we may use the same induction argument from Theorem \ref{InjectiveWordBasis}. 
Again let $\Pi^*_m \subseteq \Pi^*$ be the subset of polynomials of $\Pi$ with at most $m$ factors; we prove by induction that $\Pi^*_m$ spans the space of $\cL$--products with $m$ or fewer factors. When $m=1$, the polynomials $P_1$ are precisely the elements in Reutenauer's basis for $\cL_k$ (Theorem \ref{ReutenauersBasis}). For $m>1$,  Equation (\ref{EqnTransposition}) in the proof of Theorem \ref{InjectiveWordBasis} again completes the inductive step, which concludes our proof. 
\end{proof}

To prove that the subgroup of the chains $C^{(k)}_{k-1}$ given in Lemma \ref{LProductBasis} is in fact the entire top homology group, we will compare their ranks. We now use an Euler characteristic argument to compute the rank of $\T_k$. 

\begin{proposition} \label{RankTopHomology} 
The top homology group of the complex of injective words is a free abelian group with rank
$$ \mathrm{rank} \T_k= \frac{k!}{0!} - \frac{k!}{1!} + \frac{k!}{2!} - \frac{k!}{3!} + \cdots + (-1)^{k-2} \frac{k!}{(k-2)!} + (-1)^{k-1} \frac{k!}{(k-1)!} + (-1)^{k} \frac{k!}{k!}.$$ 
The exponential generating function for the ranks of these groups is: $$H(x)=\frac{e^{-x}}{1-x}.$$ 
\end{proposition}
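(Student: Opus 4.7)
The plan is to compute the rank of $H_{k-1}(C^{(k)}_*)$ by an Euler characteristic argument, using that this is the \emph{only} nontrivial reduced homology group of the complex. First, observe that $H_{k-1}(C^{(k)}_*)$ is automatically a free abelian group: since there are no injective words of length $k+1$ in $[k]$, the group $C^{(k)}_k$ vanishes, so $H_{k-1}(C^{(k)}_*)$ is identified with the kernel of $\cd: C^{(k)}_{k-1} \to C^{(k)}_{k-2}$, a subgroup of a free abelian group. By Farmer's connectivity theorem (Theorem \ref{Farmer}), together with the fact that $||\Inj_\bullet(k)||$ has dimension $k-1$, all other reduced homology groups vanish, so the reduced Euler characteristic of $C^{(k)}_*$ equals $(-1)^{k-1}\mathrm{rank}\, H_{k-1}(C^{(k)}_*)$.

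Next I would compute the ranks of the chain groups directly: for $q \geq 0$, an element of $C^{(k)}_q$ is an injective word of length $q+1$ on $[k]$, giving rank $k!/(k-q-1)!$, while rank$(C^{(k)}_{-1}) = 1$ by convention. Equating the two expressions for the Euler characteristic yields
\[ (-1)^{k-1} \mathrm{rank}\, H_{k-1}(C^{(k)}_*) = -1 + \sum_{q=0}^{k-1}(-1)^q \frac{k!}{(k-q-1)!}.\]
After substituting $j = k-q-1$ in the sum and simplifying the resulting alternating signs, the right-hand side becomes $(-1)^{k-1}\sum_{j=0}^{k}(-1)^j \frac{k!}{j!}$, which gives the claimed formula.

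Finally, to identify the exponential generating function, I would directly manipulate the sum
\[ H(x) = \sum_{k \geq 0} \frac{\mathrm{rank}\, H_{k-1}(C^{(k)}_*)}{k!} x^k = \sum_{k \geq 0} \left(\sum_{j=0}^k \frac{(-1)^j}{j!}\right) x^k = \sum_{j \geq 0} \frac{(-1)^j}{j!} \sum_{k \geq j} x^k = \frac{1}{1-x} \sum_{j \geq 0} \frac{(-x)^j}{j!} = \frac{e^{-x}}{1-x},\]
where the double sum is interchanged by absolute convergence on $|x| < 1$. (Equivalently, one can recognize $k!\sum_{j=0}^k(-1)^j/j!$ as the number of derangements of $[k]$, whose EGF is classically $e^{-x}/(1-x)$.) There is no real obstacle here beyond bookkeeping the signs carefully in the Euler characteristic re-indexing step; this is a routine calculation once Farmer's connectivity result is invoked.
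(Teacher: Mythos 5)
Your proposal is correct and takes essentially the same approach as the paper: both invoke Farmer's connectivity theorem (plus the dimension bound) to concentrate the reduced homology in degree $k-1$, and then compute the rank via the Euler characteristic of the augmented chain complex. The only difference is cosmetic: for the exponential generating function, the paper derives the recurrence $h_k = kh_{k-1} + (-1)^k$ and solves the resulting functional equation $H(x) = xH(x) + e^{-x}$, whereas you interchange the order of summation in the double sum directly — either is fine, and your observation that freeness of $H_{k-1}(C^{(k)}_*)$ follows simply from its being the kernel of $\cd$ (since $C^{(k)}_k = 0$) is a clean way to make that point precise.
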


\begin{proof}
Since the group $C^{(k)}_q$ has rank $[\fS_k:\fS_{k-q-1}] = \frac{k!}{(k-q-1)!}$, the Euler characteristic of the chain complex  $C^{(k)}_*$ is:  
$$ \chi =  -\frac{k!}{k!} + \frac{k!}{(k-1)!} - \frac{k!}{(k-2)!} + \ldots + (-1)^{k-3} \frac{k!}{2!} + (-1)^{k-2} \frac{k!}{1!} + (-1)^{k-1} \frac{k!}{0!} . $$ 
Farmer's results imply that  the homology of the complex $C^{(k)}_*$ is a free abelian group concentrated in degree $(k-1)$; see Theorem \ref{Farmer}.  It follows that its Euler characteristic is $(-1)^{k-1} h_k$. Thus,

$$ h_k =  \frac{k!}{0!} - \frac{k!}{1!} + \frac{k!}{2!} - \cdots + (-1)^{k-2} \frac{k!}{(k-2)!} +  (-1)^{k-1} \frac{k!}{(k-1)!} + (-1)^{k} \frac{k!}{k!}.$$ 

\noindent By inspection, the sequence $\{h_k\}$ satisfies the relation $h_k = k h_{k-1} + (-1)^k$ for $k\geq 1$. 
  
Since $h_0=1$, we infer that its exponential generating function $H(x)$ satisfies the relation: \begin{align*}
H(x) -1 & = \sum_{k \geq 1} h_k \frac{x^k}{k!} \\
&= \sum_{k \geq 1}  k h_{k-1} \frac{x^k}{k!} + \sum_{k \geq 1}  (-1)^k \frac{x^k}{k!}  \\ 
&=  x H(x)+e^{-x}- 1.
\end{align*} Solving for $H(x)$ gives:
\begin{align*} H(x) &=\frac{e^{-x}}{1-x} \\
&= 1 + (0) \frac{x}{1!}  + (1)\frac{x^2}{2!}  + (2)\frac{ x^3}{3!} + (9)\frac{x^4}{4!}  + (44)\frac{x^5}{5!}  + (265)\frac{x^6}{6!} + (1854)\frac{x^7}{7!}  + \cdots.  && \qedhere \end{align*}
\end{proof}

\begin{theorem} \label{TopHomology} 
$\T_k$ is equal to the subgroup of  $C^{(k)}_{k-1}$ spanned by $\cL$--products. It  has a $\Z$--module basis given by Lemma \ref{LProductBasis}.
\end{theorem}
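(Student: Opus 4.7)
The plan is to combine the three preceding results. By Proposition \ref{PropLProductCycles}, the subgroup $A \subseteq C^{(k)}_{k-1}$ spanned by $\cL$-products lies in $\ker(\cd) = H_{k-1}(C^{(k)}_*)$. By Lemma \ref{LProductBasis}, $A$ is free on the basis $\Pi^*$ and is a direct summand of $C^{(k)}_{k-1}$. Since $H_{k-1}(C^{(k)}_*)$ is a subgroup of the free abelian group $C^{(k)}_{k-1}$, it is itself free. Choosing a complement $C^{(k)}_{k-1} = A \oplus A'$ and intersecting with $H_{k-1}(C^{(k)}_*)$ yields a splitting $H_{k-1}(C^{(k)}_*) = A \oplus \bigl(H_{k-1}(C^{(k)}_*) \cap A'\bigr)$. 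So it will suffice to show that $A$ and $H_{k-1}(C^{(k)}_*)$ have the same finite rank: then $H_{k-1}(C^{(k)}_*) \cap A'$ is a free abelian group of rank zero, hence trivial, forcing $A = H_{k-1}(C^{(k)}_*)$.

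To count $|\Pi^*|$, observe that by Lemma \ref{LProductBasis} each element of $\Pi^*$ is determined by an unordered set partition $[k] = S_1 \sqcup \cdots \sqcup S_m$ with all $|S_i| \geq 2$, together with a choice of Reutenauer basis element in $B_{S_i}$ for each $i$; the total order on $B$ then fixes the unique ordering $P_1 < \cdots < P_m$. Since $|B_{S_i}| = (|S_i|-1)!$ by Theorem \ref{ReutenauersBasis}, the exponential formula for EGFs gives
\[
\sum_{k \geq 0} |\Pi^*_k|\, \frac{x^k}{k!} \;=\; \exp\!\bigl(L(x)\bigr) \;=\; \exp\!\bigl({-}\log(1-x) - x\bigr) \;=\; \frac{e^{-x}}{1-x},
\]
using the computation of $L(x)$ from Corollary \ref{LieNGeneratingFunction}. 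This matches the generating function $H(x) = e^{-x}/(1-x)$ for $h_k = \mathrm{rank}\,H_{k-1}(C^{(k)}_*)$ from Proposition \ref{RankTopHomology}, so $|\Pi^*_k| = h_k$ and the rank comparison is complete.

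The main work is already behind us: isolating the cycle subspace (Proposition \ref{PropLProductCycles}), extracting the well-behaved basis $\Pi^*$ compatible with the direct-summand structure (Lemma \ref{LProductBasis}), and computing the top Betti number via Euler characteristic (Proposition \ref{RankTopHomology}). Combinatorially the rank identity is transparent: an element of $\Pi^*$ records a set partition of $[k]$ into blocks of size $\geq 2$ together with a cyclic ordering of each block (Reutenauer's basis of $\cL_S$ is in bijection with cyclic orderings of $S$), and this is precisely the cycle decomposition of a derangement in $\fS_k$. The generating function step above formalizes this bijection, and the splitting argument then upgrades the inclusion $A \subseteq H_{k-1}(C^{(k)}_*)$ to equality, which is the statement of the theorem.
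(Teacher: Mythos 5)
Your proposal is correct and follows essentially the same strategy as the paper's proof: reduce to a rank comparison via the direct-summand structure from Lemma \ref{LProductBasis}, then identify the exponential generating function of $|\Pi^*|$ with $e^{L(x)} = e^{-x}/(1-x) = H(x)$ to match Proposition \ref{RankTopHomology}. Your rendering of the splitting step (choosing a complement $A'$ and intersecting with the cycle group) and your invocation of the exponential formula are just slightly more explicit phrasings of what the paper does.
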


\begin{proof}
Because the subgroup spanned by the $\cL$--products is a direct summand of $C^{(k)}_{k-1}$ by Lemma \ref{LProductBasis}, to prove the theorem it is enough to prove that its rank is equal to the rank of $H_{k-1}(C^{(k)}_*)$. Recall for $k \geq 2$ the basis given in Lemma \ref{LProductBasis},
$$P_1 P_2 \cdots P_m \qquad \text{ such that $[k]=S_1 \sqcup S_2 \sqcup \cdots \sqcup S_m$, $P_i \in B_{S_i}$, and  $P_1 < P_2 < \ldots < P_m \in B$}.$$
In Theorem \ref{ReutenauersBasis} we saw that $|B_{a}|$ has order $\ell_a=(a-1)!$. The number of ways to decompose $[k]$ into subsets of orders $a_1, a_2, \ldots, a_m$ is ${ k \choose a_1, a_2, \ldots, a_m}$, and the number of products of Reutenauer basis elements for these subsets (where factors can appear in any order) is ${ k \choose a_1, a_2, \ldots, a_m} \ell_{a_1} \ell_{a_2} \cdots \ell_{a_m}$. The number of products with factors in ascending order is $\frac{1}{m!} { k \choose a_1, a_2, \ldots, a_m} \ell_{a_1} \ell_{a_2} \cdots \ell_{a_m}$. Hence the basis $\Pi^*$ for the space of $\cL$--products given in Lemma \ref{LProductBasis} has  cardinality: 
$$ \ell_k + \frac1{2!} \sum_{a+b=k} {k \choose a,b} \ell_a \ell_b + \frac1{3!} \sum_{a+b+c=k} {k \choose a,b,c} \ell_a \ell_b  \ell_c + \frac1{4!} \sum_{a+b+c+d=k} {k \choose a,b,c, d} \ell_a \ell_b  \ell_c \ell_d + \cdots $$ 
This implies that the exponential generating function for the rank of this space is given by exponentiating the generating function $L(x) = -\log(1-x)-x$ for $\ell_k$ found in Corollary \ref{LieNGeneratingFunction}. But 
$$ e^{L(x)} = \frac{e^{-x}}{1-x} = H(x),$$ where $H(x)$ is the exponential generating function found in Proposition \ref{RankTopHomology}, and so we conclude that for $k\geq 1$ the cardinality of the basis $\Pi^*$ is equal to the rank of $H_{k-1}(C^{(k)}_*)$. Hence $H_{k-1}(C^{(k)}_*)$ is equal to the subgroup of  $C^{(k)}_{k-1}$ spanned by $\cL$--products. 
\end{proof}

\begin{remark} \label{RemarkDerangements}
We remark that Theorem \ref{TopHomology} and the basis for $\Top_k$ given in Lemma \ref{LProductBasis} make it apparent that the rank of $\Top_k$ will be equal to the number of derangements of $\fS_k$. The Reutenauer basis for $\Lie_S$, $|S|=k$ of Theorem \ref{ReutenauersBasis} are the $(k-1)!$ elements $\{ [[[ \cdots[a, a_2], a_3], \ldots], a_{k-1}], a_k ] \}$ where $a$ denotes the lexicographically first element of $S$ and  all permutations of the remaining elements $a_i$ of $S$ appear. Then the map $$[[[ \cdots[a, a_2], a_3], \ldots], a_{k-1}], a_k ] \longmapsto (a \, a_2 \, a_3 \, \cdots \, a_k) $$ identifies the Reuntenauer basis elements with the set of $k$-cycles on $S$. Extending this map to the basis in Lemma \ref{LProductBasis} identifies each basis element with a permutation without 1-cycles, written in cycle notation, with cycles ordered lexicographically.  We have a naturally defined bijection between our basis for $\Top_k$ and the set of derangements on $k$ letters. This bijection, however, is not $\fS_k$-equivariant. \end{remark}

\subsection{Secondary injective word complexes} \label{subsecsecworcomplex} 
 
 In this subsection, we define a chain complex called the secondary injective words chain complex of a $\bigwedge\,( \Sym^2 R)$-module. This chain complex should be thought of as a central stability complex for $\bigwedge \,(\Sym^2 R)$-modules and this complex will appear on the $E^2$-page of a certain spectral sequence. 
 
Recall that if $A=\{a,b\}$ is a 2-element set, then $\Lie_A \cong R$ is the free $R$-module on the graded Lie bracket $[a,b]=ab+ba$, that is, $\Lie_A$ is a rank-one trivial $\fS_2$-representation.

\begin{definition}
Let $\mathcal V$ be a $\FIM^{+}$-module, $S$ a set of cardinality $k$. Let 
\begin{align*} \Inj^2_p(\mathcal V)_S & : =\Ind^{\fS_k}_{(\fS_2)^{p+1}  \times \fS_{k-2p-2}} \Lie_2^{ \boxtimes (p+1)} \boxtimes \mathcal V_{k-2p-2} \\ 
& = \bigoplus_{ \substack{\text{ordered partitions} \\  S=A_0 \sqcup A_1 \sqcup \cdots \sqcup A_{p} \sqcup B \\  |A_i|=2, \; |B|=k-2p-2}}  \Lie_{A_0} \otimes \Lie_{A_1} \otimes \cdots \otimes \Lie_{A_{p}} \otimes \mathcal V_B.
\end{align*}

\noindent These groups assemble to form a chain complex as follows. Define maps 
\begin{align*} d_i :  \Inj^2_p(\mathcal V)_S &\longrightarrow \Inj^2_{p-1}(\mathcal V)_S  \qquad \qquad  (i=0, \ldots, p) \\ 
\Lie_{A_0} \otimes \Lie_{A_1} \otimes \cdots \otimes \Lie_{A_{p}} \otimes \mathcal V_B &\longrightarrow  \Lie_{A_0} \otimes \Lie_{A_1} \otimes \cdots \otimes \widehat{\Lie_{A_i}} \otimes \cdots \otimes \Lie_{A_{p-1}} \otimes \mathcal V_{B \sqcup A_i}
\end{align*}
as follows: let $d_i$ act by the identity on the tensor factors $\Lie_{A_0}$, $\Lie_{A_1}, \ldots, \widehat{\Lie_{A_i}}, \ldots  \Lie_{A_{p-1}} $, and act on the factor $\mathcal V$ by the map $\mathcal V_B \to \mathcal V_{B \sqcup A_i}$ induced by the FIM$^+$ morphism associated to the inclusion ($B \hookrightarrow B \sqcup A_i$, $A_i$).

Because the composition of maps $\mathcal V_B \to \mathcal V_{B \sqcup A_i} \to \mathcal V_{B \sqcup A_i \sqcup A_j} $ is the negative of the composition  
$\mathcal V_B \to \mathcal V_{B \sqcup A_j} \to \mathcal V_{B \sqcup A_j \sqcup A_i}$, we must take the sum of the maps $d_i$ (instead of the alternating sum) to obtain a chain complex. Let $\Inj^2_*(\mathcal V)_S$ denote the chain complex with differentials given by the sum of the maps $d_i$. 
\end{definition}

\begin{proposition} \label{StructureInj2M(d)}
There is an isomorphism of chain complexes (which is non-equivariant with respect to the permutation group of $S$): $$\Theta: \bigoplus_{\substack{ f \in \Hom_{\FI}([d],S) \\ Z  \text{ a perfect matching on }S \setminus \im(f) }} \Inj_*(M(0))_Z \m \Inj^2_* \left( M^{\FIM^+}(d) \right)_S.$$ 
\end{proposition}

Proposition \ref{StructureInj2M(d)} and Theorem \ref{Farmer} imply that $\Inj^2_*( M^{\FIM^+}(d))_S$ is highly acyclic.

\begin{corollary} \label{free2connected}
The homology groups $H_i(\Inj^2_ *(M^{\FIM^+}(d)))_S \cong 0$ if $i \leq \left(\frac{|S|-d}{2}-2\right)$. 
\end{corollary}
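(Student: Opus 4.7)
The proof will be almost immediate from the two cited inputs, so my plan is simply to combine them cleanly.

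First I would invoke Proposition \ref{StructureInj2M(d)} to rewrite
\[
\Inj^2_*\!\left(M(d)^{\FIM^+}\right)_S \;=\; \bigoplus_{\substack{ f \in \Hom_{\FI}([d],S) \\ Z \text{ a perfect matching on } S \setminus \im(f)}} \Inj^+_*(Z),
\]
so that homology commutes with the direct sum and it suffices to bound the connectivity of each summand $\Inj^+_*(Z)$ individually.

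Next, I would identify the parameter governing the connectivity of each summand. For any $f \in \Hom_{\FI}([d],S)$ the complement $S \setminus \im(f)$ has cardinality $|S|-d$, so a perfect matching $Z$ on this complement has exactly $\tfrac{|S|-d}{2}$ elements (in particular, the sum is empty unless $|S|-d$ is even and nonnegative, in which case the bound is vacuous anyway). Applying Corollary \ref{Inj+HighlyConnected} to the oriented injective-word complex on the set $Z$ then gives $H_i(\Inj^+_*(Z)) \cong 0$ whenever $i \leq |Z| - 2 = \tfrac{|S|-d}{2} - 2$.

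Combining the two observations, every summand in the direct-sum decomposition is acyclic in the claimed range, hence so is $\Inj^2_*(M(d)^{\FIM^+})_S$. The only subtlety is the slight notational confusion around the symbol $Z$ (it denotes a perfect matching, but it is treated as a set of pairs when plugged into $\Inj^+_*$), so I would make a line explicit that $|Z|=\tfrac{|S|-d}{2}$ to justify the arithmetic of the connectivity bound. There is no real obstacle here; this corollary is purely a bookkeeping consequence of Proposition \ref{StructureInj2M(d)} and Corollary \ref{Inj+HighlyConnected}.
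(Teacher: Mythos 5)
Your proof is correct and is exactly the argument the paper intends: the paper simply states that Proposition \ref{StructureInj2M(d)} and Corollary \ref{Inj+HighlyConnected} together imply the corollary, without writing out the bookkeeping. Your care in noting that $|Z|=\tfrac{|S|-d}{2}$ (counting pairs in the matching, not elements of the underlying set) is the right point to make explicit, and the direct-sum decomposition commutes with homology as you say.
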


\begin{proof}[Proof Proposition \ref{StructureInj2M(d)}] Let $k=|S|$; we may assume $k \equiv d$ (mod 2) or both chain complexes are zero. By an \emph{order} on a matching $\{ \{x_1,y_1\},\ldots, \{x_l,y_l\} \}$, we mean a bijection to $[l]$. Choose an order on every perfect matching of every $(k-d)$-element subset of $S$.  The map $\Theta$ that we will construct will not be equivariant with respect to the $\fS_k$-action and will depend on the choice these choices of orderings, but only up to sign. 

 Fix a homological degree $p$ and let $f \in \Hom_{\FI}([d],S)$. Let $Z$ be  a perfect matching on $S\setminus \im(f) $ and let $z$ be an injective word in $Z$ of length $(p+1)$. The data $(f,Z,z)$ specifies a generator of the domain of $\Theta$.  Let $\{a_j,b_j\}$ denote the 2-element set that is the $j$th letter of $z$. Let 
 $$z'= [a_0,b_0] \otimes \ldots \otimes [a_{p},b_{p}] \in \Lie_{\{a_{0},b_{0}\}} \otimes \ldots \otimes \Lie_{\{a_{p},b_{p}\}}.$$ Let $$w=\{ a_{p+1},b_{p+1} \} \wedge \ldots \wedge \{ a_{\frac{k-d-2}{2}},b_{\frac{k-d-2}{2}}\} $$ be an oriented perfect matching of the set of elements of $Z$ not appearing in $z$, written in an arbitrary order (see Definition \ref{DefFIM+}). Up to sign, this depends on choice of ordering on the set   $$\big \{ \{ a_{p+1},b_{p+1} \} , \ldots , \{ a_{\frac{k-d-2}{2}},b_{\frac{k-d-2}{2}}\} \big \}.$$  Our construction will ultimately be independent of this choice of order, in contrast to the choices of orders made in the first paragraph. Let 
 $Q=S \setminus \{a_0,b_0,\ldots , a_{p},b_{p} \}$ and let $F=(f,w)$. Using the isomorphism $$M^{\FIM^+}(d)_{Q} \cong \Hom_{\FIM^+}([d],Q)$$ described in Proposition \ref{altDescriptionOfFree}, we may view $F$ as an element of $M^{\FIM^+}(d)_{Q}$. Let $\sigma: Z \m Z$ be the permutation from our pre-selected order on $Z$ to the order $\{a_0,b_0\}, \ldots, \{a_{\frac{k-d-2}{2}},b_{\frac{k-d-2}{2}}\}$ defined by $(z', w)$. Define $\Theta$ via the formula $$\Theta(f,Z,z):=(-1)^{\frac{(p+1)(p)}{2}} \mr{sign}(\sigma) \left( z' \boxtimes F\right)$$ and extend linearly.  We will check that this map is well defined, an isomorphism of abelian groups and a map of chain complexes. 

To see that $\Theta$ is well defined, it suffices to check that it does not depend on the choice of order on the set $$\big \{ \{ a_{p+1},b_{p+1} \} , \ldots , \{ a_{\frac{k-d-2}{2}},b_{\frac{k-d-2}{2}}\} \big \}.$$  This is the reason that we included a $\mr{sign}(\sigma)$ term in the definition of $\Theta$; permuting two terms of the oriented matching $ \{ a_{p+1},b_{p+1} \} \wedge \ldots \wedge \{ a_{\frac{k-d-2}{2}},b_{\frac{k-d-2}{2}}\} $ gives a minus sign which exactly cancels the sign change in the $\mr{sign}(\sigma)$ term.

 We now check that this map is an isomorphism of abelian groups. Both $$ \bigoplus_{\substack{ f \in \Hom_{\FI}([d],S) \\ Z  \text{ a perfect matching on }S \setminus \im(f) }} \Inj_p(M(0))_Z \hspace{.2in} \text{   and   } \hspace{.2in} \Inj^2_p \left( M^{\FIM^+}(d) \right)_S$$ are isomorphic as abelian groups to the free abelian group on the set \[\{ (f,Z,z) \; | \; f:[d] \hookrightarrow S, Z \text{ a perfect matching on } S \setminus \im(f) , z \text{ an injective word of length }(p+1) \text{ in }Z    \} .\] This gives bases of the domain and the codomain of $\Theta$. Up to sign, $\Theta$ maps one basis to the other basis and so it is an isomorphism of abelian groups.

All that remains is to check that $\Theta$ is a map of chain complexes. The differential on the domain of $\Theta$ is an alternating sum of maps which we called $d_i$ and the differential on the codomain is a (non-alternating) sum of maps which we also called $d_i$. Thus, it suffices to check that $d_i \circ \Theta =(-1)^i \Theta \circ d_i$.  We will continue to use the notation of the second paragraph. We have  $$d_i(f,Z,z) =\Big (f,Z, \{a_0,b_0 \} , \cdots, \widehat{ \{a_i,b_i \} }, \cdots,  \{a_p,b_p \}  \Big ) \in \Inj_{p-1}(M(0))_Z .$$ Let 
 $$z'_i= [a_0,b_0] \otimes \ldots \otimes \widehat{[a_i,b_i]} \otimes \ldots \otimes [a_{p+1},b_{p+1}] \in \Lie_{\{a_{0},b_{0}\}} \otimes \ldots \otimes \widehat{\Lie_{ \{ a_i,b_i \} }  } \otimes \ldots \otimes \Lie_{\{a_{p},b_{p}\}},$$ $$ w_i= \{a_i,b_i\}  \wedge \{ a_{p+1},b_{p+1} \} \wedge \ldots \wedge \{ a_{\frac{k-d-2}{2}},b_{\frac{k-d-2}{2}}\} $$ and $F_i=(f,w_i) \in M^{\FIM^+}_{Q \cup \{a_i,b_i \}}$. Let $\sigma_i: Z \m Z$ be the permutation from our pre-selected order on $Z$ to the order $$\{a_0,b_0\}, \ldots,\widehat{\{ a_i,b_i \}  } ,  \ldots, \{ a_p,b_p \}, \{a_i,b_i \} , \{a_{p+1}, b_{p+1} \} , \ldots ,    \{a_{\frac{k-d-2}{2}},b_{\frac{k-d-2}{2}}\}.$$ We have  $$\Theta(d_i(f,Z,z))=(-1)^{\frac{(p)(p-1)}{2}} \mr{sign}(\sigma_i) \left( z_i' \boxtimes F_i\right).$$ In contrast, $$ d_i(\Theta(f,Z,z)) =(-1)^{\frac{(p+1)(p)}{2}} \mr{sign}(\sigma) \left( z_i' \boxtimes F_i\right).$$  Since $ \sigma_i$ can be obtained from $\sigma$ by composing with $(p-i)$ simple transpositions, we see that $\mr{sign}(\sigma_i)=(-1)^{p-i} \mr{sign}(\sigma)$, and the claim follows. 
\end{proof}

Using Theorem \ref{NSSnoth}, will prove a vanishing result for $H_*(\Inj^2_*(\mathcal V))$ for $\mathcal V$ finitely generated and $R$ a field of characteristic zero.

\begin{proposition} \label{connected2}
Let $R$ be a field of characteristic zero and let $\mathcal V$ be a finitely generated $\bigwedge\,(\Sym^2 R)$-module. For each $p$, there is a number $N^{\mathcal V}_p$ such that if $|S|>N^{\mathcal V}_p$, the homology group $H_p(\Inj^2_*(\mathcal V))_S $ vanishes. 
\end{proposition}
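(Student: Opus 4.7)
The plan is to run a double-complex argument combining the free resolution provided by Corollary \ref{freeResolution} with the high acyclicity of $\Inj^2_*$ on free modules (Corollary \ref{free2connected}). First I would choose, via Corollary \ref{freeResolution}, a resolution
$$\cdots \longrightarrow F_2 \longrightarrow F_1 \longrightarrow F_0 \longrightarrow \mathcal V \longrightarrow 0$$
of $\mathcal V$ by finitely generated free $\FIM^+$-modules of the form $F_i = \bigoplus_{j=d_i}^{e_i}(M(j)^{\FIM^+})^{m_{ij}}$. I would then form the first-quadrant double complex
$$C_{p,q} := \Inj^2_q(F_p)_S,$$
with horizontal differential induced by the resolution and vertical differential from the $\Inj^2_*$ differential.

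The key observation is that, for fixed $S$, the assignment $\mathcal W \mapsto \Inj^2_q(\mathcal W)_S$ is an exact endofunctor of $\bigwedge\,(\Sym^2 R)$-modules: by its definition as a finite direct sum of terms of the form $\Lie_{A_0}\otimes \cdots \otimes \Lie_{A_q}\otimes \mathcal W_B$, it is simply the composition of the exact evaluation functors $\mathcal W \mapsto \mathcal W_B$ with tensor products by the rank-one free $R$-modules $\Lie_{A_i}$. Consequently, applying $\Inj^2_q$ to the resolution preserves exactness for every $q$.

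I would then compare the two spectral sequences of $C_{p,q}$, both of which converge (since $C$ is first-quadrant) to the homology of the total complex. Filtering horizontally first, the exactness just noted produces $\Inj^2_q(\mathcal V)_S$ concentrated in column $p=0$ on the $E^1$-page; the sequence therefore collapses at $E^2$ and identifies $H_n(\mathrm{Tot}\,C) \cong H_n(\Inj^2_*(\mathcal V))_S$. Filtering vertically first gives $E^1_{p,q} = H_q(\Inj^2_*(F_p))_S$, and by Corollary \ref{free2connected} applied summand by summand this group vanishes as soon as $|S| \geq 2q + e_p + 4$. Setting
$$N^{\mathcal V}_p := \max_{0 \leq q \leq p}\bigl(2q + e_{p-q} + 4\bigr),$$
whenever $|S| > N^{\mathcal V}_p$ every $E^1_{p',q'}$ with $p'+q' = p$ vanishes, so the filtration quotients $E^\infty_{p',q'}$ do as well, and hence $H_p(\Inj^2_*(\mathcal V))_S = 0$.

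The only substantive technical point is the exactness of $\Inj^2_q(-)_S$, which is a routine unpacking of the definition; convergence of both spectral sequences is automatic for a first-quadrant double complex, so the argument is essentially a formal consequence of the two inputs from the previous subsections. I do not expect to extract explicit bounds for $N^{\mathcal V}_p$ sharper than the crude one above without additional control on the syzygies of $\bigwedge\,(\Sym^2 R)$-modules analogous to \cite[Theorem A]{CE}.
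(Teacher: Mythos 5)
Your argument is precisely the proof given in the paper: take a free resolution via Corollary~\ref{freeResolution}, note that each $\Inj^2_q$ is an exact functor, form the double complex $\Inj^2_*(C_*)$, and compare the two spectral sequences --- one computes $H_*(\Inj^2_*(\mathcal V))_S$ by collapse, the other vanishes in a range by Corollary~\ref{free2connected}. One bookkeeping caveat on your explicit bound: $\Inj^2_*$ is an augmented complex indexed from $q=-1$, so the antidiagonal $p'+q'=p$ also includes the term $(p',q')=(p+1,-1)$ and your $N^{\mathcal V}_p$ should involve $e_{p+1}$ as well; but the existence of \emph{some} bound, which is all the proposition asserts, is unaffected, and the paper itself does not attempt an explicit formula.
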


\begin{proof}

First we will use Theorem \ref{NSSnoth} to construct a free resolution of $\mathcal V$. That is, we will show there are integers $d_i$,$e_i$, $m_{ij}$ and maps making the following an exact sequence of $\bigwedge \,(\Sym^2 R)$-modules: 
$$\ldots  \m \bigoplus_{j=d_1}^{e_1} (M^{\FIM^+}(j))^{m_{1j}} \m \bigoplus_{j=d_0}^{e_0} (M^{\FIM^+}(j))^{m_{0j}} \m \mathcal V   \m 0. $$ Suppose for the purposes of induction that we have constructed the first $k$ stages of a resolution by modules of the form $\bigoplus_{j=d_i}^{e_i} (M^{\FIM^+}(j))^{m_{ij}}$. The kernels of the last map is a submodule of a finitely generated FIM$^+$-module. Hence it is finitely generated, and so there exists a surjection onto it from an FIM$^+$-module of this form. Using this map, we construct the next term in the sequence.

Let $C_*$ be the chain complex obtained by replacing $\mathcal V$ in the above sequence with $0$. Note that the functor $\mathcal W \mapsto \Inj^2_p( \mathcal W)$ is exact for all $p$. Consider the double complex spectral sequences associated to the double complex $\Inj^2_*(C_*)$. One spectral sequence has: $$E^2_{p,q}=H_p(\Inj^2_*(H_q(C_*))).$$ Since $H_q(C_*)$ vanishes for $q>0$, this spectral sequence collapses on the second page. Since $H_0(C_*)=\mathcal V$, this spectral sequence converges to $H_p(\Inj^2_*(\mathcal V))$. The other spectral sequence has: $$'E^1_{p,q}=H_p(\Inj^2_*(C_q)).$$ Since $$C_q=\bigoplus_{j=d_q}^{e_q} (M^{\FIM^+}(j))^{m_{qj}},$$ Corollary \ref{free2connected} implies that $'E^1_{p,q}(S)$ vanishes in range increasing with the size of $S$. Thus, this spectral sequence converges to zero in a range increasing with the size of $S$. This implies that $H_p(\Inj^2_*(\mathcal V))_S \cong 0$ for $S$ sufficiently large compared with $p$. 
\end{proof}

The following corollary shows that Theorem \ref{maintheorem} implies Corollary \ref{corSecondaryCentral}.

\begin{corollary}
Let $R$ be a field of characteristic zero and $\mathcal V$ a finitely generated $\bigwedge \,(\Sym^2 R)$-module. For $k$ sufficiently large,  $\mathcal V_k$ is isomorphic to the quotient of  $ \Ind_{\fS_{k-2} \times \fS_2 }^{\fS_k} \mathcal V_{k-2}$  by the image of the sum of the two natural maps \[ \Ind_{\fS_{k-4} \times \fS_2 \times \fS_2 }^{\fS_k} \mathcal V_{k-4} \rightrightarrows \Ind_{\fS_{k-2} \times \fS_2 }^{\fS_k} \mathcal V_{k-2}\]
\end{corollary}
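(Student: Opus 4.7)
The plan is to interpret the corollary as the exactness, in a small range of degrees, of the augmented secondary injective word complex, and to derive this exactness from Proposition \ref{connected2} together with the finite generation of $\mathcal V$.

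First I would unpack what is being asked. Because $\Lie_A$ is the trivial one-dimensional $\fS_2$-representation, one has canonical identifications
\[ \Inj^2_0(\mathcal V)_k \cong \Ind^{\fS_k}_{\fS_2 \times \fS_{k-2}} \mathcal V_{k-2} \qquad \text{and} \qquad \Inj^2_1(\mathcal V)_k \cong \Ind^{\fS_k}_{\fS_2 \times \fS_2 \times \fS_{k-4}} \mathcal V_{k-4}, \]
and the two face maps $d_0, d_1 \colon \Inj^2_1(\mathcal V)_k \rightrightarrows \Inj^2_0(\mathcal V)_k$ become the two natural induction maps appearing in the corollary. Consequently, saying that $\mathcal V_k$ is the coequalizer of $d_0$ and $d_1$ is equivalent to saying that the three-term sequence
\[ \Inj^2_1(\mathcal V)_k \xrightarrow{\,d_0 - d_1\,} \Inj^2_0(\mathcal V)_k \xrightarrow{\,d_0\,} \mathcal V_k \longrightarrow 0 \]
is exact; here the second map is the augmentation, which descends to the coequalizer by the simplicial identity $d_0 d_0 = d_0 d_1$.

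Next I would dispatch the two required exactness statements. The cokernel of $d_0 \colon \Inj^2_0(\mathcal V)_k \to \mathcal V_k$ is precisely $H_0^{\bigwedge(\Sym^2 R)}(\mathcal V)_k$, the module of $\bigwedge(\Sym^2 R)$-generators of $\mathcal V$ in degree $k$. Since $\mathcal V$ is finitely generated, $H_0^{\bigwedge(\Sym^2 R)}(\mathcal V)$ is supported in finitely many degrees, so this cokernel vanishes for all sufficiently large $k$. The remaining equality $\ker(d_0) = \mathrm{im}(d_0 - d_1)$ is exactly the vanishing of $H_0$ of the augmented chain complex $\Inj^2_*(\mathcal V)$ (where $\Inj^2_{-1}(\mathcal V) := \mathcal V$ is recovered from the defining formula with $p = -1$), and this is the $p = 0$ case of Proposition \ref{connected2}, which holds once $k > N^{\mathcal V}_0$.

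The principal subtlety lies not in the argument itself but in being careful about the role of the augmentation in $\Inj^2_*$. Proposition \ref{connected2} is proved by resolving $\mathcal V$ by free $\FIM^+$-modules $M(d)^{\FIM^+}$ and invoking Corollary \ref{free2connected} through a double complex spectral sequence; the cleanest bookkeeping treats $\Inj^2_*$ throughout as the augmented object, in which case both the surjectivity of the augmentation and the identification of $\ker(d_0)$ with $\mathrm{im}(d_0 - d_1)$ follow uniformly from that proposition. If instead one reads Proposition \ref{connected2} as a statement about the non-augmented complex, the surjectivity step is handled separately by the finite-generation argument above, and the remainder is precisely what Proposition \ref{connected2} supplies. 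Either way, the coequalizer description of $\mathcal V_k$ holds for every $k$ strictly exceeding both $N^{\mathcal V}_0$ and the generation degree of $\mathcal V$ as a $\bigwedge(\Sym^2 R)$-module.
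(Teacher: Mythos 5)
Your proposal matches the paper's proof in essence: the coequalizer condition is reformulated as the vanishing of $H_{-1}(\Inj^2_*(\mathcal V))_k$ and $H_0(\Inj^2_*(\mathcal V))_k$ for $k$ large, and these are handled by Proposition \ref{connected2}. You supply a slightly more explicit argument for the $H_{-1}$ (surjectivity) piece via finite generation, but since Proposition \ref{connected2} is stated for arbitrary $p$ (and its proof via Corollary \ref{free2connected} covers $p=-1$), the paper simply cites that proposition for both homology groups; the two routes are interchangeable and you correctly observe this.
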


\begin{proof}
This statement is exactly the condition that $$H_{0}(\Inj^2_*(\mathcal V))_k \cong H_{-1}(\Inj^2_*(\mathcal V))_k \cong 0.$$ This is true for large $k$ by Proposition \ref{connected2}.
\end{proof}

\section{Configuration spaces} \label{SectionConfigurationSpaces}

In this section, we apply the tools of the previous section to prove secondary representation stability. We begin by recalling the definition of configuration spaces and their stabilization maps in Section \ref{secStabmap}. Then, in Section \ref{SectionArcRes}, we define the \emph{arc resolution} and an associated spectral sequence, which we use to prove representation stability for configuration spaces of (possibly nonorientable) manifolds. In Section \ref{SectionDifferentials}, we compute some differentials in this spectral sequence, and use this calculation to prove secondary representation stability for configuration spaces of surfaces in Section \ref{SectionMainProof}, as well as an improved range for representation stability for configuration spaces of high-dimensional manifolds in Section \ref{SectionHigherDimensions}. In Section \ref{secConj}, we give some computations for specific manifolds, and some conjectures. For simplicity, we will assume that $M$ is a smooth manifold, although all results are true for general topological manifolds. See Remark \ref{topologicalmanifolds} for a discussion of the necessary modifications needed to address non-smoothable manifolds.

\subsection{Stabilization maps and homology operations}
\label{secStabmap}

In this subsection, we define stabilization maps and an FI$\sharp$-module structure on the homology of the configuration spaces of a noncompact manifold.  Throughout the section $M$ will always denote a connected manifold of dimension $n \geq 2$.  Manifolds in this paper are assumed to be without boundary, unless otherwise stated.

\begin{definition}
For $M$ and $N$ smooth manifolds possibly with boundary, let $\Emb(N,M)$ denote the space of smooth embeddings topologized with the $C^\infty$ topology. 
\end{definition}

\begin{definition} Given a finite set $S$, let $F_S(M):=\Emb(S,M)$. We write $F_k(M)$ for $F_{[k]}(M)$. Let $C_k(M)$ denote the quotient of $F_k(M)$ by the action of $\mathfrak S_k = \mathrm{Aut}([k])$. The space $$F_k(M) \cong \{(m_1,\ldots,m_k) \in M^k\, |\, m_i \neq m_j \text{ for } i\neq j \}$$ is the configuration space of $k$ ordered points in $M$, and the space $C_k(M)$ is the configuration space of $k$ unordered points in $M$.

\end{definition}

Given an embedding of smooth manifolds $N \sqcup L \m M$ and sets $S$ and $T$, we get a map of spaces $$F_{S}(N) \times F_{T}(L) \m F_{S \sqcup T}(M).$$ Recall $n:=\dim(M)$. If $M$ is not compact, there exists a smooth embedding $e: \R^n \sqcup M \hookrightarrow M$ with $e|_{M}$ isotopic to the identity, as described in Section \ref{intro} (see Figure \ref{MEmbedding}). We fix such an embedding for the duration of this paper.  With this embedding we define the following maps on the homology of configuration spaces. 

\begin{definition} Let $M$ be a noncompact smooth manifold.
Given a class $\alpha \in H_{i}(F_S(\R^n))$, let $$t_\alpha:H_*(F_T(M)) \longrightarrow H_{*+i}(F_{T \sqcup S}(M))$$ be the map on homology induced by the embedding $e: \R^n \sqcup M \hookrightarrow M$. 

\end{definition}

The sequence of $\fS_k$-representations $H_i(F_k(M))$ assemble to form an FI-module as follows.  For a set $P$, let $[P]$ be the class of a point in $H_0(F_{P}(\R^n))$.  Let $f:S \m T$ be an injective map of finite sets.  The FI-module structure on $H_i(F(M))$ is defined so that the map $f$ is sent to the composition of the map induced by the diffeomorphism $F_S(M) \m F_{f(S)}(M)$ and $t_{[T\setminus f(S)]}$. See Figure \ref{ModuleStructure} for an illustration. This FI-module structure on homology arises from a homotopy-FI-space structure on the functor $S \mapsto F_S(M)$. 

The configuration spaces of $M$ also admit a co-FI-space structure defined as follows. View $F_S(M)$ as the spaces of embeddings $\Emb(S,M)$ and let injections act by precomposition, as in Figure \ref{CoModuleStructure}.
\begin{figure}[!ht]    \centering
\labellist
\Large \hair 0pt
\pinlabel {\fontsize{20}{40} $f: S \hookrightarrow T$} [c] at 25 95
\pinlabel {\fontsize{20}{40} $\tilde{f}$} [c] at 268 75
\endlabellist
\begin{center}\scalebox{.5}{\includegraphics{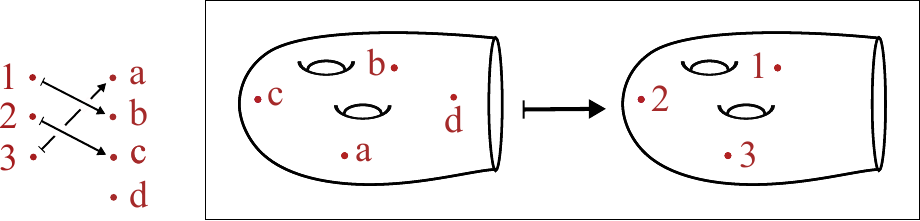}}\end{center}
\caption{The co-FI-space structure on $F(M)$.}
\label{CoModuleStructure}
\end{figure}  
The induced co-FI-module structure on $H_i(F(M))$ is compatible with the FI-module structure in such a way as to give $H_i(F(M))$ the structure of an FI$\sharp$-module. Church--Ellenberg--Farb describe this structure in detail \cite[Section 6]{CEF}.

Generalizing the construction of the stabilization map, for smooth manifolds $N, L, M$ there is a natural map: $$\Emb(N \sqcup L,M) \times F_{S}(N) \times F_{T}(L) \m F_{S \sqcup T}(M).$$ Define a map $\theta:S^{n-1} \m \Emb(\R^n \sqcup \R^n, \R^n)$ as follows: Let $r:\R^n \m \R^n$ be a map which induces an orientation preserving homeomorphism between $\R^n$ and the open unit ball around the origin. View $S^{n-1}$ as the unit vectors in $\R^n$ and let $\theta(\vec v): \R^n \sqcup \R^n \m \R^n$ be the function mapping $\vec x$ in the first copy of $\R^n$ to $r(\vec x)+\vec v$ and mapping $\vec x$ in the second copy of $\R^n$ to $r(\vec x)-\vec v$. By restricting to the class of a point in $H_0(S^{n-1})$, this induces a product on the homology of ordered configuration space of $\R^n$: 
$$\bullet: H_i(F_S(\R^n)) \otimes H_j(F_T(\R^n)) \m H_{i+j}(F_{S \sqcup T}(\R^n)).$$ \noindent
By restricting to a fundamental class of $S^{n-1}$, this induces a bracket: 
$$\psi^n: H_i(F_S(\R^n)) \otimes H_j(F_T(\R^n)) \m H_{i+j+n-1}(F_{S \sqcup T}(\R^n)).$$ 
The map $\psi^n$ can be thought of as a version of the Browder operation for $E_n$-algebras in the category of $\FB$-spaces. See May \cite[Definition 4.1]{M} for the definition of $E_n$-algebras and Browder \cite[Page 351]{Br} for the definition of Browder operations. The operations $\bullet$ and $\psi^n$ are illustrated in Figure \ref{HomologyOperations}. 
\begin{figure}[!ht]    \centering
\begin{subfigure}{.4\textwidth}
\labellist
\Large \hair 0pt
\pinlabel {\color{Fuchsia} $\alpha$} at  98 148
\pinlabel {\color{Fuchsia} $\beta$} at  245 110
\pinlabel {\color{black} $\R^2$} at  410 135
\endlabellist
  \centering
{\includegraphics[scale=.13]{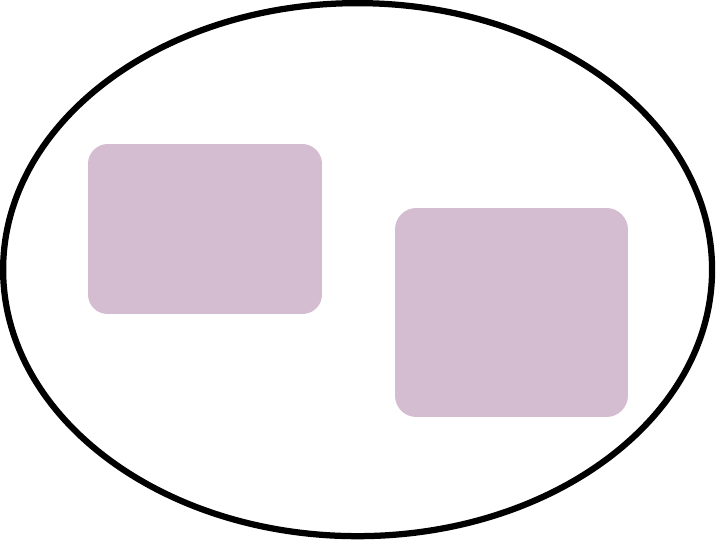}}
 \caption{The homology class $\alpha \bullet \beta$.}
\end{subfigure}%
\begin{subfigure}{.4\textwidth}
\labellist
\Large \hair 0pt
\pinlabel {\color{Fuchsia} $\beta$} at 248 126
\pinlabel {\color{Fuchsia} $\alpha$} at 148 230 
\pinlabel {\color{black} $\R^2$} at  420 180
\endlabellist
  \centering
{\includegraphics[scale=.13]{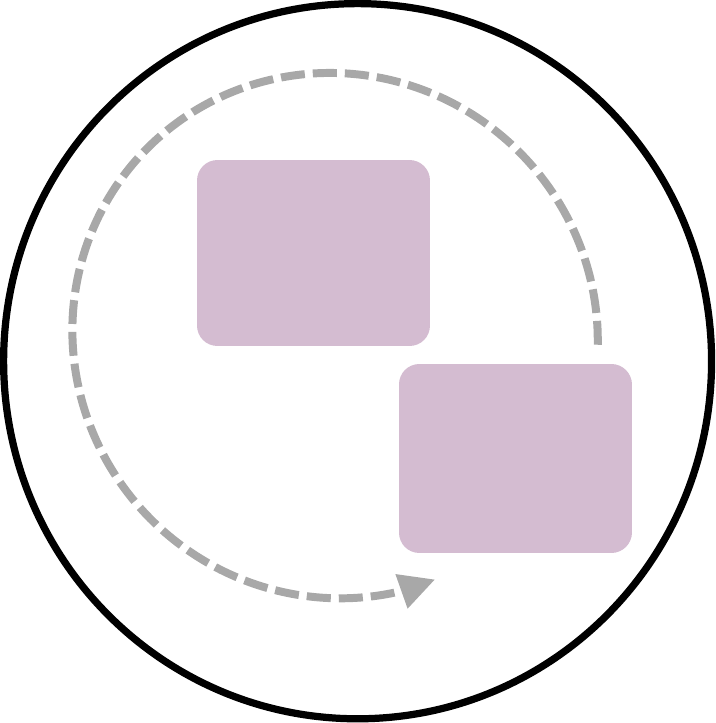}}
\caption{The homology class $\psi^2(\alpha, \beta)$.}
\end{subfigure}
\caption{Chains representing the homology operations on $H_*(F(\R^n))$.}
\label{HomologyOperations}
\end{figure}  

In this paper, we are primarily interested in the operation $\psi^2$, which we simply call $\psi$. The maps $\psi^n$ come from maps at the chain level which we will also call $\psi^n$. We define $\psi$ in dimension $n >2$ at the chain level as follows. Let $\theta': S^1 \m \Emb(\R^n \sqcup \R^n, \R^n)$ be the restriction of $\theta$ to an equatorial circle. The (counterclockwise) fundamental chain of $S^1$ induces a map: $$\psi: C_i(F_S(\R^n)) \otimes C_j(F_T(\R^n)) \m C_{i+j+1}(F_{S \sqcup T}(\R^n)).$$
\begin{figure}[!ht]    \centering
\labellist
\Large \hair 0pt
\pinlabel {\color{Fuchsia}  $\alpha$} at  45 53
\pinlabel { \color{Fuchsia}  $\beta$} at  71 42
\pinlabel {\color{black} $\R^2$} at  115 50
\pinlabel {\color{black} $\R^3$} at  -5 85
\endlabellist
\scalebox{1}{\includegraphics[scale=.7]{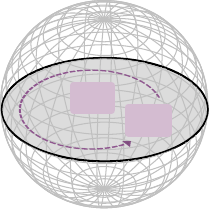}}
\caption{The homology class $\psi(\alpha, \beta) \in H_*(F(\R^3))$.}
\label{HigherOperation}
\end{figure} 

\noindent Figure \ref{HigherOperation} shows the map $\psi$ on $H_*(F(\R^3))$. Given a singleton set $S=\{s\}$, let $s$ denote the class of a point in $H_0(F_S(\R^n))$. Figure \ref{browder12isolated} shows $\psi(1,2) \in H_1(F_2(\R^n))$.

 \begin{figure}[!ht]    \centering
 \labellist \endlabellist
\includegraphics[scale=.23]{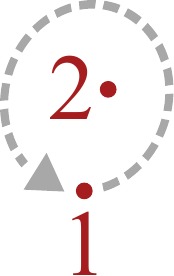}
\caption{A chain  representing $\psi(1,2)$.}
\label{browder12isolated}
\end{figure} 

 Cohen described the algebraic structure on $H_*(F_k(\R^n))$ imposed by the operations $\psi^n$ and $\bullet$: the groups $H_*(F_k(\R^n))$ assemble to form the $n$-Poisson operad. Cohen denoted the Browder operations  by $\lambda_{n-1}$ and described relations they satisfy \cite[Chapter III Theorem 1.2]{CLM} (also see Sinha \cite[Sinha Theorem 2.10]{Sinha}).

\begin{theorem}[Cohen {\cite[Chapter III]{CLM}}] \label{PropGerstenhaber} Fix $n \geq 2$.
The product $\bullet$ is an associative and graded commutative product, and the Browder operation $\psi^n$ is a graded Lie bracket of degree $(n-1)$, which together satisfy the Gerstenhaber relations. Specifically,  these operations satisfy the following identities.  Let $|\cdot|$ denote the degree of a homology class. 
\begin{itemize}
\item[] (Degrees of $\bullet$ and $\psi^n$) \qquad \qquad $ \displaystyle |\alpha \bullet \beta| = |\alpha | + |\beta|,  \qquad 
|\psi^n(\alpha ,\beta)| = |\alpha | + |\beta| + (n-1) $
\item[] (Graded commutativity for  $\bullet$)  \qquad \qquad \; $ \displaystyle  \alpha \bullet \beta = (-1)^{|\alpha||\beta|} \beta \bullet \alpha$
\item[] (Graded antisymmetry law for $\psi^n$) \qquad  $ \displaystyle  \psi^n(\alpha,\beta)=  -(-1)^{|\alpha||\beta|+(n-1)(|\alpha|+|\beta|+1)}\psi^n(\beta,\alpha)$
\item[] (Graded Jacobi identity for $\psi^n$) 
\begin{align*}
(-1)^{(|\alpha|+n-1)(|\gamma|+n-1)}\psi^n(\alpha,\psi^n(\beta, \gamma)) & + (-1)^{(|\beta|+n-1)(|\alpha|+n-1)}\psi^n(\beta, \psi^n(\gamma,\alpha))  \\ 
& + (-1)^{(|\gamma|+n-1)(|\beta|+n-1)}\psi^n(\gamma,\psi^n(\alpha,\beta))=0 
\end{align*} 
\item[](The Browder operation $\psi^n$ is a derivation of the product $\bullet$ in each variable)
$$ \psi^n(\alpha,\beta \bullet \gamma)=\psi^n(\alpha,\beta) \bullet \gamma + (-1)^{(|\alpha|+n-1)|\beta|}\beta \bullet \psi^n(\alpha,\gamma)$$
\end{itemize}
\end{theorem}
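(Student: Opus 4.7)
The plan is to identify $\bullet$ and $\psi^n$ as the two canonical operadic operations arising from the homology of the little $n$-disks operad $\mathcal{E}_n$, and then derive all Gerstenhaber identities from relations in $H_*(\mathcal{E}_n)$. The maps $\theta$ and $\theta'$ introduced in the paper realize precisely this operad action on the collection $\{F_k(\R^n)\}$, with $\mathcal{E}_n(2) \simeq S^{n-1}$; then $\bullet$ arises from the class of a point $[*]\in H_0(\mathcal{E}_n(2))$ and $\psi^n$ arises from the fundamental class $[S^{n-1}] \in H_{n-1}(\mathcal{E}_n(2))$. Associativity of $\bullet$ follows from associativity of the operad composition at the level of $H_0$, and the degree formulas are immediate from this description.

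I would prove graded commutativity of $\bullet$ and graded antisymmetry of $\psi^n$ by analyzing the action of the transposition $\sigma \in \fS_2$ on $\mathcal{E}_n(2) \simeq S^{n-1}$: $\sigma$ acts as the antipodal map, which fixes $[*]$ and multiplies $[S^{n-1}]$ by $(-1)^n$. Combining with the Koszul sign from swapping the two input classes of degrees $|\alpha|,|\beta|$, and accounting for the $(n-1)$ degree shift intrinsic to $\psi^n$, produces the stated signs.

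For the derivation identity, I would use a direct geometric argument (compare Figure \ref{HomologyOperations}): a cycle representing $\psi^n(\alpha, \beta \bullet \gamma)$ is an $S^{n-1}$-family of configurations in which the $\alpha$-particles loop around \emph{both} the $\beta$- and $\gamma$-clusters. By a small isotopy this family is cobordant to the concatenation of (loop around $\beta$, then place $\gamma$ stationary to the side) with (place $\beta$ stationary, then loop around $\gamma$), yielding the two Leibniz terms with the prescribed Koszul sign.

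The main obstacle is the graded Jacobi identity for $\psi^n$. My plan here is to use the standard operadic argument: the composition $\mathcal{E}_n(2)\circ_1 \mathcal{E}_n(2)$ gives a map $S^{n-1}\times S^{n-1} \to \mathcal{E}_n(3)$, and the three iterated brackets $\psi^n(\alpha,\psi^n(\beta,\gamma))$, $\psi^n(\beta,\psi^n(\gamma,\alpha))$, $\psi^n(\gamma,\psi^n(\alpha,\beta))$ correspond to three different ways of viewing a common canonical cycle class in $H_{2(n-1)}(\mathcal{E}_n(3))$. The Jacobi identity follows from a standard chain-level cobordism relating these three embeddings of the torus, together with the fact that the total boundary vanishes. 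Throughout, the delicate step is tracking the signs coming from the $(n-1)$-shift in $\psi^n$; I would organize these by passing to the $(n-1)$-shifted chain complex, in which $\psi^n$ becomes an ordinary graded Lie bracket and the Jacobi identity reduces to the ordinary one in the shifted Lie operad, with Koszul signs being the only bookkeeping required.
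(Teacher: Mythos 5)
The paper does not prove Theorem~\ref{PropGerstenhaber}; it is cited directly from Cohen \cite[Chapter III Theorem 1.2]{CLM} and used as a black box, so there is no proof in the paper to compare against.

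Your sketch outlines the standard operadic argument, which is essentially what one finds in Cohen's treatise: the little $n$-disks operad $\mathcal{E}_n$ acts on $\{F_k(\R^n)\}$, $\mathcal{E}_n(2)\simeq S^{n-1}$ produces $\bullet$ from the generator of $H_0$ and $\psi^n$ from the fundamental class, and the Gerstenhaber identities follow from operad relations together with the $\fS_k$-equivariance. Structurally this is the right approach. The caveat is that the sign identities---which are precisely the content of the statement---are deferred in your sketch, and they are not entirely automatic. For instance, the antipodal map on $S^{n-1}$ has degree $(-1)^n$ and fixes $H_0$, but translating this through the equivariance of the operad pairing to the stated exponent $(-1)^{|\alpha||\beta| + (n-1)(|\alpha|+|\beta|+1)}$ requires the Koszul sign for the operadic evaluation map itself, not merely the swap sign $(-1)^{|\alpha||\beta|}$. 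Your ``shift by $(n-1)$'' bookkeeping is the right device, and it does produce exactly the listed signs (the exponents in the antisymmetry and Jacobi relations become $(-1)^{ab}$ with shifted degrees $a = |\alpha| + (n-1)$, etc.), but this needs to be carried out explicitly rather than asserted. Similarly, for the Jacobi identity you gesture at a cobordism in $\mathcal{E}_n(3)$ relating the three iterated brackets; Cohen constructs a concrete chain on $\mathcal{E}_n(3)$ realizing this relation, and without that chain your argument is a plan rather than a proof. In short: the plan is sound and matches the standard route, but all the delicate sign-tracking that the theorem is really about is left unverified.
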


\begin{remark} When we say that $\bullet$ is a commutative product, we do not mean that $\bigoplus_{i,k} H_i(F_k(\R^n)) $ is a commutative ring. Instead, we mean that the associated $\FB$-module has the structure of graded tca. Similarly, $\psi^n$ gives an appropriate shift of the $\FB$-module associated to $\left\{\oplus_{i,k} H_i(F_k(\R^n)) \right \}_{k=0}^{\infty}$ the structure of an algebra over the Lie operad in $\FB$-modules with Day convolution.
\end{remark}

\subsection{The arc resolution and representation stability} \label{SectionArcRes}

We now recall two related semi-simplicial spaces. One was used by Kupers--Miller \cite[Appendix]{kupersmillercells} to give a new proof of homological stability for unordered configuration spaces. We will use the second to give a new proof of representation stability for ordered configuration spaces. If $M$ is a noncompact manifold, there exists a (not necessarily compact) manifold with non-empty boundary $\overline{M}$ such that $M$ is the interior of  $\overline{M}$ (see for example Miller--Palmer \cite[Section 3]{MiP1}).

\begin{definition} Let $M$ be the interior of a (not necessarily compact) smooth manifold $\overline{M}$ with nonempty boundary $\partial M$. Fix an embedding $\gamma: [0,1] \m  \partial M$. Let $$\Arc_j(F_k(M)) \subset F_k(M) \times \Emb(\sqcup_{j+1} [0,1], \overline{M})$$ be the subspace of points and arcs $(x_1,\ldots x_k;\alpha_0,\ldots \alpha_j)$ satisfying the following conditions:
\begin{itemize} \begin{multicols}{2}
  \item $\alpha_i(0) \in \gamma([0,1])$
  \item $\alpha_i(1) \in \{x_1,\ldots, x_k\}$
  \item $\alpha_i(t) \notin \partial M \cup \{x_1,\ldots, x_k\}$ for $t \in (0,1)$
  \item $\gamma^{-1}(\alpha_{j_1}(0))>\gamma^{-1}(\alpha_{j_2}(0)) $ whenever $j_1 >j_2$.
  \end{multicols}
\end{itemize}
\noindent Let $\Arc_j(C_k(M))$ denote the quotient of $\Arc_j(F_k(M))$ by the action of $\sS_k$, as in Figure \ref{ArcResolution}. 
\begin{figure}[!ht]
\begin{center}\scalebox{.23}{\includegraphics{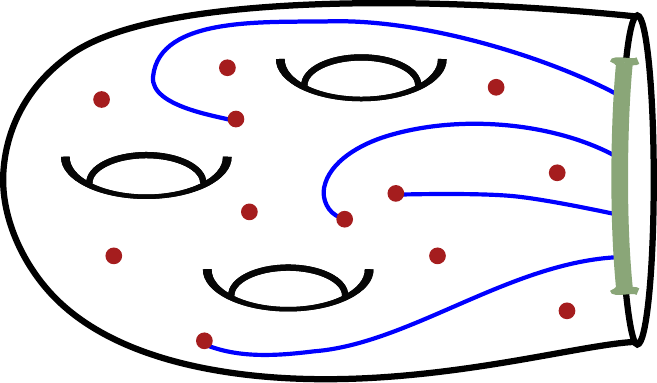}}\end{center}
\caption{An element of $\Arc_3(C_{12}(M))$.}
\label{ArcResolution}
\end{figure}  

As $j$ varies, the spaces $\Arc_j(F_k(M))$ assemble into an augmented semi-simplicial space. The $i$th face map $ d_i: \Arc_j(F_k(M)) \m \Arc_{j-1}(F_k(M))$ is given by forgetting the $i$th arc $\alpha_i$. The space $\Arc_{-1}(F_k(M))$ is homeomorpic to $F_k(M)$, and so the augmentation map induces a map $||\Arc_\bullet(F_k (M))|| \m F_k(M)$. Similarly $\Arc_j(C_k(M))$ assemble to form an augmented semi-simplicial space and $$\Arc_{-1}(C_k(M)) \cong C_k(M).$$ We call the two augmented semi-simplicial spaces $\Arc_\bullet(F_k (M))$ and $\Arc_\bullet(C_k (M))$ the \emph{ordered} and \emph{unordered arc resolutions}, respectively. 
\end{definition}
 Building on Hatcher--Wahl \cite{hatcherwahl} and a lecture of Randal-Williams, Kupers--Miller \cite[Appendix]{kupersmillercells} proved the following. 
 
\begin{theorem}[Kupers--Miller {\cite[Appendix]{kupersmillercells}}] Let $M$ be a smooth noncompact connected manifold of dimension at least two. The map $||\Arc_\bullet(C_k (M))|| \m C_k(M)$ is $(k-1)$-connected.  \label{arcresolutionconnected}
\end{theorem}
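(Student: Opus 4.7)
The proof follows the Quillen-type template developed by Hatcher--Wahl and refined by Randal-Williams: reduce the connectivity of the augmentation to a connectivity statement about a combinatorial arc complex on a single configuration, then prove that combinatorial statement by a direct coning argument.

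First, I would verify that the augmentation behaves like a fibration in the relevant range by analyzing its homotopy fiber. The key observation is that over a configuration $c=\{x_1,\ldots,x_k\} \in C_k(M)$, the fiber of $||\Arc_\bullet(C_k(M))|| \to C_k(M)$ is naturally weakly equivalent to the realization $||\mathcal{A}_\bullet(c)||$ of the augmented semi-simplicial space whose $p$-simplices are ordered tuples $(\alpha_0,\ldots,\alpha_p)$ of pairwise-disjoint embedded arcs in $\overline{M}$ from $\gamma([0,1])$ to the points of $c$, with the ordering condition on endpoints in $\gamma$. This identification uses standard parametrized isotopy extension to show that the augmentation is, up to realization, a quasi-fibration: when the points of $c$ are moved, arcs can be dragged along continuously, so the family of fibers is locally trivial in the homotopical sense.

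The main step is then to prove that $||\mathcal{A}_\bullet(c)||$ is $(k-1)$-connected (in fact, weakly contractible). Given a map $f \colon S^i \to ||\mathcal{A}_\bullet(c)||$ with $i \leq k-1$, I would simplicially approximate $f$ so that its image touches only finitely many simplices, hence only finitely many arcs. The plan is then to construct a single embedded arc $\alpha$ from a point on $\gamma$ lying to the left of every other starting point on $\gamma$ appearing in $f(S^i)$ to some point $x_j \in c$, with $\alpha$ disjoint from every arc appearing in the image of $f$. Prepending $\alpha$ to each simplex in $f(S^i)$ assembles into a cone on $f(S^i)$ inside $||\mathcal{A}_\bullet(c)||$, extending $f$ over $D^{i+1}$.

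The main obstacle is this construction of $\alpha$ as a disjoint arc. In dimensions $n \geq 3$, this is immediate by transversality: an arc is $1$-dimensional and generically avoids the finite union of $1$-dimensional arcs already present. In dimension $n=2$, which is the essential case for this paper, the same conclusion requires the classical innermost-bigon/surgery techniques from Hatcher's theory of arc complexes on surfaces: take any embedded arc from $\gamma$ to $x_j$ (which exists by connectedness of $M$), and remove intersections with the existing arcs by successive isotopies through innermost disks cut off by these intersections. In either case one gets no obstruction to adding a new arc, so every sphere in the arc complex bounds.

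Finally, combining the fiber identification with the $(k-1)$-connectivity (indeed weak contractibility) of $||\mathcal{A}_\bullet(c)||$, the long exact sequence in homotopy forces $||\Arc_\bullet(C_k(M))|| \to C_k(M)$ to be $(k-1)$-connected, as claimed.
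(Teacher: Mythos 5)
The paper does not prove this statement; it quotes it from the appendix of Kupers--Miller, where the argument is a Hatcher--Wahl-style resolution argument. Your overall framework --- reduce via a quasi-fibration/microfibration argument to high connectivity of the arc complex $||\mathcal{A}_\bullet(c)||$ over a fixed configuration $c$ --- matches the structure of that proof. But the connectivity argument for $||\mathcal{A}_\bullet(c)||$ has a gap. The cone you propose to build is illegal in general: prepending $\alpha$ to a simplex $(\alpha_0,\ldots,\alpha_p)$ yields a valid $(p+1)$-simplex only when the endpoint $x_j$ of $\alpha$ is distinct from the endpoints of all the $\alpha_m$. For $i\leq k-2$ (the range actually required, since $(k-1)$-connectivity of the map means $(k-2)$-connectivity of the fiber, so your $i\leq k-1$ is off by one), each individual simplex in the simplicial image of $f$ misses at least one of the $k$ points, but different simplices may miss different points and together exhaust all of them: already for $k=3$ a $1$-sphere can pass through $1$-simplices with endpoint pairs $\{x_1,x_2\}$, $\{x_2,x_3\}$, $\{x_1,x_3\}$, leaving no compatible endpoint for $\alpha$. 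The actual argument is an inductive link or bad-simplex argument in the style of Hatcher--Wahl: one identifies the link of a marked point (or of a single arc) with the corresponding complex on $k-1$ points, gets its connectivity by induction, and nullhomotopes $f$ by repeated local modifications rather than one global cone.

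Relatedly, the claim of weak contractibility is false, and is what makes the one-step cone look plausible. After discretizing via parametrized isotopy extension, the semi-simplicial set underlying the arc complex over a $k$-point configuration in a disk is precisely the complex of injective words on $k$ letters, which is $(k-2)$-connected but has nonzero top reduced homology $\tilde H_{k-1}$ of rank the number of derangements in $\fS_k$ --- this is exactly the group $\Top_k$ computed in Section~\ref{SectionInjHomology} and appearing in the $E^2$-page of the arc resolution spectral sequence in Proposition~\ref{GeometricRealizationSS}. Were the fiber contractible, that bottom $E^2$-row would vanish and the secondary stability mechanism of the paper would disappear. So the correct target is $(k-2)$-connectivity of the fiber, no more, and it must be reached by induction rather than a single coning.
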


This implies the same connectivity for the arc resolution of ordered configuration spaces.

\begin{proposition} Let $M$ be a smooth noncompact connected manifold of dimension at least two. The map $||\Arc_\bullet(F_k (M))|| \m F_k(M)$ is $(k-1)$-connected. \label{ArcF}
\end{proposition}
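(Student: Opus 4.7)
The plan is to deduce the ordered case from the unordered case (Theorem \ref{arcresolutionconnected}) by observing that both $F_k(M) \to C_k(M)$ and the assembled map $||\Arc_\bullet(F_k(M))|| \to ||\Arc_\bullet(C_k(M))||$ are regular $\fS_k$-covers, and that the two augmentation maps fit together in a pullback square intertwining these covers.

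First I would verify that, at each simplicial level, $\fS_k$ acts freely on $\Arc_j(F_k(M))$ through its free action on the $F_k(M)$ factor, with quotient $\Arc_j(C_k(M))$. This gives a natural identification
\[
\Arc_j(F_k(M)) \cong F_k(M) \times_{C_k(M)} \Arc_j(C_k(M)),
\]
which is semi-simplicial in $j$. Passing to the thick geometric realization -- which only introduces identifications along face maps, so a levelwise free action of a discrete group assembles into a free action on the realization whose quotient is the realization of the levelwise quotient -- produces a commuting square
\[
\begin{array}{ccc}
||\Arc_\bullet(F_k(M))|| & \stackrel{\phi_F}{\longrightarrow} & F_k(M) \\
\downarrow & & \downarrow \\
||\Arc_\bullet(C_k(M))|| & \stackrel{\phi_C}{\longrightarrow} & C_k(M)
\end{array}
\]
that is a pullback and whose vertical maps are regular $\fS_k$-covers, hence Hurewicz fibrations.

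To conclude, I would invoke the fact that pulling back a map along a Hurewicz fibration preserves homotopy fibers. Since $\phi_C$ is $(k-1)$-connected by Theorem \ref{arcresolutionconnected}, its homotopy fibers are $(k-2)$-connected; the same then holds for $\phi_F$, so $\phi_F$ is itself $(k-1)$-connected, as desired. The main technical point requiring care is the claim that the levelwise pullback description descends to the realization -- equivalently, that thick realization commutes with pullback along the covering $F_k(M) \to C_k(M)$ -- which can be checked directly from the definition $||X_\bullet|| = \bigsqcup_n X_n \times \Delta^n/{\sim}$ by noting that the identifications in the face relation are $\fS_k$-equivariant, so free quotients commute with realization.
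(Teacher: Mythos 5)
Your proof is correct and takes essentially the same approach as the paper's: the paper also deduces the ordered case from Theorem~\ref{arcresolutionconnected} by observing that the quotient map $F_k(M)\to C_k(M)$ induces homeomorphisms between the homotopy fibers of the ordered and unordered augmentation maps. Your proposal simply spells out the justification (the levelwise pullback/free $\fS_k$-action structure, the fact that free quotients commute with thick realization, and the preservation of homotopy fibers under pullback along a covering) that the paper's two-sentence proof treats as immediate.
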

\begin{proof}
Since the map $||\Arc_\bullet(C_k (M))|| \m C_k(M)$ is $(k-1)$-connected, its homotopy fibers (the standard path space construction) are $(k-2)$-connected.  The quotient map $F_k(M) \m C_k(M)$ induces homeomorphisms between the homotopy fibers of $||\Arc_\bullet(F_k (M))|| \m F_k(M)$ and the homotopy fibers of $||\Arc_\bullet(C_k (M))|| \m C_k(M)$. Thus the homotopy fibers of $||\Arc_\bullet(F_k (M))|| \m F_k(M)$ are $(k-2)$-connected and so the map $||\Arc_\bullet(F_k (M))|| \m F_k(M)$ is $(k-1)$-connected as well.
\end{proof}

If $M$ is connected and of dimension at least two, then the connected components of $\Arc_j(F_k(M))$ are determined by which arc connects to which point. For example, a connected component could be specified by saying that $\alpha_0$ connects to $x_3$, that $\alpha_1$ connects to $x_8$, and so forth.
Thus $\Arc_j(F_k(M))$ has $k!/(k-j-1)!$ connected components. Kupers--Miller \cite[Appendix]{kupersmillercells} showed that $\Arc_j(C_k(M))$ is homotopy equivalent to $C_{k-j-1}(M)$, and their result implies that each connected component of $\Arc_j(F_k(M))$ is homotopy equivalent to $F_{k-j-1}(M)$. The face maps of the unordered arc resolution are homotopic to the stabilization maps for unordered configuration spaces  \cite[Appendix]{kupersmillercells}. It follows that the face map on  $\Arc_j(F_k(M))$ that forgets the arc attached to the point labeled by $i$ has the effect of stabilizing by a point labeled by $i$.

An augmented semi-simplicial space $A_\bullet$ gives rise to a homology spectral sequence; see for example Randal-Williams \cite[Section 2.3]{RW}.  This spectral sequence satisfies $$E^1_{p,q}=H_q(A_p) \implies H_{p+q+1}(A_{-1},||A_\bullet ||),$$ and the differentials on the $E^1$-page are given by the alternating sum of the face maps.
 
\begin{definition} We call the spectral sequence associated to an (augmented) semi-simplicial space $A_\bullet$ the \emph{(augmented) geometric realization spectral sequence}. We call the augmented geometric realization spectral sequence for the ordered arc resolution the \emph{arc resolution spectral sequence}. We will denote the $(p,q)$th spot on the $r$th page by $E^r_{p,q}[M](S)$ and will often drop the $M$ or $S$ from the notation.
\end{definition}

\begin{proposition} \label{GeometricRealizationSS} Let $M$ be a noncompact connected smooth manifold of dimension at least two. The arc resolution spectral sequence satisfies:
$$E^1_{p,q}(S) \cong \Inj_p(H_q(F_{S}(M))) \qquad \text{for $q \geq 0$ and $p \geq -1$} .$$  It converges to: $$ H_{p+q+1}\big(F_{S}(M),||\Arc_\bullet(F_{S}(M))||\big).$$

\begin{figure}[h!]  \centering
 \begin{tikzpicture}  \footnotesize
  \matrix (m) [matrix of math nodes,
    nodes in empty cells,nodes={minimum width=5ex,
    minimum height=5ex,outer sep=2pt},
    column sep=5ex,row sep=3ex]{ 
3     &  H_3(F_{S}(M)) &  \displaystyle \bigoplus_{f:\{0\} \hookrightarrow S}H_3(F_{S-f(\{0\})}(M))   &  \displaystyle  \bigoplus_{f:\{0,1\} \hookrightarrow S}H_0(F_{S-f(\{0,1\})}(M))   &\cdots & \\       
2     &  H_2(F_{S}(M)) &  \displaystyle \bigoplus_{f:\{0\} \hookrightarrow S}H_2(F_{S-f(\{0\})}(M))   &  \displaystyle  \bigoplus_{f:\{0,1\} \hookrightarrow S}H_2(F_{S-f(\{0,1\})}(M))   &\cdots & \\              
 1     &  H_1(F_{S}(M)) & \displaystyle  \bigoplus_{f:\{0\} \hookrightarrow S}H_1(F_{S-f(\{0\})}(M))   &   \displaystyle \bigoplus_{f:\{0,1\} \hookrightarrow S}H_1(F_{S-f(\{0,1\})}(M))   &\cdots & \\                  
 0     &  H_0(F_{S}(M)) & \displaystyle  \bigoplus_{f:\{0\} \hookrightarrow S}H_0(F_{S-f(\{0\})}(M))   & \displaystyle   \bigoplus_{f:\{0,1\} \hookrightarrow S}H_0(F_{S-f(\{0,1\})}(M))   &\cdots & \\       
 \quad\strut &   -1  &  0  &  1  &2&\\};

 \draw[-stealth] (m-4-3.mid west)--(m-4-2.mid east); 
 \draw[-stealth] (m-3-3.mid west) -- (m-3-2.mid east);
 \draw[-stealth] (m-2-3.mid west) -- (m-2-2.mid east);
 \draw[-stealth] (m-1-3.mid west) -- (m-1-2.mid east);

 \draw[-stealth] (m-4-4.mid west) -- (m-4-3.mid east);
 \draw[-stealth] (m-3-4.mid west) -- (m-3-3.mid east);
 \draw[-stealth] (m-2-4.mid west) -- (m-2-3.mid east);
 \draw[-stealth] (m-1-4.mid west) -- (m-1-3.mid east);

 \draw[-stealth] (m-4-5.mid west) -- (m-4-4.mid east);
 \draw[-stealth] (m-3-5.mid west) -- (m-3-4.mid east);
 \draw[-stealth] (m-2-5.mid west) -- (m-2-4.mid east);
 \draw[-stealth] (m-1-5.mid west) -- (m-1-4.mid east);

\draw[thick] (m-1-1.east) -- (m-5-1.east) ;
\draw[thick] (m-5-1.north) -- (m-5-6.north) ;

\end{tikzpicture}
\caption{$E^1_{p,q}(S)=H_q(\Arc_p(F_S (M))) \cong   \Inj_p\Big(H_q(F(M))\Big)_{S}.$ } \label{FigureE1}
\end{figure}  

For $|S|=k$, the $E^2$-page satisfies
\begin{align*}
E^2_{p,q}(S) &\cong \bigoplus_{\substack{S = P \sqcup Q, \\ |P|=p+1}}  \tilde H_{p}(||\Inj_\bullet(P)||)\otimes H_0^{\FI} (H_q(F(M)))_Q \\
&\cong \Ind_{\sS_{p+1}\times \sS_{k-p-1}}^{\sS_k} \Top_{p+1} \boxtimes H_0^{\FI} (H_q( F(M)))_{k-p-1}.
\end{align*}
where  $\Top_{p+1} :=  \tilde H_{p}(||\Inj_\bullet(p+1)||)$.

\begin{figure}[h!]    \centering \begin{tikzpicture} \footnotesize
  \matrix (m) [matrix of math nodes,
    nodes in empty cells,nodes={minimum width=3ex,
    minimum height=5ex,outer sep=2pt},
 column sep=3ex,row sep=3ex]{ 
4    & H_0^{\FI}\Big(H_4(F(M))\Big)_{6}  & 0 &  \up_{\sS_{2}\times \sS_{4}}^{\sS_6} \Top_{2} \boxtimes H_0^{\FI} (H_4( F(M)))_{4}   &\up_{\sS_{3}\times \sS_{3}}^{\sS_6} \Top_{3} \boxtimes H_0^{\FI} (H_4( F(M)))_{3} & \\  
 3    &  H_0^{\FI}\Big(H_3(F(M))\Big)_{6}  &0 &  \up_{\sS_{2}\times \sS_{4}}^{\sS_6} \Top_{2} \boxtimes H_0^{\FI} (H_3 (F(M)))_{4}   &\up_{\sS_{3}\times \sS_{3}}^{\sS_6} \Top_{3} \boxtimes H_0^{\FI} (H_3 (F(M)))_{3} & \\  
 2    &  H_0^{\FI}\Big(H_2(F(M))\Big)_{6} & 0&  \up_{\sS_{2}\times \sS_{4}}^{\sS_6} \Top_{2} \boxtimes H_0^{\FI} (H_2( F(M)))_{4}    &\up_{\sS_{3}\times \sS_{3}}^{\sS_6} \Top_{3} \boxtimes H_0^{\FI} (H_2( F(M)))_{3} &  \\          
1     &  H_0^{\FI}\Big(H_1(F(M))\Big)_{6}   &0    & \up_{\sS_{2}\times \sS_{4}}^{\sS_6} \Top_{2} \boxtimes H_0^{\FI} (H_1( F(M)))_{4}  &\up_{\sS_{3}\times \sS_{3}}^{\sS_6} \Top_{3} \boxtimes H_0^{\FI} (H_1( F(M)))_{3} & \\             
 0     &  H_0^{\FI}\Big(H_0(F(M))\Big)_{6}  & 0 & \up_{\sS_{2}\times \sS_{4}}^{\sS_6} \Top_{2} \boxtimes H_0^{\FI} (H_0( F(M)))_{4}  &\up_{\sS_{3}\times \sS_{3}}^{\sS_6} \Top_{3} \boxtimes H_0^{\FI} (H_0( F(M)))_{3}  & \\       
 \quad\strut &   -1  &  0  &  1  & 2  &\\};

 \draw[thick] (m-1-1.east) -- (m-6-1.east) ;
 \draw[thick] (m-6-1.north) -- (m-6-6.north east) ;

\end{tikzpicture}
\caption{$E^2_{p,q}(6) \cong \Ind_{\sS_{p+1}\times \sS_{6-p-1}}^{\sS_6} \Top_{p+1} \boxtimes H_0^{\FI} (H_q( F(M)))_{6-p-1}$.  \\ The $0$th column is identically zero because $\Top_1=0$; see for example {Proposition \ref{RankTopHomology}}. } \label{E2Page}
\end{figure}

In particular, the leftmost $E^2$ column $p=-1$ are the FI--homology groups
$$E^2_{-1,q}(S) \cong H_0^{\FI} (H_q(F(M)))_{S} $$
and the bottom $E^2$ row $q=0$ are the reduced homology groups of the complex of injective words
$$E^2_{p,0}(S) \cong \tilde H_{p}(|| \Inj_\bullet(S)||) $$
which vanish except at $p = k - 1$.
\end{proposition}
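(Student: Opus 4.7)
The plan is to proceed in four steps: identify the $E^1$-page from the homotopy type of the components of $\Arc_p(F_S(M))$, match the $E^1$ differentials with those of the $\Inj_*$-complex, apply the structure theorem for $\Inj_*$ on free $\FI$-modules to obtain $E^2$, and read off the convergence directly from the augmented geometric realization spectral sequence.

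First, I would compute $E^1_{p,q}(S) = H_q(\Arc_p(F_S(M)))$. As remarked just above the proposition, the connected components of $\Arc_p(F_S(M))$ are in bijection with injections $f\colon\{0,1,\ldots,p\}\hookrightarrow S$ (recording which arc ends at which particle), and each component deformation retracts onto the configuration space $F_{S\setminus\im(f)}(M)$ of the remaining particles; this is the ordered analogue of the Kupers--Miller computation and follows from Proposition~\ref{ArcF} applied componentwise. Summing over components yields
\[
E^1_{p,q}(S) \;\cong\; \bigoplus_{f\colon\{0,\ldots,p\}\hookrightarrow S} H_q\!\left(F_{S\setminus\im(f)}(M)\right) \;=\; \Inj_p(H_q(F(M)))_S,
\]
by Definition~\ref{DefnInj.}.

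Next, the $E^1$ differential is the alternating sum of the semi-simplicial face maps. Forgetting the arc $\alpha_i$ on a component indexed by $f$ lands in the component indexed by $f\circ\delta_i$ (where $\delta_i$ skips $i$), and on homology the induced map is precisely the $\FI$-module stabilization for the inclusion $S\setminus\im(f)\hookrightarrow S\setminus\im(f\circ\delta_i)$ by the homotopy identification of arc-forgetting with stabilization used in \cite{kupersmillercells}. This identifies the $E^1$ differential with the differential on $\Inj_*(H_q(F(M)))_S$, so
\[
E^2_{p,q}(S) \;\cong\; H_p\!\left(\Inj_*(H_q(F(M)))\right)_S.
\]
Since Church--Ellenberg--Farb showed that $H_q(F(M))$ is an $\FI\sharp$-module (free as an $\FI$-module), I can invoke Theorem~\ref{connectedM(W)} to rewrite this as
\[
E^2_{p,q}(S) \;\cong\; \Ind_{\fS_{p+1}\times\fS_{k-p-1}}^{\fS_k} H_p(\Inj_*(p+1)) \boxtimes H_0^{\FI}(H_q(F(M)))_{k-p-1},
\]
and unpacking the induction (sum over decompositions $S=P\sqcup Q$ with $|P|=p+1$) gives the other displayed form. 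The leftmost column $p=-1$ then follows from Proposition~\ref{minusonehomology}, and the bottom row $q=0$ from the fact that $H_0(F(M)) \cong M(0)$ is the constant $\FI$-module so $\Inj_*(H_0(F(M)))_S$ is the reduced chain complex of $\Inj_\bullet(S)$, whose homology vanishes except in top degree by Theorem~\ref{Farmer}.

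Finally, convergence to $H_{p+q+1}\!\left(F_S(M),\|\Arc_\bullet(F_S(M))\|\right)$ is the standard output of the augmented geometric realization spectral sequence (\cite[Section~2.3]{RW}), applied to the augmented semi-simplicial space $\Arc_\bullet(F_S(M))\to F_S(M)$. The main obstacle is the second step: rigorously matching the face map that forgets an arc with the $\FI$-stabilization map on homology. This requires a careful choice of deformation retraction of each component of $\Arc_p(F_S(M))$ onto $F_{S\setminus\im(f)}(M)$ compatible with the embedding $e\colon M\sqcup\mathbb{R}^n\hookrightarrow M$ used to define the $\FI$-structure; once this compatibility is in place (as in the unordered case handled by Kupers--Miller), everything else is essentially formal from the algebraic machinery of Section~\ref{SectionAlgebraicFoundations}.
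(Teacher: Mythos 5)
Your proof follows essentially the same route as the paper: identify $E^1$ with $\Inj_*(H_q(F(M)))_S$ via the structure of the components of $\Arc_p(F_S(M))$, recognize the $d^1$ differential as the twisted injective words differential, apply Theorem~\ref{connectedM(W)} using the $\FI\sharp$-structure, and read off the $p=-1$ column and $q=0$ row from Proposition~\ref{minusonehomology}, $H_0(F(M))\cong M(0)$, and Theorem~\ref{Farmer}. One small correction: the homotopy equivalence of each component of $\Arc_p(F_S(M))$ with $F_{S\setminus\im(f)}(M)$ is not a consequence of Proposition~\ref{ArcF} (which concerns the connectivity of the realization map), but rather, as you also note, of the Kupers--Miller identification $\Arc_j(C_k(M))\simeq C_{k-j-1}(M)$ lifted to ordered configuration spaces.
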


The $E_1$-page and $E_2$-page of the arc resolution spectral sequence are shown in Figures \ref{FigureE1} and  \ref{E2Page}.

\begin{proof}[Proof of Proposition \ref{GeometricRealizationSS}]
By definition, the arc resolution spectral sequence satisfies $$E^1_{p,q}(S)=H_q(\Arc_p(F_{S}(M))) \qquad \text{for $q \geq 0$ and $p \geq -1$.} $$ Since $F_S(M)$ is the space of $(-1)$-simplices, the spectral sequence converges to $H_{p+q+1}(F_S(M),||\Arc_\bullet(M)||)$.
 
By the above remarks on the structure of the space $\Arc_p(F_{S}(M))$, the $E^1$-page satisfies $$E^1_{p,q}(S)=H_q(\Arc_p(F_{S}(M))) \cong \bigoplus_{ f: \{ 0 , 1, \ldots, p \} \hookrightarrow S} H_q ( F_{S \setminus \im(f) }(M)) $$ and has $d^1$ differentials induced by the alternating sum of face maps on $\Arc_\bullet(F_S (M))$ which are homotopic to stabilization maps. Hence each row of the $E^1$-page is precisely the twisted complex of injective words 
$$E^1_{p,q}(S) \cong \Inj_p\Big(H_q(F(M))\Big)_{S}$$ 
of Definition \ref{DefnInj.}.  It follows that $$ E^2_{p,q}(S) \cong H_p \Big(\Inj_*(H_q(F(M)))\Big)_{S}.$$ 
When $p=-1$,  by Proposition \ref{minusonehomology}, $$E^2_{-1,q}(S) \cong H_0^{\FI}\Big(H_q(F(M))\Big)_{S}.$$ 

Since $n \geq 2$ and $M$ is connected, the configuration space $F_k (M)$ is connected, and there is an isomorphism of FI-modules  $H_0 ( F(M) ) \cong M(0)$. Therefore when $q=0$, 
$$ E^2_{p,0}(S) \cong H_p \Big(\Inj_*(H_0(F(M)))\Big)_{S} \cong H_p \Big(\Inj_*(M(0))\Big)_S \cong \tilde H_p \Big(|| \Inj_\bullet(S) || \Big), $$ 
a group that is nonzero only when $p=|S|-1$ by Theorem \ref{Farmer}. When $|S|=k$, the $\FI\sharp$-module structure on  $H_q(F(M))$ and Theorem \ref{connectedM(W)} imply that the $E^2$-page has the form
$$E^2_{p,q}(k) \cong H_p \Big( \text{Inj}_* (H_q(F_{k}(M)))\Big) \cong \Ind_{\sS_{p+1}\times \sS_{k-p-1}}^{\sS_k} \tilde H_{p}(||\Inj_\bullet(p+1)||)\boxtimes H_0^{\FI} (H_q (F(M)))_{k-p-1} $$
as claimed. 
\end{proof}

Before we discuss applications of the arc resolution, we describe modifications necessary to deal with non-smoothable manifolds. 

\begin{remark}\label{topologicalmanifolds}
If $M$ or $N$ does not have a smooth structure, then the space of smooth embeddings of $N$ into $M$ is not defined. To modify the arguments of \cite{kupersmillercells} to prove a version of Theorem  \ref{arcresolutionconnected} which applies to topological manifolds, we need to consider a space of embeddings that satisfies a parameterized isotopy extension theorem (see Burghelea--Lashof \cite[Page 19]{topisotopy}). One space of embeddings of topological manifolds that is compatible with the proof in \cite{kupersmillercells} is the following: Let $\Emb^{lf}_\bullet(N,M)$ denote the simplicial set whose space of $k$-simplices is the set of locally flat embeddings of $\Delta^k \times N $ into $ \Delta^k \times M$ that commute with the projection to $\Delta^k$. Using $||\Emb^{lf}_\bullet(N,M)||$ in the definition of the arc resolution allows us to apply the arguments of \cite{kupersmillercells} to topological manifolds without significant modifications. For ease of exposition, we will only give proofs in the smooth case.  
\end{remark}

\begin{theorem} \label{ConfigSpaceRepStable} Let $M$ be a noncompact connected smooth manifold of dimension at least two. Then $$\deg  H_0^{\FI}(H_i(F(M);\Z)) \leq 2i.$$
\end{theorem}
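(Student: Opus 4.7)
The plan is to prove the theorem by induction on the homological degree $i$, exploiting the arc resolution spectral sequence described in Proposition \ref{GeometricRealizationSS}. The base case $i=0$ is immediate: since $M$ is connected and $n \geq 2$, each $F_k(M)$ is connected, so $H_0(F(M)) \cong M(0)$ as an FI-module and $H_0^{\FI}(M(0))_k = 0$ for all $k > 0$.

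For the inductive step, fix $i \geq 1$ and assume $\deg H_0^{\FI}(H_j(F(M))) \leq 2j$ for all $j < i$. Fix $k > 2i$ and consider the arc resolution spectral sequence converging to $H_{p+q+1}\bigl(F_k(M),\|\Arc_\bullet(F_k(M))\|\bigr)$. By Proposition \ref{GeometricRealizationSS}, the $E^2$-page has the form
\[
E^2_{p,q}(k) \cong \Ind^{\fS_k}_{\fS_{p+1}\times \fS_{k-p-1}} \Top_{p+1} \boxtimes H_0^{\FI}\bigl(H_q(F(M))\bigr)_{k-p-1},
\]
and the entry we care about is $E^2_{-1,i}(k) \cong H_0^{\FI}(H_i(F(M)))_k$.

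The next step is to show that all differentials entering or leaving $E^r_{-1,i}(k)$ vanish, so $E^2_{-1,i}(k) = E^\infty_{-1,i}(k)$. Outgoing differentials $d^r \colon E^r_{-1,i} \to E^r_{-1-r,\, i+r-1}$ land in columns with $p < -1$, hence vanish. For incoming differentials $d^r \colon E^r_{r-1,\, i-r+1} \to E^r_{-1,i}$ with $r \geq 2$, it suffices to show the source is zero on the $E^2$-page. By the inductive hypothesis applied to the homological degree $j = i-r+1 < i$, one has $H_0^{\FI}(H_{i-r+1}(F(M)))_{k-r} = 0$ whenever $k - r > 2(i-r+1)$, i.e.\ whenever $k > 2i - r + 2$. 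Since $r \geq 2$ and $k > 2i$, this holds, so $E^2_{r-1,\, i-r+1}(k) = 0$, and the incoming differentials all vanish.

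It remains to show $E^\infty_{-1,i}(k) = 0$, which will force $E^2_{-1,i}(k) = 0$. By Proposition \ref{ArcF}, the augmentation $\|\Arc_\bullet(F_k(M))\| \to F_k(M)$ is $(k-1)$-connected, so
\[
H_{p+q+1}\bigl(F_k(M),\|\Arc_\bullet(F_k(M))\|\bigr) = 0 \quad \text{for } p + q + 1 \leq k - 1.
\]
In particular, $H_i(F_k(M),\|\Arc_\bullet\|) = 0$ as soon as $i \leq k - 1$, and our assumption $k > 2i \geq i + 1$ for $i \geq 1$ guarantees this. Therefore $E^\infty_{-1,i}(k)$, being a subquotient of this vanishing group, is zero, completing the induction.

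The main obstacle is verifying the vanishing of the relevant differentials, which requires the inductive hypothesis to interact cleanly with the combinatorics of the $E^2$-page: the crucial observation is that as $r$ grows, the column index $p = r-1$ increases while the FI-degree $k-p-1 = k-r$ decreases at the same rate, so the bound $k - r > 2(i-r+1)$ simplifies to $k > 2i - r + 2$, which is weaker than $k > 2i$ for every $r \geq 2$.
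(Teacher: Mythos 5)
Your proof is correct and follows essentially the same strategy as the paper's: induction on $i$, using Proposition \ref{ArcF} for convergence of the arc resolution spectral sequence to zero in the relevant range, and the inductive hypothesis together with the $E^2$-page description from Proposition \ref{GeometricRealizationSS} to rule out all incoming differentials into $E^r_{-1,i}(k)$ for $k>2i$. The paper phrases the vanishing of the relevant $E^2$-entries via Corollary \ref{twistedInj} rather than the explicit $\Ind\, \Top_{p+1}\boxtimes H_0^{\FI}(\cdot)$ description, but these are the same fact.
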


When $M$ is orientable, Theorem \ref{ConfigSpaceRepStable} is a result of Church--Ellenberg--Farb \cite[Theorem of 6.4.3]{CEF} proved by different methods.

\begin{proof}[Proof of Theorem \ref{ConfigSpaceRepStable}]
Consider the arc resolution spectral sequence described in Proposition \ref{GeometricRealizationSS}. For $p+q \leq |S|-2$, Proposition \ref{ArcF} implies that the sequence converges to zero: $$E^\infty_{p,q}(S)  \cong H_{p+q+1}(F_{S}(M),||\Arc_\bullet(F_{S}(M))||) \cong 0 \qquad \text{ for $p+q \leq |S|-2$} .$$

We now prove Theorem \ref{ConfigSpaceRepStable} by induction on homological degree $i$.  Observe that $$\deg  H_0^{\FI}(H_0(F(M)))=0$$ since $H_0(F(M)) \cong M(0)$. Assume that $\deg  H_0^{\FI}(H_q(F(M))) \leq 2q$ for all $q<i$. Using Theorem \ref{twistedInj} and our inductive hypothesis,  we obtain $$E^2_{p,q}(S)=0 \qquad \text{ for $p \leq |S| -2 - 2q$ \; and \;  $q <i$}$$ (equivalently $|S| \geq p + 2(q+1)$). This shows that there are no possible differentials into (or out of) $E^r_{-1,i}(S)$ for $r>1$ and $|S|>2i$. See Figure \ref{FigureE2Inductive}.

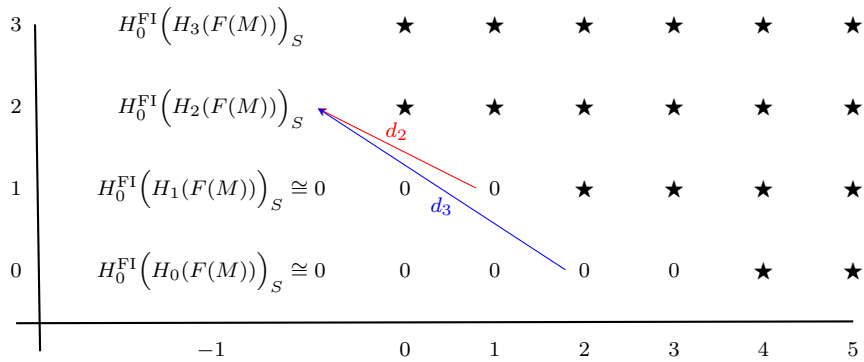
\begin{figure}[h!]    \centering \begin{tikzpicture} \footnotesize 
  \matrix (m) [matrix of math nodes,
    nodes in empty cells,nodes={minimum width=3ex,
    minimum height=5ex,outer sep=2pt},
    column sep=6ex,row sep=3ex]{  
 3    &  H_0^{\FI}\Big(H_3(F(M))\Big)_{S} &  \bigstar &  \bigstar &\bigstar & \bigstar &\bigstar &  \bigstar & \\  
 2    &  H_0^{\FI}\Big(H_2(F(M))\Big)_{S} &  \bigstar&  \bigstar &\bigstar &\bigstar &\bigstar &\bigstar &   \\          
1     &  H_0^{\FI}\Big(H_1(F(M))\Big)_{S} \cong 0  & 0   & 0&\bigstar&\bigstar &\bigstar &\bigstar &  \\             
 0     &  H_0^{\FI}\Big(H_0(F(M))\Big)_{S} \cong 0 & 0   & 0   &0 &0 &\bigstar & \bigstar &  \\       
 \quad\strut &   -1  &  0  &  1  & 2 &3 &4 & 5\\};

 \draw[-stealth, red] (m-3-4.west) -- (m-2-2.east) node [midway,above] {$d_2$};
 \draw[-stealth, blue] (m-4-5.west) -- (m-2-2.east) node [midway,below] {$d_3$};

\draw[thick] (m-1-1.east) -- (m-5-1.east) ;
\draw[thick] (m-5-1.north) -- (m-5-8.north) ;

\end{tikzpicture}
\caption{$E^2_{p,q}(S)$ in the inductive step, illustrated for $|S|=5$ and $i=2$. } \label{FigureE2Inductive}
\end{figure}  
Thus for $|S|>2i$, 
 \[H_0^{\FI}(H_i(F(M)))_S \cong E^2_{-1,i}(S) \cong E^\infty_{-1,i}(S) \cong 0.\] This shows that $\deg  H_0^{\FI}(H_i(F(M))) \leq 2i$. The claim now follows by induction. 
\end{proof}

\subsection{Differentials in the arc resolution spectral sequence} \label{SectionDifferentials}

The goal of this subsection is to compute many of the differentials in the arc resolution spectral sequence. This calculation will be used in the subsequent two subsections to prove secondary representation stability for manifolds of dimension $2$ and an improved representation stability range for higher dimensional manifolds. We begin by comparing the geometric realization spectral sequence to a double complex spectral sequence.

In this subsection, the symbol $C_i(X)$ will denote the $i$-dimensional singular chains on a space $X$, as opposed to the configuration space of $i$ unordered points. Let $\partial:C_i(X) \m C_{i-1}(X)$ denote the usual boundary operator. Due to the abundance of the letter ``$d$'' in this subsection, we will denote maps on singular chains induced by face maps in the arc resolution by $f_i$. The differential $d^1$ of the arc resolution spectral sequence is given by the alternating sum of the maps in homology induced by the face maps. We will denote the map on singular chains given by the alternating sum of the face maps by $d^1$ as well. See Bendersky--Gitler \cite[Proof of Proposition 1.2]{BenderskyGitler} for a proof of the following.

\begin{proposition} 
Let $A_\bullet$ be a semi-simplicial space. Beginning on the $E^1$ pages, the geometric realization spectral sequence agrees with the spectral sequence for the double complex $C_*(A_\bullet)$ that has $d^0$ differential induced by $\partial$ and $d^1$ induced by the alternating sum of the face maps.  
\end{proposition}

In particular, we will redefine the $E^0$-page of the arc resolution spectral sequence to be the complex  $E^0_{p,q}(k) \cong C_q(\Arc_p(F_k(M)))$ shown in Figure \ref{E0Page}. Since we have reformulated the arc resolution spectral sequence as a double complex spectral sequence, we can use the standard formula for the differentials in a double complex spectral sequence (see for example Bott--Tu \cite[Formula 14.12, Page 164]{Bott&Tu}).
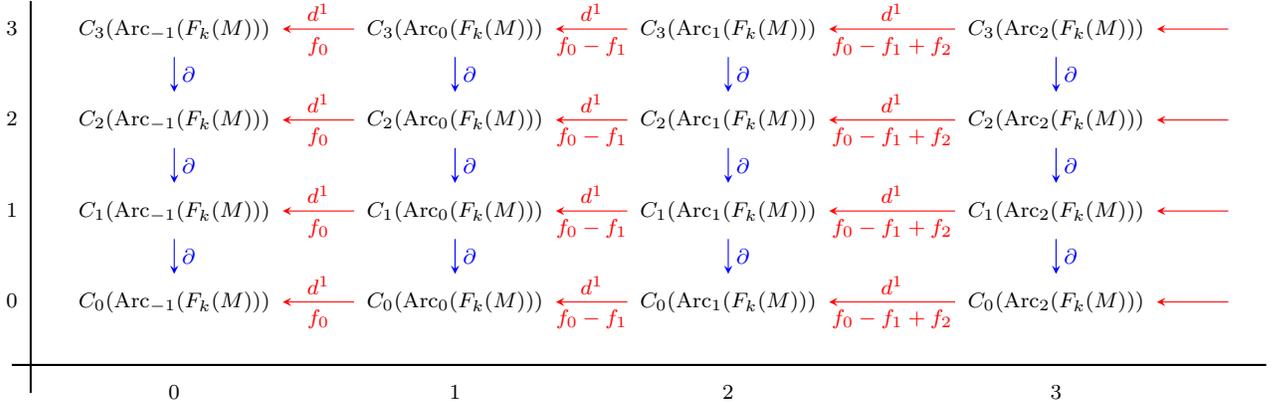
\begin{figure}[h!]    \centering \begin{tikzpicture} {\footnotesize
  \matrix (m) [matrix of math nodes,
    nodes in empty cells,nodes={minimum width=3ex,
    minimum height=5ex,outer sep=2pt},
    column sep=9ex,row sep=5ex, text height=1.5ex, text depth=0.25ex]{ 
 3    &[-4ex]   C_3(\Arc_{-1}(F_k(M)))&   C_3(\Arc_{0}(F_k(M))) &  C_3(\Arc_{1}(F_k(M))) &[6ex]  C_3(\Arc_{2}(F_k(M)))  &  \; \\  
 2    &[-4ex]   C_2(\Arc_{-1}(F_k(M))) &   C_2(\Arc_{0}(F_k(M)))&     C_2(\Arc_{1}(F_k(M))) &[6ex] C_2(\Arc_{2}(F_k(M)))  & \;  \\          
1     &[-4ex]  C_1(\Arc_{-1}(F_k(M)))  &  C_1(\Arc_{0}(F_k(M)))  &   C_1(\Arc_{1}(F_k(M))) &[6ex] C_1(\Arc_{2}(F_k(M))) &  \; \\             
 0     &[-4ex]  C_0(\Arc_{-1}(F_k(M))) &  C_0(\Arc_{0}(F_k(M)))   &   C_0(\Arc_{1}(F_k(M))) &[6ex]  C_0(\Arc_{2}(F_k(M))) & \; \\
&  0    &  1   &  2   &3&\\       }; 

 \draw[-stealth, red] (m-1-3.west) -- (m-1-2.east) node [midway,above] {$d^1$} node [midway,below] {$f_0$};
 \draw[-stealth, red] (m-2-3.west) -- (m-2-2.east) node [midway,above] {$d^1$} node [midway,below] {$f_0$};
 \draw[-stealth, red] (m-3-3.west) -- (m-3-2.east) node [midway,above] {$d^1$} node [midway,below] {$f_0$};
 \draw[-stealth, red] (m-4-3.west) -- (m-4-2.east) node [midway,above] {$d^1$} node [midway,below] {$f_0$};

 \draw[-stealth, red] (m-1-4.west) -- (m-1-3.east) node [midway,above] {$d^1$} node [midway,below] {$f_0-f_1$};
 \draw[-stealth, red] (m-2-4.west) -- (m-2-3.east) node [midway,above] {$d^1$} node [midway,below] {$f_0-f_1$};
 \draw[-stealth, red] (m-3-4.west) -- (m-3-3.east) node [midway,above] {$d^1$} node [midway,below] {$f_0-f_1$};
 \draw[-stealth, red] (m-4-4.west) -- (m-4-3.east) node [midway,above] {$d^1$} node [midway,below] {$f_0-f_1$}; 

 \draw[-stealth, red] (m-1-5.west) -- (m-1-4.east) node [midway,above] {$d^1$} node [midway,below] {$f_0-f_1+f_2$};
 \draw[-stealth, red] (m-2-5.west) -- (m-2-4.east) node [midway,above] {$d^1$} node [midway,below] {$f_0-f_1+f_2$};
 \draw[-stealth, red] (m-3-5.west) -- (m-3-4.east) node [midway,above] {$d^1$} node [midway,below] {$f_0-f_1+f_2$};
 \draw[-stealth, red] (m-4-5.west) -- (m-4-4.east) node [midway,above] {$d^1$} node [midway,below] {$f_0-f_1+f_2$};

 \draw[-stealth, red] (m-1-6.west) -- (m-1-5.east);
 \draw[-stealth, red] (m-2-6.west) -- (m-2-5.east);
 \draw[-stealth, red] (m-3-6.west) -- (m-3-5.east);
 \draw[-stealth, red] (m-4-6.west) -- (m-4-5.east);

 \draw[-stealth, blue] (m-1-2) -- (m-2-2) node [midway,right] {$\partial$};
 \draw[-stealth, blue] (m-2-2) -- (m-3-2) node [midway,right] {$\partial$};
 \draw[-stealth, blue] (m-3-2) -- (m-4-2) node [midway,right] {$\partial$};

 \draw[-stealth, blue] (m-1-3) -- (m-2-3) node [midway,right] {$\partial$};
 \draw[-stealth, blue] (m-2-3) -- (m-3-3) node [midway,right] {$\partial$};
 \draw[-stealth, blue] (m-3-3) -- (m-4-3) node [midway,right] {$\partial$};

 \draw[-stealth, blue] (m-1-4) -- (m-2-4) node [midway,right] {$\partial$};
 \draw[-stealth, blue] (m-2-4) -- (m-3-4) node [midway,right] {$\partial$};
 \draw[-stealth, blue] (m-3-4) -- (m-4-4) node [midway,right] {$\partial$};

 \draw[-stealth, blue] (m-1-5) -- (m-2-5) node [midway,right] {$\partial$};
 \draw[-stealth, blue] (m-2-5) -- (m-3-5) node [midway,right] {$\partial$};
 \draw[-stealth, blue] (m-3-5) -- (m-4-5) node [midway,right] {$\partial$};

\draw[thick] (m-1-1.north east) -- (m-5-1.east) ;
\draw[thick] (m-5-1.north) -- (m-5-6.north east) ;

}
\end{tikzpicture}
\caption{$E^0_{p,q}(k) \cong C_q(\Arc_p(F_k(M)))$. The $d^1$ differentials are equal to the alternating sum of the maps $f_i$ induced by the face maps.} \label{E0Page}
\end{figure}

 We will now describe some functoriality properties of the arc resolution spectral sequence and then make a calculation of some differentials emanating from the bottom row.

\begin{definition} \label{MapArc}
Let $M$ be the interior of a smooth $n$-manifold  $\overline{M}$ with an embedding $[0,1] \hookrightarrow \partial \overline{M}$. Choose an interval in the boundary of the half-closed disk $\R^{n-1} \times (-\infty,0]$. Fix an embedding 
$$\bar{e}: \overline M \sqcup (\R^{n-1} \times (-\infty,0]) \hookrightarrow \overline{M}$$ such that:
\begin{itemize}     
\item On the interior of the domain, $\bar{e}$ restricts to an embedding of $M \sqcup \R^n$ into $M$ such that $\bar{e}|_M$ is isotopic to the identity.
\item  The embedding $\overline e$ restricts to an embedding of the two boundary intervals of $\overline M$ and $\R^{n-1} \times (-\infty,0]$ into the boundary interval of $\overline M$. 
\end{itemize}
Then the embedding $\bar{e}$ induces a map of spaces $$e_{p,p'}:\Arc_p(F_S(\R^n)) \times \Arc_{p'}(F_T(M)) \m \Arc_{p+p'+1}(F_{S \bigsqcup T}(M))$$ as in Figure \ref{ArcEmbedding}.
\begin{figure}[!ht]    \centering
\labellist
 \hair 0pt
\pinlabel {\Large ,} at  360 100
\pinlabel {\Large $\longmapsto$} at  650 100
\pinlabel {\color{Maroon} $5$} at  452 112
\pinlabel {\color{Maroon} $5$} at  947  65
\pinlabel {\color{Maroon} $1$} at  877 132
\pinlabel {\color{Maroon} $1$} at  165 95
\pinlabel {\color{Maroon} $2$} at  105 137
\pinlabel {\color{Maroon} $2$} at  820 175
\pinlabel {\color{Maroon} $3$} at  100 40
\pinlabel {\color{Maroon} $3$} at  820 50
\pinlabel {\color{Maroon} $4$} at  145 50
\pinlabel {\color{Maroon} $4$} at  860 80
\endlabellist
  \centering
{\includegraphics[scale=.21]{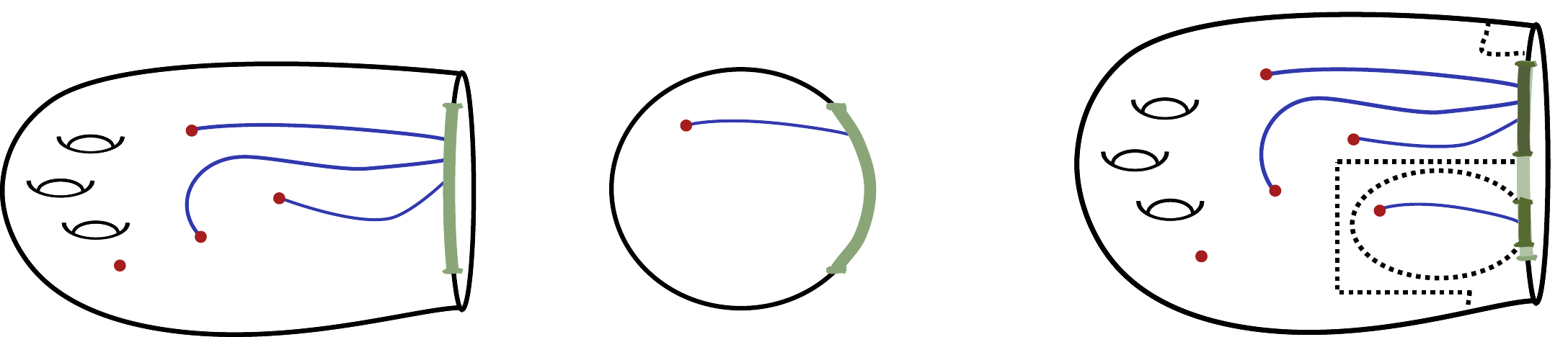}}
\caption{The map $e_{2,0}: \Arc_2(F_{\{1,2,3,4\}}(\R^n)) \times \Arc_{0}(F_{\{5\}}(M)) \m \Arc_{3}(F_{\{1,2,3,4,5\}}(M))$.}
\label{ArcEmbedding}
\end{figure}  
\end{definition}

\begin{lemma} \label{LemmaLeibniz} The maps $e_{p,p'}$ of Definition \ref{MapArc} induce maps $$t^r:E^r_{p,q}[\R^n](S) \otimes E^r_{p'q'}[M](T) \m E^r_{p+p'+1,q+q'}[M](S \sqcup T)$$     These maps satisfy the following Leibniz rule with respect to the differentials; if $a \in E^r_{p,q}[\R^n](S)$ and $b \in E^r_{p'q'}[M](S)$, then
$$ d^r(t^r(a \otimes b)) = t^r(d^r(a) \otimes b) + (-1)^{p+q} t^r(a \otimes d^r(b)).$$ 
 \end{lemma}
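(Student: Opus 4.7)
The plan is to establish the lemma by lifting the construction of $t^r$ from the $E^0$-page, where a chain-level Leibniz rule follows from standard facts about cross products in double complexes, and then propagating this structure through each subsequent page of the spectral sequence. Recall that the $E^0$-page may be identified with the double complex $E^0_{p,q}(k) \cong C_q(\Arc_p(F_k(M)))$ appearing in Figure \ref{E0Page}, with horizontal differential given by the alternating sum of face-induced maps and vertical differential given by the singular boundary $\partial$. The embeddings $e_{p,p'}$ yield chain-level maps
$$ \tilde{t}^0 : C_q(\Arc_p(F_S(\R^n))) \otimes C_{q'}(\Arc_{p'}(F_T(M))) \longrightarrow C_{q+q'}(\Arc_{p+p'+1}(F_{S\sqcup T}(M))) $$
by composing the Eilenberg--Zilber cross product with the map on singular chains induced by $e_{p,p'}$.

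Next I would verify the chain-level Leibniz rule directly. The vertical contribution is the classical Leibniz rule for the singular cross product, which produces the usual sign $(-1)^{q}$ on the first factor's vertical degree. The horizontal contribution is essentially combinatorial: by the construction of $e_{p,p'}$, the first $p+1$ arcs of $e_{p,p'}(\alpha,\beta)$ come from $\alpha$ and the last $p'+1$ from $\beta$. Hence the face maps indexed $0,\ldots,p$ on the image act on the $\R^n$-factor, while those indexed $p+1,\ldots,p+p'+1$ act on the $M$-factor with an index shift of $p+1$. Taking alternating sums gives a horizontal Leibniz rule with a sign $(-1)^{p+1}$ in front of the second term. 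Assembling the two partial differentials into the total differential $D=d^1 \pm \partial$ and re-indexing by the total degree $|a|=p+q$ yields the Leibniz rule in the stated form $D \tilde t^0(a\otimes b)=\tilde t^0(Da\otimes b)+(-1)^{p+q}\tilde t^0(a\otimes Db)$.

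Finally I would propagate this structure through the spectral sequence. The map $\tilde t^0$ is a chain map of double complexes, and by the standard inductive argument for multiplicative structure on spectral sequences (e.g.\ the argument used for the Serre spectral sequence) a product on $E^0$ satisfying the Leibniz rule descends to products $t^r$ on each page $E^r$ satisfying the same Leibniz rule: the Leibniz identity at level $r$ shows that $t^r$ restricts to $d^r$-cycles modulo $d^r$-boundaries, hence descends to $t^{r+1}$; moreover, the Leibniz identity at level $r+1$ is inherited from that at level $r$. The main obstacle is sign bookkeeping: one must check that the Eilenberg--Zilber sign for the vertical differential combines with the face-index shift for the horizontal differential so that the total Leibniz sign depends on $p+q$ rather than on $p$ or $q$ alone. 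This is a routine but delicate calculation of the sort familiar from multiplicativity arguments for spectral sequences of double complexes.
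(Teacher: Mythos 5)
Your proof follows essentially the same route as the paper's, which realizes the arc resolution spectral sequence at the chain level (via the skeletal filtration of the chains on the geometric realization) and appeals to the standard fact that a pairing of filtered chain complexes induces a pairing of spectral sequences satisfying the Leibniz rule; the paper disposes of this in two sentences by citing Galatius's notes \cite[Theorem 9.5]{SorenSSnotes}, whereas you reproduce the content of that argument at the level of the equivalent double complex, via the Eilenberg--Zilber cross product followed by $(e_{p,p'})_*$ and the usual inductive propagation through the pages. The one substantive gap in your writeup is the sign verification you call ``routine but delicate'' but do not carry out: with the naive cross product composed with $(e_{p,p'})_*$, the horizontal Leibniz sign is $(-1)^{p+1}$ (from the index shift $j \mapsto j-(p+1)$ in the face maps acting on the second factor) while the vertical Eilenberg--Zilber sign is $(-1)^q$, and these do not assemble into the uniform sign $(-1)^{p+q}$ against the total differential unless you build a Koszul twist into the definition of $\tilde t^0$ — so the sign bookkeeping is precisely where the content lies, and you should state explicitly the twist you are inserting.
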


\begin{proof}
Let $\mathcal F[M](S)_*$ denote the filtered chain complex given by filtering the double complex $E^0_{*,*}[M](S)$ in the simplicial direction. The maps $e_{p,p'}$ assemble to form a filtered chain map $$e_{p,p'*}: \mathcal F[\R^n](S)_* \otimes \mathcal F[M](T)_* \m \mathcal F[M](S \sqcup T)_{*+1}.$$ Filtered chain maps induce pairings of filtered chain complex spectral sequences which satisfy the Leibniz rule with respect to the differentials (see for example Helle \cite[Lemma 3.5.2]{GOH} or Massey \cite[Section 8]{MasseyExact}).

\end{proof}

In this subsection, we will not use the full strength of Lemma \ref{LemmaLeibniz} and will only use it to produce maps of spectral sequences. However, in the next two subsections, we will use the pairing to compute differentials.

\begin{convention}  We can produce chains in $E^0_{p,q}(k) \cong C_q(\Arc_p(F_k(M)))$ from $q$-parameter families of points in $\Arc_p(F_k(M))$. For example, Figure \ref{FigHomologyDemo} shows a map  $[0,1]^2 \to \Arc_0(F_5( M))$. Given any subdivision of the product $[0,1]^2$ into triangles, we can express this map as a linear combination of singular chains.  We interpret Figure \ref{FigHomologyDemo} to represent the associated chain in $C_2(\Arc_0(F_5 M ))$ or its homology class  in $H_2(\Arc_0(F_5 M ))$. 
\begin{figure}[!ht]    \centering
{\includegraphics[scale=.2]{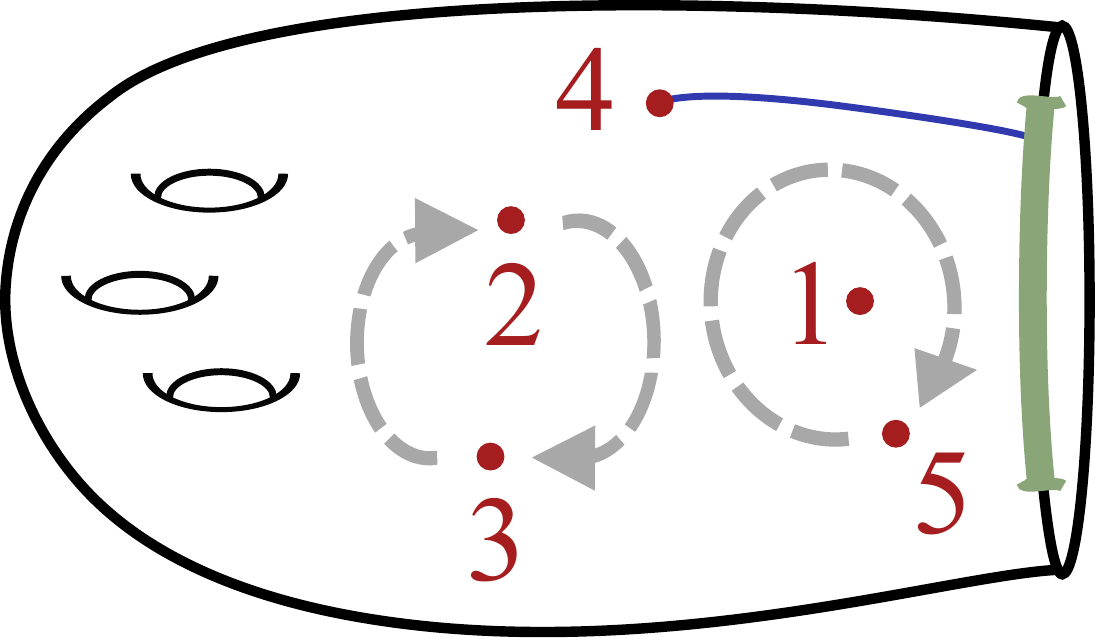}}
\caption{A map $[0,1]^2 \to \Arc_0(F_5 M)$. As the first factor $[0,1]$ ranges from $0$ to $1$, point 3 moves from bottom to top while simultaneously point 2 moves from top to bottom. As the second factor $[0,1]$ ranges from $0$ to $1$, point 5 moves in a closed loop around point 1.}
\label{FigHomologyDemo}
\end{figure}  
To view a map $s: [0,1]^q \to \Arc_p(F_k(M))$ as a sum of chains, we need to choose an order on the $q$ factors $e_i:[0,1] \to \Arc_p(F_k(M))$ of the domain. To compute its boundary, we use the formula 
$$ \partial s = \sum_j (-1)^{j+1} e_1 \times e_2 \times \cdots \times \partial(e_j) \times \cdots \times e_q \qquad \qquad  \text{where } \quad \partial e_i = e_i(1) - e_i(0) $$ 
For example, if we order the two singular $1$-simplices in Figure \ref{FigHomologyDemo} as they appear left to right, and observe that the second singular $1$-simplex is a cycle, we find that the boundary is the chain shown in Figure \ref{FigHomologyDemoBoundary}.
\begin{figure}[!ht]    \centering
{\includegraphics[scale=.2]{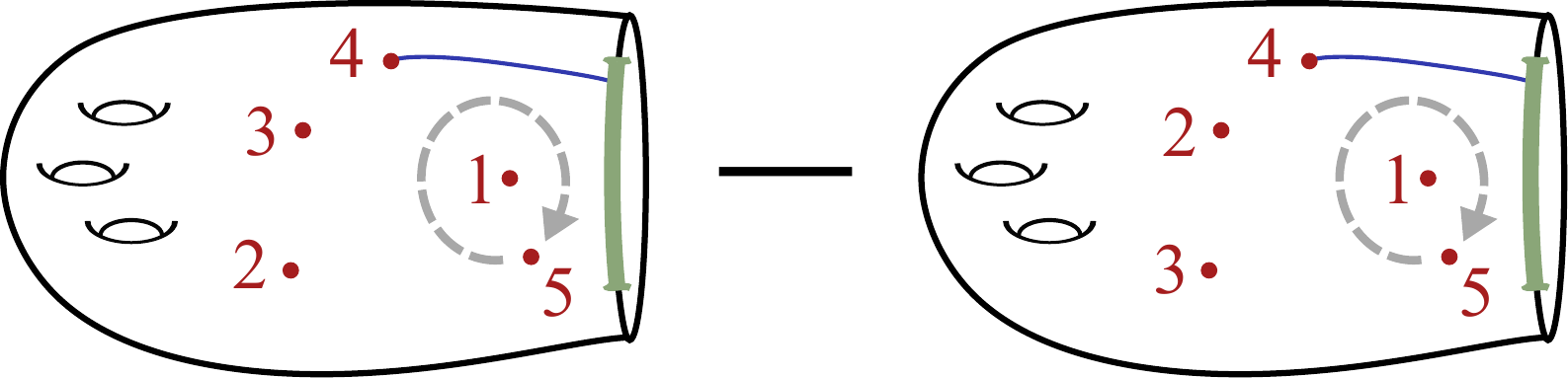}}
\caption{The boundary of Figure  \ref{FigHomologyDemo}.}
\label{FigHomologyDemoBoundary}
\end{figure}  
More generally, consider $e:[0,1] \to \Arc_p(F_k(M))$ and $y =\sum_\alpha m_\alpha \sigma_\alpha \in  C_{q-1}(\Arc_p(F_k(M)))$ with $\sigma_{\alpha}: \Delta^{q-1} \m \Arc_p(F_k(M)) $ and $m_\alpha \in \Z$.  Let $(e \times y) \in C_{q}(\Arc_p(F_k(M)))$ be a chain obtained by functorially subdividing $[0,1] \times \Delta^{q-1}$ into copies of $\Delta^q$ (e.g. see Hatcher \cite[Theorem 2.10]{hatcherbook}). To clarify how we orient the simplicies in the sudivision of $[0,1] \times \Delta^{q-1}$, note that the desired construction satisfies $$ \partial(e \times y) =\Big( (\partial e \times y) - (e \times \partial y) \Big) = \Big( (e(1)\times y )- (e(0) \times y) - (e \times \partial y) \Big).$$ For example, the product chain shown in Figure \ref{y3} has boundary given by the chain depicted in Figure  \ref{d(ey)}. 
\begin{figure}[!ht]   \centering
\begin{subfigure}{.35\textwidth}
\labellist
\large \hair 0pt
\pinlabel {\color{Fuchsia} $y$} at  236 122
\endlabellist
  \centering
{\includegraphics[scale=.19]{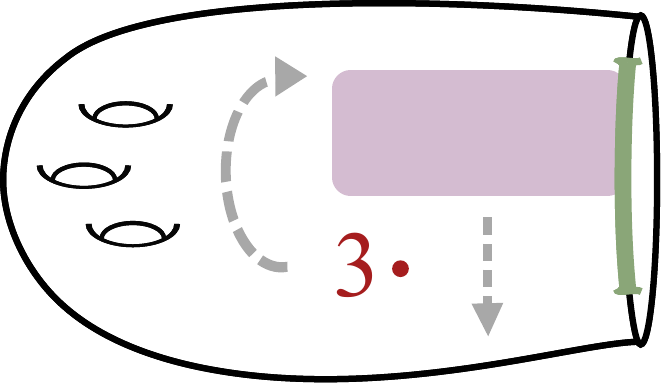}}
 \caption{(ordered) product of chains.}
  \label{y3}
\end{subfigure}%
\begin{subfigure}{.65\textwidth}
\labellist
\large \hair 0pt
\pinlabel {\color{Fuchsia} $y$} at  233 83
\pinlabel {\color{Fuchsia} $y$} at  679 117
\pinlabel {\color{Fuchsia} \footnotesize $\partial y$} at  1123 124
\endlabellist
  \centering
{\includegraphics[scale=.19]{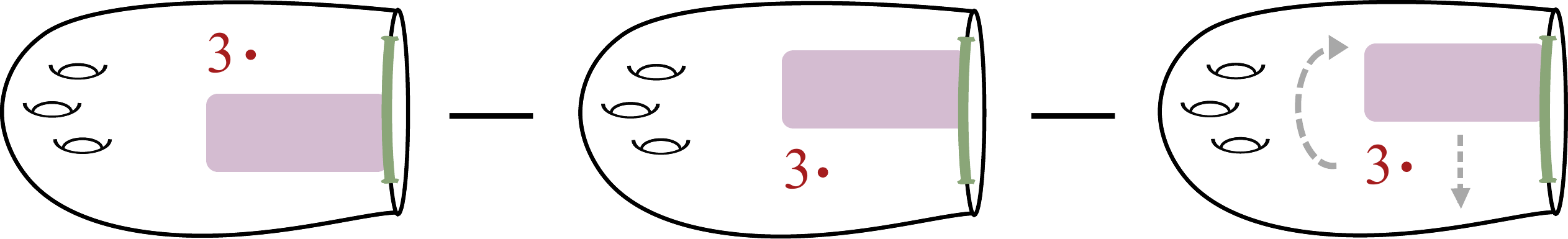}}
\caption{ boundary of a product of chains.}
  \label{d(ey)}
\end{subfigure}
\caption{The boundary of a product of chains.}  \label{FigSampleBoundary}
\end{figure}  
These conventions will feature in the computations carried out below and are important for determining signs. 
\end{convention}

The main result of this section is the values of the differentials computed in the following lemma. This result, combined with the Leibniz rule stated in Lemma \ref{LemmaLeibniz}, determines a large portion of the differentials in arc resolution spectral sequence.

\begin{lemma} \label{LemmaDifferentialsOnLie} Let $E^r_{p,q}(S)$ be the arc resolution spectral sequence and let $k=|S|$. Consider an element of Reutenauer's basis  for $\cL_S$ (Theorem \ref{ReutenauersBasis}) $$ L=[[[ \cdots[a_1, a_2], a_3], \ldots], a_{k-1}], a_k ] \in E^1_{k-1,0}(S).$$ Then $d^r(L)=0$ for $r<k$ and $d^k(L)$ is the image of the class $t_{\psi(  \cdots \psi(\psi(\psi(a_1, a_2), a_3), a_4), \cdots, a_k )}(y_0) 
$ in $E^k_{-1,k-1}(S)$. Here $y_0$ denotes the class of a point in $H_0(F_0(M))$.  

  \end{lemma}

 The image of the element $[[[1,2],3],4]\in E^1_{3,0}(4)$ is shown in Figure \ref{SampleDifferential}. 
\begin{figure}[!ht]    \centering
\includegraphics[scale=.19]{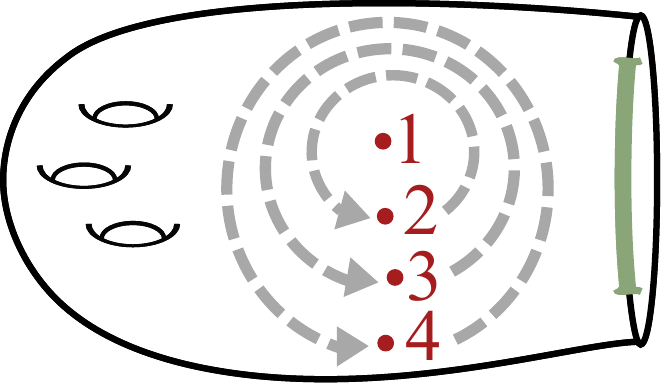}
\caption{$d^4\Big([[[1,2],3],4]\Big)$ The points labeled 2, 3, and 4 orbit counterclockwise around the point labeled 1 in concentric circles. }
\label{SampleDifferential}
\end{figure}  

From now on, we will simply write statements such as the above as $$d^k(L) = t_{\psi(  \cdots \psi(\psi(\psi(a_1, a_2), a_3), a_4), \cdots, a_k )}(y_0) $$ as we will implicitly identify elements that survive to later pages of spectral sequences with their images.

\begin{proof}[Proof of Lemma \ref{LemmaDifferentialsOnLie}] The statement of the theorem involves two numbers $k=|S|$ and $r$, the page of the spectral sequence. Our proof will involve a nested induction, first inducting on $k$ and then inducting on $r$. Throughout our inductive argument, any assumption we make will be understood to apply to all manifolds as opposed to just one particular fixed manifold. 

We begin with the base case $k=1$, $S=\{1\}$, where we observe that the $d^1$ differential maps the singleton word $1 \in E^1_{0,0}[M](S)$  to the class of the point in $H_0(\Arc_{-1}(F_{1}(M)))=H_0(F_{1}(M))$.  This result, shown in Figure  \ref{basecase}, follows from the description of the spectral sequence in Proposition \ref{GeometricRealizationSS}. 
\begin{figure}[!ht]    \centering
\labellist
 \hair 0pt 
\pinlabel {\LARGE  $\overset{d^1}{\longmapsto}$} at  430 110
\pinlabel {\color{Maroon} $1$} at  161 107
\pinlabel {\color{Maroon} $1$} at  702 107
\endlabellist
\includegraphics[scale=.18]{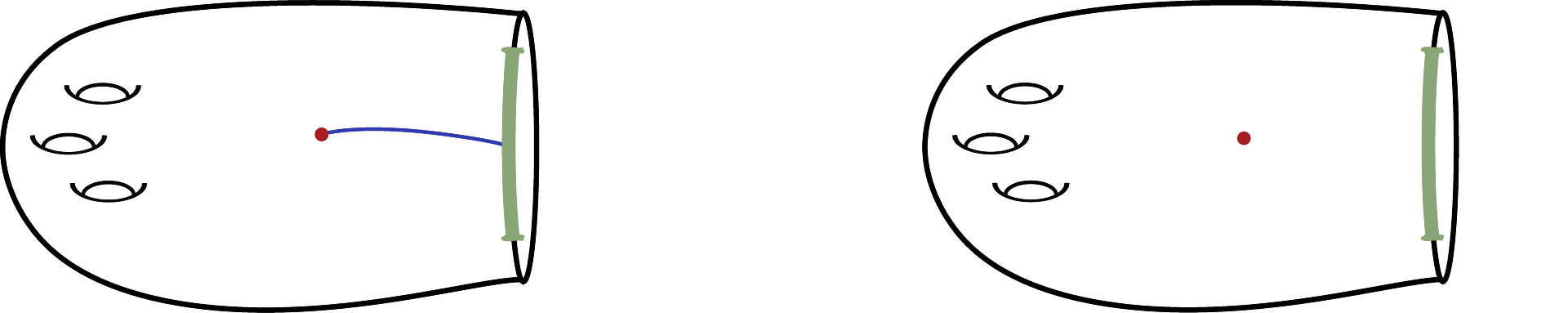}
\caption{$d^1(1)$.}
\label{basecase}
\end{figure}  

Now suppose that $k>1$, and let  $x_{k-1}$ be the class $[[ \cdots[a_1, a_2], a_3], \ldots], a_{k-1}] \in E^1_{k-2, 0}[M](S\backslash\{a_k\})$. Suppose by induction that $x_{k-1}$ survives to  $E^{k-1}_{k-2, 0}[M](S \backslash\{a_k\})$, and $$d^{k-1}(x_{k-1}) =t_{\psi(  \cdots \psi(\psi(\psi(a_1, a_2), a_3), a_4), \cdots, a_{k-1})}(y_0).$$  This implies that there exist chains $x_{k-2}, \ldots, x_1$ with $x_i \in E^0_{i-1, k-i-1}[M](S \backslash \{a_k \})$, such that $d^1(x_i)=(-1)^{i-1} \partial(x_{i-1}),$ as in Figure \ref{dnDifferential} (compare to Bott--Tu \cite[Formula 14.12, Page 164]{Bott&Tu}). 
\begin{figure}[h!]    \centering \begin{tikzpicture} { \footnotesize
  \matrix (m) [matrix of math nodes,
    nodes in empty cells,nodes={minimum width=3ex,
    minimum height=5ex,outer sep=2pt},
    column sep=6ex,row sep=3ex, text height=1.5ex, text depth=0.25ex]{         
 k-1    & \cdot &  \cdot&    \cdot &\cdots  &  \cdot &  \cdot&    \cdot & \;  \\          
k-2    & d^{k-1}(x_{k-1}) &  x_1&   \cdot &\cdots &   \cdot &  \cdot&    \cdot  &  \; \\             
 k-3     &\cdot&  -\partial x_1 & x_2  &\cdots&  \cdot &  \cdot&    \cdot  \; \\
   \vdots  & \vdots &  \vdots &  \vdots &  \ddots  & \vdots  & \vdots &  \vdots &  \\ 
 2 &  \cdot &  \cdot&    \cdot &\cdots  & x_{k-3} &  \cdot&    \cdot & \;  \\ 
 1 &   \cdot &  \cdot&    \cdot &\cdots     &  (-1)^{k-3} \partial x_{k-3}  &  x_{k-2}&   \cdot  & \;  \\ 
 0 &  \cdot &  \cdot&    \cdot &\cdots  &\cdot&  (-1)^{k-2} \partial x_{k-2} & x_{k-1} & \; \\ 
  \;\;\;& -1    &  0   &  1   & \cdots & k-4 & k-3 & k-2 \\       }; 

 \draw[-stealth, red] (m-2-3.west) -- (m-2-2.east) node [midway,above] {$d^1$} node [midway,below] {$f_0$};

 \draw[-stealth, red] (m-3-4.west) -- (m-3-3.east) node [midway,above] {$d^1$} node [midway,below] {$f_0-f_1$}; 
 
  \draw[-stealth, blue] (m-2-3) -- (m-3-3) node [midway,right] {$-\partial$};
 
  \draw[-stealth, blue] (m-3-4) -- (m-4-4) node [midway,right] {$\partial$};

  \draw[-stealth, red] (m-7-8.west) -- (m-7-7.east) node [midway,above] {$d^1$};

 \draw[-stealth, red] (m-6-7.west) -- (m-6-6.east) node [midway,above] {$d^1$}; 
 
  \draw[-stealth, red] (m-5-6.west) -- (m-5-5.east) node [midway,above] {$d^1$}; 

 \draw[-stealth, blue] (m-5-6) -- (m-6-6) node [midway,right] {$(-1)^{k-3} \partial$};
 
  \draw[-stealth, blue] (m-6-7) -- (m-7-7) node [midway,right] {$(-1)^{k-2}  \partial$};

 \draw[-stealth, green] (m-7-8) -- (m-2-2) node [midway, above right] {$d^k$};

\draw[thick] (m-1-1.north east) -- (m-1-1.north east|-m-8-1.east) ;
\draw[thick] (m-8-1.north) -- (m-8-8.north east) ;

}
\end{tikzpicture}
\caption{ Computing $d^{k-1}(x_{k-1}) = t_{\psi(  \cdots \psi(\psi(\psi(a_1, a_2), a_3), a_4), \cdots, a_{k-1})}(y_0)$. } \label{dnDifferential}
\end{figure}
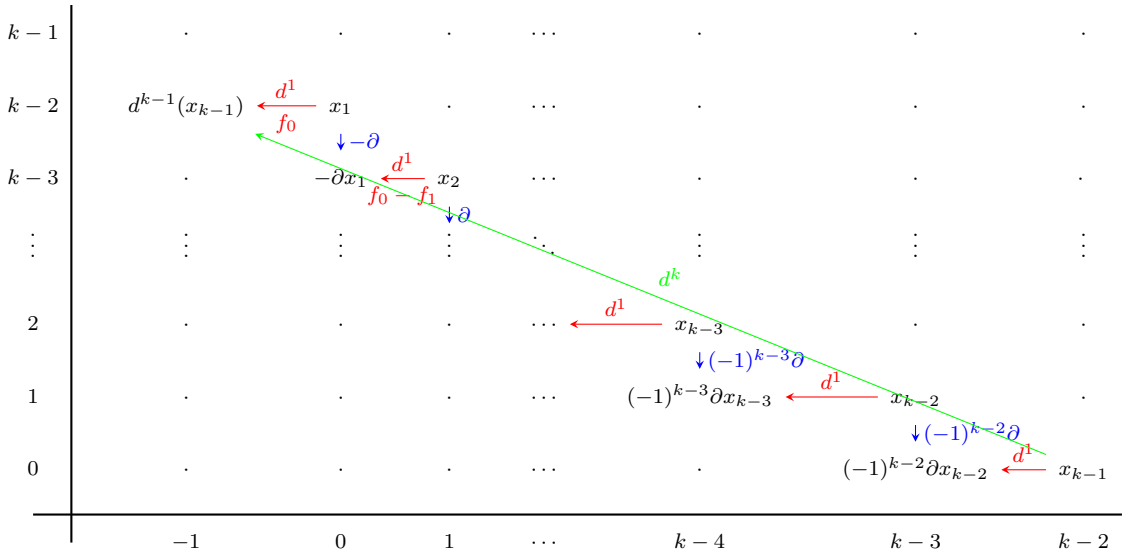

By plugging in the class of a point in $E^0_{-1,0}[\R^n](\{a_k\})$ and considering the map from Lemma \ref{LemmaLeibniz}, we get a map $E^r_{p,q}[M](S \backslash \{a_k\}) \m E^r_{p,q}[M](S)$.  We will use this to view the classes $x_i$ as elements of $E^0_{i-1, k-i-1}[M](S)$. Similarly, by plugging in the class of a point in $E^0_{-1,0}[M](\varnothing)$ and considering the map from Lemma \ref{LemmaLeibniz}, we get a map $E^r_{p,q}[\R^n](S ) \m E^r_{p,q}[M](S)$ which will allow us to associate classes in $E^r_{p,q}[\R^n](S )$ with classes in $E^r_{p,q}[M](S )$. The chain $x_i$ can be taken to be in the image of $E^0_{i-1, k-i-1}[\R^n](S) \m E^0_{i-1, k-i-1}[M](S)$. This uses our inductive assumptions applied to the case the manifold is $\R^n$ and the fact that $$L=[[[ \cdots[a_1, a_2], a_3], \ldots], a_{k-1}], a_k ] \in E^1_{k-1,0}[M](S)$$ is in the image of $E^1_{k-1,0}[\R^n](S)$. In other words,  $x_i$ can be represented as in Figure \ref{xBox}. Remember that the class $x_i$ is a chain on a space with $i$ arcs. 

\begin{figure}[!ht]    \centering
\labellist
 \hair 0pt 
\pinlabel {\color{Fuchsia} $x_{i}$} at  235 95
\endlabellist
\includegraphics[scale=.2]{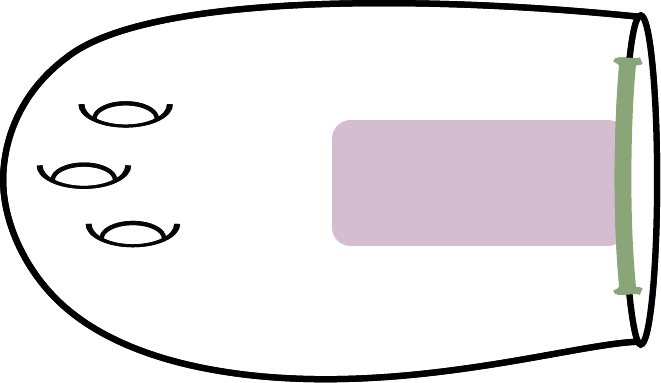}
\caption{The points of the chain $x_i$ can be taken to be in the shaded box.}
\label{xBox}
\end{figure}  

We may assume without loss of generality that the label $a_k$ is the letter $k$. Now consider the class $$L=[x_{k-1},k] \in E^1_{k-1,0}[M](S)$$ as shown in Figure \ref{xn}.

\begin{figure}[!ht]    \centering
\labellist
 \hair 0pt 
\pinlabel {\Large  $-(-1)^{k-1}$} [l] at  325 90
\pinlabel {\color{Maroon} $k$} at  161 137
\pinlabel {\color{Fuchsia} $x_{k-1}$} at  245 80
\pinlabel {\color{Maroon} $k$} at  766 59
\pinlabel {\color{Fuchsia} $x_{k-1}$} at  850 115
\endlabellist
\includegraphics[scale=.23]{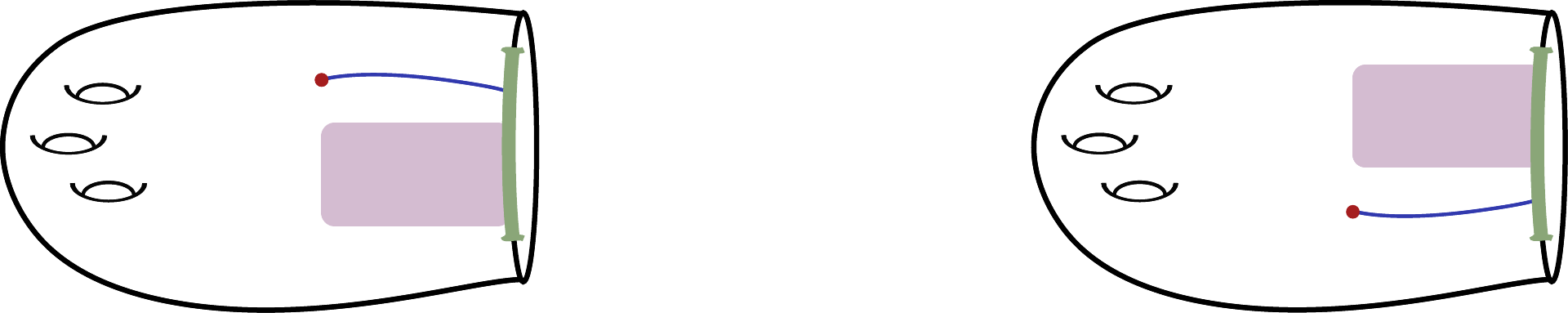}
\caption{A chain representing $[x_{k-1},k]$.}
\label{xn}
\end{figure}  

Our goal is to show that $d^r([x_{k-1},k])=0$ for $r <k $, and to compute $d^k([x_{k-1},k])$. To do this, we will compute a zigzag of chains $\xi_i \in E^0_{i, k-i-1}[M](S)$ satisfying $(-1)^{i}\partial(\xi_{i-1})=d^1(\xi_{i})$, beginning with $\xi_{k-1}=L$. 
The image $$d^1([x_{k-1},k])=\sum (-1)^i f_i ( [x_{k-1},k])$$ is shown in Figure  \ref{fxn}. 
\begin{figure}[!ht]    \centering
\labellist
 \hair 0pt
\pinlabel { \Large  $-(-1)^{k-1}$} [l] at  773 90
\pinlabel {\color{Maroon} $k$} at  165 140
\pinlabel {\color{Maroon} $k$} at  606 139
\pinlabel {\color{Maroon} $k$} at  1230 60
\pinlabel {\color{Maroon} $k$} at  1682 63
\pinlabel {\color{Fuchsia} $x_{k-1}$} at  227 79
\pinlabel {\color{Fuchsia} $x_{k-1}$} at  1763 117
\pinlabel {\small \bf \color{Fuchsia} $d^1(x_{k-1})$} at  661 83
\pinlabel {\small \bf \color{Fuchsia} $d^1(x_{k-1})$} at 1279 119
\endlabellist
\includegraphics[scale=.23]{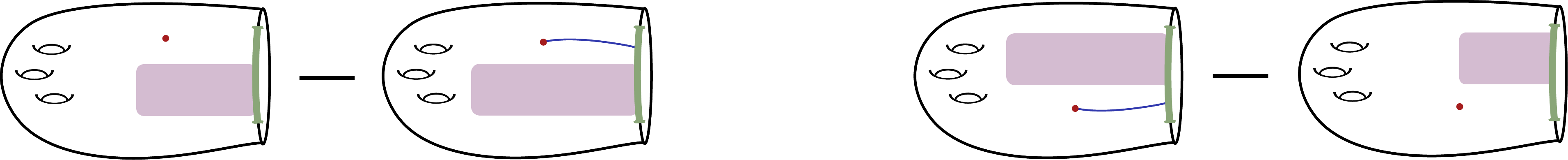}
\caption{A chain representing  $\sum (-1)^i f_i([x_{k-1},k])$.}
\label{fxn}
\end{figure}  

Then $\sum (-1)^i  f_i ([x_{k-1},k])$ is equal to the boundary $(-1)^{k-1} \partial (\xi_{k-2})$, where $ \xi_{k-2}$ is the chain shown in Figure \ref{xin}. Recall that $x_{k-2}$ is defined such that $(-1)^{k-2}\partial (x_{k-2}) = d^1(x_{k-1})=\sum (-1)^i  f_i(x_{k-1})$, and that $\partial(x_{k-1})=0$. In Figure \ref{xin}, and in the images throughout this proof, we will order the simplicies with the simplex designated by the dotted line first, and the class $x_i$ in the shaded region second, so the boundary is computed as in Figure \ref{FigSampleBoundary}. We have shown that $d^1 ([x_{k-1},k])$ is zero in homology, and $[x_{k-1},k]$ survives to $E^2$.  
\begin{figure}[!ht]    \centering
\labellist
 \hair 0pt 
 \pinlabel {\Large  $(-1)^{k-1}$} [r] at  -10 95
\pinlabel {\Large  $+(-1)^{k-1}$} [l] at  780 95
\pinlabel {\Large  $+$}  at  381 95
\pinlabel {\color{Maroon} $k$} at  170 50
\pinlabel {\color{Maroon} $k$} at  606 139
\pinlabel {\color{Maroon} $k$} at  1230 60
\pinlabel {\color{Fuchsia} $x_{k-1}$} at  233 115
\pinlabel {\ \color{Fuchsia} $x_{k-2}$} at  671 79
\pinlabel { \color{Fuchsia} $x_{k-2}$} at 1299 115
\endlabellist
\includegraphics[scale=.23]{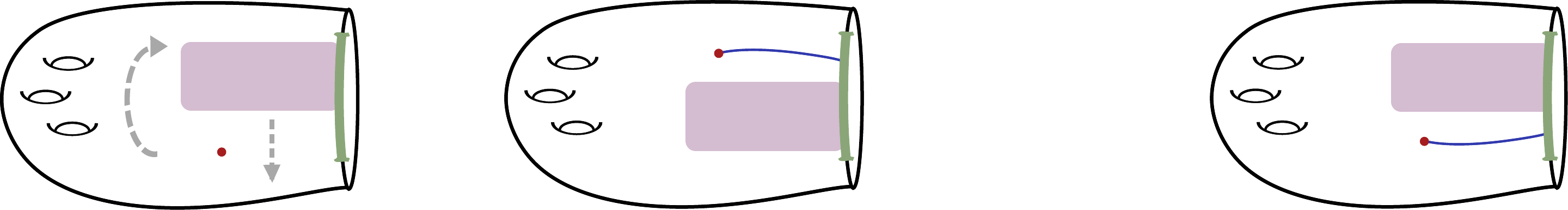}
\caption{The chain $\xi_{k-2}$.}
\label{xin}
\end{figure}  

We now prove by induction on $r$ that $d^{r-1}([x_{k-1},k])=0$ for $r<k$. Suppose by induction that $d^{r-1}([x_{k-1},k])$ is represented by the boundary $(-1)^{k-r-1} \partial(\xi_{k-r})$, where the chain $\xi_{k-r}$ is shown in Figure \ref{xin-Inductive}.
\begin{figure}[!ht]    \centering
\labellist
\hair 0pt
\pinlabel {\Large $+\;(-1)^{k-r+1}$} [l] at  870 86
\pinlabel {\Large  $+$} at  480 86
\pinlabel {\Large $(-1)^{k-r+1}$} [r] at  75 86
\pinlabel {\color{Maroon} $k$} at  270 50
\pinlabel {\color{Maroon} $k$} at  690 139
\pinlabel {\color{Maroon} $k$} at  1383 60
\pinlabel {\small \color{Fuchsia} $x_{k-r+1}$} at  330 115
\pinlabel {\ \color{Fuchsia} $x_{k-r}$} at  760 79
\pinlabel { \color{Fuchsia} $x_{k-r}$} at 1453 115
\endlabellist
\includegraphics[scale=.23]{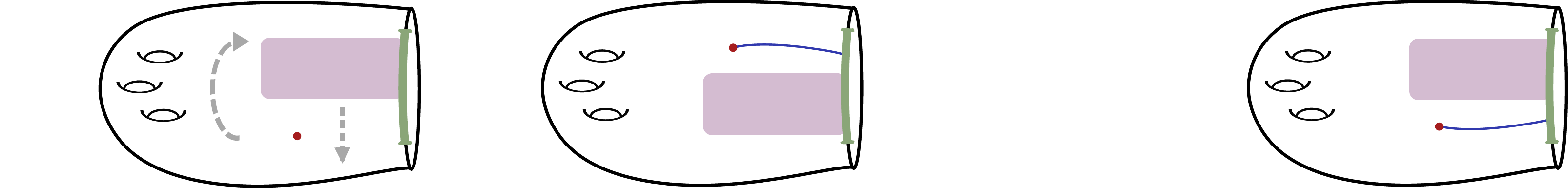}
\caption{The chain $\xi_{k-r}$.}
\label{xin-Inductive}
\end{figure}  
 Then  $d^{r}([x_{k-1},k]) = \sum (-1)^i f_i(\xi_{k-r})$  is shown in Figure \ref{fxin}.
\begin{figure}[!ht]    \centering
\labellist
 \hair 0pt
\pinlabel {\Large  $-$} at  1037 330
\pinlabel { \Large  $+$} at  556 330
\pinlabel {\Large  $(-1)^{k-r+1}$} [l] at  -130 330
\pinlabel {\Large  $+\,\;(-1)^{k-r+1}$} [l] at  75 83
\pinlabel {\Large  $-$} at  835  83
\pinlabel {\color{Maroon} $k$} at  320 293
\pinlabel {\color{Maroon} $k$} at  803 376
\pinlabel {\color{Maroon} $k$} at  1279 376
\pinlabel {\color{Maroon} $k$} at  613 55
\pinlabel {\color{Maroon} $k$} at  1070 55 
\pinlabel {\tiny \color{Fuchsia} $d^1(x_{k-r+1})$} at  380 353
\pinlabel { \color{Fuchsia} $x_{k-r}$} at  874 316
\pinlabel {\color{Fuchsia} $x_{k-r}$} at 1140 113
\pinlabel {\small \color{Fuchsia} $d^1(x_{k-r})$} at  666 120
\pinlabel {\small \color{Fuchsia} $d^1(x_{k-r})$} at  1331 320
\endlabellist
\includegraphics[scale=.23]{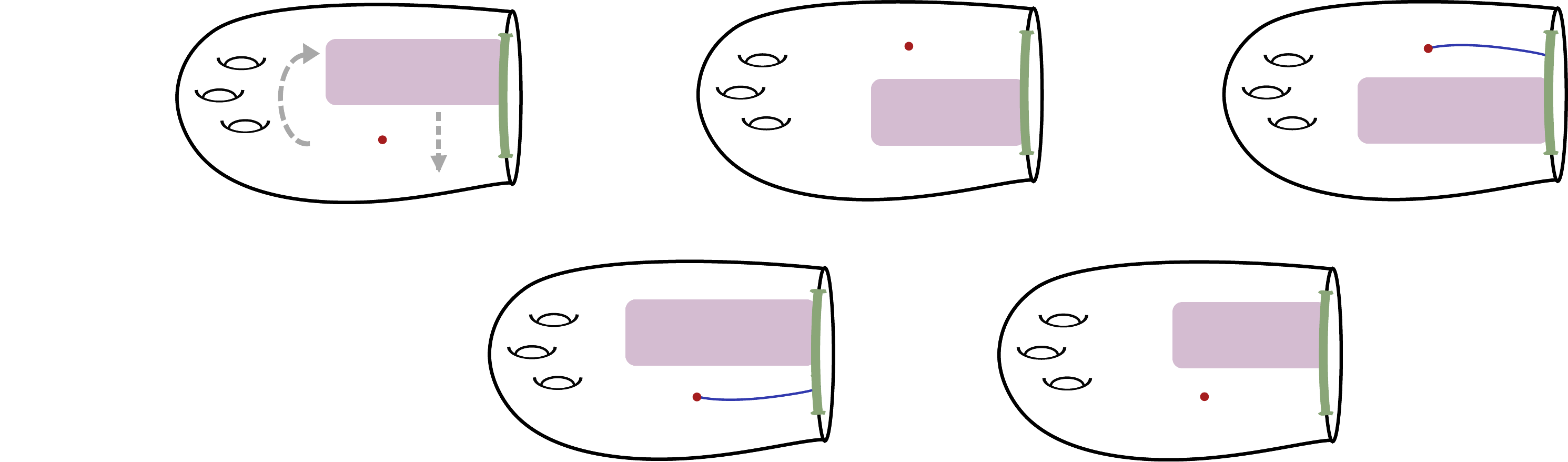}
\caption{The chain $d^{r}([x_{k-1},k]) = \sum (-1)^i f_i(\xi_{k-r})$ for $r \leq k-1$.}
\label{fxin}
\end{figure}  
  If $r \leq k-2$, then by inductive hypothesis there is a chain $x_{k-r-1}$ with $$(-1)^{k-r-1}\partial x_{k-r-1} =\sum (-1)^i f_i(x_{k-r}).$$ 
In this case, the chain $\xi_{k-r-1}$ in Figure  \ref{boundinghigher}  is such that $(-1)^{k-r-1} \partial(\xi_{k-r-1})$ equals the chain representing $d^{r}([x_{k-1},k])$ in Figure \ref{fxin}, and so $d^r([x_{k-1},k])=0$ on $E^r_{k-1-r,r}[M](S)$. By comparing Figure \ref{boundinghigher} to Figure \ref{xin-Inductive}, we see we have completed the inductive step in the induction on $r$. 
\begin{figure}[!ht]    \centering
\labellist
 \hair 0pt
\pinlabel {\Large  $+\,(-1)^{k-r}$} [l] at  983 90
\pinlabel {\Large  $+$} [l] at  565 90
\pinlabel {\Large  $(-1)^{k-r}$} at  85 90
\pinlabel {\color{Maroon} $k$} at  380 50
\pinlabel {\color{Maroon} $k$} at  815 139
\pinlabel {\color{Maroon} $k$} at  1437 60
\pinlabel {\color{Fuchsia} $x_{k-r}$} at  443 115
\pinlabel {\small \color{Fuchsia} $x_{k-r-1}$} at  890 79
\pinlabel {\small \color{Fuchsia} $x_{k-r-1}$} at 1510 115
\endlabellist
\includegraphics[scale=.23]{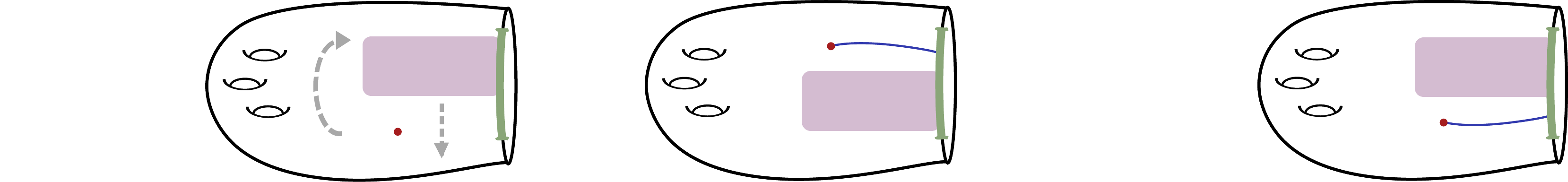}
\caption{A chain $\xi_{k-r-1}$ with $(-1)^{k-r-1}\partial(\xi_{k-r-1})  = \sum (-1)^i f_i(\xi_{k-r})$ for $r \leq k-2$.}
\label{boundinghigher}
\end{figure}  

Now consider Figure \ref{fxin} when $r = k-1$. There are no arcs attached in the chain $$d^1(x_1) = \sum (-1)^i  f_i(x_1)=f_0(x_1),$$ and by induction  $d^1(x_1)=d^{k-1}(x_{k-1})$ is a $\partial$-cycle. Hence the chain $\sum  (-1)^i f_i([x_{k-1},k])$ is the boundary of the chain in Figure \ref{boundingfxi-Lie}. Again, we conclude that $d^{k-1}([x_{k-1},k])=0$. 
\begin{figure}[!ht]    \centering
\labellist
\hair 0pt
\pinlabel {\LARGE  $-$}  at  480 90
\pinlabel {\LARGE  $-$}  at  45 90
\pinlabel {\color{Maroon} $k$} at  267 51
\pinlabel {\color{Maroon} $k$} at  710 141
\pinlabel {\color{Fuchsia} $x_1$} at  327 115
\pinlabel {\footnotesize \color{Fuchsia} $d^1(x_1)$} at  753 55
\endlabellist
\includegraphics[scale=.23]{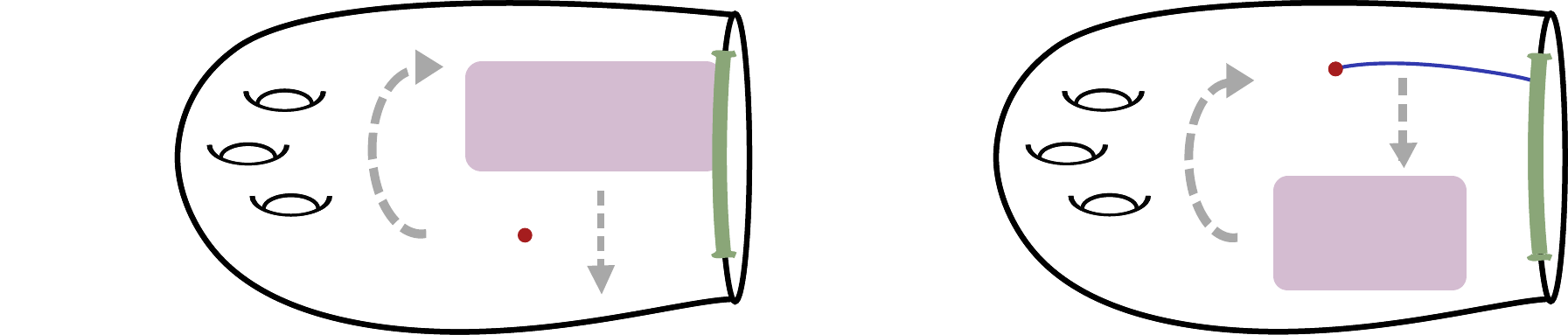}
\caption{A chain $\xi_0$ with boundary $-d^{k-1}([x_{k-1},k]) = -\sum (-1)^i f_i(\xi_{1})$.}
\label{boundingfxi-Lie}
\end{figure}  
We can compute $d^{k}([x_{k-1},k])$ by applying the map induced by the alternating sum of face maps to Figure \ref{boundingfxi-Lie}, with the result shown in Figure \ref{fBoundingfxi-Lie}. 
\begin{figure}[!ht]    \centering
\labellist
\hair 0pt
\pinlabel {\LARGE  $-$}  at  480 90
\pinlabel {\LARGE  $-$}  at  45 90
\pinlabel {\color{Maroon} $k$} at  267 51
\pinlabel {\color{Maroon} $k$} at  720 161
\pinlabel {\footnotesize  \color{Fuchsia} $d^1(x_1)$} at  312 115
\pinlabel {\footnotesize \color{Fuchsia} $d^1(x_1)$} at  753 55
\endlabellist
\includegraphics[scale=.23]{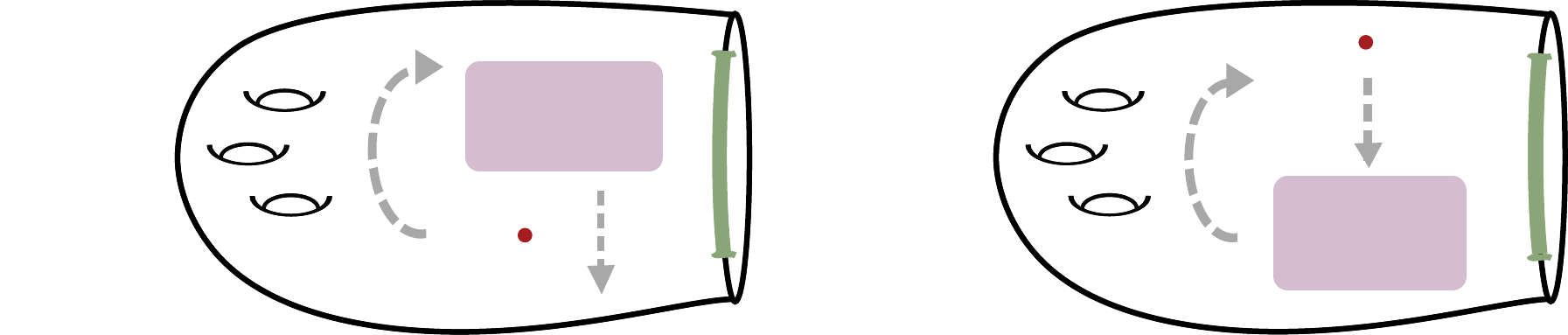}
\caption{The image $d^k([x_{k-1},k])$.}
\label{fBoundingfxi-Lie}
\end{figure}  
By construction: 
$$d^1(x_1)= d^{k-1}(x_{k-1})=t_{\psi(  \cdots \psi(\psi(\psi(a_1, a_2), a_3), a_4), \cdots, a_{k-1})}(y_0).$$ 
Hence the chain in Figure \ref{fBoundingfxi-Lie} is homologous to the chain in Figure \ref{finale}. In this figure we have negated the chain by reversing the direction of the arrow from clockwise to counterclockwise. 
\begin{figure}[!ht]    \centering
\labellist
 \hair 0pt
\pinlabel {\color{Maroon} $k$} at  187 27
\pinlabel {\tiny \color{Fuchsia} $d^{k-1}(x_{k-1})$} at  185 109
\endlabellist
\includegraphics[scale=.25]{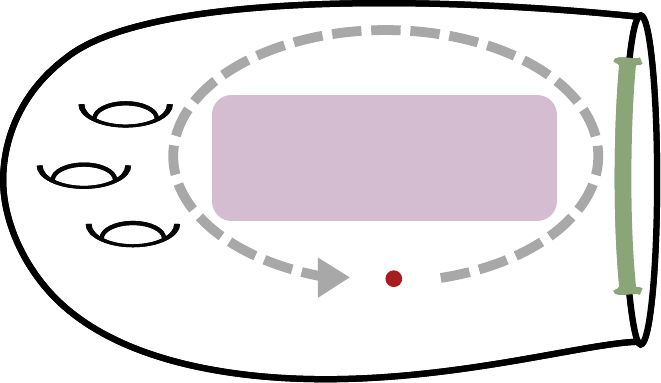}
\caption{The image $d^k([x_{k-1},k])$.}
\label{finale}
\end{figure}  
 Figure \ref{finale} concludes the induction on $k$, and the proof. 
\end{proof}

\subsection{Proof of secondary representation stability} \label{SectionMainProof}

In this subsection, we prove Theorem \ref{maintheorem},  secondary representation stability for the homology of configuration spaces. For this result we need to assume $R$ is a field of characteristic zero. The reason for this assumption is so that we can apply corollaries of Theorem \ref{NSSnoth}. Assuming that $R$ is a field also makes the formulation of Proposition \ref{E2filtration} easier, although workarounds do exist for general rings. We will also assume that the manifold $M$ has finite type. This implies that the homology groups of the ordered configuration spaces are finitely generated as abelian groups. The $\bigwedge \,(\Sym^2 R)$-modules which we will show  exhibit secondary representation stability are defined as follows.

\begin{definition}
Let $\W_i^M(S):=H_0^{\FI}\left(H_{\frac{|S|+i}{2}} \left( F (M) \right) \right)_{S}$. 
 Here we use the convention that fractional homology groups are $0$.  
\end{definition}

The collection of $\fS_k$-representations $\W_i^M(k)$ assemble to form a $\bigwedge\,(\Sym^2 R)$-module as follows. Let $(f, Q) \in \Hom_{FIM^+}(S, T)$ be a standard generator with  $f: S \to T$ an injective map and $Q=\{(x_1, y_1), \ldots, (x_d, y_d)  \}$ an oriented matching of the complement of the image. Let $f':S \m f(S)$ be the bijective map defined by $f$ and let $$f_*':H_{i + \frac{|S|}{2}} \left( F(M) \right)_S \m H_{i + \frac{|S|}{2}} \left( F(M) \right)_{f(S)}$$ be the map on homology induced by the FI-modules structure.  The element $(f,Q)$ acts on a homology class in $H_0^{\FI}(H_{i + |S|/2} \left( F(M) \right))_S$ by $t_{\psi(x_d,y_d)} \circ \ldots \circ  t_{\psi(x_1,y_1)}  \circ f_*'$, as shown in Figure \ref{Match}.  Although this map is defined on the homology of configuration spaces, it descends to a map on minimal generators $H_0^{\FI}(H_*(F(M)))$. The order of the composition factors $t_{\psi(x_i,y_i)}$ only affects the sign of the homology class: this sign is exactly what differentiates the category FIM$^+$ from the linearization of FIM.

To prove secondary representation stability we will need to better understand the algebraic structure on the $E^2$-page of the arc resolution spectral sequence. To that end, we now define a filtration on the top homology of the complex of injective words.

\begin{definition}
For $d$ and $b$ of the same parity, let $\T_d^b$ be the image of the natural map: $$\Ind^{\fS_d}_{\fS_b \times \fS_2 \times \ldots \times \fS_2} \Top _b\boxtimes \Lie_2 \boxtimes \ldots \boxtimes \Lie_2 \m \Top_d$$
\end{definition}

By Proposition \ref{EmbedTdb} below, we can identify the groups $\T_d^b$ with the groups: $$\Ind^{\fS_d}_{\fS_b \times \fS_2 \times \ldots \times \fS_2} \Top _b\boxtimes \Lie_2 \boxtimes \ldots \boxtimes \Lie_2.$$  For example, $\T_7^3$ is the span of the elements: 

$$  \Big\{ \quad [[a, b], c]\,[d,e]\,[ f,g], \quad  [[a, c], b]\,[d,e]\,[ f,g] \quad \Big\vert \quad \text{decompositions } [7] = \{ a, b, c\} \sqcup \{d,e\} \sqcup \{f,g\} \quad \Big\}. $$

\begin{proposition} \label{EmbedTdb} The map $\Ind^{\fS_d}_{\fS_b \times \fS_2 \times \ldots \times \fS_2} \Top _b\boxtimes \Lie_2 \boxtimes \ldots \boxtimes \Lie_2 \m \Top_d$ defining the group $\T_d^b$  is injective. 
\end{proposition}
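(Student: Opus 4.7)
The plan is to expand everything in the ambient free module $C^{(d)}_{d-1}$ of injective $d$-words and exploit a ``positional structure'' argument. Write $m = (d-b)/2$. Choosing the canonical system of coset representatives of $\fS_b \times \fS_2^m$ in $\fS_d$, the induced representation decomposes as an $R$-module as
$$\Ind^{\fS_d}_{\fS_b \times \fS_2^m}\bigl(\Top_b \boxtimes \Lie_2^{\otimes m}\bigr) \;\cong\; \bigoplus_{(B, A_1, \ldots, A_m)} \Top_b(B) \boxtimes \Lie_{A_1} \boxtimes \cdots \boxtimes \Lie_{A_m},$$
where the sum runs over ordered partitions of $[d]$ with $|B|=b$ and $|A_i|=2$, and $\Top_b(B)$ denotes the copy of $\Top_b$ using the letters of $B$. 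Under this identification the natural map sends the $(B, A_\bullet)$-summand via $w \otimes [A_1] \otimes \cdots \otimes [A_m] \mapsto w \cdot [A_1] \cdots [A_m]$, using the multiplication in the free associative algebra on $[d]$.

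The crucial observation is that the image of the $(B, A_1, \ldots, A_m)$-summand in $C^{(d)}_{d-1}$ is $R$-linearly spanned by those injective $d$-words whose first $b$ letters form exactly the set $B$ and whose letters in positions $(b+2i-1, b+2i)$ form exactly the set $A_i$ for each $i$. This ``positional signature'' uniquely recovers the ordered partition, so distinct summands have disjoint word-supports in $C^{(d)}_{d-1}$. Their images are therefore automatically $R$-linearly independent, and the injectivity of the whole map reduces to proving injectivity of each individual component $\phi_{(B, A_\bullet)}: \Top_b(B) \boxtimes \Lie_{A_1} \boxtimes \cdots \boxtimes \Lie_{A_m} \to C^{(d)}_{d-1}$.

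For a fixed ordered partition, I would expand $w \in \Top_b(B) \subseteq C^{(b)}_{b-1}(B)$ in the natural word basis as $w = \sum_{\sigma \in \fS_B} c_\sigma w_\sigma$. Writing each generator as $[A_i] = a_1^i a_2^i + a_2^i a_1^i$ and multiplying out,
$$w \cdot [A_1] \cdots [A_m] \;=\; \sum_{\sigma,\, (\epsilon_i) \in \fS_2^m} c_\sigma \cdot w_\sigma\, a^1_{\epsilon_1(1)} a^1_{\epsilon_1(2)} \cdots a^m_{\epsilon_m(1)} a^m_{\epsilon_m(2)},$$
and the length-$d$ words indexed by pairs $(\sigma, (\epsilon_i))$ are pairwise distinct. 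Vanishing of the product therefore forces every $c_\sigma = 0$, so $\phi_{(B, A_\bullet)}$ is injective. The only real content is the positional-signature observation; once it decouples the summands, the rest is bookkeeping in the free associative algebra. In particular the same proof shows the stronger statement that the analogous map from $\Ind^{\fS_d}_{\fS_b \times \fS_2^m} C^{(b)}_{b-1} \boxtimes \Lie_2^{\otimes m}$ into $C^{(d)}_{d-1}$ is injective.
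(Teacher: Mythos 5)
Your proof is correct and follows the same route as the paper's: both decompose the induced representation as a direct sum over ordered partitions $(B, A_1, \ldots, A_m)$ of $[d]$ and use the positional-signature observation (a word in the image of a summand determines the partition from its first $b$ letters and subsequent consecutive pairs) to conclude that distinct summands have disjoint word-supports in $C^{(d)}_{d-1}$. Your write-up additionally spells out the injectivity of each individual component via the $(\sigma,\epsilon)$-indexed expansion, a step the paper leaves implicit in the phrase ``embeds as the span of injective words.''
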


\begin{proof}
We must show that the module $$\Ind^{\fS_d}_{\fS_b \times \fS_2 \times \ldots \times \fS_2} \Top _b\boxtimes \Lie_2 \boxtimes \ldots \boxtimes \Lie_2  \cong  \bigoplus_{ \substack{[d] = B \sqcup A_1 \sqcup A_2 \sqcup \cdots \sqcup A_{\frac{d-b}{2}}  \\ |B|=b, |A_i|=2} } \Top_B \otimes \Lie_{A_1} \otimes \cdots \otimes \Lie_{A_{\frac{d-b}{2}}} $$ 
injects into $\Top_d$. The summand indexed by the set decomposition $[d] = B \sqcup A_1 \sqcup A_2 \sqcup \cdots \sqcup A_{ \frac{d-b}{2} } $ embeds as the span of injective words: 
$$  \left\{ \quad L[a_1, b_1][a_2, b_2] \cdots \left[a_{\frac{d-b}{2}}, b_{\frac{d-b}{2}}\right]  \quad \mid \quad A_i=\{a_i, b_i \}, \quad L \in \Top_B \quad \right\}. $$ 
Given any element in the image of this summand -- viewed as a linear combination of injective words -- and given any word $w$ appearing as a term in this element, we can uniquely recover the decomposition $[d] = B \sqcup A_1 \sqcup A_2 \sqcup \cdots \sqcup A_{ \frac{d-b}{2} } $ by observing the order of the letters $[d]$ in $w$. Hence the intersection of the image of distinct summands is zero, and the map is injective as claimed. 
\end{proof}

\begin{proposition} \label{TbdSES}
There is a short exact sequence: $$0 \longrightarrow \T_d^b \longrightarrow \T_d^{b+2} \longrightarrow \Ind^{\fS_d}_{\fS_{b+2} \times \fS_2 \times \ldots \times \fS_2} (\Top_{b+2}/ \T_{b+2}^b) \boxtimes \Lie_2 \boxtimes \ldots \boxtimes \Lie_2 \longrightarrow 0.$$ 
\end{proposition}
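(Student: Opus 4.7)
The plan is to realize the claimed short exact sequence as the image, under an exact functor, of the tautological short exact sequence
\[ 0 \longrightarrow \T_{b+2}^b \longrightarrow \Top_{b+2} \longrightarrow \Top_{b+2}/\T_{b+2}^b \longrightarrow 0, \]
whose left-exactness is precisely Proposition \ref{EmbedTdb} applied with $d = b+2$.

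The functor I have in mind is
\[ F(W) \;:=\; \Ind^{\fS_d}_{\fS_{b+2} \times (\fS_2)^{(d-b-2)/2}} W \boxtimes \Lie_2^{\boxtimes (d-b-2)/2}, \]
from $R[\fS_{b+2}]$-modules to $R[\fS_d]$-modules. I would first observe that $F$ is exact: induction between finite group algebras is exact because the larger group algebra is free of finite rank as a module over the subgroup algebra, and tensoring with $\Lie_2^{\boxtimes (d-b-2)/2} \cong R$ is exact. Applying $F$ to the displayed short exact sequence produces a short exact sequence whose middle term is $\T_d^{b+2}$ by the definition of $\T_d^{b+2}$, and whose rightmost term is exactly the one appearing in the statement of the proposition.

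The remaining task is to identify the leftmost term $F(\T_{b+2}^b)$ with $\T_d^b$. Here I would invoke Proposition \ref{EmbedTdb} (with $d = b+2$) to rewrite $\T_{b+2}^b$ as $\Ind^{\fS_{b+2}}_{\fS_b \times \fS_2} \Top_b \boxtimes \Lie_2$, and then apply transitivity of induction:
\[ F(\T_{b+2}^b) \;\cong\; \Ind^{\fS_d}_{\fS_b \times (\fS_2)^{(d-b)/2}} \Top_b \boxtimes \Lie_2^{\boxtimes (d-b)/2}, \]
which is $\T_d^b$ by a second appeal to Proposition \ref{EmbedTdb}.

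Finally I would verify that the first map of the resulting short exact sequence coincides with the natural inclusion $\T_d^b \hookrightarrow \T_d^{b+2} \subseteq \Top_d$. Under both identifications, a generator $K \otimes [a_1,b_1] \otimes [a_2,b_2] \otimes \cdots \otimes [a_{(d-b)/2}, b_{(d-b)/2}]$ with $K \in \Top_b$ is sent to the product $K \cdot [a_1,b_1] \cdots [a_{(d-b)/2}, b_{(d-b)/2}]$ in $\Top_d$, so the two maps agree. The only ``hard'' part is thus the bookkeeping naturality check at the end, which is essentially immediate from the definitions; the substantive algebraic input is just exactness of induction combined with Proposition \ref{EmbedTdb}.
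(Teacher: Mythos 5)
Your proof is correct and follows essentially the same strategy as the paper's: apply the induction-and-tensor functor $F$ to the tautological short exact sequence $0 \to \T_{b+2}^b \to \Top_{b+2} \to \Top_{b+2}/\T_{b+2}^b \to 0$, and identify the resulting terms via Proposition~\ref{EmbedTdb} and transitivity of induction. The one place you tighten the argument is by observing that $F$ is exact, not merely right-exact (since $R[\fS_d]$ is free over $R[\fS_{b+2} \times (\fS_2)^{(d-b-2)/2}]$ and $\Lie_2 \cong R$); the paper instead uses right-exactness to obtain exactness at the middle and right terms, and then invokes Proposition~\ref{EmbedTdb} a second time to check that $\T_d^b \to \T_d^{b+2}$ injects, by factoring it through an injection into $\Top_d$. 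Your observation makes that final step superfluous, but the two proofs are otherwise the same.
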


\begin{proof} By Proposition \ref{EmbedTdb}, we may identify: 
\begin{align*} 
\T_d^b &\cong \Ind^{\fS_d}_{\fS_b \times \fS_2 \times \ldots \times \fS_2} \Top _b\boxtimes \Lie_2 \boxtimes \ldots \boxtimes \Lie_2  \\
& \cong \Ind^{\fS_d}_{\fS_{b+2} \times \fS_2 \times \ldots \times \fS_2}  \left( \Ind_{\fS_b \times \fS_2}^{\fS_{b+2}}  \Top _b\boxtimes \Lie_2 \right) \boxtimes \Lie_2  \boxtimes \ldots \boxtimes \Lie_2  \\ 
& \cong \Ind^{\fS_d}_{\fS_{b+2} \times \fS_2 \times \ldots \times \fS_2}  \left(\T_{b+2}^b \right) \boxtimes \Lie_2  \boxtimes \ldots \boxtimes \Lie_2  .
\\ \qquad \\ 
\text{ Moreover, }\T_d^{b+2} & \cong \Ind^{\fS_d}_{\fS_{b+2} \times \fS_2 \times \ldots \times \fS_2} \Top _{b+2}\boxtimes \Lie_2 \boxtimes \ldots \boxtimes \Lie_2. \end{align*}
\noindent Under these identifications, the map $\T_d^b \m \T_d^{b+2}$ is induced by the map $\T^b_{b+2} \m \T_{b+2}$. Since tensor product and induction are right-exact operations, from the short exact sequence 
$$ 0 \longrightarrow  \T_{b+2}^b \longrightarrow \Top_{b+2} \longrightarrow   (\Top_{b+2}/ \T_{b+2}^b ) \longrightarrow 0 \qquad \text{(exact by  Proposition \ref{EmbedTdb}) }$$ 
we can conclude that the sequence in question is exact at each point except possibly $\T_d^b$. But Proposition \ref{EmbedTdb} implies that the composition of the maps $ \T_d^b \longrightarrow \T_d^{b+2}  \longrightarrow \Top_d$ is injective.  This implies that the map $ \T_d^b \longrightarrow \T_d^{b+2}$ is injective, and we conclude that the sequence
$$0 \longrightarrow \T_d^b \longrightarrow \T_d^{b+2} \longrightarrow \Ind^{\fS_d}_{\fS_{b+2} \times \fS_2 \times \ldots \times \fS_2} (\Top_{b+2}/ \T_{b+2}^b) \boxtimes \Lie_2 \boxtimes \ldots \boxtimes \Lie_2 \longrightarrow 0$$
 is exact. 
\end{proof}

\begin{definition}
Let $E^r_{p,q}(k)$ denote entry $(p,q)$ on the $r$th page of the arc resolution spectral sequence for the set $[k]$. For $i \geq 0$, let $$A_j^i(k):= E^2_{2j-1,i-j+\lceil k/2 \rceil }(k).$$ The groups  $A_*^i(k)$  with the $d^2$ differential form a chain complex which we call the ``$i$th  even diagonal.'' For $i \geq 0$, let $$B_j^i(k):= E^2_{2j,i-j+\lceil k/2 \rceil }(k).$$ Call the chain complex $B_*^i(k)$ the ``$i$th odd diagonal.''
\end{definition}
Some examples of these complexes on $E^2_{p,q}(6)$ and are illustrated in Figure \ref{E2PageComplexes}.  $A_*^3(k)$ will always be the third (counting from $i=0$) diagonal above the triangle of zeroes, and similarly $B_*^1(k)$ is the first (counting from $i=0$) offset diagonal above the triangle of zeroes.

\begin{figure}[h!]    \centering \begin{tikzpicture} \tiny
  \matrix (m) [matrix of math nodes,
    nodes in empty cells,nodes={minimum width=3ex,
    minimum height=5ex,outer sep=2pt},
 column sep=3ex,row sep=3ex]{ 
 6    &  E^2_{-1, 6} & E^2_{0, 6}  & E^2_{1, 6}  & E^2_{2, 6}  &  E^2_{3, 6}  &  E^2_{4, 6} &  E^2_{5, 6}  &  E^2_{6, 6}  &  E^2_{7, 6}   \\  
 5    &  E^2_{-1, 5} & E^2_{0, 5}  & E^2_{1, 5}  & E^2_{2, 5}  &  E^2_{3, 5}  &  E^2_{4, 5}  &  E^2_{5, 5}  &  E^2_{6, 5}  &  E^2_{7, 5}   \\  
  4    &  E^2_{-1, 4} &  E^2_{0, 4}  & E^2_{1, 4}  & E^2_{2, 4}  &  E^2_{3, 4}  &  E^2_{4, 4}  &  E^2_{5, 4}  &  E^2_{6, 4}  &  E^2_{7, 4}   \\  
    3    &  E^2_{-1, 3} &  E^2_{0, 3}  & E^2_{1, 3}  & E^2_{2, 3}  &  E^2_{3, 3}  &  E^2_{4, 3}  &  E^2_{5, 3}  &  E^2_{6, 3}  &  E^2_{7, 3}   \\  
 2   &  0&  0 & E^2_{1, 2}  & E^2_{2, 2}  &  E^2_{3, 2}  &  E^2_{4, 2}  &  E^2_{5, 2}  &  E^2_{6, 2}  &  E^2_{7, 2}  &\qquad &\qquad & \\  
1   &  0 &  0 & 0  & 0 &  E^2_{3, 1}  &  E^2_{4, 1}  &  E^2_{5, 1}  &  E^2_{6, 1}  &  E^2_{7, 1} &&&  \\  
0    &  0 &  0  & 0 & 0 &  0  &  0  &  E^2_{5, 0}  &  E^2_{6, 0}  &  E^2_{7, 0} &&&   \\  
 \quad\strut &   -1  &  0  &  1  & 2 &3 & 4 & 5 & 6 & 7 & & & \\}; 

\draw[thick] (m-1-1.east) -- (m-8-1.east) ;
\draw[thick] (m-8-1.north) -- (m-8-10.north east) ;

  \draw[-stealth, red] (m-2-4) -- (m-1-2) ;
  \draw[-stealth, red] (m-3-6) -- (m-2-4) ;
\draw[-stealth, red] (m-4-8) -- (m-3-6) ;
\draw[-stealth, red] (m-5-10) -- (m-4-8) ;
\draw[-stealth, red] (m-6-12) -- (m-5-10) ;

  \draw[-stealth, ultra thick, blue] (m-3-3) -- (m-2-1) ;
  \draw[-stealth, ultra thick, blue] (m-4-5) -- (m-3-3) ;
  \draw[-stealth, ultra thick, blue] (m-5-7) -- (m-4-5) ;
\draw[-stealth, ultra thick,blue] (m-6-9) -- (m-5-7) ;
\draw[-stealth,  ultra thick, blue] (m-7-11) -- (m-6-9) ;

\end{tikzpicture}
\caption{The complexes  $A_*^3(6)$ (differentials in red) and $B_*^1(6)$ (in blue bold) on $E^2_{p,q}(6)$. } \label{E2PageComplexes} 
\end{figure}
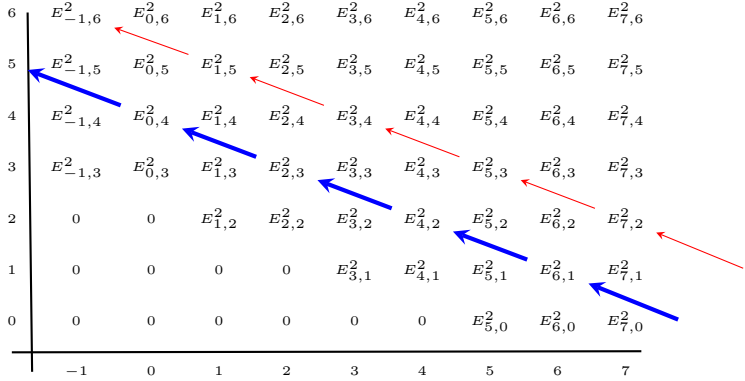

We now relate the chain complexes $A_*^i(k)$ and $B_*^i(k)$ to the chain complexes $\Inj^2_*(\W^M_j)$. Note that $A_*^i(k)$ and $B_*^i(k)$ are $0$ for $*<0$ in contrast to $\Inj^2_*(\W^M_j)$, which is potentially nonzero for $*=-1$. To simplify indexing in the following proposition, we introduce two $\bigwedge \,(\Sym^2 R)$-modules: 
$$ \qquad \mathcal V_i := \mathcal W^M_{2i} \oplus \mathcal W^M_{2i+1} \qquad \text{ and } \qquad \mathcal U_i := \mathcal W^M_{2i+1} \oplus \mathcal W^M_{2i+2}.$$   
Concretely,  
$$ \mathcal V_i(k) = H_0^{\FI} \left(H_{i+\left\lceil \frac{k}{2} \right\rceil } (F(M))\right)_k \qquad \text{and} \qquad   
\mathcal U_i(k) = H_0^{\FI} \left(H_{i+\left\lceil \frac{k+1}{2} \right\rceil } (F(M))\right)_k.$$

\begin{proposition} \label{E2filtration} Suppose $R$ is a field. The chain complex $A_*^i(k)$ has a filtration by chain complexes such that the filtration quotients are isomorphic to
$$ \Ind^{\fS_k}_{\fS_{2b} \times \fS_{k-2b} }   (\Top_{2b}/ \T_{2b}^{2b-2})  \boxtimes \Big( \Inj^2_{*-b-1 } \mathcal V_i \Big)_{k-2b} .$$

The chain complex $B_*^i(k)$ has a filtration such that the filtration differences are isomorphic to
$$ \Ind^{\fS_k}_{\fS_{2b+1} \times \fS_{k-2b-1} }   (\Top_{2b+1}/ \T_{2b+1}^{2b-1})  \boxtimes \Big( \Inj^2_{*-b-1} \; \mathcal U_i \Big)_{k-2b-1}. $$ These groups are chain complexes with the differential induced by the second factor and these isomorphism are isomorphisms of chain complexes.

\end{proposition}

\begin{proof}

By Proposition \ref{TbdSES}, there is a filtration of $\Top_d$ given by 
\begin{align*} 0 \hookrightarrow   \T^0_d\hookrightarrow  \cdots \hookrightarrow   \T^{d-4}_d   \hookrightarrow   \T^{d-2}_d  \ \hookrightarrow     \T^d_d = \Top_d &&\text{ when $d$ is even, and } \\ 
0 = \T^1_d \hookrightarrow   \T^3_d\hookrightarrow   \cdots \hookrightarrow   \T^{d-4}_d   \hookrightarrow   \T^{d-2}_d  \ \hookrightarrow     \T^d_d = \Top_d  && \text{ when $d$ is odd,}
\end{align*}
 whose quotients are the groups: 
$$  \frac{ \T_{d}^{b} }{ \T_{d}^{b-2}  } \cong \Ind^{\fS_d}_{\fS_{b} \times \fS_2 \times \ldots \times \fS_2} (\Top_{b}/ \T_{b}^{b-2}) \boxtimes \Lie_2 \boxtimes \ldots \boxtimes \Lie_2.$$

\noindent Since $R$ is a field, it follows that 
$$ E^2_{p,q}(k) \cong  \bigoplus_{\substack{[k] = P \sqcup R, \\ |P|=p+1}}  \Top_{p+1}(P) \otimes H_0^{\FI} (H_q( F(M)))_{k-p-1}
$$ 
is filtered by the modules 
$$ \bigoplus_{\substack{[k] = P \sqcup R, \\ |P|=p+1}}  \T^b_{p+1}(P) \otimes H_0^{\FI} (H_q(F(M)))_{k-p-1} \qquad \text{for $b \equiv p+1 \pmod2$} $$ 
with filtration quotients 
\begin{align*}
&\bigoplus_{\substack{[k] = P \sqcup R, \\ |P|=p+1}}  \Big( \Ind^{\fS_{p+1}}_{\fS_{b} \times \fS_2 \times \ldots \times \fS_2} (\Top_{b}/ \T_{b}^{b-2}) \boxtimes \Lie_2 \boxtimes \ldots \boxtimes \Lie_2 \Big)\otimes H_0^{\FI} (H_q(F(M)))_{k-p-1} \\
&= \Ind^{\fS_k}_{\fS_{b} \times \fS_2 \times \ldots \times \fS_2 \times \fS_{k-p-1}} (\Top_{b}/ \T_{b}^{b-2}) \boxtimes  \Lie_2 \boxtimes  \ldots \boxtimes  \Lie_2\boxtimes H_0^{\FI} (H_q (F(M)))_{k-p-1}\\
& = \Ind^{\fS_k}_{\fS_{b} \times \fS_{k-b} }   (\Top_{b}/ \T_{b}^{b-2})  \boxtimes \Big( \Ind^{\fS_{k-b}}_{ \fS_2 \times \ldots \times \fS_2 \times \fS_{k-p-1}} \Lie_2 \boxtimes \ldots \boxtimes \Lie_2 \boxtimes H_0^{\FI} (H_q( F(M)))_{k-p-1} \Big).
\end{align*}

\noindent This means that the filtration differences for $A_j^i(k)= E^2_{2j-1,i-j+\lceil k/2 \rceil }(k)$ are given by

\begin{align*}
&  \Ind^{\fS_k}_{\fS_{b} \times \fS_{k-b} }   (\Top_{b}/ \T_{b}^{b-2})  \boxtimes \Big( \Ind^{\fS_{k-b}}_{ \fS_2 \times \ldots \times \fS_2 \times \fS_{k-2j} } \Lie_2 \boxtimes \ldots \boxtimes \Lie_2 \boxtimes H_0^{\FI} (H_{i-j+\lceil k/2 \rceil } (F(M)))_{k-2j} \Big) \qquad \text{($b$ even)} \\ 
&=  \Ind^{\fS_k}_{\fS_{b} \times \fS_{k-b} }   (\Top_{b}/ \T_{b}^{b-2})  \boxtimes \Big( \Inj^2_{j-\frac{b}{2}-1 } \mathcal V_i \Big)_{k-b}. \end{align*}
Similarly, the filtration differences for  $B_j^i(k)= E^2_{2j,i-j+\lceil k/2 \rceil }(k)$ are given by
\begin{align*}
& \Ind^{\fS_k}_{\fS_{b} \times \fS_{k-b} }   (\Top_{b}/ \T_{b}^{b-2})  \boxtimes \Big( \Ind^{\fS_{k-b}}_{ \fS_2 \times \ldots \times \fS_2 \times \fS_{k-2j-1}} \Lie_2 \boxtimes \ldots \boxtimes \Lie_2 \boxtimes H_0^{\FI} (H_{i-j+\lceil k/2 \rceil} (F(M)))_{k-2j-1} \Big) \qquad \text{($b$ odd)} \\
&= \Ind^{\fS_k}_{\fS_{b} \times \fS_{k-b} }   (\Top_{b}/ \T_{b}^{b-2})  \boxtimes \Big( \Inj^2_{j-\frac{b}{2}-\frac12} \;  \mathcal U_i \Big)_{k-b}.
\end{align*}

For simplicity, we will reindex these filtrations by replacing odd values of $b$ with $2b+1$ and even values of $b$ with $2b$.

Let $\mathcal F_b(A_*^i)$ be the portion of the filtration of $A_*^i$ constructed above containing elements of the form $ \Ind^{\fS_k}_{\fS_{2b} \times \fS_{k-2b} }   \Top_{2b}  \boxtimes ( \Inj^2_{j-b-1 } \mathcal V_i )_{k-2b}$ and similarly define  $\mathcal F_b(B_*^i)$. We have constructed filtrations on the groups $A_*^i$ and $B_*^i$, and now it remains to check that these are filtrations of chain complexes. We must also verify that the boundary maps on the filtration quotients induced by the $d^2$ differential in the spectral sequence agree with the boundary maps of $$ \Ind^{\fS_k}_{\fS_{2b} \times \fS_{k-2b} }   (\Top_{2b}/ \T_{2b}^{2b-2})  \boxtimes( \Inj^2_{*} \mathcal V_i )_{k-2b} \quad \text{and} \quad \Ind^{\fS_k}_{\fS_{2b+1} \times \fS_{k-2b-1} }   (\Top_{2b+1}/ \T_{2b+1}^{2b-1})  \boxtimes ( \Inj^2_{*} \; \mathcal U_i )_{k-2b-1} .$$ 

First we will show that the subgroups $\mathcal F_b(A_*^i)$ are in fact subchain complexes. An element of $\mathcal F_b(A_j^i)(k)$  can be written as a sum of elements of the form: $$t \boxtimes l_0 \boxtimes \ldots \boxtimes l_{j-b} \boxtimes v \qquad \text{ with $t \in \Top_{2b}$, $l_q \in \Lie_2$ and $v \in H_0^{\FI}(H_{i-j+\lceil \frac{k-1}{2} \rceil}(F(M)))_{2j-1}$.}$$
 Moreover, we may assume that $t$ is a product of Lie polynomials (and not  a linear combination of products of Lie polynomials). 
By the Leibniz rule (Lemma \ref{LemmaLeibniz}) and our calculations of the differentials in the arc resolution spectral sequence from  Lemma \ref{LemmaDifferentialsOnLie}, it follows that the differential $$d^2(t \boxtimes l_0 \boxtimes \ldots \boxtimes l_{j-b} \boxtimes v)$$ is given by a signed sum of terms which remove one $\Lie_2$ factor and then stabilize $v$ by the appropriate Browder operation. Note there is no nonzero term involving applying $d_2$ to $v$ since $v$ corresponds to an element of the $-1$st column of the arc resolution spectral sequence. Since all of the terms in the sum are in $\mathcal F_b(A_{j-1}^i)(k)$, this establishes that that $\mathcal F_b(A_*^i)$ is a filtration of chain complexes. There are two types of terms in the signed sum, the first involves removing an $\Lie_2$ factor from $t$ and the second involve deleting one of the $l_q$ factors. The portion of the sum involving terms of the second type is exactly the boundary map of the chain complex $$\Ind^{\fS_k}_{\fS_{2b} \times \fS_{k-2b} }   \Top_{2b}  \boxtimes ( \Inj^2_{*} \; \mathcal V_i )_{k-2b}.$$ Thus, it suffices to show that the portion of the sum involving terms of the first type are in $\mathcal F_{b-1}(A_{j-1}^i)(k)$, and hence zero in the quotient. But removing an $\Lie_2$ factor from $t$ yields a Lie polynomial that is two letters shorter and hence in $\Top_{2b-2}$.  A similar argument works for $B_*^i$. 
\end{proof}

The following result shows that vanishing on the $E^3$-page of the arc resolution spectral sequence implies secondary representation stability.

\begin{proposition} \label{H_0ForFIPairs} There are isomorphisms of FB-modules:  
$$H_0(A_*^i) \cong H_0^{\FIM^+}(\mathcal \cV_i).$$ 
There are isomorphisms of symmetric group representations:
$$H_0 (A_*^{i})(2k) \cong H_0^{\FIM^+}(\mathcal W_{2i}^M)(2k) \qquad  \text{ and } \qquad  H_0 (A_*^{i})(2k+1) \cong H_0^{\FIM^+}(\mathcal W_{2i+1}^M)(2k+1).$$

\end{proposition}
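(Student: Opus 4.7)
The strategy is to use Proposition \ref{E2filtration} to reduce the computation of $H_0(A_*^i)$ to the bottom ($b=0$) piece of the filtration, and then to identify the result directly with the definition of $\FIM^+$-homology.

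First I would compute the two lowest groups in $A_*^i(k)$. By Proposition \ref{GeometricRealizationSS}, $A_0^i(k)=E^2_{-1,\,i+\lceil k/2\rceil}(k)=\mathcal V_i(k)$. For $A_1^i(k)=E^2_{1,\,i-1+\lceil k/2\rceil}(k)$, using $\lceil(k-2)/2\rceil=\lceil k/2\rceil-1$ together with $\Top_2\cong\Lie_2\cong R$ as the trivial $\fS_2$-representation, we obtain $A_1^i(k)\cong\Ind^{\fS_k}_{\fS_2\times\fS_{k-2}}\Lie_2\boxtimes\mathcal V_i(k-2)=\Inj^2_0(\mathcal V_i)_k$.

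The key step is then to apply Proposition \ref{E2filtration}: the $b$-th filtration quotient takes the form $\Ind^{\fS_k}_{\fS_{2b}\times\fS_{k-2b}}(\Top_{2b}/\T_{2b}^{2b-2})\boxtimes(\Inj^2_{*-b-1}\mathcal V_i)_{k-2b}$ and lives in chain degrees $*\ge b$. In the only chain degrees $* \in \{0,1\}$ that matter for $H_0$, only the $b=0$ piece is nonzero: pieces with $b\ge 2$ are concentrated in chain degrees $\ge 2$, while for $b=1$ direct computation gives $\T_2^0\cong\Lie_2\cong\Top_2$, so $\Top_2/\T_2^0=0$. Consequently, in these two bottom degrees the complex $A_*^i$ agrees with the bottom subcomplex $\mathcal F_0(A_*^i)\cong\Inj^2_{*-1}(\mathcal V_i)_k$, and the differential $d^2\colon A_1^i(k)\to A_0^i(k)$ is precisely the augmentation map $\Inj^2_0(\mathcal V_i)_k\to\mathcal V_i(k)$ of the $\Inj^2$-complex. (The ``filtration-dropping'' terms identified in the proof of Proposition \ref{E2filtration}, which remove a $\Lie_2$ factor from $\Top_{2b}$, vanish automatically for $b=0$ since $\Top_0=R$ has no such factor.) Taking cokernels yields
\[H_0(A_*^i)(k)=\coker\Big(\Inj^2_0(\mathcal V_i)_k\longrightarrow\mathcal V_i(k)\Big)=H_0^{\FIM^+}(\mathcal V_i)(k),\]
and the isomorphism is $\fS_k$-equivariant since every construction above is, establishing the first claim as an isomorphism of FB-modules.

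For the second claim I would simply identify the parity components of $\mathcal V_i$ with those of $\mathcal W^M_*$: at even $k=2m$, $\mathcal V_i(2m)=H_0^{\FI}(H_{i+m}(F(M)))_{2m}=\mathcal W^M_{2i}(2m)$, and at odd $k=2m+1$, $\mathcal V_i(2m+1)=H_0^{\FI}(H_{i+m+1}(F(M)))_{2m+1}=\mathcal W^M_{2i+1}(2m+1)$. Since the $\FIM^+$-action always preserves the parity of $k$, these parity-wise bijections are compatible with the $\FIM^+$-module structures and hence pass to $H_0^{\FIM^+}$. The main obstacle I anticipate is verifying in detail that the $d^2$ differential restricted to the bottom filtration piece is honestly the augmentation of $\Inj^2_*(\mathcal V_i)$ on the nose (and not merely up to filtration), together with the bookkeeping required to match the $\FIM^+$-structure emerging from the arc resolution spectral sequence with the one defined a priori on $\mathcal W^M_\bullet$ via the Browder operation $\psi$.
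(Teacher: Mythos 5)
Your argument is essentially the paper's own proof, with the implicit steps made explicit: the paper likewise invokes Proposition \ref{E2filtration} to assert that $\Inj^2_{*-1}(\mathcal V_i)\to A_*^i$ is an isomorphism in degrees $*=0,1$, then identifies $H_{-1}(\Inj^2_*(\mathcal V_i))$ with $H_0^{\FIM^+}(\mathcal V_i)$ in analogy with Proposition \ref{minusonehomology}, and derives the second pair from $\mathcal V_i\cong\mathcal W_{2i}^M\oplus\mathcal W_{2i+1}^M$. Your extra verifications (the vanishing of the $b=1$ quotient via $\T_2^0=\Top_2$, the degree-support of the $b\ge 2$ pieces, and the concern about matching $d^2$ with the $\Inj^2$ augmentation) are exactly the checks the paper delegates to the proof of Proposition \ref{E2filtration}, so the proposal is correct.
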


\begin{proof} Let $\mathcal Q$ be a $\bigwedge \,(\Sym^2 R)$-module. In analogy to Proposition \ref{minusonehomology}, there is an isomorphism $$H_{-1}(\Inj^2_*(\mathcal Q)) \cong H_0^{\FIM^+}(\mathcal Q).$$
 By Proposition \ref{E2filtration}, the map $\Inj^2_{*-1}(\mathcal V_i) \m A_*^i$ is an isomorphism for $* =0,1$. Thus, $$H_{-1}(\Inj^2_*(\mathcal V_i)) \cong H_0(A_*^i).$$ 

 The second pair of isomorphisms follow from the fact that $\mathcal V_i \cong \mathcal W_{2i}^M \oplus  \mathcal W_{2i+1}^M$. 
\end{proof}

We now prove the main theorem: if $M$ is a finite type noncompact connected manifold, and $R$ is a field of characteristic zero, then $\W_i^M$ is a finitely generated $\bigwedge \,(\Sym^2 R)$-module. For convenience, we will also assume $M$ is smooth but see Remark \ref{topologicalmanifolds} for a discussion of the case of general topological manifolds.

\begin{proof}[Proof of Theorem \ref{maintheorem}] 

We will prove by induction that the  $\bigwedge \,(\Sym^2 R)$-modules $\W^M_i$ are finitely generated. Because the homology of configuration spaces of manifolds with finite type homology is finitely generated (as abelian groups), it suffices to show that these $\bigwedge \,(\Sym^2 R)$-modules have finite generation degree.

Before we proceed, we make some preliminary observations. By combining Proposition \ref{connected2}, Proposition \ref{E2filtration}, and considering the spectral sequence associated to a filtered chain complex, we conclude the following results: 
\begin{enumerate}
\item[(a)] If $\cU_i$ is  finitely generated for $i \leq d$, then for all $j$, 
$$H_j(B^i_*(k)) \cong 0 \qquad \text{ for all $i \leq d$, and all $k$ sufficiently large (depending on $i$, $j$, and $M$).} $$ 
\item[(b)]   If $\cV_i$ is  finitely generated for $i \leq d$,  then for all $j$, 
$$H_j(A^i_*(k)) \cong 0 \qquad \text{for all $i \leq d$, and all $k$ sufficiently large (depending on $i$, $j$, and $M$).} $$ 
\end{enumerate}

The proof of Theorem \ref{ConfigSpaceRepStable} implies that $\W_i^M$ vanishes for $i$ strictly negative. This will be the base case of the following two-step induction argument. In the first part of the induction argument, we will assume that $\W_i^M$ is finitely generated for $i \leq 2m$ and then prove that $\W^M_{2m+1}$ is finitely generated. This induction hypothesis is equivalent to the statement that $\cU_i$ and $\cV_i$ are finitely generated for $i \leq m-1$. By Proposition \ref{H_0ForFIPairs},  the conclusion is equivalent to the statement that $H_0(A_*^{m}(k)) \cong 0$ for sufficiently large odd $k$. In the second part of the induction argument, we will assume that $\W_i^M$ is finitely generated for $i \leq 2m+1$ and then prove that $\W_{2m+2}^M$ is finitely generated. This induction hypothesis is equivalent to the statement that $\cV_i$ is finitely generated for $i \leq m$ and $\cU_i$ is finitely generated for $i \leq m-1$, and the conclusion is equivalent to the statement that $H_0(A_*^{m+1}(k)) \cong 0$ for sufficiently large even $k$.

\paragraph*{First induction step:}
Assume $\W_i^M$ is finitely generated for $i \leq 2m$. Our goal is to show $H_0(A_*^{m}(k)) \cong 0$ for odd $k$ sufficiently large. By definition, when $k$ is odd,  $$A_0^m(k) \cong E^2_{-1,m+\frac{k+1}{2} }(k) \qquad \text{and} \qquad H_0(A_*^m(k)) \cong E^3_{-1,m+\frac{k+1}{2} }(k).$$ 

\noindent Since the connectivity of the arc resolution is $(k-1)$ by Proposition \ref{ArcF}, we know that for large $k$, $$E^\infty_{-1,m+\frac{k+1}{2}}(k)  \cong  0.$$ There are no differentials out of $E^r_{-1,m+\frac{k+1}{2}}(k)$ so to prove that $H_0(A_*^i(k))$ vanishes we will show that there are no nonzero differentials $d^r$ into this group for $r>2$. The domains of such differentials are $E^r_{-1+r,m-r+1+\frac{k+1}{2} }(k)$  for $r>2$. When $r\geq 2m+4$, the proof of Theorem  \ref{ConfigSpaceRepStable} implies $E^2_{-1+r,m-r+1+\frac{k+1}{2} }(k) \cong 0$. The differentials are shown in the case $m=1, k=7$ in Figure \ref{E3VanishesOdd}. 

\begin{figure}[h!]    \centering \begin{tikzpicture} \scriptsize
  \matrix (m) [matrix of math nodes,
    nodes in empty cells,nodes={minimum width=3ex,
    minimum height=5ex,outer sep=2pt},
 column sep=3ex,row sep=3ex]{ 
5    &  E^2_{-1, 5} &  E^2_{0, 5}  & E^2_{1, 5}  & E^2_{2, 5}  &  E^2_{3, 5}  &  E^2_{4, 5}  &  E^2_{5, 5}  &  E^2_{6, 5}  &  E^2_{7, 5} &  E^2_{8, 5}    \\  
  4    &  E^2_{-1, 4} &  E^2_{0, 4}  & E^2_{1, 4}  & E^2_{2, 4}  &  E^2_{3, 4}  &  E^2_{4, 4}  &  E^2_{5, 4}  &  E^2_{6, 4}  &  E^2_{7, 4}&  E^2_{8, 4}     \\  
    3    &  0 &  E^2_{0, 3}  & E^2_{1, 3}  & E^2_{2, 3}  &  E^2_{3, 3}  &  E^2_{4, 3}  &  E^2_{5, 3}  &  E^2_{6, 3}  &  E^2_{7, 3}   &  E^2_{8, 3} \\  
 2   &  0&  0 & 0 & E^2_{2, 2}  &  E^2_{3, 2}  &  E^2_{4, 2}  &  E^2_{5, 2}  &  E^2_{6, 2}  &  E^2_{7, 2} &  E^2_{8, 2} &\qquad &\qquad & \\  
1   &  0 &  0 & 0  & 0 &  0 &  E^2_{4, 1}  &  E^2_{5, 1}  &  E^2_{6, 1}  &  E^2_{7, 1}&  E^2_{8, 1}  &&&  \\  
0    &  0 &  0  & 0 & 0 &  0  &  0  &  0 &  E^2_{6, 0}  &  E^2_{7, 0} &E^2_{8, 0}  &&&   \\  
 \quad\strut &   -1  &  0  &  1  & 2 &3 & 4 & 5 & 6 & 7 & 8 & & & \\}; 

\draw[thick] (m-1-1.east) -- (m-7-1.east) ;
\draw[thick] (m-7-1.north) -- (m-7-11.north east) ;

  \draw[-stealth, ultra thick, red] (m-2-4) -- (m-1-2) ;
  \draw[-stealth,  ultra thick, red] (m-3-6) -- (m-2-4) ;
\draw[-stealth, ultra thick, red] (m-4-8) -- (m-3-6) ;
\draw[-stealth, ultra thick, red] (m-5-10) -- (m-4-8) ;
\draw[-stealth, ultra thick, red] (m-6-12) -- (m-5-10) ;

  \draw[-stealth, green] (m-3-5) -- (m-1-2) ;
    \draw[-stealth, green] (m-4-6) -- (m-1-2) ;
    \draw[-stealth, green] (m-5-7) -- (m-1-2) ;
      \draw[-stealth, green] (m-6-8) -- (m-1-2) ;

\end{tikzpicture}
\caption{The complex  $A_*^1(7)$ and the differentials $d^3$, $d^4$, $d^5$, and $d^6$ (shown in green). } \label{E3VanishesOdd} 
\end{figure}
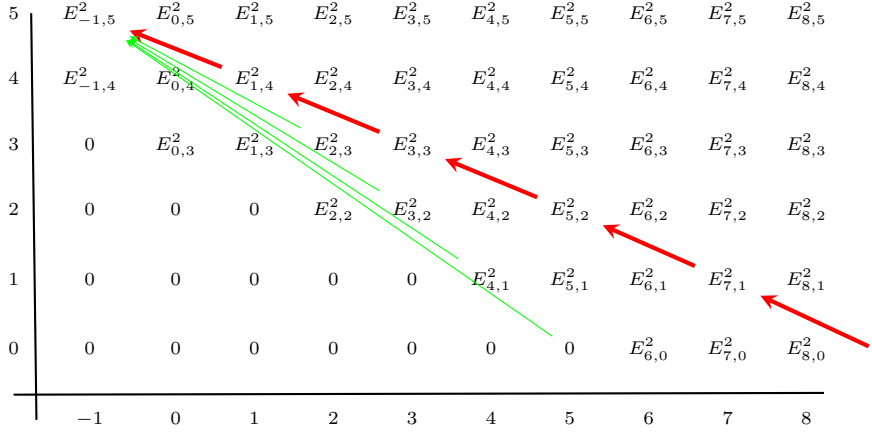  

For $2<r < 2m+4$,  the groups $E^3_{-1+r,m-r+1+\frac{k+1}{2} }(k)$ are of the form $H_j(A^i_*(k))$ for $i \leq m-1$ and  $2 \leq j \leq m+1$ or of the form $H_j(B^i_*(k))$  for $ i \leq m$ and  $1 \leq j \leq m+1$. Thus by observations (a) and (b), there is some uniform bound $K \in \Z$ such that these groups are all zero for $k>K$.  Hence for large $k$ and $r>2$, there cannot be nonzero differentials with codomain $E^r_{-1,m+\frac{k+1}{2} }(k)$. It follows that, for $k$ sufficiently large and odd,  $$H_0(A_*^m(k)) \cong E^3_{-1,m+\frac{k+1}{2} }(k) \cong E^\infty_{-1,m+\frac{k+1}{2} }(k) \cong 0.$$ This establishes the first induction step.

\paragraph*{Second induction step:}
Assume $\W_i^M$ is finitely generated for $i \leq 2m+1$. Our goal is to show $H_0(A_*^{m+1}(k)) \cong 0$ for even $k$ sufficiently large. When $k$ is even, 
$$A_0^{m+1}(k) \cong E^2_{-1,m+1+\frac{k}{2} }(k) \qquad \text{and} \qquad H_0(A_*^{m+1}(k)) \cong E^3_{-1,m+1+\frac{k}{2} }(k).$$ 
Again, it suffices to show that $E^3_{-1,m+1+\frac{k}{2} }(k)$ is not the target of any nonzero differentials $d^r$, $r>2$, once $k$ is sufficiently large. The domains of the only possibly nonzero differentials are  $E^r_{-1+r,m-r+2+\frac{k}{2} }(k)$  for $2<r<2m+5$.  But  for $2<r<2m+5$ the groups $E^3_{-1+r,m-r+2+\frac{k}{2} }(k)$ are one of $H_j(A^i_*(k))$ for $i \leq m$ and $2 \leq j \leq m+2$,  or $H_j(B^i_*(k))$  for $ i \leq m+1$ and $1 \leq  j \leq m+1$. By observations (a) and (b) these groups vanish for large even $k$. This establishes the second induction step and the theorem. 
 \end{proof}

\begin{remark}
To prove secondary representation stability, one only needs information about $d^2$-differentials in the arc resolution spectral sequence. This information can be obtained by using the Leibnitz rule if one can compute $d^2: E^2_{1,0}[\R^n](2)  \m E^2_{-1,1}[\R^n](2) $. This can be computed in the following alternative way. The map is surjective since $E^\infty_{-1,1}[\R^n](2) \cong 0$. Since $H_1(F_1( \R^n )) \cong 0$, $E^2_{-1,1}[\R^n](2) \cong H_1(F_2(\R^n))$. One can then compute this differential from the fact that $F_2(\R^n) \simeq S^{n-1}$ with $H_{n-1}(F_2(\R^n))$ generated by the Browder operation applied to two copies of the generator of $H_0(F_1(\R^n))$. We included calculations of higher differentials since they will be needed in the next subsection to establish an improved stable range in higher dimensions and are suggestive of even higher order stability for surfaces.

\end{remark}

\subsection{Improved range in higher dimensions} \label{SectionHigherDimensions}
 
Although Theorem \ref{maintheorem} holds for manifolds of dimension $n \geq 3$, the result in higher dimensions is degenerate: the homology operation $\psi$ is zero for $n \geq 3$, and the isomorphism of Corollary \ref{corSecondaryCentral} is also the zero map. Thus, in high dimensions secondary representation stability manifests itself as an improved range for representation stability. We begin by showing that the arc resolution spectral sequence collapses at the $E^2$-page if $\dim M >2$. In this subsection, we work with integral coefficients.

\begin{proposition}\label{collapse}
If $M$ is a noncompact connected smooth manifold of dimension $n \geq 3$, the arc resolution spectral sequence collapses at the $E^2$-page.
\end{proposition}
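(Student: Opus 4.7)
The plan is to show that every higher differential $d^r$ with $r \geq 2$ is zero. Three ingredients combine: the description of the $E^2$-page as a sum of tensors of $\mathrm{Top}_\ast$ with FI-generator classes, the Leibniz rule for the $t^r$-product (Lemma \ref{LemmaLeibniz}), and the crucial fact noted in Section \ref{secStabmap} that for $n \geq 3$ the Browder operation $\psi = \psi^2$ is the zero operation on $H_\ast(F(\mathbb{R}^n))$, because the defining $S^1 \to \mathrm{Emb}(\mathbb{R}^n \sqcup \mathbb{R}^n, \mathbb{R}^n)$ factors through the simply connected $S^{n-1}$.

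By Proposition \ref{GeometricRealizationSS} and Theorem \ref{TopHomology}, at bidegree $(p,q)$ the $E^2$-page splits, over decompositions $S = P \sqcup R$ with $|P| = p+1$, into summands $\mathrm{Top}_{|P|}(P) \otimes H_0^{\FI}(H_q(F(M)))_R$. By Lemma \ref{LProductBasis} each $\mathrm{Top}_{|P|}(P)$ is spanned by $\mathcal{L}$-products $L_1 L_2 \cdots L_m$ whose factors are iterated Lie brackets on a partition of $P$. Using the embedding $\bar e$ to place disjoint copies of $\mathbb{R}^n$ inside $M$, each such basis element can be written as an iterated $t^2$-product
\[
 t^2\bigl(L_1,\; t^2(L_2,\; \cdots\; t^2(L_m, v)\cdots)\bigr),
\]
where each $L_i$ lives in the arc-resolution $E^2$-page of $\mathbb{R}^n$ and $v$ is a generator class in $H_0^{\FI}(H_q(F(M)))_R \cong E^2_{-1,q}[M](R)$.

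Applying the Leibniz rule of Lemma \ref{LemmaLeibniz} iteratively rewrites $d^r$ of such a product as a signed sum of terms each containing exactly one factor of the form $d^r(L_i)$ or $d^r(v)$. The generator $v$ sits in column $p = -1$, so $d^r(v) = 0$ for all $r \geq 1$ because the target column $p \leq -2$ does not exist. By Lemma \ref{LemmaDifferentialsOnLie}, a single Lie bracket $L_i$ of length $\ell$ satisfies $d^r(L_i) = 0$ for $r < \ell$ and
\[
 d^\ell(L_i) \;=\; t_{\psi(\cdots \psi(\psi(a_1, a_2), a_3), \ldots, a_\ell)}(y_0),
\]
which vanishes in homology because $\psi$ is the zero operation for $n \geq 3$. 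Hence every basis element of $E^2$ is a $d^r$-cycle for every $r \geq 2$, and the spectral sequence collapses at the $E^2$-page.

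The main obstacle is bookkeeping: realizing each $\mathcal{L}$-product as a genuine nested $t^2$-product at the level of the spectral sequence (and not merely at the chain level), with signs consistent with Lemma \ref{LemmaLeibniz} and compatible with the filtration of Proposition \ref{E2filtration}. This is geometrically natural since the factors $L_i$ are supported on disjoint subsets of the configuration and can be placed in disjoint embedded copies of $\mathbb{R}^n$ via $\bar e$, but the sign compatibility and the identification of iterated $t^2$-products with Reutenauer-style basis elements of $\mathrm{Top}_{|P|}$ should be verified carefully.
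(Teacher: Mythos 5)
Your argument follows the paper's proof essentially exactly: both observe that $\psi$ vanishes for $n\geq 3$ (since $F_2(\R^n)\simeq S^{n-1}$ has trivial $H_1$), then use Lemma \ref{LemmaDifferentialsOnLie} to kill all differentials on single Lie brackets, and conclude via the Leibniz rule (Lemma \ref{LemmaLeibniz}) and the $\cL$-product description of the $E^2$-page. If anything you are more explicit than the paper about how the Leibniz rule distributes across the nested product and why the column-$(-1)$ factor is a permanent cycle.
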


\begin{proof} Since $F_2(\R^n) \simeq S^{n-1}$, the class $\psi(1,2) \in H_1(F_2(\R^n))$ is zero for $n\geq 3$. Since Browder operations are bilinear, the iterated product  $\psi(1,\psi(2,\ldots , \psi(k-1,k) \ldots ) $ vanishes in $H_{k-1}(F_k(\R^n))$. In particular, $t_{\psi(1,\psi(2,\ldots , \psi(k-1,k) \ldots ) }$ is the zero map. By Lemma \ref{LemmaDifferentialsOnLie}, for $L=[1,\ldots[k-1,k],\ldots] \in E^1_{k-1,0}(S)$, the differential $d^r(L)$ vanishes for $r<k$ and $$d^k(L)= t_{\psi(1,\psi(\ldots ,\psi(k-1,k)\ldots))}(y_0)$$ where $y_0$ is the class of a point in $H_0(F_0(M))$. Thus $d^k(L)=0$ as well. For degree reasons, for $r>k$ the codomain of the differential $d^r$ is zero, and so $d^r(L)=0$. 
 
 Now consider $T \in E^1_{k-1,0}(S)$. By Theorem \ref{TopHomology}, $T=L_1 \boxtimes \ldots \boxtimes L_m$ with $L_i$ Lie polynomials. By Lemma \ref{LemmaLeibniz}, $d^r(T)$ is a signed sum of products of $d^r(L_i)$. These terms vanish by the above paragraph so $d^r(T)=0$ for all $r \geq 2$.

 Consider $r \geq 2$ and assume by induction that we have shown that $d^t=0$ for all $2 \leq t < r$. Thus $E^2_{p,q} \cong E^r_{p,q}$. In the language of Lemma \ref{LemmaLeibniz}, Proposition \ref{GeometricRealizationSS} implies that $E^2_{p,q}[M](S)$ is generated by classes of the form $t^r(T \otimes \alpha)$ with $T \in E^1_{p,0}[\R^n](U)$ and $\alpha \in E_{-1,q}^2[M](S \setminus U)$ for $U \subseteq S$ a subset of size $p+1$. Then $d^r(T)=0$ by the above paragraph and $d^r(\alpha)=0$ since $\alpha$ is in the $-1$st column. Thus, $d^r(t^r(T \otimes \alpha))=0$ and we have shown that $d^r=0$. The claim follows by induction. 
\end{proof}

Using Proposition \ref{collapse}, we can prove an improved stable range for the homology of configuration spaces of higher-dimensional manifolds. This result was proven by Church--Ellenberg--Farb for noncompact connected orientable manifolds \cite[Theorem 6.4.3]{CEF}, and we extend their result to all noncompact connected manifolds. 

\begin{theorem} \label{ImprovedRangeTheorem} Let $M$ be a noncompact connected smooth manifold of dimension at least three. Then $\deg  H_0^{\FI}(H_i(F(M); \Z)) \leq i$.
\end{theorem}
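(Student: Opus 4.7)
The plan is to observe that for $\dim M \geq 3$, the improved stable range falls out almost immediately from combining the three results already in hand: the collapse of the arc resolution spectral sequence at $E^2$ (Proposition~\ref{collapse}), the connectivity of the arc resolution (Proposition~\ref{ArcF}), and the identification of $E^2_{-1,q}$ with the FI-generators (Proposition~\ref{GeometricRealizationSS}). No induction on homological degree is actually required, which is why the range improves by a factor of two relative to Theorem~\ref{ConfigSpaceRepStable}.

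First I would fix a finite set $S$ with $|S| = k$ and consider the arc resolution spectral sequence $E^r_{p,q}(S)$. By Proposition~\ref{collapse}, all differentials $d^r$ with $r \geq 2$ vanish, so $E^2_{p,q}(S) \cong E^\infty_{p,q}(S)$ for every $(p,q)$. On the other hand, the spectral sequence converges to $H_{p+q+1}(F_S(M), \|\Arc_\bullet(F_S(M))\|)$, and Proposition~\ref{ArcF} says this relative homology vanishes in the range $p+q+1 \leq k-1$, i.e., whenever $p+q \leq k-2$.

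Next I would specialize to the column $p = -1$, $q = i$, where Proposition~\ref{GeometricRealizationSS} identifies
\[
E^2_{-1, i}(S) \;\cong\; H_0^{\FI}\bigl(H_i(F(M);\Z)\bigr)_S.
\]
Combining the two previous observations, this group equals $E^\infty_{-1, i}(S)$, which vanishes as soon as $-1 + i \leq k - 2$, that is, $k \geq i+1$. Hence $H_0^{\FI}(H_i(F(M);\Z))_S = 0$ whenever $|S| > i$, proving $\deg H_0^{\FI}(H_i(F(M);\Z)) \leq i$.

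There is no real obstacle at this stage: the genuine content of the theorem is already absorbed into Proposition~\ref{collapse}, whose proof rests on the vanishing of the Browder operation $\psi$ in dimensions $\geq 3$ together with the Leibniz rule of Lemma~\ref{LemmaLeibniz} and the differential computation of Lemma~\ref{LemmaDifferentialsOnLie}. The only minor subtlety worth double-checking is the index bookkeeping between the connectivity statement $(k-1)$-connected and the relative homology $H_{p+q+1}$, but this is routine. In particular, no analogue of the inductive argument used in the proof of Theorem~\ref{ConfigSpaceRepStable} is needed here, since collapse at $E^2$ rules out a priori all the higher differentials into $E^r_{-1, i}(S)$ that forced the factor of two in the lower-dimensional case.
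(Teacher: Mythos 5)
Your proposal is correct and takes essentially the same approach as the paper: both use the collapse at $E^2$ from Proposition~\ref{collapse}, the connectivity bound from Proposition~\ref{ArcF}, and the identification $E^2_{-1,i}(S) \cong H_0^{\FI}(H_i(F(M)))_S$ from Proposition~\ref{GeometricRealizationSS}. The index bookkeeping you double-checked ($-1+i \leq k-2$ giving $k \geq i+1$) matches the paper's implicit calculation.
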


\begin{proof}
Consider the spectral sequence described in Proposition \ref{GeometricRealizationSS}. We proved in Proposition \ref{ArcF} that $E^\infty_{p,q}(S)  \cong  0$ for $p+q \leq |S|-2$. Proposition \ref{collapse} implies that $E^\infty_{p,q}(S) \cong E^2_{p,q}(S)$ for all $p$ and $q$. Since $H_0^{\FI}(H_i(F(M)))_{S} \cong E^2_{-1,i}(S)$, the claim follows. 
\end{proof}

\subsection{Conjectures and calculations} \label{secConj}
 
In this subsection, we make several conjectures. We give evidence for some of these conjectures by proving them in special cases. 

\subsubsection*{Higher-dimensional manifolds and higher order stability}

We begin with some questions concerning configuration spaces. 

\begin{question} \label{QuestionHigherManifolds}   \begin{enumerate}
\item[(a)] Is there a notion of tertiary and higher order representation stability that is present in the homology of configuration spaces?
\item[(b)] What is the stable range for secondary representation stability?
\item[(c)] Is there any form of nontrivial secondary representation stability for configuration spaces of higher-dimensional manifolds?
\end{enumerate} 
\end{question} 

We suggest a conjectural answer to all three questions. Its statement requires the following definition.
 
\begin{definition} Define the following twisted (skew-)commutative algebras: 
$$ \mathfrak L^n_d := \left\{ \begin{array}{ll} \Sym \, H_{(d-1)(n-1)}(F_d(\R^n)), &  \text{ $(d-1)(n-1)$ even} \\ 
\bigwedge H_{(d-1)(n-1)}(F_d(\R^n)), &  \text{ $(d-1)(n-1)$ odd.} 
\end{array} \right. $$ 
\end{definition}

Note that $H_{(d-1)(n-1)}(F_d(\R^n))$ is $\Lie_d$ when $(d-1)(n-1)$ is odd. For $(d-1)(n-1)$ even, there is a similar description except with different signs. For $d=1$ and $n$ arbitrary, $\mathfrak L_d^n$-modules are precisely FI-modules. For $d=2$, these $\mathfrak L_d^n$-modules are modules over $\bigwedge (\mathrm{Sym}^2 R)$ if $n$ is even and modules over $ \mathrm{Sym}(\bigwedge^2 R)$ if $n$ is odd.

For a noncompact $n$-manifold $M$, the embedding $\R^n \sqcup M \hookrightarrow M$ induces maps 
$$H_{(d-1)(n-1)}(F_d(\R^n)) \otimes H_i(F_k(M)) \longrightarrow H_{i+(n-1)(d-1)}(F_{k+d}(M)).$$ For $d=1$, this gives the FI-module structure on $H_i(F(M))$ and for $d=n=2$, this gives the $\bigwedge \,(\Sym^2 R)$-module structure on $\W^M_i$. In general these embeddings induce $\mathfrak L^n_d$-module structures on the groups $\W[d]^M_i(S)$ defined as follows. 
 
\begin{definition} Let $M$ be a noncompact connected manifold of dimension $n$ and let 
\[ \W[d]^M_i(S): = H_0^{\mathfrak L^n_{d-1}}\left( \ldots \left( H_0^{\mathfrak L^n_{1}}\left( H_{\frac{(n-1)(d-1)|S|+i}{d}} \left( F(M) ;R\right) \right) \right) \ldots \right)_S \]
\end{definition}

Note that we use the $\mathfrak L^n_d$-module structure on $\W[d]^M_i$ to define $\W[d+1]^M_i(S)$. For $d=1$, $\W[d]^M_i$ is just the FI-module $H_i(F(M))$. For $d=n=2$, $\W[d]^M_i$ is the $\bigwedge \,(\Sym^2 R)$-module $\W^M_i$. We conjecture that these modules have finite generation degree when $M$ is sufficiently highly connected, and we conjecture an explicit stable range.
 
\begin{conjecture}
Let $M$ be a noncompact manifold of dimension $n \geq 2$. If $M$ is $q$-connected with $q \geq \left \lfloor \frac{(n-1)(d-1)}{d} \right \rfloor $, then $H_0^{\mathfrak L_d^n}\left( \W[d]^M_i\right)(S) \cong 0$ for $$|S|>\max \left( \frac{i(d^2+d)}{n-1},\frac{i d}{q d -(n-1)(d-1)} \right).$$

\label{highdimconj}
\end{conjecture}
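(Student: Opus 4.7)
The plan is to prove Conjecture \ref{highdimconj} by induction on $d$, directly generalizing the strategy used in this paper for $d=1$ (Theorems \ref{ConfigSpaceRepStable} and \ref{ImprovedRangeTheorem}) and $d=2$ (Theorem \ref{maintheorem}). The base case $d=1$ is already proved: the hypothesis $\tau \geq 0$ is vacuous, and the conjectured bound $\max(2i/(n-1), i/\tau)$ reduces to $2i$ when $n=2, \tau \geq 1$ and to $i$ when $n \geq 3, \tau \geq 1$, matching the two theorems above. For the inductive step, one assumes that $\W[j]^M_i$ is a finitely generated $\mathfrak L^n_j$-module in the stated range for all $j < d$, and seeks to deduce the corresponding finite generation statement for $\W[d]^M_i$.

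The natural tool to use is the arc resolution spectral sequence of Section \ref{SectionArcRes}, but extended by analyzing higher-order diagonals than the $A^i_*$ and $B^i_*$ diagonals appearing in Proposition \ref{E2filtration}. Just as those diagonals packaged the $\bigwedge \,(\Sym^2 R)$-module structure on the unstable FI-generators, the $d$-th order diagonals would package the $\mathfrak L^n_d$-module structure on the iteratively unstable $\mathfrak L^n_{d-1}$-generators. By induction, the modules appearing on these higher diagonals are finitely generated, and the key input is to show that sufficiently many differentials vanish so that the coequalizer-type presentation of $H_0^{\mathfrak L^n_d}(\W[d]^M_i)$ from the appropriate injective word complex (generalizing $\Inj^2_*$ of Section \ref{subsecsecworcomplex}) gives the claimed vanishing.

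The main combinatorial input is a generalization of Lemma \ref{LemmaDifferentialsOnLie}: one would show that certain $d$-fold iterated graded Lie polynomial cycles $L = [[[\cdots[a_1,a_2],\ldots],a_{k-1}],a_k]$ in the bottom row of the $E^1$-page survive until a specific page determined by $d$ and $n$, where the differential sends them to stabilizations by the iterated Browder operation $\psi(\cdots \psi(\psi(a_1,a_2),a_3),\ldots, a_k)$, which now represents a class in $H_{(n-1)(d-1)}(F_d(\R^n))$ generating $\mathfrak L^n_d$ rather than vanishing. The Leibniz rule (Lemma \ref{LemmaLeibniz}) then propagates these calculations to determine the higher differentials. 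The precise numerical bounds in the conjecture should emerge from careful bookkeeping: the term $i(d^2+d)/(n-1)$ from the interaction between homological degree and the shift $(n-1)(d-1)$ introduced by each $d$-fold stabilization, and the term $id/(\tau d - (n-1)(d-1))$ from the connectivity hypothesis, which is exactly what is needed to kill unwanted homology of $M$ in degrees below the threshold $(n-1)(d-1)$.

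The hardest step will be establishing an appropriate Noetherian property for finitely generated $\mathfrak L^n_d$-modules, generalizing Theorem \ref{NSSnoth}, since without this one cannot construct finite free resolutions analogous to Corollary \ref{freeResolution} and therefore cannot conclude high acyclicity of the $d$-th order injective word complex analogous to Proposition \ref{connected2}. This is known only in special cases from the work of Nagpal--Sam--Snowden. A second obstacle is the combinatorial complexity of the differential computations: even in the $d=2$ case, the proof of Lemma \ref{LemmaDifferentialsOnLie} is quite intricate, and the higher analog requires tracking nested Lie and commutative structures simultaneously. The conjectured bounds are therefore plausible but extracting them precisely is likely to require significant additional work in higher-order graded operadic combinatorics.
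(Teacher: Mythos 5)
The statement you are addressing is Conjecture~\ref{highdimconj}. The paper does not prove it; the authors explicitly frame it as an open conjecture, and the only proof given is of a special case: Proposition~\ref{calculationRn} establishes the conjectured bound when $M = \R^n$, by a direct slope argument (homological degree divided by number of particles) inside Cohen's explicit description of $H_*(F_k(\R^n))$ as a module over the $n$-Poisson operad, with Remark~\ref{calculationPuncturedRn} noting an extension to punctured disks. There is no spectral-sequence argument for general $M$ in the paper, and none is claimed.

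Your proposal is a strategy sketch, not a proof, and you say as much. It mirrors the architecture the paper uses for $d=1$ (Theorems~\ref{ConfigSpaceRepStable}, \ref{ImprovedRangeTheorem}) and $d=2$ (Theorem~\ref{maintheorem}): arc resolution spectral sequence, higher-order diagonals generalizing $A^i_*$ and $B^i_*$, a differential computation generalizing Lemma~\ref{LemmaDifferentialsOnLie}, and an induction. You correctly identify the main obstruction, namely the Noetherian property for $\mathfrak L^n_d$-modules, which the paper itself poses as Question~\ref{conjNoth} and which is needed to build finite free resolutions as in Corollary~\ref{freeResolution} and to run the acyclicity argument of Proposition~\ref{connected2}. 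That diagnosis agrees with the paper's own framing of why this is a conjecture rather than a theorem. One small caveat about your base-case check: for $d=1$, the second term $id/(\tau d - (n-1)(d-1))$ reads $i/\tau$, which degenerates at $\tau = 0$; the known Theorems~\ref{ConfigSpaceRepStable} and \ref{ImprovedRangeTheorem} hold for merely connected $M$, so the conjecture and the theorems agree cleanly only once $\tau \geq 1$. Your proposal also does not touch the one case the paper actually settles, $M = \R^n$, which is handled by direct calculation rather than the inductive spectral-sequence strategy you outline.
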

 
The above conjecture can be interpreted as three separate conjectures, addressing the three parts of Question \ref{QuestionHigherManifolds}.  Note that $\left \lfloor  ((n-1)(d-1))/d \right \rfloor =0$ when $n=2$ and thus for surfaces we are only assuming that the manifold is connected. Our heuristic for assuming that the manifold needs to be $\left \lfloor((n-1)(d-1))/d \right \rfloor $-connected is to bound the \emph{slope} of certain homology classes, that is, the  ratio of homological degree to the number of moving points. This condition seems to ensure that the slope of all homology classes in the configuration space that ``come from the topology of the manifold'' is higher than those coming from $H_{d-1}(F_d(\R^n))$. As support for Conjecture \ref{highdimconj}, we will next prove the result in the case that the manifold is $\R^n$. From now on, we work with integral coefficients. 

\subsubsection*{Configurations of (punctured) Euclidean space}

Cohen  \cite[Chapter III]{CLM} proved that the homology groups $H_*(F_k(\R^n))$ are the submodule of the free graded commutative algebra on the free graded lie algebra on the set $[k]$ such that in each product of brackets, every element of $[k]$ appears exactly once. The bracket is the $E_n$-Browder operation $\psi^n$ and the product is $\bullet$ (see Theorem \ref{PropGerstenhaber}). For example, a typical element of  $H_{3(n-1)}(F_6(\R^n))$ is $2 \bullet \psi^n(1,4)  \bullet \psi^n(3,\psi^n(5,6))$.

\begin{proposition}\label{calculationRn}
 Conjecture \ref{highdimconj} holds for $M=\R^n$ with integral coefficients. Specifically, for $n>1$, $$H_0^{\mathfrak L_d^n}\left( \W[d]^{\R^n}_i\right)(S) \cong 0 \qquad \text{ for all $|S|>\frac{i(d^2+d)}{n-1}.$} $$
\end{proposition}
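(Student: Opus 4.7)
My approach is to leverage Cohen's explicit description \cite[Chapter III]{CLM} of $H_*(F_k(\R^n))$: it is the free graded-commutative algebra (under $\bullet$) on the free graded Lie algebra (under $\psi^n$) on the set $[k]$, subject to the constraint that each letter appears exactly once in every monomial. Consequently, an additive basis is indexed by pairs $(\mathcal{P}, \{L_i\})$, where $\mathcal{P} = \{S_1, \ldots, S_c\}$ is an unordered partition of $[k]$ into nonempty blocks and each $L_i$ is a Lie monomial on $S_i$; such a basis element has homological degree $(n-1)\sum_i (|S_i|-1) = (n-1)(k-c)$.

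The key step is to identify $\W[d]^{\R^n}_i(S)$ in terms of this basis. The $\mathfrak{L}^n_j$-action on $H_*(F(\R^n))$ is generated by multiplying (via $\bullet$) with a $j$-letter Lie monomial on new particles, so its image consists precisely of those basis elements whose partition contains a block of size exactly $j$ (and hence, by iteration within $\mathfrak{L}^n_j$, those with any block of size $j$). These actions for distinct $j$ commute with each other and preserve the block-size decomposition, so the iterated quotient descends unambiguously. Accordingly, I can identify $\W[d]^{\R^n}_i(S)$, for $k = |S|$, with the span of those basis elements in degree $\frac{(n-1)(d-1)k+i}{d}$ whose every block satisfies $|S_i| \geq d$; by the same reasoning, $H_0^{\mathfrak{L}^n_d}(\W[d]^{\R^n}_i)(S)$ is spanned by the basis elements for which every $|S_i| \geq d+1$.

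With this identification, the vanishing bound is a counting exercise. The degree constraint $(n-1)(k-c) = \frac{(n-1)(d-1)k + i}{d}$ forces $c = \frac{k}{d} - \frac{i}{d(n-1)}$, while the block-size inequality $|S_i| \geq d+1$ for all $i$ gives $k \geq (d+1)c$. Substituting yields $k(n-1) \leq (d+1)i$, so $k \leq \frac{(d+1)i}{n-1}$ — a bound strictly stronger than, and therefore implying, the claimed $k \leq \frac{(d^2+d)i}{n-1}$. Hence for $|S| > \frac{(d^2+d)i}{n-1}$ no such basis element exists, and the module vanishes.

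The main point requiring care is the identification of the iterated $H_0$ quotient with the subspace cut out by block-size constraints, which demands that each $\mathfrak{L}^n_j$-action descend compatibly through the previously-taken quotients. This amounts to the observation that attaching a new $j'$-block to a basis element cannot destroy an existing $j$-block in its partition, so the submodule ``classes possessing a $j$-block'' is preserved under every $\mathfrak{L}^n_{j'}$-action. Once this compatibility is verified, the remainder of the argument is immediate from the counting above.
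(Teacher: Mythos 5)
Your proof is correct, and it takes essentially the same route as the paper: invoke Cohen's description of $H_*(F_k(\R^n))$ as products of iterated Browder brackets, identify the iterated $H_0^{\mathfrak L^n_j}$-quotients with the span of basis monomials whose every block has size $\geq j+1$, and then extract the vanishing bound by a constraint on block sizes. The compatibility observation you single out (that attaching a $j'$-block cannot destroy an existing $j$-block, so the ideal generated by ``has a $j$-block'' is stable under all $\mathfrak L^n_{j'}$-actions) is exactly the point that makes the iterated quotient well-defined, and you handle it correctly.

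The one place you genuinely diverge from the paper is in how the bound is extracted, and it is worth understanding why you arrived at a tighter constant. The paper argues by a \emph{slope} (ratio of homological degree to particle count) comparison: basis elements with all blocks of size $\geq d+1$ have slope at least $\frac{(n-1)d}{d+1}$, while a class in the target group has slope equal to its homological degree over $|S|$; the paper plugs in homological degree $i+\frac{(n-1)(d-1)|S|}{d}$ and solves. You instead do an exact count of the number of blocks $c$, using the degree $\frac{(n-1)(d-1)|S|+i}{d}$, and then impose $|S|\geq (d+1)c$. These two homological degree expressions are not the same: they differ by $\frac{(d-1)i}{d}$. The formula you use is the one appearing in the definition of $\W[d]^M_i$ in the paper (and it is the one that specializes correctly to $\W_i^M$ when $d=n=2$); the formula in the paper's proof appears to be a typo. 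With the definition's degree, your exact count gives the sharper bound $|S|>\frac{(d+1)i}{n-1}$; repeating your count with the proof's degree would reproduce the stated $|S|>\frac{(d^2+d)i}{n-1}$. In any case your bound implies the one in the Proposition, so your proof of the statement as given is valid — and in fact somewhat stronger. It would be worth flagging, for your own bookkeeping, that this strengthened bound may also propagate to the bound conjectured for general manifolds.
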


\begin{proof}

Cohen's description of $H_*(F_k(\R^n))$ allows us to compute the groups $\W[d]^{\R^n}_i$ explicitly. The $\mathcal L_{d}^n$-module structure on $\W[d]^{\R^n }_i$ is induced by stabilizing by $(d-1)$ nested $E_n$-Browder operations. Thus, the $\mathcal L_{d}^n$ generators $H_0^{\mathfrak L_d^n}\left( \W[d]^{\R^n}_i\right)(S)$ are spanned by products of $d$ or more nested Browder operations. The ratio of homological degree to number of points for these classes is at least $\frac{(n-1)(d) }{d+1}$; see Theorem \ref{PropGerstenhaber}. The group $H_0^{\mathfrak L_d^n}\left( \W[d]^{\R^n}_i\right)(S)$ is defined as a subquotient (and in fact, for $M=\R^n$, is a submodule) of the homology group $H_{i+\frac{(n-1)(d-1)|S|}{d}  } (F_S(\R^n))$. Thus, elements of $H_0^{\mathfrak L_d^n}\left( \W[d]^{\R^n}_i\right)(S)$ have a ratio of homological degree to number of points given by $\frac{i}{|S|}+\frac{(n-1)(d-1)}{d}$. If 
$$\frac{i}{|S|}+\frac{(n-1)(d-1)}{d} < \frac{(n-1)(d) }{d+1},$$
then the set of abelian group generators for $H_0^{\mathfrak L_d^n}\left( \W[d]^{\R^n}_i\right)(S)$ is empty and so $H_0^{\mathfrak L_d^n}\left( \W[d]^{\R^n}_i\right)(S)$ vanishes in the indicated range.  
\end{proof}

Cohen's calculation completely determine the modules $\W^{\R^2}_i$. We describe the case of $i=0$ in detail. 

\begin{proposition} \label{PropWR20}
 For the plane $M = \R^2$, $\W^{\R^2}_0 \cong M^{\FIM^+}(0).$ Notably, $\W^{\R^2}_0(2k) =  H_0^{\FI}(H_k(F(\R^2)))_{2k}$ is a rank--$\displaystyle \left( \frac{(2k)!}{k!2^k}\right)$ free module. 
\end{proposition}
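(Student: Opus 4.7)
The plan is to explicitly compute $\W^{\R^2}_0$ using Cohen's description of $H_*(F_k(\R^2))$ from the start of Section~\ref{secConj}, and then identify the result with the free $\FIM^+$-module $M^{\FIM^+}(0)$ by comparing generators and verifying compatibility of signs. By definition $\W^{\R^2}_0(S) = H_0^{\FI}(H_{|S|/2}(F(\R^2)))_S$, so I may restrict attention to $|S| = 2k$ even (the odd case vanishes trivially, matching the vanishing of $M^{\FIM^+}(0)$ on odd-sized sets).

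First I will carry out a degree count. By Cohen, every class in $H_i(F_{2k}(\R^2))$ is a linear combination of products of iterated Browder brackets in which each letter of $[2k]$ appears exactly once; viewing such a class as a forest of binary trees with $m$ components of leaf-sizes $j_1, \ldots, j_m$, Theorem~\ref{PropGerstenhaber} with $n=2$ gives total homological degree $\sum_\ell (j_\ell - 1) = 2k - m$. Setting $i = k$ forces $m = k$, so our class is a product of exactly $k$ trees whose leaf-sizes sum to $2k$.

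Second, I will pass to FI-generators. A forest containing any tree with a single leaf $a$ is the image under the FI stabilization map (attaching a singleton particle near the boundary) of the corresponding class on $[2k]\setminus\{a\}$, hence dies in $H_0^{\FI}$. Combined with $m=k$ and $\sum j_\ell = 2k$, the constraint $j_\ell \geq 2$ forces $j_\ell = 2$ for every component. Thus $\W^{\R^2}_0(2k)$ is spanned by products
\[
\psi(a_1,b_1) \bullet \psi(a_2,b_2) \bullet \cdots \bullet \psi(a_k,b_k),
\]
with $\{a_1,b_1,\ldots,a_k,b_k\} = [2k]$. The graded antisymmetry of $\psi$ from Theorem~\ref{PropGerstenhaber} reads, in bidegree $0$ and dimension $n=2$, as $\psi(a,b) = \psi(b,a)$, so each factor is determined by an unordered pair. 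The graded commutativity of $\bullet$ applied to two degree-$1$ classes yields $\psi(a,b)\bullet\psi(c,d) = -\psi(c,d)\bullet\psi(a,b)$, so these products anticommute. Consequently $\W^{\R^2}_0(2k)$ is the free abelian group on oriented perfect matchings of $[2k]$ modulo the sign-change relations, of rank $(2k)!/(k!\,2^k)$.

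Finally I will promote this rank computation to an isomorphism of $\FIM^+$-modules. I define a map $\Phi\colon M^{\FIM^+}(0) \to \W^{\R^2}_0$ by sending the generator of $M^{\FIM^+}(0)_\varnothing \cong R$ to the fundamental class in $H_0(F_\varnothing(\R^2))$; by definition of $M^{\FIM^+}(0)$ as a free module, this extends uniquely to an $\FIM^+$-map whose value on an oriented matching $A_1 \wedge \cdots \wedge A_k$ with $A_i = \{a_i,b_i\}$ is precisely $\psi(a_1,b_1)\bullet\cdots\bullet\psi(a_k,b_k)$, as the $\FIM^+$-action on $\W^{\R^2}_0$ is defined via iterated stabilization by the Browder class (Figure~\ref{Match}). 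The preceding paragraph shows $\Phi$ is surjective and identifies both sides as rank-$(2k)!/(k!\,2^k)$ free modules, so $\Phi$ is an isomorphism. The main thing to get right will be matching the two skew-commutativity conventions: the anticommutation of degree-$1$ Browder factors under $\bullet$ must reproduce the $\wedge$-sign defining $\FIM^+$ morphisms, which follows directly from comparing Theorem~\ref{PropGerstenhaber} with Definition~\ref{DefFIM+}.
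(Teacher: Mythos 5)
Your proposal is correct and follows essentially the same route as the paper: both use Cohen's description of $H_*(F_k(\R^2))$ to identify $H_0^{\FI}(H_k(F(\R^2)))_{2k}$ with the span of products of degree-one Browder brackets (one per oriented perfect matching), and both match the sign conventions to identify this with $M^{\FIM^+}(0)_{2k}$. Your version supplies a bit more detail — the explicit degree count $\sum(j_\ell-1) = 2k-m$ forcing $m=k$ and all $j_\ell = 2$, and the explicit verification that graded antisymmetry of $\psi$ and graded commutativity of $\bullet$ in $n=2$ reproduce the $\wedge$-signs in $\FIM^+$ — whereas the paper states the identification more briefly and leans on the $\FI\sharp$ structure (hence freeness) to justify that $H_0^{\FI}$ sits inside $H_k(F_{2k}(\R^2))$ as the summand with no singleton factors; it would be worth flagging that this freeness, or equivalently the fact that Cohen gives a genuine $\Z$-basis, is what certifies that your spanning set is linearly independent in the quotient rather than just spanning.
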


\begin{proof}

For any $k>0$ the generators $H_0^{\FI} (H_i(F(\R^2)))_{k}$ can be identified with a subgroup of the free abelian group $H_i(F_{k}(\R^2))$ where the products of iterated brackets have no degree-0 singleton factors.  In particular, $H_0^{\FI} (H_k(F(\R^2)))_{2k}$ has a basis indexed by the set of perfect matchings on $[2k]$, where the matching $\{a_i, b_i\}_{i=1}^k$ corresponds (up to sign) to the homology class $\psi(a_1, b_1)\bullet \psi(a_2, b_2) \bullet \cdots \bullet \psi(a_k, b_k) $, as in Figure \ref{H3F6Disk}.
\begin{figure}[!ht]    \centering
\includegraphics[scale=.18]{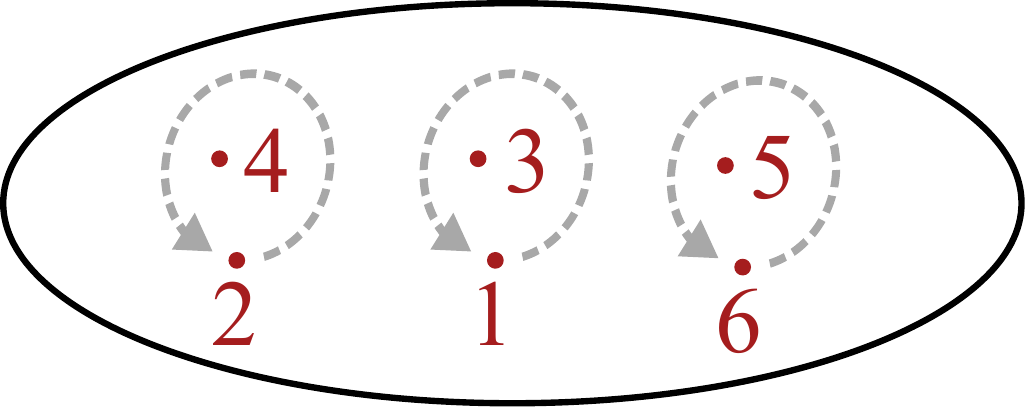}
\caption{The basis element for $H_0^{\FI} (H_3(F(\R^2)))_6$ corresponding the matching $\left\{ \{4,2\}, \{3,1\}, \{5,6\} \right\}$. }
\label{H3F6Disk}
\end{figure}  
This description follows from the work of Cohen. As an $\fS_{2k}$--representation, the group $H_0^{\FI} (H_k(F(\R^2)))_{2k}$ is precisely $M^{\FIM^+}(0)_{2k}$. Since $t_{\psi(a,b)}$ is the operation $x \mapsto  \psi(a,b) \bullet x$, this identification is compatible with the $\FIM^+$ action. 
\end{proof}
 
 The decomposition of the $\fS_{2k}$--representation $H_0^{\FI}(H_k(F(\R^2);\Q))_{2k}$ into irreducible constituents of is given explicitly in Proposition \ref{PropDecomposingM(0)}.
 
\begin{remark}
The methods used to prove Proposition \ref{PropWR20} can be used for other calculations. For example, if $M$ is a punctured $2$-disk, then $\W_0^M \cong M^{\FIM^+}(0)$ and $\W_1^M \cong M^{\FIM^+}(H_1(M)) \bigoplus M^{\FIM^+}(\Lie_3)$.
\end{remark}

 \subsubsection*{Computing $\W^M_0$ for some surfaces $M$}

\begin{proposition} \label{PropMobiusGenus} Let $M$ be a connected surface. If $M$ is not orientable or of genus greater than zero, then $$ \W^M_0(0) \cong \Z \qquad \text{and} \qquad \W^M_0(2i) \cong 0 \quad \text{for $i>0$.} $$ \end{proposition}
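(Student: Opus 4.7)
The case $|S|=0$ is immediate: $F_0(M)$ is a single point (the empty configuration), so $\W^M_0(0) = H_0^{\FI}(H_0(F(M)))_0 = H_0(F_0(M);\Z) = \Z$.

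For $|S|=2i$ with $i>0$, the goal is to show $H_0^{\FI}(H_i(F(M)))_{2i}=0$, which improves by one degree on the generic bound of Theorem~\ref{ConfigSpaceRepStable}. Any slope-$1/2$ class in $H_i(F_{2i}(M))$ that involves a factor coming from a loop class $\gamma \in H_1(M)$ automatically involves a stationary particle (since loops carry slope~$1$ per particle), and is therefore manifestly in the image of the stabilization $t_*: H_i(F_{2i-1}(M)) \to H_i(F_{2i}(M))$ after applying the $\fS_{2i}$-action. By Cohen's description of $H_*(F_k(\R^2))$ (Theorem~\ref{PropGerstenhaber}), transported via any embedded disk $\R^2 \hookrightarrow M$, the remaining slope-$1/2$ classes are spanned by products of bracket classes $\psi(a_1,b_1)\bullet \cdots \bullet \psi(a_i,b_i)$. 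So the real task is to handle these.

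The central step is the case $i=1$: under the hypothesis that $M$ is non-orientable or of positive genus, the single bracket class $\psi(a,b) \in H_1(F_{\{a,b\}}(M))$ lies in the $\Z[\fS_2]$-span of the image of $t_*$. The geometric input is a \emph{handle-slide} (respectively \emph{crosscap-slide}) $2$-chain in $F_2(M)$. For an orientable surface of positive genus, choose a handle containing a symplectic pair $\alpha,\beta \in H_1(M)$; the commutator relation $[\alpha,\beta]$ in $\pi_1$ lifts to a $2$-chain in $F_2(M)$ whose boundary expresses $\psi(a,b)$ as a $\Z$-combination of classes of the form $\alpha_a\bullet [b]$, $\beta_a\bullet [b]$ and their $\fS_2$-counterparts, all of which are in the image of $t_*$. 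For a non-orientable $M$, the analogous slide around a crosscap provides a $2$-chain whose boundary, together with an integral multiple of the orientation-reversing loop class in $H_1(M)$, realizes the same kind of relation over $\Z$.

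Once the $i=1$ relation is in hand, an induction on $i$ completes the proof: given a product $\psi(a_1,b_1)\bullet \cdots \bullet \psi(a_i,b_i) \in H_i(F_{2i}(M))$, rewrite the first factor using the relation to obtain a sum of classes, each of which contains at least one loop-class factor and is therefore stable. The principal obstacle is verifying the handle/crosscap-slide over $\Z$ in the non-orientable case: the naive slide argument in a crosscap produces only the relation $2\psi(a,b) \equiv 0 \pmod{\mathrm{image}(t)}$, and lifting this $\Z/2$-relation to an integral relation requires a careful combination with the action of the orientation-reversing loop class in $H_1(F_2(M))$.
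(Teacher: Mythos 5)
The central missing ingredient in your proposal is the surjectivity statement that reduces the problem to single Browder brackets. You assert that ``the remaining slope-$1/2$ classes are spanned by products of bracket classes $\psi(a_1,b_1)\bullet\cdots\bullet\psi(a_i,b_i)$'' by invoking Cohen's description of $H_*(F_k(\R^2))$ transported along a disk $\R^2 \hookrightarrow M$, but Cohen's theorem describes the homology of configuration spaces of $\R^2$, not of an arbitrary surface $M$: there is no a priori reason that $H_i(F_{2i}(M))$ modulo the image of $t_*$ is exhausted by the image of products of brackets pushed in from a disk. The slope heuristic (``loops carry slope $1$ per particle, so any class involving a loop factor has a stationary particle'') is a useful intuition but not a proof — a general class in $H_i(F_{2i}(M))$ need not decompose into a product of factors at all. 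What actually furnishes this reduction in the paper is the arc resolution spectral sequence: convergence to zero in the relevant range forces the $d^2$ differential
\[
\Ind^{\fS_{2i}}_{\fS_{2i-2}\times\fS_2}\,\W_0^M(2i-2)\boxtimes H_1(F_2(\R^2)) \longrightarrow \W_0^M(2i)
\]
to be surjective for $i>0$, and this is precisely the statement you are assuming. Without it (or something equivalent), your induction has no base for the surjectivity claim.

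The second gap is one you already flag yourself: the crosscap-slide argument in the nonorientable case only appears to yield $2\psi(a,b)\equiv 0$ modulo stable classes, and you note that upgrading this to an integral relation ``requires a careful combination'' that you do not carry out. The paper sidesteps this geometry entirely and works algebraically: since the M\"obius strip $\mathcal M$ is open, $H_1(F(\mathcal M))$ is an $\FI\sharp$-module, hence determined by its generators (Theorem~\ref{4.1.5}); combining the decomposition $H_1(F_2(\mathcal M))\cong \bigoplus_\ell \Ind^{\fS_2}_{\fS_\ell\times\fS_{2-\ell}} H_0^{\FI}(H_1(F(\mathcal M)))_\ell\boxtimes\Z$ with Wang's computation $H^1(F_2(\mathcal M))\cong\Z^2$ and $H^2(F_2(\mathcal M))\cong\Z$ (torsion-free, so the universal coefficient theorem pins down $H_1$) forces $H_0^{\FI}(H_1(F(\mathcal M)))_2\cong 0$ directly. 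For a general nonorientable $M$, the map from $F_2(\R^2)$ factors through $F_2(\mathcal M)$, killing the image of $\psi(a,b)$. Your handle-slide argument for orientable surfaces of positive genus is in the same spirit as the paper's citation of Bellingeri's presentation of $\pi_1(F_k(M))$ (which encodes the commutator-vs.-full-twist relation you describe), so that part is fine modulo making the $2$-chain precise. But without the spectral-sequence surjection and a correct treatment of the crosscap case over $\Z$, the argument does not close.
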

 
\begin{proof}
By definition, $\W^M_0(0)= H_0^{\FI} \left( H_0 (F(M)) \right)_0 = H_0 (F_0(M)).$  As claimed, this is isomorphic to $\Z$ for any connected manifold $M$. 

To prove the vanishing of $\W^M_0(2i) = H_0^{\FI} \left( H_i(F(M)) \right)_{2i}$ for $i > 0$ we will first show, by assembling known results, that the map 
$$H_0^{\FI}(H_1(F(\R^2)))_2 \longrightarrow H_0^{\FI}(H_1(F(M)))_2$$ 
induced by embedding $\R^2 \hookrightarrow M$ is zero. We begin with the case that $M$ is nonorientable. Let $\mathcal M$ denote the M\"obius strip.  Since $\mathcal M$ is open, $H_1(F(\mathcal M))$ is an FI$\sharp$-module. By Church--Ellenberg--Farb \cite[Theorem 4.1.5]{CEF} (here Theorem \ref{4.1.5}) it follows that 
\[ H_1(F_2(\mathcal M)) \cong \bigoplus_{\ell =0}^2 \Ind_{\fS_\ell \times \fS_{2-\ell} }^{\fS_2} H_0^{\FI }(H_1(F(\mathcal M)))_\ell \boxtimes \Z, \] with $\Z$ the trivial  $ \fS_{2-\ell}$-representation.
Since $H_1(F_1(\mathcal M)) \cong H_1(\mathcal M) \cong \Z $ and $H_1(F_0(\mathcal M)) \cong 0$, 
the component of  $H_1(F_2(\mathcal M))$ generated in degrees $\ell=0,1$ is isomorphic to 
$$ \Ind_{\fS_1 \times \fS_1}^{\fS_2} H_0^{\FI }(H_1(F(\mathcal M)))_1 \boxtimes \Z \cong \Ind_{\fS_1 \times \fS_1}^{\fS_2} \Z \cong \Z^2,$$
the canonical $\fS_2$ permutation representation. By Theorem \ref{4.1.5} this component is a direct summand of $H_1(F_2(\mathcal M))$. Wang  \cite[Lemma 1.6]{Wang} showed that $H^1(F_2(\mathcal M)) \cong \Z^2$ and that $H^2(F_2(\mathcal M)) \cong \Z$ and hence is torsion free. We deduce that $H_1(F_2(\mathcal M)) \cong \Z^2$ consists entirely of its $\ell=1$ component: the component $H_0^{\FI }(H_1(F(\mathcal M)))_2$ generated in degree $\ell=2$ is zero. Hence the map 
$$H_0^{\FI}(H_1(F(\R^2)))_2 \longrightarrow H_0^{\FI}(H_1(F(\mathcal M)))_2$$ 
is zero. For a general noncompact nonorientable surface $M$, the map $$H_0^{\FI}(H_1(F(\R^2)))_2 \longrightarrow H_0^{\FI}(H_1(F(M)))_2$$ factors through $H_0^{\FI}(H_1(F(\mathcal M)))_2$, and so is zero. 

From a presentation of $\pi_1 (F_k(M))$ for $M$ a noncompact, orientable positive genus surface (for example, Bellingeri \cite[Theorem 6.1]{Bellingeri}) we see that the map $H_1(F_2(\R^2)) \m H_1(F_2(M))$ is zero even before passing to the quotient module  of minimal generators. 

Consider the arc resolution spectral sequence described in Section \ref{SectionArcRes}. By the proof of Theorem \ref{ConfigSpaceRepStable}, the domain of any differentials $d^r_{p,q}$ with codomain $E^r_{-1,i}(2i)$ are zero for $r>2$. Since $E^r_{p,q}(2i)=0$ for $p<-1$, there are no nontrivial differentials out of the group $E^r_{-1,i}$. High connectivity of the arc resolution (Proposition \ref{ArcF}) implies that $E^\infty_{-1,i}(2i) \cong 0$ for $i>0$. Thus the differentials
$d^2 : E^r_{1,i-1}(2i) \to E^r_{-1,i}(2i)$
 are surjective for $i>0$. Equivalently, the maps 
 $$\Ind^{\fS_{2i}}_{\fS_{2i-2} \times \fS_2} \W_0^M(2i-2) \boxtimes H_1(F_2(\R^2)) \longrightarrow \W_0^M(2i) $$ 
 surject for all $i >0$. We have shown that the map $H_0^{\FI}(H_1(F(\R^2)))_2 \longrightarrow H_0^{\FI}(H_1(F(M)))_2$ is zero if $M$ is not orientable or has positive genus, and so for $i=1$, 
 $$\Ind^{\fS_{2}}_{\fS_{0} \times \fS_2} \W_0^M(0) \boxtimes H_1(F_2(\R^2)) \longrightarrow \W_0^M(2) $$  is the zero map. Since it is also surjective, $\W_0^M(2) \cong 0$. The claim for higher $i$ then follows inductively, using the fact that only the zero group can be the surjective image of the zero group. 
\end{proof}
 
In Proposition \ref{PropMobiusGenus} we saw that for nonorientable or positive genus surfaces $M$, $$ \W^M_0(2i) = H_0^{\FI} \left( H_i(F(M))\right)_{2i}=0 \qquad \text{for $i>0$}, $$ and this gives the following small improvement on known stable ranges.
 
\begin{corollary}
Let $M$ be a connected noncompact manifold which is not a (possibly punctured) $2$--disk, and let $i>0$. Then $H_i(F(M)) $ is generated in degree $\leq 2i-1$.
\end{corollary}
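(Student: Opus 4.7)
The plan is to split into cases based on the dimension of $M$. If $\dim M \geq 3$, then Theorem \ref{ImprovedRangeTheorem} already gives $\deg H_0^{\FI}(H_i(F(M);\Z)) \leq i$, which is stronger than $\leq 2i-1$ as soon as $i \geq 1$. So the substantive case is $\dim M = 2$, where the hypothesis that $M$ is not a (possibly punctured) $2$-disk forces $M$ to be either nonorientable or of positive genus, i.e.\ precisely the hypothesis of Proposition \ref{PropMobiusGenus}.

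In the surface case, the strategy is to combine the standard stable range with the new vanishing result at the boundary of that range. By Theorem \ref{ConfigSpaceRepStable}, we already know $H_0^{\FI}(H_i(F(M);\Z))_k = 0$ for $k > 2i$, so it remains only to rule out generators in the top degree $k = 2i$. But $H_0^{\FI}(H_i(F(M)))_{2i}$ is by definition $\W_0^M(2i)$, and Proposition \ref{PropMobiusGenus} establishes that $\W_0^M(2i) \cong 0$ for $i > 0$ under exactly the hypotheses given. Hence $H_0^{\FI}(H_i(F(M);\Z))_k = 0$ for all $k \geq 2i$, which is the claim.

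There is essentially no obstacle here: the result is a short combination of two already-established facts, and all signs/cases check out directly. The only thing worth verifying carefully is the translation of ``connected noncompact $2$-manifold which is not a (possibly punctured) $2$-disk'' into ``nonorientable or positive genus,'' which is just the classification of noncompact surfaces: a noncompact connected orientable genus-zero surface is precisely an open disk with a (possibly empty, possibly infinite) discrete set of punctures.
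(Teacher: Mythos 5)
Your proof is correct and follows essentially the same path as the paper's implicit argument: combine the stable range $\deg H_0^{\FI}(H_i(F(M))) \leq 2i$ from Theorem \ref{ConfigSpaceRepStable} with the top-degree vanishing $\W_0^M(2i)=0$ supplied by Proposition \ref{PropMobiusGenus}, and dispose of $\dim M \geq 3$ via Theorem \ref{ImprovedRangeTheorem}. The only point I would tighten is your last remark: by the Ker\'ekj\'art\'o classification, a noncompact connected orientable genus-zero surface is $S^2$ minus a nonempty compact totally disconnected set, which need not be discrete (e.g.\ $S^2$ minus a Cantor set), so ``open disk with a discrete set of punctures'' slightly undercounts the excluded surfaces. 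This does not affect the argument, since the operative hypothesis is exactly the one in Proposition \ref{PropMobiusGenus} — nonorientable or positive genus — and ``(possibly punctured) $2$-disk'' should just be read as informal shorthand for ``orientable of genus zero.''
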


\subsubsection*{The combinatorics of FIM$^+$-modules}

There has been considerable recent success in characterizing the structure of finitely generated modules over the category FI and certain relatives, and these results suggest a number of questions about what ``representation stability" should mean for modules over FIM$^+$. In Proposition \ref{PropDecomposingM(d)}, we describe the decomposition of free FIM$^+$-modules over $\Q$ into irreducible $\fS_k$-representations, using a calculation of $M^{\FIM^+}(0)$ stated in Proposition \ref{PropDecomposingM(0)}. In Question \ref{QuestionRepStability}, we pose some questions about the structure of finitely generated rational FIM$^+$-modules. 

Let $\fB_k \cong \fS_2 \wr \fS_k \subseteq \fS_{2k}$ denote the signed permutation group on $k$ letters, the Coxeter group in type $B_k/C_k$. Let $V_{(1^k, \varnothing)}$ denote the $1$-dimensional rational $\fB_k$-representation pulled back from the sign $\fS_k$-representation under the natural surjection $\fB_k \to \fS_k$. There are isomorphisms of $\fS_{2k}$-representations:
$$M^{\FIM^+}(0)_{2k} \cong \Ind_{\fB_k}^{\fS_{2k}} V_{(1^k, \varnothing)} .$$ 
The decompositions of these induced representations are described explicitly by Stembridge \cite[Page 7]{StembridgeNotes}, a result which he attributes to Littlewood. These decompositions are as follows. 

\begin{proposition}[{Littlewood, Stembridge \cite[Page 7]{StembridgeNotes}}] \label{PropDecomposingM(0)} There are isomorphisms of $\fS_{2k}$-representations:
$$ M^{\FIM^+}(0)_{2k} \cong \bigoplus_{\lambda \in D_{2k} } V_{\lambda}.$$ 
Here $V_{\lambda}$ the irreducible $\fS_{2k}$-representation associated to the partition $\lambda$. A partition $\lambda \vdash 2k$ is in $D_{2k}$ if and only if it has the following symmetry: when the associated young diagram (in English notation) is cut into two along the staircase shown in Figure \ref{Staircase}, 
\begin{figure}[!ht]    \centering
\includegraphics[scale=.8]{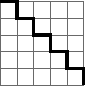}
\caption{Staircase dividing young diagrams into two skew subdiagrams.}
\label{Staircase}
\end{figure}  
then the resultant two skew subdiagrams are symmetric under reflection in the line of slope $-1$. 
\end{proposition}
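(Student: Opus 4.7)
The plan is to realize $M^{\FIM^+}(0)_{2k}$ explicitly as an induced representation, pass to Frobenius characteristics, and invoke a classical plethystic identity of Littlewood.

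First I would identify the structure of $M^{\FIM^+}(0)_{2k}$. By Definition \ref{DefFIM+}, a basis is given by oriented perfect matchings of $[2k]$, that is, perfect matchings together with an orientation on the set of pairs modulo the sign of reordering. The $\fS_{2k}$-action is transitive on perfect matchings, and the stabilizer of the standard matching $\{\{1,2\},\{3,4\},\ldots,\{2k-1,2k\}\}$ is precisely the hyperoctahedral group $\fB_k = \fS_2 \wr \fS_k$. Each $\fS_2$-factor preserves the orientation (it fixes a pair), while the permuting $\fS_k$ acts on the ordering of pairs by its sign character. Hence the stabilizer acts on the orientation data by the character $V_{(1^k,\varnothing)}$ pulled back along $\fB_k \twoheadrightarrow \fS_k$, and we obtain the isomorphism $M^{\FIM^+}(0)_{2k} \cong \Ind_{\fB_k}^{\fS_{2k}} V_{(1^k, \varnothing)}$ stated immediately before the proposition.

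Next I would apply the Frobenius characteristic map. The representation $V_{(1^k,\varnothing)}$ is the exterior tensor $\mathbf{1}^{\boxtimes k}$ over the base groups composed with the sign representation of the permuting $\fS_k$. By the standard formula for the Frobenius characteristic of an induced representation from a wreath product (see, e.g., Macdonald, \emph{Symmetric Functions and Hall Polynomials}, Appendix I.A.8), this gives
\[ \mathrm{ch}\bigl(M^{\FIM^+}(0)_{2k}\bigr) \;=\; e_k[h_2], \]
where $[-]$ denotes plethystic substitution, $h_2$ is the Frobenius characteristic of the trivial $\fS_2$-representation, and $e_k$ is that of the sign representation of $\fS_k$. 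Summing over $k$, the definition of plethysm yields the generating identity
\[ \sum_{k \geq 0} e_k[h_2] \;=\; \prod_{i \leq j} (1 + x_i x_j), \]
and a classical Littlewood identity (see Macdonald, \S I.5) expands the right-hand side in the Schur basis as $\sum_\lambda s_\lambda$, summed over a specific combinatorial family of partitions. Reading off the degree-$2k$ component and inverting the Frobenius correspondence then produces the desired decomposition $M^{\FIM^+}(0)_{2k} \cong \bigoplus_\lambda V_\lambda$.

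The main obstacle is to verify that the partitions appearing in this Schur expansion coincide exactly with the set $D_{2k}$ described via the staircase-symmetry condition in the proposition. I would resolve this by carefully translating between the two combinatorial descriptions: the partitions indexing $e_k[h_2]$ admit a classical description in terms of domino tableaux or equivalently in terms of the 2-quotient, and the staircase reflection in Figure \ref{Staircase} is the geometric realization of this condition under the slope-$(-1)$ symmetry that exchanges the two halves of the diagram. For this identification I would appeal directly to Stembridge's treatment and to Littlewood's original character computation, which between them establish the equivalence.
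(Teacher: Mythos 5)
Your proposal is correct and follows essentially the same route the paper takes: the paper identifies $M^{\FIM^+}(0)_{2k}$ with $\Ind_{\fB_k}^{\fS_{2k}} V_{(1^k,\varnothing)}$ in the paragraph immediately preceding the proposition and then cites Stembridge (attributing the decomposition to Littlewood) without further argument. What you add — passing to Frobenius characteristics, identifying the characteristic as $e_k[h_2]$, and invoking the Littlewood expansion of $\prod_{i\le j}(1+x_ix_j)$ — is the content of the citation rather than a different proof, and you, like the paper, ultimately defer the matching of the staircase symmetry with the Frobenius-coordinate condition ($\alpha_i = \beta_i + 1$) to Littlewood and Stembridge.
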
 
Figure \ref{PartitionsOf6} illustrates this symmetry in the case $2k=6$. 
\begin{figure}[!ht]    \centering
\begin{subfigure}{.45\textwidth}
  \centering
{\includegraphics[scale=2]{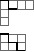}}
 \caption{Partitions of $6$ contained in $D_{6}$}
\end{subfigure} %
\begin{subfigure}{.45\textwidth}
  \centering
{\includegraphics[scale=2]{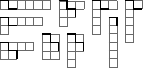}}
\caption{ Partitions of $6$ not in $D_6$.}
\end{subfigure}
\caption{The set $D_6$.}
\label{PartitionsOf6}
\end{figure}  
Notably, identifying a partition in $D_{2k}$ with one of its skew subdiagrams puts $D_{2k}$  in bijection with \emph{strict} partitions of $k$, that is, partitions with distinct parts.  This computation of  $M^{\FIM^+}(0)$  allows us to use the Littlewood-Richardson rule to compute the decomposition of any free rational $\FIM^+$-module. 

\begin{proposition} \label{PropDecomposingM(d)} 
Given a rational $\fS_d$--representation $W$, the associated free $\FIM^+$-module 
$$M^{\FIM^+}(W)  \cong M^{\FIM^+}(d) \otimes_{\Q[\fS_d]} W$$ 
has the following decomposition: 
\begin{align*}
M^{\FIM^+}(W)_k \cong \left\{ \begin{array}{ll} \Ind^{\fS_k}_{\fS_d \boxtimes \fS_{k-d} } W \boxtimes M^{\FIM^+}(0)_{k-d},  & k \equiv d \pmod{2} \\ 
 0 , & k \not\equiv d \pmod{2} \\ 
\end{array} \right. 
\end{align*}
\end{proposition}

\begin{example} For example, the first five nonzero components of the rational module $M^{\FIM^+}(1)$ decompose as follows:
 \begin{align*}
M^{\FIM^+}(1)_1 & \cong V_{\Y{1}}   \qquad  \qquad 
M^{\FIM^+}(1)_3  \cong  V_{\Y{3}} \oplus V_{\Y{2,1}}  \qquad \qquad 
M^{\FIM^+}(1)_5  \cong V_{\Y{4,1}} \oplus V_{\Y{3,1,1}} \oplus V_{\Y{3,2}}    \\   
M^{\FIM^+}(1)_7 & \cong V_{\Y{5,1,1}}  \oplus V_{\Y{4,3}} \oplus V_{\Y{4,2,1}}  \oplus V_{\Y{4,1,1,1}}   \oplus V_{\Y{3,3,1}}    \\
M^{\FIM^+}(1)_9 & \cong V_{\Y{6,1,1,1}} \oplus  V_{\Y{5,3,1}} \oplus V_{\Y{5,2,1,1}} \oplus V_{\Y{5,1,1,1,1}} 
\oplus  V_{\Y{4,4,1}}  \oplus  V_{\Y{4,3,2}}  \oplus  V_{\Y{4,3,1,1}} 
\end{align*} 
\end{example}

In analogy to the other categories and (skew-)tca's that have been studied under the scope of ``representation stability," we pose the following questions.

 \begin{question} \label{QuestionRepStability}  
 What constraints does finite generation put on the irreducible representations appearing in a rational $\FIM^+$-module?  Given a finitely generated rational $\FIM^+$-module $\cV$, is there some operation on Young diagrams for constructing $\cV_{k+1}$ from  $\cV_k$ in the stable range in the spirit of Church--Farb's \emph{multiplicity stability} \cite[Definition 1.1]{CF}? Does $\cV_k$ even determine $\cV_{k+1}$ for large $k$?
 
\end{question}

\subsubsection*{Algebraic finiteness properties for twisted (skew-)commutative algebras}

Conjecture \ref{highdimconj} suggests the following purely algebraic questions.
 
\begin{question}
Let $R$ be a Noetherian ring. If $\mathcal W$ is a finitely generated $\mathfrak L_d^n$-module and $\mathcal V$ is a $\mathfrak L_d^n$-submodule, is $\mathcal V$ necessarily finitely generated? \label{conjNoth}
\end{question}
 
The main theorem of Church--Ellenberg--Farb--Nagpal \cite{CEFN} shows that the answer is yes for $d=1$. For $R$ a field of characteristic zero, the answer is yes for $d=2$. This is due to Nagpal--Sam--Snowden who address the case when $n$ is odd \cite[Theorem 1.1]{SSN1} and the case when $n$ is even \cite[Theorem 1.1]{NSSskew}. The following question generalizes Church--Ellenberg's results in \cite{CE} to the case of $d>1$. For a (skew-)tca $\mathcal A$, let $H_i^{\mathcal A}$ denote the ith left derived functor of $H_0^{\mathcal A}$.
 
\begin{question} Is there a function $f:\N_0 \times \N_0 \times \N_0 \m \N_0$ such that for all $\mathfrak L_d^n$-modules $W$ with $\deg H_0^{\mathfrak L_d^n}(W)=g$ and $\deg H_1^{\mathfrak L_d^n}(W)=r$, then $\deg H_i^{\mathfrak L_d^n}(W) \leq f(g,r,i)$? \label{higherCE}
\end{question}

An affirmative answer to Question \ref{higherCE} for $d=n=2$ would allow us to prove a quantitative version of Theorem \ref{maintheorem}. An affirmative answer to either of these two questions for $d>2$ seems relevant to establishing tertiary and higher order representation stability, though more ideas appear to be needed.

{\footnotesize
\bibliographystyle{amsalpha}
\bibliography{FI}

\def\cprime{$'$}
\providecommand{\bysame}{\leavevmode\hbox to3em{\hrulefill}\thinspace}
\providecommand{\MR}{\relax\ifhmode\unskip\space\fi MR }
\providecommand{\MRhref}[2]{%
  \href{http://www.ams.org/mathscinet-getitem?mr=#1}{#2}
}
\providecommand{\href}[2]{#2}
\begin{thebibliography}{GKRWb}

\bibitem[Bel04]{Bellingeri}
Paolo Bellingeri, \emph{On presentations of surface braid groups}, J. Algebra
  \textbf{274} (2004), no.~2, 543--563. \MR{2043362}

\bibitem[BG91]{BenderskyGitler}
Martin Bendersky and Sam Gitler, \emph{The cohomology of certain function
  spaces}, Transactions of the American Mathematical Society \textbf{326}
  (1991), no.~1, 423--440.

\bibitem[BL74]{topisotopy}
Dan Burghelea and Richard Lashof, \emph{The homotopy type of the space of
  diffeomorphisms. {I}, {II}}, Trans. Amer. Math. Soc. \textbf{196} (1974),
  1--36; ibid. 196\ (1974), 37--50. \MR{0356103}

\bibitem[Bro60]{Br}
William Browder, \emph{Homology operations and loop spaces}, Illinois J. Math.
  \textbf{4} (1960), 347--357. \MR{0120646 (22 \#11395)}

\bibitem[BT82]{Bott&Tu}
Raoul Bott and Loring~W Tu, \emph{Differential forms in algebraic topology}.

\bibitem[Cas]{Casto}
Kevin Casto, \emph{${FI}_{G}$-modules, orbit configuration spaces, and complex
  reflection groups}, \url{https://arxiv.org/abs/1608.06317}.

\bibitem[CE17]{CE}
Thomas Church and Jordan~S. Ellenberg, \emph{Homology of {FI}-modules}, Geom.
  Topol. \textbf{21} (2017), no.~4, 2373--2418. \MR{3654111}

\bibitem[CEF15]{CEF}
Thomas Church, Jordan~S. Ellenberg, and Benson Farb, \emph{F{I}-modules and
  stability for representations of symmetric groups}, Duke Math. J.
  \textbf{164} (2015), no.~9, 1833--1910. \MR{3357185}

\bibitem[CEFN14]{CEFN}
Thomas Church, Jordan~S. Ellenberg, Benson Farb, and Rohit Nagpal,
  \emph{F{I}-modules over {N}oetherian rings}, Geom. Topol. \textbf{18} (2014),
  no.~5, 2951--2984. \MR{3285226}

\bibitem[CF13]{CF}
Thomas Church and Benson Farb, \emph{Representation theory and homological
  stability}, Adv. Math. \textbf{245} (2013), 250--314. \MR{3084430}

\bibitem[Chu12]{Ch}
Thomas Church, \emph{Homological stability for configuration spaces of
  manifolds}, Invent. Math. \textbf{188} (2012), no.~2, 465--504. \MR{2909770}

\bibitem[CLM76]{CLM}
Frederick~R. Cohen, Thomas~J. Lada, and J.~Peter May, \emph{The homology of
  iterated loop spaces}, Lecture Notes in Mathematics, Vol. 533,
  Springer-Verlag, Berlin, 1976. \MR{0436146 (55 \#9096)}

\bibitem[Far79]{Fa}
Frank~D. Farmer, \emph{Cellular homology for posets}, Math. Japon. \textbf{23}
  (1978/79), no.~6, 607--613. \MR{529895 (82k:18013)}

\bibitem[Gan16]{GanLES}
Wee~Liang Gan, \emph{A long exact sequence for homology of {FI}-modules}, New
  York J. Math \textbf{22} (2016), 1487--1502.

\bibitem[GKRWa]{GKRW1}
S{\o}ren Galatius, Alexander Kupers, and Oscar Randal-Williams, \emph{Cellular
  {$E_k$}-algebras}, \url{https://arxiv.org/abs/1805.07184}.

\bibitem[GKRWb]{GKRW2}
\bysame, \emph{{$E_2$}-cells and mapping class groups},
  \url{https://arxiv.org/abs/1805.07187}.

\bibitem[GL17]{GanLi.OnCentralStability}
Wee~Liang Gan and Liping Li, \emph{On central stability}, Bull. Lond. Math.
  Soc. \textbf{49} (2017), no.~3, 449--462. \MR{3723630}

\bibitem[Hat02]{hatcherbook}
Allen Hatcher, \emph{Algebraic topology}, Cambridge University Press,
  Cambridge, 2002. \MR{1867354 (2002k:55001)}

\bibitem[Hel17]{GOH}
Gard~Olav Helle, \emph{Pairings and convergence of spectral sequences},
  Master's thesis, 2017,
  \url{https://www.duo.uio.no/bitstream/handle/10852/57776/1/Helle-thesis.pdf}.

\bibitem[Hep]{HepworthEdge}
Richard Hepworth, \emph{On the edge of the stable range},
  \url{https://arxiv.org/abs/1608.08834}.

\bibitem[HW10]{hatcherwahl}
Allen Hatcher and Nathalie Wahl, \emph{Stabilization for mapping class groups
  of 3-manifolds}, Duke Math. J. \textbf{155} (2010), no.~2, 205--269.
  \MR{2736166 (2012c:57001)}

\bibitem[Ker05]{Ker}
Moritz~C. Kerz, \emph{The complex of words and {N}akaoka stability}, Homology
  Homotopy Appl. \textbf{7} (2005), no.~1, 77--85. \MR{2155519}

\bibitem[KM15]{kupersmillerimprov}
Alexander Kupers and Jeremy Miller, \emph{Improved homological stability for
  configuration spaces after inverting 2}, Homology Homotopy Appl. \textbf{17}
  (2015), no.~1, 255--266. \MR{3344444}

\bibitem[KM18]{kupersmillercells}
\bysame, \emph{{$E_n$}-cell attachments and a local-to-global principle for
  homological stability}, Math. Ann. \textbf{370} (2018), no.~1-2, 209--269.
  \MR{3747486}

\bibitem[Mas54]{MasseyExact}
W.~S. Massey, \emph{Products in exact couples}, Ann. of Math. (2) \textbf{59}
  (1954), 558--569. \MR{0060829}

\bibitem[May72]{M}
J.~P. May, \emph{The geometry of iterated loop spaces}, Springer-Verlag,
  Berlin-New York, 1972, Lecture Notes in Mathematics, Vol. 271. \MR{0420610
  (54 \#8623b)}

\bibitem[MP15]{MiP1}
Jeremy Miller and Martin Palmer, \emph{Scanning for oriented configuration
  spaces}, Homology Homotopy Appl. \textbf{17} (2015), no.~1, 35--66.
  \MR{3338540}

\bibitem[Mus12]{MussonLieSuperalgebras}
Ian~Malcolm Musson, \emph{Lie superalgebras and enveloping algebras}, vol. 131,
  American Mathematical Soc., 2012.

\bibitem[NSS]{NSSskew}
Rohit Nagpal, Steven~V Sam, and Andrew Snowden, \emph{Noetherianity of some
  degree two twisted skew-commutative algebras},
  \url{https://arxiv.org/abs/1610.01078}.

\bibitem[NSS16]{SSN1}
Rohit Nagpal, Steven~V. Sam, and Andrew Snowden, \emph{Noetherianity of some
  degree two twisted commutative algebras}, Selecta Math. (N.S.) \textbf{22}
  (2016), no.~2, 913--937. \MR{3477338}

\bibitem[Pal18]{palmertwisted}
Martin Palmer, \emph{Twisted homological stability for configuration spaces},
  Homology Homotopy Appl. \textbf{20} (2018), no.~2, 145--178. \MR{3806572}

\bibitem[Pat]{Patzt.CentralStability}
Peter Patzt, \emph{Central stability homology},
  \url{https://arxiv.org/abs/1704.04128}.

\bibitem[PS17]{PutmanSamLinearGroups}
Andrew Putman and Steven~V Sam, \emph{Representation stability and finite
  linear groups}, Duke Math. J. \textbf{166} (2017), no.~13, 2521--2598.
  \MR{3703435}

\bibitem[Put15]{Pu}
Andrew Putman, \emph{Stability in the homology of congruence subgroups},
  Invent. Math. \textbf{202} (2015), no.~3, 987--1027. \MR{3425385}

\bibitem[Reu93]{ReutenauerFreeLieAlgebras}
Christophe Reutenauer, \emph{Free {L}ie algebras}, Volume 7 of London
  Mathematical Society Monographs. New Series. (1993).

\bibitem[Ros65]{RossLieSuperalgebras}
Leonard~E Ross, \emph{Representations of graded {L}ie algebras}, Transactions
  of the American Mathematical Society \textbf{120} (1965), no.~1, 17--23.

\bibitem[Rot08]{RotmanHomological}
Joseph Rotman, \emph{An introduction to homological algebra}, Springer Science
  \& Business Media, 2008.

\bibitem[RW13]{RW}
Oscar Randal-Williams, \emph{Homological stability for unordered configuration
  spaces}, Quarterly Journal of Mathematics (2013), no.~64 (1), 303--326.

\bibitem[Sin13]{Sinha}
Dev~P. Sinha, \emph{The (non-equivariant) homology of the little disks operad},
  O{PERADS} 2009, S\'{e}min. Congr., vol.~26, Soc. Math. France, Paris, 2013,
  pp.~253--279. \MR{3203375}

\bibitem[Sno13]{SnowdenSyzygies}
Andrew Snowden, \emph{Syzygies of {S}egre embeddings and {$\Delta$}-modules},
  Duke Mathematical Journal \textbf{162} (2013), no.~2, 225--277.

\bibitem[Sol69]{SolomonSteinberg}
Louis Solomon, \emph{The steinberg character of a finite group with {BN}-pair},
  Theory of Finite Groups (Symposium, Harvard Univ., Cambridge, Mass., 1968),
  1969, pp.~213--221.

\bibitem[SS]{SStcas}
Steven~V Sam and Andrew Snowden, \emph{Introduction to twisted commutative
  algebras}, \url{https://arxiv.org/abs/1209.5122}.

\bibitem[SS15]{SS1}
\bysame, \emph{Stability patterns in representation theory}, Forum Math. Sigma
  \textbf{3} (2015), e11, 108. \MR{3376738}

\bibitem[Ste06]{StembridgeNotes}
John~R Stembridge, \emph{A practical view of {$\hat{W}$}: A guide to working
  with {W}eyl group representations, with special emphasis on branching rules},
  Atlas of Lie Groups AIM Workshop IV 10--14 July 2006 (2006).

\bibitem[Tot96]{Totaro}
Burt Totaro, \emph{Configuration spaces of algebraic varieties}, Topology
  \textbf{35} (1996), no.~4, 1057--1067. \MR{1404924 (97g:57033)}

\bibitem[Wan02]{Wang}
Jeffrey~H. Wang, \emph{On the braid groups for the {M}\"obius band}, J. Pure
  Appl. Algebra \textbf{169} (2002), no.~1, 91--107. \MR{1890187}

\end{thebibliography}
}

\end{document}